\def \R{\mathbb{R}}
\def \im{\mathop{\mathrm{im}}}
\def \Z{\mathbb{Z}}
\def \Q{\mathbb{Q}}
\def \D{\mathrm{Diff}}
\def \P{\mathcal{P}}
\def \SP{\mathcal{SP}}
\def \tp{\mathop\mathrm{tp}}
\def \Tp{\mathop\mathrm{Tp}}
\def \AP{\mathcal{AP}}
\def \dim{\mathrm{dim}\,}
\def \id{\mathrm{id}}
\def \k{\kappa}
\def \s{\smallsmile\!}
\def \colim{\mathrm{\colim}}
\def \O{\mathop{\mathrm{O}}}
\def \Hom{\mathop{\mathrm{Hom}}}
\def \BSO{\mathop{\mathrm{BSO}}}
\def \BO{\mathop\mathrm{BO}}
\def \SO{\mathop{\mathrm{SO}}}
\def \Th{\mathop{\mathrm{Th}}}
\def \A{\mathcal{A}}
\def \rk{\mathop{\mathrm{rk}}}
\newtheorem{theorem}{Theorem}[section]
\newtheorem{lemma}[theorem]{Lemma}
\newtheorem{corollary}[theorem]{Corollary}
\theoremstyle{remark}
\newtheorem{remark}[theorem]{Remark}
\theoremstyle{definition}
\newtheorem{definition}[theorem]{Definition}
\newtheorem{example}[theorem]{Example}
\title{Invariants of singular sets of smooth maps}
\author{R. Sadykov}
\thanks{The author was supported by the Postdoctoral Fellowship for Foreign Researchers, Japan Society for the Promotion of Science; Postdoctoral Fellowship of Max Planck Institute, Germany; Research Instructorship at Dartmouth College, USA; and Postdoctoral Fellowship of Toronto University, Canada.}
\subjclass[2000]{Primary 57R45; Secondary 57R20}
\begin{document}
\date{\today}
\begin{abstract}
A singular point of a smooth map $f\colon M\to N$ of manifolds of dimensions $m$ and $n$ respectively is a point in $M$ at which the rank of the differential $df$ is less than the minimum of $m$ and $n$. The classical invariant of the set $S$ of singular points of $f$ of a given type is defined by taking the fundamental class $[\bar{S}]\in H_*(M)$ of the closure of $S$. We introduce and study new invariants of singular sets for which the classical invariants may not be defined, i.e., for which $\bar{S}$ may not possess the fundamental class. The simplest new invariant is defined by carefully choosing the fundamental class of the intersection of $\bar{S}$ and its slight perturbation in $M$. Surprisingly, for certain singularity types such an invariant is well-define (and not trivial) despite the fact that $\bar{S}$ does not possess the fundamental class. 

We determine new invariants for maps with Morin singularities---i.e., singularities of types $\mathcal{A}_k$ for $k\ge 1$ in the $ADE$-classification of simple singularities by Dynkin diagrams---and, as an application, show that these invariants together with generalized Miller-Morita-Mumford classes form a commutative graded algebra of characteristic classes that completely determine the cobordism groups of maps with at most $\A_k$-singularities for each $k\ge 1$. 
\end{abstract}
\maketitle

\setcounter{tocdepth}{2}
\tableofcontents

\addcontentsline{toc}{part}{Introduction}

\section{Introduction}

Let $f\colon M\to N$ be a general position map of complex analytic manifolds of complex dimensions $m$ and $n$ respectively. A point $x$ in $M$ is \emph{singular} if the differential of $f$ at $x$ is of rank less than the minimum of $m$ and $n$. Let $S\subset M$ denote the set of singular points of $f$ of a given type. Its closure $\bar{S}$ in $M$ is a submanifold of $M$ with singularities of codimension at least $2$. In particular $\bar{S}$ possesses a fundamental class whose image $[\bar{S}]$ in $H_*(M)$ does not change under homotopy of $f$. Rene Thom observed~\cite{Th}, \cite{HK} that the cohomology class dual to $[\bar{S}]$ is a polynomial in terms of Chern classes of the tangent vector bundle of $M$ and the pullback $f^*TN$ of the tangent bundle of $N$. Explicit expressions for the \emph{Thom polynomials} for particular singularity types were computed by Porteous~\cite{Po}, Ronga~\cite{Ro}, Gaffney~\cite{Ga} and T.~Ohmoto~\cite{Oh}; and recently a number of Thom polynomials were explicitly computed by Rim\'{a}nyi, Feher and Kazarian by means of a very efficient Rim\'{a}nyi restriction method~\cite{Ri1}, \cite{Kaz4}, \cite{FR}. 

In the case of a general position map $f\colon M\to N$ of smooth manifolds of dimensions $m$ and $n$ respectively, Chern classes are replaced with Stiefel-Whitney classes and the definition of Thom polynomials carries through provided that the closure of the set of singular points $\bar{S}$ of a given type possesses a fundamental class for every general position map $f$. If the \emph{dimension} $m-n$ of $f$ is positive, however, this is not necessarily the case since the boundary $\partial\bar{S}$ may  be of dimension $\dim \bar{S}-1$.   

\begin{figure}[ht]
	\centering
			\includegraphics[draft=false, width=80mm]{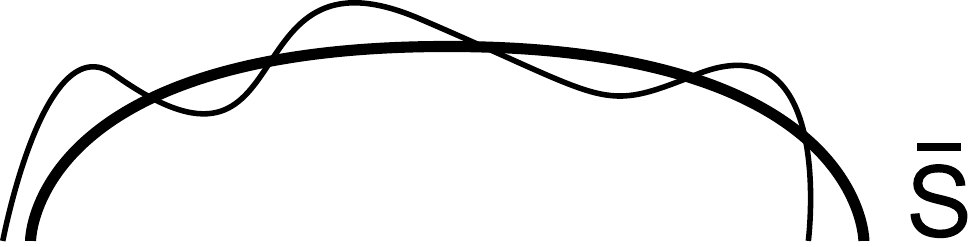}
\caption{For a smooth map $f$ of positive dimension, the set $\bar{S}$ (bold) of singular points of a given type may have a non-empty boundary of dimension $\dim\bar{S}-1$, and therefore may not possess a fundamental class. Nevertheless the intersection of $\bar{S}$ with a carefully chosen perturbation of $\bar{S}$ in $M$ may still possess a fundamental class, which is a non-trivial well-defined invariant.}
\label{fig:7}
\end{figure}

We consider smooth maps $f$ of positive dimension and introduce new invariants of singular sets. The simplest new invariant of the set $S$ of singular points of $f$ of a given type is defined by taking the fundamental class of the intersection of $\bar{S}$ and its carefully chosen perturbation in $M$. We show that for certain singularity types these invariants are well-defined even though the classical Thom polynomial is not well-defined, i.e., even though $\bar{S}$ does not possess a fundamental class. 

\begin{remark} If $S$ is a closed submanifold of $M$, then the homology class $[S'\cap S]$ does not depend on the choice of  the perturbation $S'$ of $S$ in $M$. On the other hand, if $S$ is not closed, then different choices of the perturbation $\bar{S}'$ of $\bar{S}$ lead to different homology classes $[\bar{S}'\cap \bar{S}]$. Our computation indirectly shows that for each $f$ there is a distinguished choice of the perturbation $\bar{S}'$ such that the homology class $[\bar{S}'\cap \bar{S}]$ is an invariant of the set $S=S(f)$ of singular points of $f$ of the prescribed type. 
\end{remark}

We determine a number of non-trivial invariants for Morin singularities, which are singularities of types $\A_r$ for $r\ge 1$ in the $ADE$ classification of simple singularities by Dynkin diagrams~\cite{Ar1}, \cite{Ar2}. As an application we show that these invariants together with the generalized Miller-Morita-Mumford classes completely determine the rational cobordism groups of \emph{$\A_r$-maps}, i.e., maps with singularities only of types $\A_i$ for $i\le r$. 

A comprehensive study of invariants of $\mathcal{A}$, $\mathcal{D}$ and $\mathcal{E}$ type singularities of Lagrange and Legendre  maps is carried out by Vassilyev~\cite{Va} by means of the Vassilyev complex. Instead of the Vassilyev complex, we use the Kazarian spectral sequence which is a far-reaching development of the Vassilyev complex, and which is defined in the Appendix by Kazarian in the addition \cite{Va1} of \cite{Va} (see also \cite{Kaz1}--\cite{Kaz4}). It follows that the peculiar invariants of singular sets of smooth $\A_r$-maps have no counterparts in the list of invariants of $\A_r$ singular sets of Lagrange and Legendre maps in the book \cite{Va}.    

In the remaining introductory sections we recall the definition of generalized Miller-Morita-Mumford characteristic classes of surface bundles and extend these invariants to invariants of smooth maps (\S\ref{s:mmm}); recall the Landweber-Novikov version of Thom polynomials (\S\ref{s:surp}); define new invariants of maps with prescribed singularities (\S\ref{s:inv}); and give a geometric interpretation of invariants of $\A_r$-maps (\S\ref{s:interpretation}). In sections~\ref{s:3}--\ref{s:rsg} we define Morin singularity types $\A_r$ and compute their relative symmetry groups.
In section~\ref{s:8} we construct the classifying loop space $\Omega^{\infty}\mathbf{A_r}$ for maps with $\A_r$ singularities; the cohomology classes in $H^*(\Omega^{\infty}\mathbf{A_r})$ are invariants of $\A_r$-maps. In subsequent sections we compute the rational cohomology groups of $\Omega^{\infty}\mathbf{A_r}$ by means of the Kazarian spectral sequence. Computation of differentials in the Kazarian spectral sequence is the most technical part of the paper; the dimensional argument in our case is not sufficient (see the discussion in the beginning of section~\ref{s:diff}). In the final sections \S\ref{s:rational}--\S\ref{s:final} we turn to applications of new invariants. After a brief review of necessary theorems from rational homotopy theory (\S\ref{s:rational}), we recall the definition of cobordism groups of maps with prescribed singularities (\S\ref{s:def}), and compute the rational cobordism groups of $\A_r$-maps for each $r\ge 1$ (\S\ref{s:final}).

\section{The generalized Miller-Morita-Mumford classes}\label{s:mmm}

Let $f\colon M\to N$ be a smooth map of closed manifolds of dimensions $m$ and $n$ respectively with positive difference $d=m-n$. We may choose a positive integer $t>0$ and embed $M$ into $\R^t\times N$ so that $f$ is the projection of $M$ onto the second factor. Let $j$ denote the inclusion into $\R^t\times N$ of an open tubular neighborhood of $M$ in $\R^t\times N$. We recall that the one point compactification construction is contravariant with respect to inclusions of open subsets. In particular, the inclusion $j$ leads to a map 
\[
    j_+\colon S^t\wedge N_+ \longrightarrow \Th \nu
\]
of one point compactifications, where $N_+$ stands for the disjoint union of $N$ and a point, while $\Th\nu$ is the one point compactification of the open tubular neighborhood of $M$. If the map $f$ is \emph{oriented}, i.e., there is a chosen orientation of the stable vector bundle $f^*TN\ominus TM$ over $M$, then there is a homomorphism, called the \emph{Umkehr} (or Gysin) map~\cite{Co}, 
\[
     f_!\colon H^*(M)\xrightarrow{\Th} \tilde{H}^{*+t-d}(\Th\nu)\xrightarrow{j_+^*} \tilde{H}^{*+t-d}(S^t\wedge N_+)\xleftarrow{\approx} H^{*-d}(N) 
\]
where the first and last maps are Thom isomorphisms, and all cohomology groups are taken with coefficients in $\Q$.

Generalized Miller-Morita-Mumford classes of a map $f$ are defined under the assumption that $f$ has no singular points, or, equivalently, that the map $f$ is the projection map of an oriented smooth fiber bundle. In this case $f$ determines a \emph{vertical tangent bundle} over $M$ which is the subbundle of the tangent bundle of $M$ that consists of vectors tangent to the fibers of $f$. We note that the vertical tangent bundle represents the stable vector bundle $TM\ominus f^*TN$. Given a characteristic class $p$ of vector bundles, the \emph{generalized Miller-Morita-Mumford class} $\mathop\mathrm{k}_p(f)$ associated with $p$ is defined to be the image of the class $p$ of the vertical tangent bundle under the Umkehr map 
\[
   f_!\colon H^*(M)\longrightarrow H^{*-d}(N).
\]  
These classes generalize the standard Miller-Morita-Mumford classes~\cite{Ma}, \cite{Mi}, \cite{Mo}, which are characteristic classes of surface bundles. If $f$ is not a fiber bundle, i.e., if $f$ has singular points, then the vertical tangent bundle does not exist, but it can be replaced by the stable vector bundle $TM\ominus f^*TN$. Thus for every characteristic class $p$ of oriented vector bundles of dimension $d$ that is a polynomial in terms of Pontrjagin classes, we define the (extended) generalized Miller-Morita-Mumford class $\mathop\mathrm{k}_p(f)$ of a smooth map $f$ to be the class in $H^*(N)$ given by the image of the $p$-th characteristic class of $TM\ominus f^*TN$ under the Umkehr map $f_!$. 

\begin{theorem}\label{mmm} The generalized Miller-Morita-Mumford class $\mathop\mathrm{k}_p$ associated with an arbitrary rational Pontrjagin class $p\in \Q[p_1,p_2,...]$ is a \underline{non-trivial} characteristic class of $\A_r$-maps for each $r>0$. 
\end{theorem}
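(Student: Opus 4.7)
The plan is to realize the non-triviality on the simplest possible class of $\A_r$-maps, namely smooth fiber bundles. A fiber bundle projection has no singular points at all, so it qualifies as an $\A_r$-map for every $r\ge 0$; this lets one sidestep the geometry of Morin singularities entirely and reduce the statement to classical characteristic-class theory.

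First I would take $N$ to be a single point and let $M$ be a closed oriented manifold of dimension $d=|p|$, so that $f\colon M\to N$ is the constant map. Since $f$ has no singular points, it is trivially an $\A_r$-map for every $r>0$, and the virtual bundle $TM\ominus f^*TN$ coincides with $TM$. The Umkehr $f_!\colon H^{|p|}(M;\Q)\to H^0(\mathrm{pt};\Q)=\Q$ is then evaluation against the fundamental class, so
\[
    \mathop\mathrm{k}_p(f)\;=\;\bigl\langle\, p(TM),\,[M]\,\bigr\rangle\in\Q
\]
is precisely the Pontrjagin number of $M$ cut out by the polynomial $p$.

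Next I would invoke Thom's theorem: the oriented cobordism ring $\Omega^{\SO}_{*}\otimes\Q$ is the polynomial ring $\Q[\mathbb{CP}^2,\mathbb{CP}^4,\dots]$, and Pontrjagin numbers give a perfect pairing between $\Omega^{\SO}_{4k}\otimes\Q$ and the degree-$4k$ part of $\Q[p_1,p_2,\dots]$. Since $|p|$ is automatically a positive multiple of $4$ when $p$ is a non-constant homogeneous Pontrjagin polynomial, there exists some closed oriented manifold $M$ of dimension $|p|$ with $\langle p(TM),[M]\rangle\ne 0$. The resulting fiber bundle $M\to\mathrm{pt}$ is the desired $\A_r$-map, and it witnesses the non-triviality of $\mathop\mathrm{k}_p$ for every $r>0$ simultaneously.

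There is essentially no obstacle in this approach: the content is entirely classical once one notices that fiber bundles are contained in the class of $\A_r$-maps, so no engagement with singular geometry is required. If one wanted to detect $\mathop\mathrm{k}_p$ in positive cohomological degree instead of in $H^0$, one could replace the above with a projective bundle $P(V)\to N$ for a suitable complex vector bundle $V$ and use the splitting principle together with the formula for the vertical tangent bundle; but this variation is unnecessary, since non-triviality in any single degree is enough to establish the theorem as stated.
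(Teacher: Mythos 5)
Your computation for the constant map $f\colon M\to\mathrm{pt}$ is correct as far as it goes, but it establishes the theorem only in the single dimension $d=|p|$ (which forces $d\equiv 0 \bmod 4$). In this paper $d$ is an arbitrary fixed positive integer, fixed at the start of \S\ref{s:mmm}, and ``characteristic class of $\A_r$-maps'' means an element of $H^{|p|}(\Omega^{\infty}\mathbf{A_r})$ for the classifying space of $\A_r$-maps of \emph{that} dimension; your construction says nothing about any other $d$. More seriously, the strategy cannot be repaired while staying inside the class of nonsingular maps: as the remark following the theorem (quoting Ebert) points out, for odd $d$ the classes $\mathop\mathrm{k}_p$ with $p$ in the ideal generated by the components $\mathcal{L}_{4n}$, $4n>d$, of the Hirzebruch class vanish on \emph{all} manifold bundles. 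Concretely, for $d=1$ and $p=p_1$ the vertical tangent bundle of any circle bundle is an oriented line bundle, so $\mathop\mathrm{k}_{p_1}$ is identically zero on bundles; yet the theorem asserts it is non-trivial, and indeed by the Ohmoto--Saeki--Sakuma formula quoted in \S\ref{s:inv} it equals a sum of fold-locus invariants $I_{1,a}$. So the claim that ``no engagement with singular geometry is required'' is exactly backwards for odd $d$: the content of the theorem there is that singular maps detect classes which bundles cannot. A further obstruction is that for $|p|<d$ one has $\mathop\mathrm{k}_p(f)\in H^{|p|-d}(N)=0$ for every individual map $f$, so non-vanishing of the universal class in $H^{|p|}(\Omega^{\infty}\mathbf{A_r})$ can never be certified by evaluating on a single map into a manifold.

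The paper's route is entirely different: it is the Kazarian spectral sequence. The zeroth column is $E_1^{0,*}=H^*(\BSO_d)$, and Theorem~\ref{th:9.7} shows that the polynomial subalgebra $\Q[p_1,\dots,p_{\lfloor d/2\rfloor}]$ lies in the kernel of all differentials and hence survives to $E_{\infty}^{0,*}\subset H^*(\mathbf{A_r})$, while the Pontrjagin classes $p_i$ with $i>\lfloor d/2\rfloor$ are realized by classes supported on the fold stratum. Your example remains a valid consistency check in the one dimension it covers, but it does not substitute for that computation.
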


\begin{remark} The extended generalized Miller-Morita-Mumford classes associated with different Pontrjagin classes are algebraically independent. In particular,  the class $\mathop\mathrm{k}_{p_1p_2}$ can not be expressed in terms of  $\mathop\mathrm{k}_{p_1}$ and $\mathop\mathrm{k}_{p_2}$. The generalized Miller-Morita-Mumford classes are closely related to Landweber-Novikov operations (see \S\ref{s:surp}). 
\end{remark} 
 
\begin{remark} In the case of maps of even positive dimension $d$, Ebert showed that the generalized Miller-Morita-Mumford classes are algebraically independent already for manifold bundles~\cite{Eb}. On the other hand, in contrast to Theorem~\ref{mmm}, in the case of maps of odd positive dimension $d$, there are trivial generalized Miller-Morita-Mumford classes on manifold bundles; Ebert computed that these are precisely the characteristic classes associated with  classes in the ideal generated by the components $\mathcal{L}_{4n}\in H^{4n}(\BSO_d)$ of the Hirzebruch $\mathcal{L}$-class for $4n>d$, see \cite{Eb}.  
\end{remark}

In the next section we define characteristic classes of Morin maps associated with higher singularities.

\section{The Landweber-Novikov version of Thom polynomials} \label{s:surp}

To begin with we fix a prescribed set of singularity types (or just one singularity type) so that for any choice of a general position map $f\colon M\to N$ of complex analytic manifolds of dimensions $m$ and $n$ respectively, the set $S$ of points at which $f$ has a prescribed singularity is a submanifold of $M$. We do not require that the set $S$ is closed in $M$ and let $T$ denote the set $\bar{S}\setminus S$. 
The Thom polynomial $\Tp(f)$ can be defined to be the image of the Thom class under the homomorphism 
\begin{equation}\label{eque:1}
    H^*(M\setminus T, M\setminus \bar{S})\longrightarrow H^*(M\setminus T),
\end{equation}
induced by the inclusion of pairs of topological spaces. It is known that the class $\Tp(f)$ lifts to a cohomology class on $M$, and therefore we may regard $\Tp(f)$ to be a cohomology class in $H^*(M)$ defined up to the image of the group $H^*(M, M\setminus T)$. In other words, the so-defined Thom polynomial $\Tp(f)$ is defined only up to cohomology classes that come from ``higher order" singularities. For an example, we refer the reader to the paper of Rimanyi~\cite{Ri2}. 

Motivated by Landweber-Novikov operations in the complex cobordism theory, Kazarian defined the \emph{Landweber-Novikov version} of Thom polynomials of holomorphic maps \cite{Kaz3}. For the Thom polynomial $\Tp(f)\in H^*(M)$ of a holomorphic map $f\colon M\to N$ of complex analytic manifolds, the \emph{Landweber-Novikov} version of $\Tp(f)$ is defined to be the class $f_!\Tp(f)\in H^*(N)$. 
 
\begin{remark} The operations $\mathbf{MU}^*(\mathbf{MU})$ in complex cobordisms are determined by Landweber~\cite{La} and Novikov~\cite{No} (see also \cite{Ad}). The group of cobordism operations is isomorphic to the completed tensor product $\mathbf{MU}^*(\cdot)\hat\otimes A^*$, where $A^*$ is the group of \emph{basic} cobordism operations. Basic cobordism operations are linear combinations of \emph{Landweber-Novikov} operations $s_w$; there is one operation $s_w$ for each set of integers $w=(w_1,...,w_k)$. The latter operations correspond to cohomology operations in $H^*(\mathbf{MU})$, e.g., see the proof of \cite[Theorem 4.2]{La}. The Landweber-Novikov version of the Thom polynomial $f_!\Tp(f)$ is an invariant of the cobordism class of $f$\cite{Kaz3}; in particular it determines an operation in $H^*(\mathbf{MU})$ which associates the cohomology class $f_!\Tp(f)$ to each cobordism class $f$ of appropriate dimension. Thus, each singularity type determines~\cite{Kaz3} a linear combination of cohomology Landweber-Novikov operations. 
\end{remark} 
 
Higher Thom polynomials $\Tp(f, p)$ of the singular set $\bar{S}$ are defined to be the classes $i_!p(\nu)$, where $p$ is any characteristic class of vector bundles and $\nu$ is the normal bundle of the proper inclusion $i$ of $S$ into $M\setminus T$. In particular, the Thom polynomial $\Tp(f)$ coincides with $\Tp(f, 1)$. By the Kazarian spectral sequence, it immediately follows that each class $\Tp(f,p)$ is a polynomial in terms of characteristic classes of the tangent bundle of $M$ and the pull back $f^*TN$ of the tangent bundle of $N$. Furthermore this polynomial expression for $\Tp(f,p)$ is unique up to the ideal generated by polynomial invariants of higher order singularities. 

Next, let us turn to a generalization of the above definitions to the case of  maps $f\colon M\to N$ of manifolds. We do not assume that $M$ and $N$ are complex analytic, and we do not assume that $f$ is holomorphic. Instead, we assume that $M$ and $N$ are smooth, and $f$ is a solution of a fixed coordinate free differential relation $\mathcal{R}$ (e.g., $f$ is a smooth map, or $f$ is a Morin map). To begin with we give a straightforward generalization of the above definitions. Then we explain why such an approach does not work and modify the definition. 

As above we fix a prescribed set of singularity types so that for any choice of a general position map $f\colon M\to N$ of manifolds, the set $S$ of points at which $f$ has a prescribed singularity is a submanifold of $M$. Let $T$ denote the set $\bar{S}\setminus S$. In contrast to the case of holomorphic maps, the cohomology class $\tp(f)\in H^*(M\setminus T)$ defined by means of the homomorphism (\ref{eque:1}) may not admit a lift to a class in $H^*(M)$. To simplify the exposition, let us assume that it admits a lift to a class in $H^*(M)$. Then we may identify $\tp(f)$ with a cohomology class in $H^*(M)$ defined up to the image of $H^*(M, M\setminus T)$. 

Next example shows that the class $\tp(f)$ is not necessarily an invariant of $\mathcal{R}$-maps.

\begin{example}\label{exa:1} The simplest singularity types for maps of positive dimension are \emph{fold} (see Definition~\ref{d:fold}), \emph{cusp} and \emph{swallowtail} singularity types, which are $\A_1$, $\A_2$ and $\A_3$-singularity types in the $ADE$ classification. There is a map $f$ for which the class $\tp(f)$ associated with the set $S$ of fold singular points of a given index is not trivial. On the other hand, if the dimension of the target of $f$ is at least $2$, then the map $f$ is homotopic in the class of maps with swallowtail singular points to a map $g$ with no closed components of $S$. Since the class $\tp(g)$ is trivial, we have $\tp(f)\ne \tp(g)$.      
\end{example}

If the normal bundle $\nu$ of the proper embedding $i$ of $S$ into $M\setminus T$ has a canonical orientation, then for each characteristic class $p$ of vector bundles, we define the class $\tp(f, p)$ by $i_!p(\nu)$. If the normal bundle is not orientable, then the class $\tp(f, p)$ can still be defined by means of the Umkehr map~\cite{OSS} with twisted coefficients; the class $\tp(f,p)$ may still be a class with non-twisted coefficients. In contrast to higher Thom polynomials in the case of complex analytic maps, in the case of smooth $\mathcal{R}$-maps a priory the class $\tp(f,p)$ is not necessarily an invariant in the sense that if $f$ and $g$ are two $\mathcal{R}$-maps which are homotopic through $\mathcal{R}$-maps, then $\tp(f,p)$ maybe different from $\tp(g,p)$.  

\begin{example}\label{exa:2} Let $f\colon M\to N$ be a fold map from an oriented closed path-connected manifold $M$ of dimension $8s+4$ to an oriented manifold of dimension $4s+3$ for some integer $s\ge 0$. Then the set $S\subset M$ of fold singular points of $f$ of a fixed even index $a\le 2s$ is an oriented closed submanifold of $M$ of dimension $4s+2$ that carries a fundamental class. Let $e$ be the Euler class of vector bundles of dimension $4s+2$. Then the class $\tp(f, e)$ associated with the set $S$ equals $n$ times the generator of $H^{8s+4}(M)$, where  $n$ is the algebraic number of points in the intersection of $S$ and its slight perturbation in $M$. We will see that this class may be non-trivial. On the other hand, as in Example~\ref{exa:1}, the map $f$ is homotopic through $\A_3$-maps to a map $g$ with no closed components of $S$. In particular, the Euler class $e$ evaluated on the normal bundle of $S$ in $M\setminus T$ has no non-trivial components, and therefore $\tp(g, e)=0$. 
\end{example}

In the next section we will modify the definition of the class $\tp(g,e)$ so that it agrees with the class $\tp(f,e)$.

\section{New invariants of smooth maps with prescribed singularities}\label{s:inv}

Examples~\ref{exa:1} and \ref{exa:2} show that in general the classes $\tp(f, p)$ may not be invariant under homotopy of $f$ through $\mathcal{R}$-maps, and therefore, the straightforward generalizations of the Thom and higher Thom polynomials to the case of smooth maps of positive dimension are not appropriate. The reason of the failure of invariance of $\tp(f,p)$ is in indeterminacy of $\tp(f, p)$, which is not invariant under homotopy of $\mathcal{R}$-maps. To rectify the issue we define the classes $\tp(f, p)$ by means of classifying spaces. To motivate the definition, suppose that there is a smooth universal $\mathcal{R}$-map $F\colon \mathbf{E}\to \mathbf{B}$ for which there is a (not necessarily injective or surjective) correspondence between homotopy classes of $\mathcal{R}$-maps $f\colon M\to N$ and homotopy classes of maps $\tilde f\colon N\to \mathbf{B}$ such that pairs $(f, \tilde{f})$ of corresponding maps fit pullback diagrams
\[
  \begin{CD} 
    M @>>> \mathbf{E} \\
    @Vf VV @VF VV \\
    N @>\tilde{f}>> \mathbf{B}.
  \end{CD}  
\] 
Then we may compute the Landweber-Novikov version $F_!\tp(F, p)$ of classes $\tp(F, p)$, which we denote by $\Tp(F, p)\in H^*(B)$, and define $\Tp(f,p)$ to be the pullback classes $\tilde{f}^*\Tp(F, p)$. The so-defined classes $\Tp(f,p)$ are invariant under homotopy of $\mathcal{R}$-maps since the homotopy class of $\tilde{f}$ does not depend on the choice of an $\mathcal{R}$-map representing the homotopy class $[f]$ of $\mathcal{R}$-maps. In general a smooth universal $\mathcal{R}$-map of finite dimensional manifolds may not exist, but a classifying space which plays the role of $\mathbf{B}$ always exists and we will use one. 

\begin{remark} A few constructions of the universal map $u$ are known for different choices of $\mathcal{R}$. For smooth maps of negative dimension with a prescribed set of simple singularities, the universal map has been constructed by Rimanyi and Szucs~\cite{RS}. For maps of an arbitrary fixed dimension with an arbitrary set of singularities, the universal map is constructed by Kazarian~\cite{Kaz4}. In \cite{Sad5} the author used a different construction adopted from the construction of Madsen and Weiss~\cite{MW}, which can be sketched as follows. Consider the space of pairs $(f, \Delta)$, where $\Delta$ is a standard simplex and $f$ is an $\mathcal{R}$-map into $\Delta$. The $i$-th face of $(f, \Delta)$ is defined to be the pair $(\partial_if, \Delta_i)$, where $\Delta_i$ is the $i$-th face of $\Delta$ and $\partial_if=f|f^{-1}(\Delta_i)$. The space $\mathbf{B}$ is defined to be the quotient of the space of points $x\in \Delta$ in all pairs $(f, \Delta)$ under the identifications generated by identifications of pairs $(f', \Delta')$ and those faces $(\partial_if,\Delta_i)$ that are identical to $(f', \Delta')$. The union of maps $f$ determines a map $u\colon \mathbf{E}\to\mathbf{B}$. Now, let $f\colon M\to N$ be an arbitrary $\mathcal{R}$-map. Choose a triangulation of $N$ by simplices $\Delta$. Then each pair $(f|f^{-1}(\Delta), \Delta)$ has a counterpart in the space $\mathbf{B}$, and therefore there is a map $\tilde{f}\colon N\to \mathbf{B}$. 
\end{remark}

Surprisingly, the classes $\Tp(f,p)$ may be non-trivial invariants even if the classes $\Tp(f)$ are not invariants.  

\begin{example}\label{exa:3}
Let $f\colon M\to N$ be a fold map of dimension $4s+1$ of oriented closed manifolds for some non-negative integer $s$. Then the set $S\subset M$ of fold singular points of $f$ of a fixed index $a\le 2s$ is an oriented closed submanifold that carries a fundamental class. However, a homotopy of $f$ through Morin maps does not preserve the homology class $[\bar{S}]$. On the other hand, the invariant $\Tp(f,e)=I'_{1,a}$ associated with fold singular points of index $a$ and the Euler class $e$ does not change under homotopy through Morin maps.  Furthermore, according to Theorem~\ref{th:main}, this class is non-trivial (despite the fact that the class $\Tp(f)$ is not an invariant).  
\end{example} 
 
\begin{remark} If $S$ stands for the set of all fold singular points of a fold map $f$ of dimension $4s+1$, then $[\bar{S}]$ does not change under homotopy of $f$ through Morin maps; the Thom polynomial of fold singular points of all indices is well-defined. The invariant $\sum I'_{1,a}$, where $a$ ranges over all indices $0,...,2s+1$, was studied by Ohmoto, Saeki and Sakuma in the paper \cite{OSS}. The authors proved that this invariant equals $p_{2s+1}(TM\ominus f^*TN)\in H^{8s+4}(M)$ modulo $2$-torsion if considered with integer coefficients.  
\end{remark} 
 
\begin{remark} A priori the classes $\Tp(f,p)$ are not polynomials in terms of characteristic classes of the tangent bundle of $M$ and the pullback $f^*TN$ of the tangent bundle of $N$. 
\end{remark}

\section{Well-defined invariants of $\A_r$-maps}\label{s:interpretation}

The list of invariants of $\A_r$ maps depends on the dimension $d$. In this section we list well-defined invariants of Morin maps $f\colon M\to N$ of dimension $d$, where $d=4s+3$ for some $s\ge 0$, see Theorem~\ref{th:20.6} (the other characteristic classes can be described similarly). Namely, we describe the classes
\begin{equation}\label{eq:0.1}
     I_{Q,a}(f), I_Q(f), \tau_i(f) \quad \mathrm{and} \quad \sigma_Q(f) 
\end{equation}
in $H^*(N)$ corresponding to some of the generators of cohomology algebras $H^*(\Omega^{\infty}\mathbf{A_r})$ of the classifying infinite loop space of $\A_r$-maps. To this end we will describe the classes  
\begin{equation}\label{eq:0.2}
I'_{Q,a}(f), I'_Q(f), \tau'_i(f) \quad \mathrm{and}\quad \sigma'_Q(f)
\end{equation}
in the cohomology group $H^*(M)$ such that each class in (\ref{eq:0.1}) is a pushforward of the corresponding class in (\ref{eq:0.2}) by means of $f_!$. 

Since in general the definition of invariants is not straightforward (see Examples~\ref{exa:2} and \ref{exa:3}), to simplify the exposition we will assume that the given map $f: M\to N$ is a {\it fold map}. Our computation shows that each of these classes extends to an invariant of $\A_r$-maps for each $r$. 

\begin{definition}\label{d:fold} A map $f\colon M\to N$ is a fold map if for each point $x\in M$ there are coordinate neighborhoods $U\subset M$ of $x$ and $V$ of $f(x)$ such that either $f|U$ is regular or $f|U$ is of the form
\begin{equation}\label{eq:0.3}
     (x_1,  ..., x_{m}) \mapsto (- x_1^2-\cdots - x_a^2 + x_{a+1}^2 + \cdots + x_{a+b}^2, x_{a+b+1}, ..., x_m)  
\end{equation}
for some $a$ and $b$, with $a\le b$, where $(x_1,...,x_m)$ are coordinates in $U$. The number $a$ in (\ref{eq:0.3}) is called the {\it index} of a fold singular point $x$ of $f$. From the local form (\ref{eq:0.3}), it follows that if $f$ is a fold map, then the set $S_a(f)$ of all fold singular points of $f$ of index $a$ is a submanifold of $M$.  
\end{definition}

\subsection{Invariants $I_{Q,a}$ and $I_Q$.} We will see that the normal bundle $\nu_a$, with $a=0, ..., 2s+1$, of the submanifold $i_a\colon S_a(f)\subset M$  of fold singular points of $f$ of index $a$ has a canonical orientation. Let $U_a$ denote the Thom class in $H^{4s+4}(M, M\setminus S_a(f))$ orienting $\nu_a$, and let $e_a\in H^*(S_a(f))$ be the Euler class of $\nu_a$ given by the restriction of $U_a$ to $S_a(f)$. 

A choice of an orientation of the cokernel bundle of $f$ over $S_a(f)$ leads to a splitting $\nu_a=\nu^+_a\oplus \nu^-_a$ of the vector bundle $\nu_a$ into the Whitney sum of its subbundles where the quadratic form of $f$ is positive definite and negative definite respectively. We choose the orientation of the cokernel bundle of $f$ over $S_a(f)$ so that the dimensions of $\nu^+_a$ and $\nu^-_a$ are $a$ and $b=4s+4-a$ respectively.

Let $Q$ denote a polynomial in $\left\lfloor a/2\right\rfloor + \left\lfloor b/2 \right\rfloor$  variables $p_1, ..., p_{\left\lfloor a/2\right\rfloor}$ and
$p'_1, ..., p'_{\left\lfloor b/2 \right\rfloor}$. Then the invariant $I'_{Q,a}(f)$ is defined to be the class in $H^*(M)$ given by the image under the Umkehr map $(i_a)_!$ of the class 
\begin{equation}\label{eq:1.1}
     e_a\, Q(p_1(\nu^+_a), \dots , p_{\left\lfloor a/2\right\rfloor}(\nu^+_a),  p'_1(\nu^-_a), \dots, p'_{\left\lfloor b/2 \right\rfloor}(\nu^-_a)),
\end{equation}
where the classes $p_i(\nu^+_a)$ and $p'_j(\nu^-_a)$ are the corresponding Pontrjagin classes of $\nu^+_a$ and $\nu^-_a$ respectively. This invariant is trivial if $a$ is odd, since the rational Euler class of an odd dimension class is trivial. 

\begin{remark} We will see that for each polynomial $Q$ there is a map $f$ with nontrivial invariant $I'_{Q,a}(f)$. This class determines an invariant of Morin maps, despite the fact that the fundamental class of $S_a$ is not invariant under homotopy through Morin maps. We will also see that, for example, the classes $(i_a)_!Q$ do not determine invariants of Morin maps.   
\end{remark}  
 
The invariant $I'_{Q,2s+2}(f)=I'_Q(f)$ is well defined only under an additional assumption as the normal bundle of $S_a(f)$ in $M$ for $a=2s+2$ may not split. Here we need to assume that the polynomial $Q$ is chosen so that 
\[
    Q(p_1, ..., p_{2s+2}, p'_1, ..., p'_{2s+2})\ = \     Q(p'_1, ..., p'_{2s+2}, p_1, ..., p_{2s+2}).
\]

\subsection{Invariants $\sigma_Q$ and $\tau_j$.} The class $\sigma_Q$ is defined somewhat similarly to $I'_Q$. To begin with we choose a polynomial $Q$ in variables $p_1,..., p_{2s+2}, p'_1,..., p'_{2s+2}$ so that 
\[
    Q(p_1, ..., p_{2s+2}, p'_1, ..., p'_{2s+2})\ = \ -     Q(p'_1, ..., p'_{2s+2}, p_1, ..., p_{2s+2}).
\]
Next we define cohomology classes $\sigma_{Q,a}\in H^*(M)$ as above replacing (\ref{eq:1.1}) by $(i_a)_! Q$. The classes $\sigma_{Q,a}$ are not invariant under Morin homotopies of $f$, but it turned out (see Theorem~\ref{th:20.6}) that the class 
\begin{equation}\label{eq:1.2}
\sigma_{Q}=\sum_{a=0}^{2s+2}\, (-1)^a\sigma_{Q,a}
\end{equation}
is a well-defined invariant. 

\begin{remark} The invariant $\sigma_Q$ is not defined, for example, for $Q=1$, because the normal bundle of the set $S$ of singular points of index $2s+2$ is non-orientable, and therefore there is no rational Thom class $U_a$. On the other hand, if we consider, for example, only oriented functions, then the invariant $\sigma_1$ can be defined. In fact, for a function $f\colon M\to \R$ on a closed oriented manifold of dimension $4s+4$, the invariant $\sigma_1$ is Poincare dual  to the homology class given by the Euler characteristic of $M$ times the fundamental class of $M$.   
\end{remark}

Finally, the class $\tau_{j}$, with $j=s+1, ..., 2s+2$, is defined similarly to $\sigma_Q$. Here we replace the polynomial $Q$ by $p'_jQ$ and take the sum (\ref{eq:1.2}).

\section{Cobordism groups of Morin maps}

A generalized cohomology theory associates to each topological space a sequence of abelian groups, called generalized cohomology groups, satisfying all Eilenberg-Steenrod axioms except the dimension axiom. Cobordism theories constitute an important class of generalized cohom\-ology theories~\cite{St}, \cite{Ru}; each cobordism theory $Cob_{\tau}^*$ corresponds to  a structure $\tau$ on continuous maps so that the cobordism groups $Cob_{\tau}^*(N)$ of a smooth manifold $N$ are isomorphic to appropriately defined groups of cobordism classes of $\tau$-maps of smooth manifolds into $N$. For example, the oriented cobordism groups $\mathbf{MSO}^*(N)$ (respectively, complex cobordism groups $\mathbf{MU}^*(N)$) of a possibly non-orientable manifold $N$ are isomorphic to the groups of appropriately defined cobordism classes of oriented maps (respectively, maps with $\mathbf{MU}$-structure) into $N$ . 

It turns out that the cobordism theory of smooth maps with prescribed singularities into smooth manifolds fit the classical setting well. In this case one identifies $\tau$ with a set of prescribed singularities of smooth maps and defines a {\it $\tau$-map} of smooth manifolds to be a smooth map with singularities of type $\tau$. Then under a general conditions on $\tau$ the notion of $\tau$-maps can be formally extended to the category of topological spaces and continuous maps so that the appropriately defined cobordism groups of $\tau$-maps form a cobordism theory (see Theorem~\ref{th:9.0}). 

\begin{remark} It is interesting that while the classical cobordism theories are defined by means of spectra $\mathbf{MG}=M(G)$ associated with subgroups $G\subset O$ such as $G=\O, \SO, \mathop\mathrm{U},...$, the cobordism theories of $\tau$-maps are defined by means of spectra $\mathbf{X}_{\tau}=M(\sqcup G_{\alpha})$ associated with relative symmetry groups $G_{\alpha}\subset O$ of singularity types $\alpha\in \tau$ (see section~\ref{s:kaz}).  
\end{remark}

Cobordism groups of various versions of Morin maps of non-negative dimension are studied in a series of papers by Ando~\cite{An1}, \cite{An2}, \cite{An3}, Kalmar~\cite{Ka1}, \cite{Ka2}, \cite{Ka3}, \cite{Ka4}, Saeki~\cite{Sae1}, \cite{Sae}, Ikegami-Saeki~\cite{IS}, \cite{IS2}, Ikegami~\cite{I}, and the author~\cite{Sa5}. For a study in the case $d<0$ we refer an interested reader to the paper of Szucz~\cite{Sz}. 

As an application of our study of invariants of $\A_r$-maps, we determine the rational oriented cobordism groups of \emph{$\A_r$-maps}. In fact, we show that these groups are completely determined by rational characteristic classes, and on the other hand, our computation shows that the algebra of rational characteristic classes is a tensor algebra of polynomial and exterior algebras (Theorem~\ref{th:main}) in terms of above defined invariants and generalized Miller-Morita-Mumford classes. More precisely, let $\Omega^{\infty}\mathbf{A_r}$ denote the infinite loop space of the spectrum $\mathbf{A_r}$ of the oriented  cobordism group of $\A_r$-maps (see section~\ref{s:8}). Then, the cobordism theory of $\A_r$-maps associates to a topological space $X$ a sequence of groups 
\[
     \mathbf{A}^i_r(X)\colon=[X, \Omega^{\infty-i}\mathbf{A_r}]
\]
where $i\ge 0$. On the other hand the spectrum $\Omega^{\infty}\mathbf{A_r}$ is rationally equivalent to a product of rational Eilenberg-Maclane spaces $K(\Q, n)$ (see section~\ref{s:final}). Consequently the rational oriented cobordism class of an $\A_r$-map $f: M\to N$ classified by a continuous map $u: S^d\wedge N_+\to \Omega^{\infty}\mathbf{A}$ is completely determined by the set of rational {\it characteristic classes} 
\[
\{\  \chi(f):=j[u^*(\chi)]\ |\ \chi\in H^*(\Omega^{\infty}\mathbf{A})\ \},
\] 
where $j$ is the Thom isomorphism $\tilde{H}^*(S^d\wedge N_+)\approx H^{*-d}(N)$. 
Theorem~\ref{th:main} below
describes the algebras $H^*(\Omega^{\infty}\mathbf{A})$ of characteristic classes of oriented Morin maps according to the dimension $d$.

\begin{theorem}\label{th:main} The algebra $H^*(\Omega^{\infty}\mathbf{A_r})$ is a tensor product of polynomial and exterior algebras. The generators of the polynomial algebra (respectively exterior algebra) are in bijective correspondence with the homology classes of even (respectively odd) order in $H_*(\mathbf{A_r})$. The ranks of these groups are listed in Theorems~\ref{th:4s}--\ref{th:4s+2}, while the generators of the dual cohomology algebra are listed in the proofs of Theorems~\ref{th:4s}--\ref{th:4s+2}.
\end{theorem}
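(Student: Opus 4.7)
The plan is to reduce the statement to a standard application of rational homotopy theory, taking the computation of $H_*(\mathbf{A_r};\Q)$ via the Kazarian spectral sequence as input (this computation is the actual work of the paper, carried out in the sections preceding this theorem and summarized in Theorems~\ref{th:4s}--\ref{th:4s+2}). First I would invoke the fact, recalled in \S\ref{s:rational}, that for any connective spectrum $\mathbf{X}$ the rationalization $\mathbf{X}\wedge H\Q$ splits as a wedge of shifted copies of $H\Q$, one copy of $H\Q$ in degree $n$ for each basis vector of $\pi_n(\mathbf{X})\otimes\Q \cong H_n(\mathbf{X};\Q)$. Since rationally $\pi_*$ of a spectrum equals its stable rational homology, the wedge summands are indexed by a basis of $H_*(\mathbf{A_r};\Q)$.

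Next I would pass to infinite loop spaces. Rationally, $\Omega^{\infty}$ applied to a wedge of Eilenberg--MacLane spectra becomes a product, so
\[
   \Omega^{\infty}\mathbf{A_r}\wedge H\Q \ \simeq \ \prod_{n\ge 0} K(\Q,n)^{r_n},
\]
where $r_n=\dim_{\Q} H_n(\mathbf{A_r};\Q)$. Applying $H^*(-;\Q)$, using the K\"unneth theorem and Serre's computation of $H^*(K(\Q,n);\Q)$ (polynomial on one generator of degree $n$ if $n$ is even, exterior on one generator of degree $n$ if $n$ is odd), one obtains that $H^*(\Omega^{\infty}\mathbf{A_r};\Q)$ is a tensor product of polynomial algebras on $r_n$ generators of degree $n$ (for $n$ even) and exterior algebras on $r_n$ generators of degree $n$ (for $n$ odd). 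This gives the claimed bijection between algebra generators and elements of a homogeneous basis of $H_*(\mathbf{A_r};\Q)$ of the appropriate parity.

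The remaining content of the theorem is the identification of the generators, which happens in two places: the ranks $r_n$ are supplied by the computations in Theorems~\ref{th:4s}--\ref{th:4s+2}, while the explicit cohomology generators (the classes $I_{Q,a}$, $I_Q$, $\sigma_Q$, $\tau_i$ of \S\ref{s:interpretation} together with the Miller--Morita--Mumford classes $\mathrm{k}_p$) are exhibited in the proofs of those theorems; matching them against the $r_n$ basis of $H_*(\mathbf{A_r};\Q)$ is done by evaluating each characteristic class on suitable test $\A_r$-maps and appealing to the non-triviality already established (e.g.\ Theorem~\ref{mmm} for the $\mathrm{k}_p$).

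The main obstacle is not this final step, which is purely formal once $H_*(\mathbf{A_r};\Q)$ is known, but rather the input: computing $H_*(\mathbf{A_r};\Q)$ requires running the Kazarian spectral sequence and, crucially, determining its differentials, which as the introduction emphasizes cannot be done by dimensional reasons alone. Thus the real difficulty, though it lives outside the statement of Theorem~\ref{th:main} itself, is the differential computation carried out in \S\ref{s:diff} and the rank computations feeding into Theorems~\ref{th:4s}--\ref{th:4s+2}; once these are granted, Theorem~\ref{th:main} is a direct consequence of the rational splitting of infinite loop spaces.
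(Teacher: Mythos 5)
Your proposal is correct and follows essentially the same route as the paper: the paper likewise treats Theorem~\ref{th:main} as a formal consequence of the Kazarian spectral sequence computation, invoking the rational homotopy theory recalled in \S\ref{s:rational} (Hopf, Cartan--Serre, Milnor--Moore, and the rational product decomposition of $\Omega^{\infty}\mathbf{A_r}$ into Eilenberg--MacLane spaces indexed by a basis of $\pi_*(\mathbf{A_r})\otimes\Q\cong H_*(\mathbf{A_r};\Q)$). Your spectrum-level phrasing of the rational splitting is an equivalent packaging of the space-level argument in \S\ref{s:final}, and you correctly locate the real mathematical content in the differential and rank computations feeding Theorems~\ref{th:4s}--\ref{th:4s+2}.
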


\begin{corollary} For every $r\ge 1$ there is an $\A_r$-map of even dimension that is not rationally cobordant (and hence, homotopic) through $\A_r$-maps to a map without $\A_r$-singularities. On the other hand, for example, if $r+1$ is divisible by $4$, then every $\A_{r}$-map of odd dimension is rationally cobordant in the class of $\A_{r}$-maps to a map without $\A_{r}$ singular points. 
\end{corollary}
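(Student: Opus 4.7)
The plan is to deduce both assertions from Theorem~\ref{th:main} together with the explicit enumerations of generators in Theorems~\ref{th:4s}--\ref{th:4s+2}. Since $\Omega^{\infty}\mathbf{A_r}$ is rationally equivalent to a product of Eilenberg--MacLane spaces, the rational cobordism class of an $\A_r$-map $f\colon M\to N$ is faithfully detected by the values $\chi(f)\in H^*(N;\Q)$ as $\chi$ ranges over a set of algebra generators of $H^*(\Omega^{\infty}\mathbf{A_r};\Q)$. The forgetful map of spectra $\iota\colon \mathbf{A_{r-1}}\to \mathbf{A_r}$, induced by regarding an $\A_{r-1}$-map as an $\A_r$-map with empty $\A_r$-stratum, identifies rational cobordism classes of $\A_{r-1}$-maps inside $\A_r$-maps with the subspace on which all algebra generators lying in $\ker\iota^*$ vanish. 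Hence the whole corollary reduces to determining whether $H^*(\Omega^{\infty}\mathbf{A_r};\Q)$ has a non-trivial algebra generator supported on the top stratum $\A_r\setminus \A_{r-1}$.

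For the first assertion, I would exhibit, for each $r\ge 1$ and each even $d$, one such generator and an $\A_r$-map realizing a non-zero value on it. Using the generator lists in Theorems~\ref{th:4s} and \ref{th:4s+2}, one sees that the Kazarian spectral sequence produces at least one rational class of $I_{Q,a}$-type associated with the $\A_r$-stratum in even dimension (in the spirit of Section~\ref{s:interpretation}). A concrete $\A_r$-map realizing a non-trivial value can be constructed by taking a standard parametrized family of $\A_r$-singularities over a closed base of the right dimension and computing the invariant via the Umkehr-map formula from Section~\ref{s:inv}. This gives an $\A_r$-map whose class in $\mathbf{A_r}^*(N)\otimes\Q$ does not lift along $\iota^*$, hence is not rationally cobordant to any $\A_{r-1}$-map, and, a fortiori, is not $\A_r$-homotopic to one either.

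For the second assertion, I would show that when $d$ is odd and $4\mid r+1$, no algebra generator of $H^*(\Omega^{\infty}\mathbf{A_r};\Q)$ is supported on the $\A_r$-stratum. By the Kazarian spectral sequence, the rational contribution of the $\A_r$-stratum is the equivariant rational cohomology of a point under the relative symmetry group $G_r$ of $\A_r$ computed in Section~\ref{s:rsg}, twisted by the orientation character of the normal bundle of the stratum in the universal target. The condition $4\mid r+1$ combined with the odd parity of $d$ forces this orientation character to be non-trivial on the maximal compact of $G_r$, so the twisted rational invariants are concentrated in degree zero, and the Kazarian differential analysis of Section~\ref{s:diff} shows nothing else in these bidegrees survives to $E_\infty$. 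Consequently $\iota^*$ is a rational equivalence, and every $\A_r$-map of odd dimension is rationally cobordant through $\A_r$-maps to an $\A_{r-1}$-map, which by definition has no $\A_r$-singular points.

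The main obstacle is the second part: pinning down that $4\mid r+1$ is \emph{precisely} the divisibility that trivializes the top-stratum contribution in odd dimension. This is not a dimensional count but a detailed statement about the action of $G_r$ on the normal representation of the $\A_r$-stratum and on the Kazarian spectral sequence differentials, exactly the technical input warned about in the introduction to Section~\ref{s:diff}. Once that input is in place, both parts of the corollary follow from Theorem~\ref{th:main} by inspection of the generator lists in Theorems~\ref{th:4s}--\ref{th:4s+2}.
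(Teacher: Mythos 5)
Your overall reduction---that the corollary is decided by whether $H^*(\Omega^{\infty}\mathbf{A_r};\Q)$ has an algebra generator supported on the top stratum---is exactly how the statement follows from Theorem~\ref{th:main}, and your treatment of the even-dimensional case is essentially right. Two corrections there. First, the top-stratum generators in even dimension are not of $I_{Q,a}$-type: the classes $I_{Q,a}$ are fold-stratum classes and appear only for $d$ odd; the relevant generators are the classes $U_{a,b,r}\s p$, $U_{a,b,r}\s ep$ and the combinations $U^{+}_{a,b,r}\s p-U^{-}_{a,b,r}\s p$ listed in the proofs of the rank-series theorems for $d=4s$ and $d=4s+2$, and the tables show these are nonzero for every $r\ge 1$ when $d$ is even. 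Second, you do not need to build a parametrized family of singularities by hand: a nonzero top-stratum generator pairs nontrivially with a class in $\pi_*(\mathbf{A_r})\otimes\Q$ that is not in the image of $\pi_*(\mathbf{A_{r-1}})\otimes\Q$, and Theorem~\ref{th:9.0} realizes (a multiple of) that homotopy class by an actual $\A_r$-map into a Euclidean space; such a map cannot be rationally cobordant through $\A_r$-maps to an $\A_{r-1}$-map.

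The genuine gap is in the second assertion, and it is precisely the input you defer. The mechanism is not a twisted equivariant computation ``concentrated in degree zero'' followed by a differential analysis: the relevant column of the Kazarian spectral sequence is already zero on the $E_1$ page, so there is nothing for the differentials to do. Write $r=2r'+1$; then $4\mid r+1$ means $r'$ is odd, and Lemma~\ref{l:12.6} gives that the contribution of the stratum of $\A^{\pm}_{2r'+1}(a,b)$ points is $U^{\pm}_{a,b,2r'+1}\s e_{a,b}\P(a,b)$, where $e_{a,b}$ is the rational Euler class of the universal $\SO_a\times\SO_b$-bundle. When $d$ is odd, $a+b=d$ forces one of $a,b$ to be odd, hence $e_{a,b}=0$ and $E_1^{r,*}=0$; this is exactly the all-zero $A_3$ column in the tables for $d=4s+1$ and $d=4s+3$, and the same vanishing holds for $A_7, A_{11},\dots$. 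Consequently $H^*(\mathbf{A_r},\mathbf{A_{r-1}};\Q)=0$, the inclusion $\mathbf{A_{r-1}}\to\mathbf{A_r}$ is a rational equivalence, and every $\A_r$-map of odd dimension is rationally cobordant through $\A_r$-maps to an $\A_{r-1}$-map, i.e., to a map without $\A_r$ singular points. So the ``detailed statement about the action of $G_r$'' you ask for is already available in Lemma~\ref{l:12.6}; once you cite it, your argument closes.
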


Our main tools are the {\it bordism version of the Gromov h-principle} \cite{Sa10}  (see also \cite{An4}, \cite{El}, \cite{Sa}, \cite{Sz}) associating to a set $\tau$ of prescribed singularities a classifying infinite loop space $\mathbf{X}_{\tau}$ (see section~\ref{s:kaz}), and a so-called {\it Kazarian spectral sequence} \cite{Kaz1}, \cite{Kaz2}, \cite{Kaz3}, \cite{Kaz4}, which facilitates computations of cohomology groups of $\mathbf{X}_{\tau}$.

\addcontentsline{toc}{part}{Morin singularities}

\section{Singularity types}\label{s:3}

Let $M$ and $N$ be two manifolds. By definition, a \emph{map germ} from $M$ to $N$ at a point $x\in M$ is an equivalence class of maps from a neighborhood of $x$ in $M$ to $N$. Two maps represent the same map germ at $x$ if
their restrictions to some neighborhood of $x$ in $M$ coincide. Slightly abusing notation we write $f\colon (M,x)\to (N, f(x))$ for the map germ at $x$ represented by a map $f$; we note that the map $f$ that defines a map germ at $x$ may not be defined over $M$. 

A map germ $f: (M,x)\to (N,f(x))$ represented by a map of smooth manifolds is called \emph{singular} if the differential of $f$ at $x$ is of rank less that $\mathrm{min}(\dim M, \dim N)$. Otherwise $f$ is said to be \emph{regular}. Two map germs $f,g: (M,x)\to (N,f(x))$ are said to be {\it right-left equivalent}, or simply {\it $\mathcal{RL}$-equivalent}, if there are diffeomorphism germs $\alpha$ and $\beta$ of neighborhoods of $x$ in $M$ and $f(x)$ in $N$ respectively such that $f=\beta\circ g\circ \alpha^{-1}$. By definition, the \emph{singularity type} of a map germ is an equivalence class of map germs under the relation generated by the right-left equivalence relation and the equivalences $[f]=[f\times \mathrm{id}_{\R}]$, where $\id_{\R}$ is the identity map germ of $(\R, 0)$. 
Thus the equivalence relation given by singularity types is the minimal coordinate-free equivalence relation on map germs that allows us to relate singularities of maps $M\to N$ with singularities of homotopies $M\times [0,1]\to N\times [0,1]$. Every singularity type contains a so-called {\it minimal} map germ that is not right-left equivalent to any germ of the form $f\times \id_{\R}$.

The \emph{codimension} $q$ of a map of an $m$-manifold into an $n$-manifold is defined to be the difference $n-m$. Similarly, the \emph{dimension} $d$ of such a map is the difference $m-n$. $\mathcal{RL}$-equivalent map germs are of the same dimension (and codimension). In what follows, with a few exceptions, we will assume that the dimension $d$ of considered maps is positive. In the case $d=0$ the computations can be carried out in a similar way with a few modifications. 

\begin{example} For each number $d\ge 0$, there is a submersion singularity type of map germs of dimension $d$. Its minimal map germ is given by a unique map $(\R^d,0)\to (\R^0,0)$. Similarly for each number $q>0$, there is an immersion singularity type of map germs of codimension $q$. Its minimal map germ is given by the inclusion $(\R^0,0)\to (\R^q,0)$.  
\end{example}

The simplest measure of complexity of a map germ $f: (M,x)\to (N,f(x))$ is the {\it corank} of $f$ which is defined to be the number 
\[
\mathrm{min}(\dim M, \dim N) - \mathrm{rk}f(x),
\]
where $\mathrm{rk}f(x)$ stands for the rank of the differential of $f$ at $x$. For example, immersion and submersion map germs are precisely the corank zero map germs. Of course, the simplest singularities of map germs are of corank one. Among corank one singularities there is a countable series of so-called {\it Morin singularities} which are the simplest corank one singularities\footnote{In the case of maps of negative dimension the complement in the space of corank one map germs to the space of Morin map germs is of infinite codimension. However in the negative codimension case, $D$ and $E$ type singularities are also of corank one.}. We will define Morin singularity types below by listing their minimal map germs. A map is said to be {\it Morin} if it has only Morin singularities.

\section{Minimal Morin map germs}

Every minimal Morin map germs is $\mathcal{RL}$-equivalent to one of the following map germs (e.g., see \cite{AVG}). 

\subsection{Regular map germ $\A_0$} The minimal \emph{regular} germ of dimension $d$ is the projection $\R^{d}\to \R^{0}$. For any smooth map $f\colon M\to N$ of manifolds, the set of regular points of $f$ is an open submanifold $M$. 

\subsection{Fold germ $\A_1(a,b)$} The canonical form of a minimal \emph{fold} germ 
\begin{equation}\label{e:3.1}
     (\R^{d+1}, 0) \longrightarrow (\R^1, 0)
\end{equation}
of dimension $d$, with $d>0$, is given by a quadratic form
\begin{equation}\label{e:3.2}
     y=Q(a,b):= - x_1^2-\cdots - x_a^2 + x_{a+1}^2 + \cdots + x_{a+b}^2, \quad a+b=d+1,\  a\le b,
\end{equation}
where $x_1, ..., x_{d+1}$ are the standard coordinates in $\R^{d+1}$ and $y$ is the standard coordinate in the target manifold $\R^1$. 
In other words a minimal fold map germ is nothing but a Morse function, while a general fold map germ is right-left equivalent to the product $f\times \id_{\R^s}$ of a Morse function $f$ and the identity map germ $\id_{\R^s}$ of $\R^s$.  The integer $a$ in the equation (\ref{e:3.2}) is said to be the \emph{index} of the map germ (\ref{e:3.1}). The singularity types of fold map germs with different indices are \emph{not} $\mathcal{RL}$-equivalent. We will denote by $\A_1(a,b)$ and $\A_1$ the singularity type of the map germ (\ref{e:3.2}) and the union of all fold singularity types $\A_1(a,b)$ respectively. For a general position map $f\colon M\to N$ of dimension $d$, the set of fold singular points of $f$ is a submanifold of $M$ of codimension $d+1$, which may not be closed in $M$. If $f$ is a Morin map, then the closure of the set of fold singular points of $f$ is a closed submanifold of $M$.  

\subsection{Morin map germ $\A_{2r}(a,b)$, $r>0$} There is one Morin singularity type $\A_{2r}(a,b)$ of map germs of dimension $d>0$ for each positive integer $r$ and two non-negative integers $a,b$ such that $a\le b$ and $a+b=d$. 
The canonical form of a minimal $\A_{2r}(a,b)$ germ  
\[
    (\R^{a+b+2r}, 0) \longrightarrow (\R^{2r}, 0)
\]
is given by
\[
  y_i = t_i, \quad i=1, \dots, 2r-1,
  \]
  \[
  y_{2r}= Q(a,b) + t_1x + t_2x^2 + \cdots + t_{2r-1}x^{2r-1} + x^{2r+1},
\]
where $(x_1, ..., x_d, x, t_1, ..., t_{2r-1})$ are the standard coordinates in $\R^{a+b+2r}$ and $(y_1,...,y_{2r})$ are the standard coordinates in $\R^{2r}$. We put 
\[
\A_{2r}=\mathop{\cup}_{a,b} \A_{2r}(a,b). 
\]
The set of $\A_{2r}$-singular points of a general position map of dimension $d$ is a submanifold of the source manifold of codimension $d+2r$. For a Morin map of dimension $d$, the closure of the set of $\A_{2r}$-singular points is a submanifold of the source manifold of codimension $d+2r$. A singular point of type $\A_2$ is called a {\it cusp} singular point.  

\subsection{Morin map germ $\A^{\pm}_{2r+1}(a,b)$, $r>0$} There is one singularity type $\A^{+}_{2r+1}(a,b)$ and one singularity type $\A^{-}_{2r+1}(a,b)$ of map germs of dimension $d$ for each positive integer $r$ and a pair of non-negative integers $(a,b)$ such that $a<b$ and $a+b=d$. If $d$ is even, then there is also a singularity type $\A^{+}_{2r+1}(a,a)=\A^{-}_{2r+1}(a,a)$ with $a=d/2$. The canonical form of a minimal $\A^{\pm}_{2r+1}(a,b)$ germ  
\[
    (\R^{a+b+2r+1}, 0) \longrightarrow (\R^{2r+1}, 0)
\]
is given by
\[
  y_i = t_i, \quad i=1, \dots, 2r,
  \]
  \[
  y_{2r+1}= Q(a,b) + t_1x + t_2x^2 + \cdots + t_{2r}x^{2r} \pm x^{2r+2},
\]
where $(x,_1,...,x_d, x, t_1,..., t_{2r})$ and $(y_1,...,y_{2r+1})$ are the standard coordinates in $\R^{a+b+2r+1}$ and $\R^{2r+1}$ respectively. The Morin singularity types $\A^{+}_{2r+1}(a,b)$ and $\A^{-}_{2r+1}(a,b)$ are the same if and only if $a=b$. We put 
\[
\A_{2r+1}=\mathop{\cup}_{a,b}\A^{\pm}_{2r+1}(a,b). 
\]
The set of $\A_{2r+1}$-singular points of a general position map of dimension $d$ is a submanifold of the source manifold of codimension $d+2r+1$. The closure of $\A_{2r+1}$-singular points of a Morin map is a closed submanifold. A singular point of type $\A_3$ is called a \emph{swallowtail} singular point.  

For a smooth map $f$, the set of singular points of $f$ of type $\A_i$ will be denoted by $\A_i(f)$. In general the closure of the set of Morin singular points of a smooth map $f:M\to N$ may not be a submanifold of $M$. However, as has been mentioned, for a Morin map $f$ of dimension $d>0$, there is a sequence of manifolds and embeddings 
\[
     M=\bar{\A}_0(f) \stackrel{\supset}\longleftarrow \bar{\A}_1(f) \stackrel{\supset}\longleftarrow \bar{\A}_2(f) \stackrel{\supset}\longleftarrow \bar{\A}_3(f) \stackrel{\supset}\longleftarrow \cdots, 
\]
where $\A_0(f)$ stands for the set of regular points of $f$. 

\subsection{Vector bundle morphisms associated with Morin map germs}\label{s:Boardman}

Let $f: M\to N$ be a Morin map of a manifold of dimension $m$ into a manifold of dimension $n$. It gives rise to a series of homomorphisms corresponding to partial derivatives of $f$. We recall some of these homomorphisms here (for details see the foundational paper by Boardman~\cite{Bo}). 

Over the set $\A_0(f)$ the differential $df$ of $f$ gives rise to an exact sequence of vector bundle morphisms:
\begin{equation}\label{e:3.3}
   0\longrightarrow K_0\longrightarrow TM\stackrel{df}\longrightarrow f^*TN\longrightarrow 0,
\end{equation}
where $TM$ and $TN$ stand for the tangent bundles of $M$ and $N$ respectively. The vector bundle $K_0$ over $\A_{0}(f)$, defined by the exact sequence (\ref{e:3.3}), is called the \emph{kernel} bundle, it is of dimension $d=m-n$. We emphasize that $K_0$ is \emph{not} defined over the manifold $M$, and all vector bundles in the exact sequence (\ref{e:3.3}) are over (or restrictions to) the open subset $\A_0(f)$ of $M$.  

The closure of the set $\A_1(f)$ of Morin singularities of $f$ is a closed submanifold of $M$ of dimension $n-1$. The differential $df$ of $f$ gives rise to an exact sequence of vector bundle morphisms
\begin{equation}
     0\longrightarrow K_1\longrightarrow TM \stackrel{df}\longrightarrow f^*TN\longrightarrow C_1 \longrightarrow 0
\end{equation}
over $\bar{\A}_1(f)$, where the canonical \emph{kernel} bundle $K_1$ is of dimension $m-n+1$ and the canonical \emph{cokernel} bundle $C_1$ is of dimension $1$. In local coordinates, the above exact sequence is defined by means of partial derivatives of $f$ of order $1$. The restriction 
\[
    f|\bar{\A}_1(f): \bar{\A}_1(f) \longrightarrow N
\]
is a smooth map of manifolds. Its singular set is again a smooth submanifold of $M$; in fact it coincides with the closure $\bar{\A}_2(f)$ of the set of cusp singular points of $f$. If the set $\A_2(f)$ is empty, then the map $f$ induces a non-degenerate symmetric bilinear form 
\begin{equation}\label{ee:3.4}
   K_1\otimes K_1\longrightarrow C_1.
\end{equation}
In particular, if $\dim K_1$ is odd, then $C_1$ is orientable , and there is a canonical orientation of $C_1$ with respect to which the index of the quadratic form associated with the bilinear form (\ref{ee:3.4}) is less than $\frac{1}{2}\dim K_1$.  

If the set $\A_2(f)$ is non-empty, then the map $f$ induces an exact sequence of vector bundle morphisms
\[
   0\longrightarrow K_2 \longrightarrow K_1 \longrightarrow \Hom(K_1,C_1),
\] 
over $\bar{\A}_2(f)$  by means of partial derivatives of $f$ of order $2$, where $K_2$ is a line bundle. In the standard coordinates listed above, the line bundle $K_2$ is spanned by $\frac{\partial}{\partial x}$. It is tangent to the submanifold of singular points of $f$. 

Over $\bar{\A}_2(f)$ there is a canonically defined non-degenerate symmetric bilinear form
\begin{equation}\label{eq:5}
   K_1/K_2 \otimes K_1/K_2 \longrightarrow C_1. 
\end{equation}
For $i>2$, the set $\bar{\A}_i(f)$ is a submanifold of $M$ and the set of singular points of the map  
\[
    f|\bar{\A}_i(f): \bar{\A}_i(f) \longrightarrow N
\]
coincides with $\bar{\A}_{i+1}(f)$. Over the set of $\A_i$ singular points, with $i\ge 2$, the map $f$ induces an isomorphism of line bundles
\[
   K_2\stackrel{\approx}\longrightarrow \Hom(\circ_i K_2, C_1),
\]
where $\circ_i K_2$ stands for the symmetric tensor product of $n$ copies of $K_2$. For details we refer to the paper of Boardman~\cite{Bo}.

\begin{remark}\label{r:orientation} For an oriented Morin map $f\colon M\to N$, the canonical kernel bundle $K_1$ over the set of regular points $\mathcal{A}_0(f)$ carries a canonical orientation since $K_1\approx TM\ominus f^*TN$. Similarly, if the cokernel bundle $C_1$ is oriented over $\mathcal{A}_r(f)$, then the kernel bundle over $\mathcal{A}_{r}(f)$ carries a canonical orientation since $K_1\approx TM\ominus f^*TN\oplus C_1$ over $\mathcal{A}_{r}(f)$. We will always choose the orientation of $C_1$ so that the index of the quadratic form of $f$ is less than $\frac{1}{2}\dim K_1$. 
\end{remark}

\addcontentsline{toc}{part}{Relative symmetry groups}

\section{Definition of relative symmetries}

We recall that $d$ is a fixed integer (we are primarily interested in the case $d>0$, but the definitions of this section are valid for a general integer $d$). Let $N$ be a non-negative integer such that $N+d>0$. For $i\ge 0$ let $\D(\R^i, 0)$ denote the group of diffeomorphism germs of $(\R^i, 0)$. Then the group 
\[
\D(\R^{N+d}, 0)\times \D(\R^{N}, 0)
\] 
is the group of \emph{coordinate changes} in the configuration space $\R^{N+d}\times \R^N$. The space of map germs 
\[
   (\R^{N+d}, 0) \longrightarrow (\R^N, 0). 
\]
admits a so-called right-left action of the group of coordinate changes; an element $(\alpha, \beta)$ of the group takes a map germ $h$ to the map germ $\beta\circ h\circ \alpha^{-1}$. The {\it symmetry group} of a map germ $h$ is a subgroup $\mathop{\mathrm{Stab}}(h)$ of the group of coordinate changes that consists of elements acting trivially on $h$. 

\begin{theorem}[J\"{a}nich-C.T.C. Wall, \cite{J}, \cite{Wa}] Let $f\colon \R^n\to \R^p$ be a finitely determined map germ. Then any compact subgroup of $\mathop{\mathrm{Stab}}(f)$ is contained in a maximal such group, and any two maximal compact subgroups are conjugate. 
\end{theorem}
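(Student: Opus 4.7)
The plan is to reduce the infinite-dimensional problem to a finite-dimensional one via jet spaces, and then invoke the classical Cartan-Iwasawa-Mostow theorem on maximal compact subgroups of real algebraic groups.

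First, I would use the hypothesis that $f$ is finitely determined: there exists an integer $k\ge 0$ such that any map germ with the same $k$-jet as $f$ is right-left equivalent to $f$. Writing $J^k$ for the finite-dimensional real algebraic group $\D^k(\R^{n},0)\times \D^k(\R^p,0)$ of $k$-jets of coordinate changes, the natural projection $\pi\colon \mathop{\mathrm{Stab}}(f)\to J^k$ factors through the isotropy subgroup $G\subset J^k$ of the $k$-jet $j^k f$. Since $G$ is a real algebraic subgroup of $J^k$, the Cartan-Iwasawa-Mostow theorem implies that $G$ admits maximal compact subgroups and that any two such are conjugate in $G$.

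Second, I would show that $\pi$ induces a bijection between conjugacy classes of maximal compact subgroups of $\mathop{\mathrm{Stab}}(f)$ and those of $G$. For injectivity, one checks that any compact subgroup of $\ker\pi$ is trivial: elements of $\ker\pi$ are pairs of coordinate changes whose $k$-jets are trivial, i.e., they are tangent to the identity to order $k$. A standard Bochner-type linearization argument (or a direct computation combining Haar averaging with the uniqueness of formal linearization) shows that a compact group of such $k$-flat diffeomorphism germs must be trivial. Hence every compact subgroup of $\mathop{\mathrm{Stab}}(f)$ embeds into $G$ under $\pi$.

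Third, for the surjection at the level of maximal compacts, I would lift a maximal compact subgroup $H\subset G$ to a compact subgroup of $\mathop{\mathrm{Stab}}(f)$. This is the standard averaging argument: finite determinacy implies that there exists a (not necessarily multiplicative) set-theoretic section $s\colon H\to \mathop{\mathrm{Stab}}(f)$ of $\pi$ over $H$, and replacing $s$ by its Haar average over $H$ yields a genuine continuous homomorphism $\tilde{s}\colon H\to \mathop{\mathrm{Stab}}(f)$ whose image is a compact subgroup mapping isomorphically to $H$. Combined with the conjugacy of maximal compacts in $G$ and the injectivity established above, this completes the proof.

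The main obstacle is the third step: the averaging/linearization argument must produce actual convergent diffeomorphism germs, and one must verify that the resulting section is a group homomorphism whose image is genuinely compact in the analytic topology on $\mathop{\mathrm{Stab}}(f)$. This is the technical heart of Wall's original argument, and it hinges critically on finite determinacy to collapse the infinite-dimensional cohomological obstructions to those visible in $J^k$.
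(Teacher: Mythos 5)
First, a point of comparison: the paper itself gives no proof of this statement --- it is quoted as a classical theorem of J\"anich and Wall --- so there is no internal argument to measure yours against, only the known proofs in \cite{J} and \cite{Wa}. Your skeleton is the right one and matches theirs: reduce to the finite-dimensional jet group $J^k$ via finite determinacy, note that the isotropy group $G$ of $j^kf$ is a real algebraic (hence Lie) group to which the Cartan--Iwasawa--Mostow theorem applies, and kill compact subgroups of $\ker\pi$ by Bochner linearization (a compact group of $k$-flat diffeomorphism germs linearizes to the trivial group).

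The genuine gap is in your third step. Haar-averaging a set-theoretic section $s\colon H\to\mathop{\mathrm{Stab}}(f)$ cannot produce a homomorphism: the target is a nonabelian group with no affine structure in which such an average makes sense, and the average of a near-homomorphism into a nonabelian group is not a homomorphism. The lifting actually works differently. Since $J^k=(\mathrm{GL}_n\times\mathrm{GL}_p)\ltimes U$ with $U$ unipotent, a maximal compact subgroup $H\subset G$ can be conjugated into $\mathrm{O}(n)\times\mathrm{O}(p)$; its elements are then jets of honest \emph{linear} coordinate changes, so $H$ lifts tautologically to a compact group $\tilde H$ of diffeomorphism germs fixing $j^kf$ but not yet $f$. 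One then averages the \emph{germ} $f$ itself, $\bar f(x)=\int_H \beta_h^{-1}\bigl(f(\alpha_h(x))\bigr)\,dh$, which is $\tilde H$-invariant and has the same $k$-jet as $f$; finite determinacy then gives an equivalence $\bar f\sim f$ conjugating $\tilde H$ into $\mathop{\mathrm{Stab}}(f)$. Two further omissions: your claimed bijection on conjugacy classes silently assumes that $\pi(\mathop{\mathrm{Stab}}(f))$ is the full isotropy group $G$ of $j^kf$ (i.e., every jet-level symmetry is the $k$-jet of an actual symmetry of $f$); this is true but is itself a nontrivial consequence of the determinacy machinery. And to conclude that every compact subgroup is contained in a maximal one, you also need that any two compact lifts of the same compact subgroup of $G$ are conjugate by an element of $\ker\pi$ --- again a fixed-point argument on germs, not on sections.
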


The {\it relative symmetry group} of a map germ $h$ is a maximal compact subgroup of 
\[
     \mathrm{Stab}(h) \cap \{\ (\alpha, \beta)\in \D(\R^{N+d}, 0)\times \D(\R^{N}, 0)\ | \ \mathrm{det}(d\alpha|0)\cdot \mathrm{det}(d\beta|0)>0 \ \}.
\]
By definition, the {\it right representation}  (respectively, {\it left representation}) of a relative symmetry group $G$ is its representation on the source space $(\R^{N+d}, 0)$ (respectively, on the target space $(\R^{N},0)$). In other words, the right and left representations of the group of coordinate changes are the respective projections 
\[
\D(\R^{N+d}, 0)\times \D(\R^{N}, 0)\longrightarrow  \D(\R^{N+d}, 0)
\]
and
\[
\D(\R^{N+d}, 0)\times \D(\R^{N}, 0)\longrightarrow  \D(\R^{N}, 0)
\]
onto its factors. We say that a right (respectively, left) representation of the symmetry group is \emph{orientation preserving}, if the right (respectively, left) representation of every element of the symmetry group is orientation preserving. We note that the right (respectively, left) representation is orientation preserving if and only if the left (respectively, right) representation is. The image of a relative symmetry group $G$ under the right and left representations will be denoted by  $\mathcal{R}G$ and $\mathcal{L}G$ respectively. For an element $g\in G$, its image in $\mathcal{R}G$ and $\mathcal{L}G$ will be denoted by $\mathcal{R}_g$ and $\mathcal{L}_g$ respectively.

We will often use the Bochner local linearization theorem.

\begin{theorem}[Bochner, \cite{Boc}]\label{l:Boch} If $G$ is a compact group acting on $(\R^n, 0)$, then there exist local coordinates with respect to which $G$ acts linearly.  
\end{theorem}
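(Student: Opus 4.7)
The plan is to use the classical averaging argument with the Haar measure of $G$. Since $G$ is compact and fixes $0$, each $g\in G$ gives a diffeomorphism germ of $(\R^n,0)$, and differentiation at the origin produces a representation
\[
   \rho\colon G \longrightarrow GL(n,\R), \qquad \rho(g) = d g|_0.
\]
The strategy is to construct a $G$-equivariant diffeomorphism germ $\varphi$ conjugating the nonlinear action of $G$ on $(\R^n,0)$ to the linear representation $\rho$; in the coordinates given by $\varphi$, the action is then linear by construction.

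First, I would choose a small open ball $U\subset\R^n$ around the origin on which a representative of every $g\in G$ is defined; this uses compactness of $G$ together with continuity of the action to guarantee a common domain of definition (shrinking $U$ if needed so that $g(U)\subset\R^n$ for every $g\in G$). Letting $d\mu$ denote the normalized Haar measure on $G$, I would then define
\[
   \varphi(x) \;=\; \int_G \rho(g)^{-1} \cdot g(x)\, d\mu(g), \qquad x\in U.
\]
This is a smooth map $(\R^n,0)\to(\R^n,0)$, and its differential at the origin is
\[
   d\varphi|_0 \;=\; \int_G \rho(g)^{-1}\circ \rho(g)\, d\mu(g) \;=\; \id,
\]
so by the inverse function theorem $\varphi$ is a diffeomorphism germ at $0$.

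Next I would verify equivariance. For any $h\in G$ and $x$ close to $0$, the substitution $g'=gh$ on $G$ (which preserves Haar measure) gives
\[
   \varphi(h(x)) \;=\; \int_G \rho(g)^{-1}\cdot g(h(x))\, d\mu(g) \;=\; \int_G \rho(g'h^{-1})^{-1}\cdot g'(x)\, d\mu(g') \;=\; \rho(h)\cdot \varphi(x).
\]
Thus $\varphi$ intertwines the given action of $G$ with the linear action via $\rho$. Writing $y=\varphi(x)$ as the new local coordinates, the $G$-action becomes $y\mapsto \rho(h)y$, i.e., linear.

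The main obstacle is the initial technical setup: justifying that the integral defining $\varphi$ makes sense, which requires that the map $G\times U\to \R^n$, $(g,x)\mapsto g(x)$, be continuous (in fact smooth in $x$ with $G$-parametrized family bounds) on a common domain. This is where compactness of $G$ is essential, since it allows one to find a uniformly chosen neighborhood on which all representatives $g$ are defined and the dependence on $g$ is jointly continuous. Once that is in place, the averaging argument is formal and the rest of the proof proceeds as above.
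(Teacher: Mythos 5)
The paper does not prove this statement; it is quoted as a classical result with a citation to Montgomery--Zippin, so there is no internal proof to compare against. Your averaging argument is the standard proof of Bochner's linearization theorem and is correct: the homomorphism property $\rho(gh)=\rho(g)\rho(h)$ (chain rule at the common fixed point) makes the change of variables $g'=gh$ yield exactly $\varphi\circ h=\rho(h)\circ\varphi$, and $d\varphi|_0=\id$ gives invertibility. The only point worth being slightly more careful about is the one you flag yourself: the theorem as used in the paper concerns a compact group of diffeomorphism \emph{germs}, so one must first realize the germ action as an honest continuous action $G\times U\to\R^n$ on a common neighborhood before Haar integration makes sense; granting that (which is how the hypothesis is normally read), the rest of your argument is complete.
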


In the following sections we determine the relative symmetry groups and their right and left representations for $k$-jets of minimal Morin map germs.

\section{Preliminary lemmas}~\label{s:5}

Our method of computation of relative symmetry groups is different from the standard approach (c.f. Rim\'{a}nyi~\cite{Ri}). Our computation of the relative symmetry groups of Morin map germs will rely on a few lemmas below. 

Let us recall that a curve $\gamma: [0,1]\to \R^n$ is called {\it regular of order $m$} at a point $t$ if the vectors $\{\gamma'(t), \gamma''(t),..., \gamma^{(m)}(t)\}$ are linearly independent in $\R^n$. In particular, a regular curve possesses a canonical orthonormal frame called the {\it Frenet frame}, which is constructed from the $m$ derivatives by the Gram--Schmidt orthogonalization algorithm.  For example, the curve 
\[
   \delta\colon \R \longrightarrow  \R^r,
\]
\[
  \delta\colon t\mapsto (t, t^2, ..., t^r)
\]
is regular at $t=0$ of order $r$, and its Frenet frame is the standard one.

\begin{lemma}\label{l:1} For $r\ge 1$ the maximal compact subgroup $G$ of the symmetry group of the curve germ $\delta\colon t\mapsto (t, t^2, ..., t^r)$ at $0$ is $\Z_2$. 
\end{lemma}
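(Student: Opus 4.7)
The plan is to use Bochner's linearization theorem to reduce $G$ to linear actions on source and target, and then exploit the fact that $\delta$ is a regular curve of order $r$ at $0$ to pin $G$ down to $\Z_2$.

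First I would view $G$ as a compact group acting on $(\R, 0) \times (\R^r, 0)$ by the block-diagonal right-left action, and apply Theorem~\ref{l:Boch} to an averaged product Riemannian metric. This should produce coordinate changes $(\phi, \psi) \in \D(\R, 0) \times \D(\R^r, 0)$ under which every $(\alpha, \beta) \in G$ becomes a pair of linear maps $(A, B) \in \mathrm{GL}(1, \R) \times \mathrm{GL}(r, \R)$, and $\delta$ is transformed into $\tilde\delta = \psi \circ \delta \circ \phi^{-1}$, which is still regular of order $r$ at $0$ because regularity is a diffeomorphism invariant.

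Next I would exploit the linearized symmetry relation $B\,\tilde\delta(t) = \tilde\delta(At)$ by differentiating $k$ times at $t = 0$ to obtain $B\,\tilde\delta^{(k)}(0) = A^k\,\tilde\delta^{(k)}(0)$ for $k = 1, \dots, r$. Since $\{\tilde\delta^{(k)}(0)\}_{k=1}^r$ is a basis of $\R^r$, this pins $B$ down from $A$, so the projection $G \to \R^*$, $(A, B) \mapsto A$, is injective. Compactness of $G$ then forces $A$ to lie in a compact subgroup of $\R^*$, hence $A \in \{\pm 1\}$, giving $|G| \le 2$. Equality will be witnessed by the element $\alpha(t) = -t$, $\beta = \mathrm{diag}(-1, 1, -1, \dots, (-1)^r)$, which satisfies $\beta(\delta(t)) = (-t, t^2, -t^3, \dots, (-1)^r t^r) = \delta(-t)$.

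The main obstacle I anticipate is the bookkeeping around Bochner's theorem: I need to confirm that a single coordinate change on $(\R^{1+r}, 0)$ can be chosen with the product structure $(\phi, \psi)$ rather than mixing source and target coordinates, and that the transformed curve $\tilde\delta$ remains regular of order $r$. Both points reduce to diffeomorphism-invariance statements -- the first by averaging a product metric using the block-diagonal form of the right-left action, the second by the chain rule -- but they are where care is required.
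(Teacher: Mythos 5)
There is a genuine gap, and it sits exactly at the point you flagged: the assertion that ``regularity of order $r$ is a diffeomorphism invariant'' is false, and the chain rule does not rescue it. By Fa\`{a} di Bruno, the $k$-th derivative of $\psi\circ\delta$ at $0$ equals $d\psi_0(\delta^{(k)}(0))$ plus correction terms $d^m\psi_0(\delta^{(k_1)}(0),\dots,\delta^{(k_m)}(0))$ with $m\ge 2$ and $k_1+\cdots+k_m=k$; these involve the \emph{higher} differentials of $\psi$ applied to lower-order derivatives of $\delta$ and need not lie in the span of the first $k-1$ derivatives of $\psi\circ\delta$, so they can destroy linear independence. Concretely, for $r=2$ the diffeomorphism $\psi(x,y)=(x,\,y-x^2)$ is tangent to the identity at $0$ and sends the order-$2$ regular curve $t\mapsto(t,t^2)$ to $t\mapsto(t,0)$, which is not regular of order $2$. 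Since Bochner's theorem gives no control over the linearizing coordinate $\psi$ beyond $d\psi_0=\mathrm{id}$, after linearization the vectors $\tilde\delta^{(1)}(0),\dots,\tilde\delta^{(r)}(0)$ may span a proper subspace $W\subsetneq\R^r$. The relation $B\,\tilde\delta^{(k)}(0)=A^k\,\tilde\delta^{(k)}(0)$ then determines $B$ only on $W$, so the injectivity of $(A,B)\mapsto A$, and with it the bound $|G|\le 2$, is not established. (The lower bound, i.e.\ your explicit $\Z_2$, is correct.)

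For comparison, the paper does not differentiate at the origin in uncontrolled coordinates. It first uses compactness to force the source derivative to be $\pm 1$ and restricts to the subgroup $\hat G$ where it is $+1$; it then controls the target representation by means of the Frenet frame of $\delta$ along a punctured neighborhood of $0$, where $\delta$ is regular of order $r$ at \emph{every} parameter value, and only afterwards invokes Bochner on the target to conclude that a symmetry with trivial differential is trivial. To repair your argument you would need to replace the step ``derivatives at $0$ after linearization'' by something that survives the coordinate change --- for instance an argument exploiting the curve away from $t=0$, as the paper does, or a direct proof that the span of the image of $\tilde\delta$ (not merely of its derivatives at $0$) is all of $\R^r$.
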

\begin{proof}
Let $\hat G$ be the kernel of the homomorphism $G\to \Z_2$ that takes an element $g$ of $G$ to $1$ if and only if the action of $\mathcal{L}_g$ on $(\R, 0)$ reverses the orientation. Since $\hat G$ is a compact group, the differential of $\mathcal{L}_g$ is trivial for any $g\in \hat G$.  Since the curve $\delta$ is regular of order $r$ in a neighborhood of $0$, it possesses a Frenet frame preserved by each element of the symmetry group of $\delta$. Consequently the differential of $\mathcal{L}_g$ is also trivial for each $g\in \hat G$. On the other hand with respect to some coordinate system the action of $\hat G$ is linear. Hence $\hat G$ is trivial. Thus $G$ contains at most $1$ non-trivial element. Such an element exists, the action of its right representation is given by $t\mapsto -t$.   
\end{proof} 

\begin{lemma} For $r\ge 2$ the maximal compact subgroup $H$ of the symmetry group of the curve germ $\eta: x\mapsto (x^2, ..., x^r)$ at $0$ is $\Z_2$. 
\end{lemma}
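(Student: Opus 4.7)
The plan is to adapt the proof of Lemma~\ref{l:1} to the singular curve $\eta$ (which satisfies $\eta'(0)=0$). First I would exhibit a $\Z_2\hookrightarrow H$ via the involution $(\alpha,\beta)$ with $\alpha(x)=-x$ and $\beta(y_2,\dots,y_r)=(y_2,-y_3,y_4,\dots,(-1)^{r}y_r)$: indeed, $\beta(\eta(-x))=\beta(x^2,-x^3,x^4,\dots)=\eta(x)$, so this pair is a nontrivial involution in $\mathrm{Stab}(\eta)$.

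To see there is nothing further, let $\hat H\trianglelefteq H$ be the kernel of the sign homomorphism $g\mapsto\mathrm{sign}\,d\mathcal R_g(0)$; it suffices to show $\hat H=\{e\}$, whence $H\hookrightarrow\Z_2$ and in fact $H=\Z_2$ by the previous step. For $g\in\hat H$, compactness combined with the continuous map $\hat H\to\R_{>0}$, $g\mapsto d\mathcal R_g(0)$, forces $d\mathcal R_g(0)=1$. Moreover, the group of diffeomorphism germs of $(\R,0)$ tangent to the identity is pro-unipotent and admits no nontrivial compact subgroup: an element $x\mapsto x+ax^k+O(x^{k+1})$ with $a\neq 0$ has iterates $x+nax^k+O(x^{k+1})$, unbounded in $n$. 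Hence $\mathcal R_g=\id$ as a germ, and the symmetry relation reduces to $\mathcal L_g\circ\eta=\eta$.

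Replacing the Frenet-frame step of Lemma~\ref{l:1}, which is unavailable because $\eta'(0)=0$, I would exploit the fact that the higher derivatives $\eta^{(k)}(0)=k!\,e_{k-1}$ for $k=2,\dots,r$ still form a basis of $\R^{r-1}$. After applying Bochner's theorem (Theorem~\ref{l:Boch}) to $\hat H$ on the target, the $\hat H$-action becomes linear and fixes each Taylor coefficient of the transported curve. A Fa\`a di Bruno expansion of $\mathcal L_g\circ\eta=\eta$ then yields, at order $k$,
\[
d\mathcal L_g(0)\,\eta^{(k)}(0)=\eta^{(k)}(0)-\sum_{m\ge 2}\,d^m\mathcal L_g(0)\bigl(\eta^{(j_1)}(0),\dots,\eta^{(j_m)}(0)\bigr),
\]
summed over compositions $j_1+\cdots+j_m=k$ with $j_i\ge 2$; inducting on $k$ and using the spanning property, one deduces $d\mathcal L_g(0)=I$, and Bochner's linearization promotes this to $\mathcal L_g=\id$. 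Thus $\hat H=\{e\}$, so $H=\Z_2$ as claimed.

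The main obstacle is controlling the higher-order contributions from Fa\`a di Bruno: unlike the regular case of Lemma~\ref{l:1}, the order-$k$ identity pins down $d\mathcal L_g(0)\,e_{k-1}$ only modulo the corrections coming from $d^m\mathcal L_g(0)$ ($m\ge 2$) applied to lower derivatives of $\eta$. These corrections must be shown to respect a filtration compatible with the flag $T_k=\mathrm{span}(e_1,\dots,e_{k-1})$, and the conclusion ultimately rests on combining the polynomial structure of $\eta$ with the pro-unipotence of the unipotent jet subgroups to rule out nontrivial compact contributions at each level.
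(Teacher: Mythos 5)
Your route is genuinely different from the paper's, and the part handling the right representation is sound: compactness does force $d\mathcal{R}_g(0)=1$ and then $\mathcal{R}_g=\id$. The gap is in the last step, where you try to deduce $d\mathcal{L}_g(0)=I$ from $\mathcal{L}_g\circ\eta=\eta$. The Fa\`a di Bruno corrections you flag as ``the main obstacle'' are not a technicality that a filtration argument can absorb: for $r\ge 4$ the intermediate claim is false. Take $r=4$, $\eta(x)=(x^2,x^3,x^4)$, and $\beta(y_1,y_2,y_3)=(y_1,y_2,2y_1^2-y_3)$. Then $\beta\circ\eta=\eta$ (because $x^4=(x^2)^2$ along the image), $\beta^2=\id$, so $(\id,\beta)$ generates a compact subgroup of the stabilizer lying in your $\hat H$, yet $d\beta(0)=\mathrm{diag}(1,1,-1)\ne I$; analogous involutions $(y_1,\dots,y_{r-2},\,2y_1^{r/2}-y_{r-1})$, resp.\ $(y_1,\dots,y_{r-2},\,2y_2y_1^{(r-3)/2}-y_{r-1})$, exist for every even, resp.\ odd, $r\ge 4$. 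The underlying problem is that the polynomial relations among the components of $\eta$ let the quadratic jet of $\mathcal{L}_g$ feed back into its linear jet along the curve, which is exactly the $\{2,2\}$-partition term in your expansion at order $k=4$. Passing to Bochner coordinates does not rescue the spanning argument either: the linearizing change of coordinates $y_3\mapsto y_3-y_1^2$ turns $(x^2,x^3,x^4)$ into $(x^2,x^3,0)$, whose derivatives at $0$ no longer span, while conjugating the $\beta$ above into $\mathrm{diag}(1,1,-1)$. So $\hat H=\{e\}$ cannot be reached along this path without some additional input that excludes such elements.

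The paper avoids the left representation entirely by a one-line reduction: since $\delta(t)=(t,\eta(t))$ is the graph of $\eta$, every symmetry $(\alpha,\beta)$ of $\eta$ lifts injectively and continuously to the symmetry $(\alpha,\alpha\times\beta)$ of the regular curve $\delta$; hence $H$ sits inside a compact subgroup of the symmetry group of $\delta$, which by Lemma~\ref{l:1} is contained in a maximal compact subgroup isomorphic to $\Z_2$, and $H=\Z_2$ follows because $H$ contains the explicit involution you constructed. The point of the lift is that the first target coordinate of $\delta$ is forced to transform by $\alpha$ itself, so the left representation is tied to the right one and the regular-curve lemma does all the work. If you want a self-contained argument instead of this reduction, you must at least identify what rules out compact symmetries with $\mathcal{R}_g=\id$ and $\mathcal{L}_g\ne\id$ of the kind exhibited above.
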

\begin{proof} Every element of the symmetry group of the curve germ $\eta$ lifts to an element in the symmetry group of the curve germ $\delta$. Consequently, the group $H$ is a compact subgroup of the symmetry group of $\delta$. Hence we may choose the maximal compact subgroup $G$ so that $H$ is the subgroup of $G=\Z_2$. On the other hand $H$ contains a non-trivial element. Thus $H=\Z_2$.  
\end{proof}

\begin{lemma}\label{l:3} For $r\ge 1$ the left representation of the maximal compact subgroup of the symmetry group of $\A_r$ is at most $\Z_2$. 
\end{lemma}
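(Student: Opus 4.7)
My plan is to reduce the statement to the preceding lemma on the curve germ $\eta\colon x\mapsto(x^{2},\dots,x^{r})$. For the fold case $r=1$ the argument is separate and essentially immediate: the minimal germ is the quadratic form $Q(a,b)\colon(\R^{a+b},0)\to(\R,0)$, and by Bochner's theorem (Theorem~\ref{l:Boch}) the left representation of the maximal compact subgroup can be linearized and hence lies in $\mathrm{GL}(1,\R)\cong\R^{*}$, whose only compact subgroups are $\{1\}$ and $\{\pm 1\}$.

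For $r\ge 2$ I would exhibit a canonical curve germ in the target that every left symmetry must preserve. Let $f$ be a minimal $\A_r$ germ in its canonical form, with target $(\R^{r},0)$ and last component $y_{r}=Q(a,b)+\sum_{i=1}^{r-1}t_{i}x^{i}\pm x^{r+1}$. The $\A_{r-1}$-stratum $\bar{\A}_{r-1}(f)$ is intrinsically determined by the singularity type, so any $(\alpha,\beta)\in\mathrm{Stab}(f)$ satisfies $\alpha(\bar{\A}_{r-1}(f))=\bar{\A}_{r-1}(f)$, whence $\beta$ preserves the image curve germ $C:=f(\bar{\A}_{r-1}(f))\subset(\R^{r},0)$. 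The key technical step is to identify $C$ with the image of $\eta$. Imposing $x_{j}=0$ for all $j$ and $\partial^{i}y_{r}/\partial x^{i}=0$ for $i=1,\dots,r-1$, and solving iteratively from $i=r-1$ downward, yields $t_{i}=c_{i}x^{r+1-i}+O(x^{r+2-i})$ with $c_{i}\ne 0$, while $y_{r}$ evaluates on the stratum to $c_{r}x^{r+1}+\cdots$ with $c_{r}\ne 0$. Thus $C$ is a curve germ whose components have distinct leading orders $\{2,3,\dots,r+1\}$, and a diffeomorphism of $(\R^{r},0)$ permuting the coordinates and absorbing higher-order corrections carries $C$ onto the image of $\eta\colon x\mapsto(x^{2},x^{3},\dots,x^{r+1})$.

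Finally, given a maximal compact subgroup $G\subset\mathrm{Stab}(f)$, its left representation $\mathcal{L}G$ is a compact subgroup of $\D(\R^{r},0)$ preserving the set germ $C$. Because $\eta$ is injective at $0$ for $r\ge 2$, the left representation of $\mathrm{Stab}(\eta)$ is injective with image the set-germ stabilizer of $C$ in $\D(\R^{r},0)$. The preceding lemma, applied with parameter $r+1\ge 3$, shows that the maximal compact subgroup of $\mathrm{Stab}(\eta)$ is $\Z_{2}$, and by J\"{a}nich--Wall every compact subgroup of the set-germ stabilizer is conjugate to a subgroup of this $\Z_{2}$. Hence $|\mathcal{L}G|\le 2$, which is the claim. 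The main obstacle is the recursive computation identifying $C$ with $\eta$; once this identification is established, the rest is formal.
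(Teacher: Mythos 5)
Your proposal is correct and follows essentially the same route as the paper: restrict attention to the image in the target of the curve of $\A_{r-1}$-singular points, identify it in suitable coordinates with $x\mapsto(x^2,\dots,x^{r+1})$, and invoke the preceding lemma on the curve germ $\eta$ to bound the left representation by $\Z_2$. The only differences are cosmetic — you supply the recursive elimination of the $t_i$ that the paper merely asserts, and you treat $r=1$ by a separate (correct) one-line Bochner argument.
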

\begin{proof} Let $f$ be the standard minimal map germ of an $\A_r$-singularity and let $\gamma$ be the curve of $\A_{r-1}$-singular points of $f$. Then in some coordinates, the curve $\gamma$ is given by $x\mapsto (x^2, ..., x^{r+1})$. In particular the left representation of the maximal compact subgroup of the symmetry group of $\A_r$ is a subgroup of the maximal compact subgroup $H$ of the symmetry group of the curve germ $\eta$, which is $\Z_2$. 
\end{proof}

We claim that for every element $h$ of the relative symmetry group of a Morin map germ $f$, the right representation $\mathcal{R}_h$ of $h$ completely determines the left representation $\mathcal{L}_h$ of $h$. In particular, the relative symmetry group of $f$ can be (and will be) described in terms of a subgroup of the orthogonal group acting on the source of $f$. 

\begin{lemma}\label{l:7.4} The right representation of the relative symmetry group of a Morin map germ is faithful. 
\end{lemma}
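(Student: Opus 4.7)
The plan is to combine two ingredients: the Bochner linearization theorem, which replaces the nonlinear action of $\mathcal{L}G$ on the target by a linear one, and the elementary observation that a Morin map germ has regular points arbitrarily close to the origin, so its image contains a nonempty open subset of $\R^N$. These together force any element of the kernel of the right representation to be trivial.

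In more detail, let $f\colon (\R^{N+d},0)\to (\R^N,0)$ be a representative of a Morin map germ and let $G$ be its relative symmetry group. Suppose $(\id,\beta)\in G$ lies in the kernel of the right representation; then $\beta\circ f = f$, so $\beta$ fixes every point of $\im f$. First I would apply Theorem~\ref{l:Boch} to the compact group $\mathcal{L}G$ acting on $(\R^N,0)$: after a diffeomorphism germ $\phi$ of the target, the element $\beta$ is conjugated to a linear map $\tilde\beta=\phi\circ\beta\circ\phi^{-1}$, and this $\tilde\beta$ fixes the image of the right-left equivalent Morin germ $\tilde f=\phi\circ f$. It therefore suffices to show $\tilde\beta=\id$.

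Next I would exploit that $\tilde f$ is Morin: its singular locus is a proper submanifold of positive codimension in the source, so regular points of $\tilde f$ are dense near the origin. Choosing such a regular point $p$ in the domain of $\tilde f$, the rank theorem supplies an open neighborhood $U\ni p$ on which $\tilde f$ is a submersion; hence $\tilde f(U)$ is a nonempty open subset $V\subset \R^N$, and $V\subset \im\tilde f$ is fixed pointwise by $\tilde\beta$. Since $V$ has full affine span $\R^N$ and $\tilde\beta$ is linear, $\tilde\beta=\id$, whence $\beta=\id$.

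The only subtle point is the Bochner reduction: I must know that conjugating $\beta$ by a target coordinate change does not affect whether it is the identity, which holds since conjugation is a group automorphism. Beyond that, there is no serious obstacle; the essential geometric content is simply that for a Morin germ the image cannot be contained in any proper affine subspace of the target, and this is immediate from the density of regular points together with the rank theorem.
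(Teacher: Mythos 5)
Your argument is correct and follows essentially the same route as the paper's own proof: linearize the left representation via Bochner, observe that the image of a Morin germ (of non-negative dimension) contains a nonempty open set, and conclude that a linear map fixing an open set pointwise must be the identity. You merely make explicit, via density of regular points and the rank theorem, the open-image step that the paper states without elaboration.
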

\begin{proof}
We may choose coordinates in which both the right and left representations of $h$ are linear. In particular, the left representation $\mathcal{L}_h$ of $h$ is determined by the action of $\mathcal{L}_h$ on any open set of vectors. On the other hand, the image of $f$ contains an open set, and if $y$ is in the image of $f$, then $\mathcal{L}_h(y)=f\circ \mathcal{R}_h(x)$ for any point $x$ with $f(x)=y$. Thus the right representation $\mathcal{R}_h$ completely determines $h$. 
\end{proof}

\section{Relative symmetry groups of Morin map germs}\label{s:rsg}
In this section we compute the relative symmetry groups of Morin map germs and their right representations. In fact, we will describe only the right representation of Morin map germs; in view of Lemma~\ref{l:7.4}, these completely determines the groups. Furthermore, since the right representation $\mathcal{R}G$ of the relative symmetry group $G$ of a minimal Morin map germ $(\R^m,0)\to (\R^n,0)$ is isomorphic to a subgroup of the orthogonal group $\O_m$ we will identify $G$ with a subgroup of $\O_m$.  

For every non-negative integer $n$, let $\O_n$ and $\SO_n$ denote the $n$-th orthogonal and special orthogonal groups respectively.  For non-negative integers $a,b$, we put 
\[
    [{\O}_a\times {\O}_b]^+=({\O}_a\times {\O}_b) \cap {\SO}_{a+b}.
\]
For $d>0$, in the group ${\O}_{2d}$ there are elements $h_d$ and $r_d$ that act on $\R^d\times \R^d$ by exchanging the factors and reflecting the first factor along a hyperplane respectively.  

\subsection{Regular germ $\A_0$} The canonical form of a minimal regular germ is given by projecting $(\R^d,0)$ to $(\R^0,0)$. Its relative symmetry group $G$ is $\SO_d$. The action of $\mathcal{R}G$ on on $(\R^d, 0)$ is given by the standard action of the special orthogonal group. 

\subsection{Fold germ $\A_1(a,b)$} Let $f$ be a minimal fold map germ of the form
\[
     f: (\R^{d+1}, 0) \longrightarrow (\R^1, 0),
\]
\[
     y=Q(a,b):= - x_1^2-\cdots - x_a^2 + x_{a+1}^2 + \cdots + x_{a+b}^2, \quad a+b=d+1,\  a\le b,
\]
where $(x_1,...,x_{d+1})$ and $(y)$ are the standard coordinates in $\R^{d+1}$ and $\R^1$.

\begin{lemma}\label{l:8.1} The relative symmetry group $G$ of $f$ is as described in Table~\ref{t:1}. The action of $\mathcal{R}G$ is orientation preserving if and only if $a\ne b$. 
\end{lemma}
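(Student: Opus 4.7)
The plan is to reduce the computation to linear algebra via Bochner's linearization theorem and then analyze the resulting quadratic-form problem.

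First, by Theorem \ref{l:Boch}, after a coordinate change we may assume that both the right representation $\mathcal{R}_h$ on $(\R^{d+1},0)$ and the left representation $\mathcal{L}_h$ on $(\R^1,0)$ of any element $h$ of the maximal compact subgroup are linear. Since a linear automorphism of $\R$ has the form $y\mapsto\lambda y$ and compactness forces $|\lambda|=1$, we have $\mathcal{L}_h(y)=\pm y$. Writing $\alpha=\mathcal{R}_h$, the defining equation $\mathcal{L}_h\circ f=f\circ \mathcal{R}_h$ reads
\[
   \lambda\cdot Q(a,b)(\alpha x)=Q(a,b)(x),\qquad \lambda=\pm 1.
\]

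I would then split into two cases. In Case $\lambda=1$, $\alpha$ lies in the isometry group $\O(a,b)$ of the indefinite quadratic form $Q(a,b)$; this group is noncompact, but its maximal compact subgroup is the block-diagonal $\O_a\times\O_b$ acting on the negative-definite and positive-definite eigenspaces of $Q(a,b)$. In Case $\lambda=-1$, $\alpha$ must carry $Q(a,b)$ to $-Q(a,b)=Q(b,a)$, which by Sylvester's law of inertia is possible only when $a=b$; in that case the set of such $\alpha$ is the coset $h_d\cdot \O(a,a)$, whose maximal compact part is $h_d\cdot(\O_a\times\O_a)$. Combining both components yields the maximal compact subgroup of $\mathrm{Stab}(f)$: it equals $\O_a\times\O_b$ when $a<b$, and $(\O_a\times\O_a)\rtimes\Z_2$ when $a=b$, where the generator of $\Z_2$ is $(h_d,-\mathrm{id}_\R)$.

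Finally I impose the relative orientation condition $\det(d\alpha)\det(d\beta)>0$. For $\lambda=1$ this selects $\det\alpha>0$ and cuts the group down to $[\O_a\times\O_b]^+$ in every case. For $\lambda=-1$ (only relevant when $a=b$) it selects $\det\alpha<0$; since $\det h_d=(-1)^a$, this is a single coset of $[\O_a\times\O_a]^+$ in $\O_a\times\O_a\cdot h_d$, producing the entries of Table~\ref{t:1} for $a=b$. The final claim about orientation follows immediately: when $a<b$ the group $G=[\O_a\times\O_b]^+$ lies in $\SO_{d+1}$, so $\mathcal{R}G$ is orientation preserving; whereas when $a=b$ the extra coset contains $\alpha$'s with $\det\alpha<0$, so $\mathcal{R}G$ is not orientation preserving.

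The main obstacle I anticipate is not conceptual but organizational: one must carefully separate the passage from the nonlinear $\mathop{\mathrm{Stab}}(f)$ to its maximal compact subgroup (using Bochner together with the fact that $\O(a,b)$ retracts onto $\O_a\times\O_b$), and then correctly track the sign of $\det(d\alpha)\det(d\beta)$ through both components, especially the determinant of the block-exchange $h_d$ in the $a=b$ case; the rest of the argument is routine.
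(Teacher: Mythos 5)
Your proof is correct and follows essentially the same route as the paper: linearize via Bochner, observe that the left representation on $(\R^1,0)$ is $\pm\mathrm{id}$, identify the $\lambda=+1$ part with $[\O_a\times\O_b]^+$ via invariance of the quadratic form, rule out $\lambda=-1$ when $a\ne b$ by Sylvester's law, and exhibit the extra coset (with the determinant bookkeeping forced by $\det(d\alpha)\det(d\beta)>0$) when $a=b$. The only cosmetic difference is that you deduce $\mathcal{L}_h=\pm\mathrm{id}$ directly from compactness of a linear action on $\R^1$ rather than citing the paper's Lemma~\ref{l:3}, which is a perfectly adequate shortcut in this one-dimensional-target case.
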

\begin{proof} By Lemma~\ref{l:3}, there is an exact sequence of group homomorphisms
\[
   0\longrightarrow \hat{G}\longrightarrow G \longrightarrow \Z_2,
\] 
where $\hat G$ is the kernel of the left representation of $G$. By the Bochner Theorem (see Theorem~\ref{l:Boch}) we may choose coordinates in which the right action of $\hat G$ is linear. Since $G$ preserves the quadratic form $Q$ associated with $f$, the group $\hat G$ is isomorphic to a subgroup of $[\O_{2c}\times \O_{2c}]^+$. On the other hand the group $\hat G$ contains $[\O_{2c}\times \O_{2c}]^+$. Thus $\hat G$ coincides with $[\O_{2c}\times \O_{2c}]^+$. If $a\ne b$, then $G=\hat G$. If $a=b=2c$, where $c\ge 1$, then the left representation of $r_{2c}\circ h_{2c}\in G$ is not trivial, and therefore $G/\hat G=\Z_2$. Similarly, if $a=b=2c+1$ with $c\ge 1$, then the left representation of $h_{2c+1}\in G$ is not trivial, and therefore again $G/\hat{G}=\Z_2$. 
\end{proof}

\begin{table}
\caption{The relative symmetries of $\A_1(a,b)$.}\label{t:1}
\begin{tabular}{|c|c|}
\hline
 & $G<\O_{a+b}$ \\
\hline
$a\ne b$ & $[{\O}_a\times {\O}_b]^+$ \\
\hline
$a=b=2c$ &  $<[{\O}_{2c}\times {\O}_{2c}]^+, r_{2c}\circ h_{2c}>$ \\
\hline
$a=b=2c+1$ & $<[{\O}_{2c+1}\times {\O}_{2c+1}]^+, h_{2c+1}>$ \\
\hline
\end{tabular}
\end{table}

\subsection{Morin map germ $\A_{2r}(a,b)$, $r>0$} The canonical form of an $\A_{2r}(a,b)$ map germ $f$ of dimension $d$ is given by 
\[
  y_i = t_i, \quad i=1, \dots, 2r-1,
\]
\[
  y_{2r}= Q(a,b) + t_1x + t_2x^2 + \cdots + t_{2r-1}x^{2r-1} + x^{2r+1},
\]
where $a\le b$ and $a+b=d$, and $(x_1,...,x_d, x, t_1, ..., t_{2r-1})$ and $(y_1, ..., y_{2r})$ are the standard coordinates in $\R^{d+2r}$ and $\R^{2r}$ respectively. Put
\[
   \tau_{2r}: \R^{2r} \longrightarrow \R^{2r},
\]
\[
   (t_1, \dots, t_{2r-1}, x) \mapsto (t_1, -t_2, \dots, (-1)^{i+1}t_i, \dots, t_{2r-1}, -x).
\]

\begin{table}
\caption{The relative symmetries of $\A_{2r}(a,b)$.}\label{t:2}
\begin{tabular}{|c|c|}
\hline
 & $G<\O_{a+b}$ \\
\hline
$a\ne b$ & $[{\O}_a\times {\O}_b]^+$ \\
\hline
$a=b=2c$ &  $<[{\O}_{2c}\times {\O}_{2c}]^+, h_{2c}\times \tau_{2r} >$ \\
\hline
$a=b=2c+1$ & $<[{\O}_{2c+1}\times {\O}_{2c+1}]^+, (r_{2c+1}\circ h_{2c+1})\times \tau_{2r}>$ \\
\hline
\end{tabular}
\end{table}

\begin{table}
\caption{The relative symmetries of $\A_{2r+1}^{\pm}(a,b)$.}\label{t:3}
\begin{tabular}{|c|c|}
\hline
 & $G<\O_{a+b}$ \\
\hline
all $a,b$ & $<[{\O}_a\times {\O}_b]^+, h>$ \\
\hline
\end{tabular}
\end{table}

\begin{lemma}\label{l:8.2} The relative symmetry group $G$ of the $\A_{2r}(a,b)$ map germ $f$ is described in Table~\ref{t:2}. The action of $\mathcal{R}G$ is orientation preserving if and only if $r$ is even. 
\end{lemma}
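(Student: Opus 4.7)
The plan is to adapt the strategy of Lemma~\ref{l:8.1} to the $\A_{2r}$ setting. First I would invoke Lemma~\ref{l:3} to obtain the exact sequence
\[
0 \longrightarrow \hat G \longrightarrow G \longrightarrow \Z_2,
\]
with $\hat G$ the kernel of the left representation, and apply Bochner's theorem (Theorem~\ref{l:Boch}) so that both the right and left actions may be assumed linear. This reduces the problem to a concrete polynomial-matching computation in the source coordinates $(x_1,\dots,x_d,x,t_1,\dots,t_{2r-1})$ and the target coordinates $(y_1,\dots,y_{2r})$.

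Next I would identify $\hat G$. For $g\in \hat G$ the relations $y_i\circ \mathcal R_g = y_i$ for $i=1,\dots,2r-1$ force $\mathcal R_g$ to preserve each linear function $t_i$. Substituting into the equation $y_{2r}\circ\mathcal R_g = y_{2r}$ and specializing to $t=0$ gives
\[
Q\circ \mathcal R_g + (x\circ\mathcal R_g)^{2r+1} \;=\; Q + x^{2r+1};
\]
comparing homogeneous degrees in $(x_1,\dots,x_d,x)$ forces $x\circ\mathcal R_g=x$ and $Q\circ\mathcal R_g=Q$. Re-inserting the $t_i$'s and matching the coefficient of each $t_i$ in $y_{2r}\circ\mathcal R_g=y_{2r}$ rules out any residual $t$-dependence in the action on $(x_1,\dots,x_d,x)$. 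Hence $\mathcal R_g$ restricts to an orthogonal transformation of $(\R^d,Q)$, and in view of the sign constraint $\det\mathcal R_g\cdot\det\mathcal L_g>0$ with $\det\mathcal L_g=+1$, lies in $[\O_a\times\O_b]^+$; the reverse inclusion is immediate.

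The main content is the analysis of the $\Z_2$ quotient, where I would mimic the fold construction. A direct substitution shows that the source transformation $\tau_{2r}$ sends $Q+\sum t_i x^i+x^{2r+1}$ to $Q-\sum t_i x^i-x^{2r+1}$; to absorb this sign one needs a simultaneous orthogonal transformation on $(x_1,\dots,x_d)$ that negates $Q$, and such a transformation exists precisely when $a=b$. For $a=b=2c$ it is the swap $h_{2c}$ of the two copies of $\R^{2c}$; for $a=b=2c+1$ one must further compose with the reflection $r_{2c+1}$ so that the relative-symmetry sign constraint $\det\mathcal R_g\cdot\det\mathcal L_g>0$ holds. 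When $a\neq b$ no orthogonal map negates $Q$, so $G=\hat G$. Finally, writing $\det\tau_{2r}=(-1)^r$ (the map negates $x$ together with the $r-1$ even-indexed $t_i$'s) and noting that $h_{2c}$ and $r_{2c+1}\circ h_{2c+1}$ each have determinant $+1$ on $\R^{a+b}$, the new generator has $\det\mathcal R_g=(-1)^r$, which gives the orientation claim.

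The main obstacle I anticipate is the rigidity step in the analysis of $\hat G$: one must check that the combination of the linear monomials $t_i x^i$ with the leading term $x^{2r+1}$ really prevents any linear mixing between the $x_k$'s, $x$, and $t_j$'s under $\mathcal R_g$. Once this polynomial book-keeping is in place, both the computation of the $\Z_2$ quotient and the determinant verification are routine.
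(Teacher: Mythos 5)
Your skeleton matches the paper's: the exact sequence $0\to\hat G\to G\to\Z_2$ from Lemma~\ref{l:3}, the identification $\hat G=[\O_a\times\O_b]^+$, and a case analysis of the quotient. Where you differ is in how $\hat G$ is identified: the paper argues intrinsically, using that $\mathcal{R}\hat G$ preserves $K_2$ and acts trivially on $C_1$, so by the equivariant isomorphism $K_2\cong\Hom(\circ_{2r}K_2,C_1)$ it acts trivially on $K_2$, descends to $K_1/K_2$ preserving $Q$, and has trivial residual kernel $\tilde G$ because $\mathcal{R}\tilde G$ must act trivially on the coimage of $df$. Your replacement by direct polynomial matching is a legitimate alternative, and the rigidity step you flag does close up: the relations $t_i\circ\mathcal R_g=t_i$ plus the degree-$(2r+1)$ term pin down $x\circ\mathcal R_g|_{t=0}=x$ and $Q\circ\mathcal R_g|_{t=0}=Q$, and then comparing the coefficients of $t_kx^j$ (using non-degeneracy of $Q$ to kill the $t$-components of $\mathcal R_g(x_j)$) eliminates all residual mixing. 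Your verification of the extra generators for $a=b$ and the determinant count $\det\tau_{2r}=(-1)^r$ also agree with the paper.

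The genuine gap is in the case $a\ne b$. You rule out only those elements of $G$ whose right representation restricts to $\tau_{2r}$ on the $(t_1,\dots,t_{2r-1},x)$-block, but the conclusion $G=\hat G$ requires showing that \emph{no} $g\in G$ has non-trivial left representation. The missing step is that any such $g$ must act by $-1$ on the line $K_2=\mathrm{span}(\partial/\partial x)$ --- this is what the proof of Lemma~\ref{l:3} supplies: an element acting by $+1$ on the parameter of the curve of $\A_{2r-1}$-points preserves its Frenet frame and hence has trivial left representation --- and that this in turn forces $\mathcal L_g$ to act by $-1$ on $C_1$. In the paper this last implication is exactly the role of the isomorphism $K_2\cong\Hom(\circ_{2r}K_2,C_1)$, which gives $\mu=\lambda^{2r+1}$ for the actions on $C_1$ and $K_2$; in your coordinates it comes from matching the $x^{2r+1}$-terms, which yields $\mathcal L_g(y_{2r})=\mu^{2r+1}y_{2r}+\cdots$ when $\mathcal R_g(x)=\mu x+\cdots$. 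Only after this does the degree-two comparison $Q\circ\mathcal R_g=-Q$ produce the contradiction with $a\ne b$. Without it, the inference ``no orthogonal map negates $Q$, so $G=\hat G$'' does not follow from what precedes it.
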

\begin{proof}
In view of Lemma~\ref{l:3}, there is an exact sequence of homomorphisms
\[
   0\longrightarrow \hat{G}\longrightarrow G \longrightarrow \Z_2,
\] 
where $\hat G$ is the kernel of the left representation of $G$. Let us show that $\hat G$ is isomorphic to $[\O_a\times \O_b]^+$. We may choose coordinates in which the right action of $\hat G$ is linear. We recall that $\mathcal{R}G$ preserves the vector space $K_2$, while $\mathcal{R}\hat G$ acts trivially on the cokernel $C_1$ of $df$. Over the set $\A_{2r}(f)$ there is a canonical isomorphism 
\begin{equation}\label{equ:1.1}
    K_2\stackrel{\approx}\longrightarrow \Hom(\circ_{2r} K_2, C_1).
\end{equation}
Hence, the action of $\mathcal{R}\hat G$ is trivial on $K_2$, and, in particular, there is a well-defined action of $\mathcal{R}\hat G$ on the vector space $K_1/K_2$. The action of $\mathcal{R}\hat G$ on $K_1/K_2$ preserves the quadratic form $Q$ associated with the map germ $f$. Hence $\mathcal{R}\hat G|(K_1/K_2)$ is a subgroup of $[\O_a\times \O_b]^+$. On the other hand, $\mathcal{R}\hat G|(K_1/K_2)$ contains $[\O_a\times \O_b]^+$. Thus we have an exact sequence 
\[
   0\longrightarrow \tilde{G}\longrightarrow \hat G\longrightarrow [{\O}_a\times {\O}_b]^+ \longrightarrow 0, 
\]
where the third homomorphism is the right representation of $\hat G$ on $K_1/K_2$. There is a well-defined linear action of $\tilde G$ on the vector space $\R^{2r+a+b}/K_1$, which is the coimage of $df$ at $0$. Since the action of $\mathcal{L}\tilde G$ is trivial on the image $\mathop{\mathrm{Im}}(df)$, the action of $\mathcal{R}\tilde G$ is trivial on the coimage of $df$, and therefore $\tilde G$ is trivial. Consequently, $\hat G=[\O_a\times \O_b]^+$.

If $a=b=2c$ and $c>0$, then the left representation of $h_{2c}\times \tau_{2r}\in G$ is non-trivial and therefore $G/\hat G=\Z_2$. In the case $a=b=2c+1$ and $c>0$ the left representation 	is non-trivial for $(r_{2c+1}\circ h_{2c+1})\times \tau_{2r}$, and again, $G/\hat G=\Z_2$.  Finally, in the case $a\ne b$ we observe that, in view of the proof of Lemma~\ref{l:3}, every element $g$ in $G$ with non-trivial left representation acts non-trivially on $K_2$. Hence, in view of the isomorphism (\ref{equ:1.1}) the action of $\mathcal{L}_g$ on $C_1$ is non-trivial, which is impossible in the case $a\ne b$ since $G$ preserves the quadratic form $Q$ associated with $f$. Thus $G=\hat G$. 
\end{proof}


\subsection{Morin map germ $\A^{\pm}_{2r+1}(a,b)$, $r>0$}\label{sub:6} The canonical form is given by 
\[
  y_i = t_i, \quad i=1, \dots, 2r,
  \]
  \[
  y_{2r+1}= Q(a,b) + t_1x + t_2x^2 + \cdots + t_{2r}x^{2r} \pm x^{2r+2}, 
\]
where $a\le b$, $a+b=d$, and $(x_1, ..., x_d, x, t_1, ..., t_{2r})$ and $(y_1, ..., y_{2r+1})$ are standard coordinates in $\R^{d+1+2r}$ and $\R^{2r+1}$ respectively. We recall that Morin map germs of types $\A^{+}_{2r+1}(a,b)$ and $\A^{-}_{2r+1}(a,b)$ are equivalent if and only if $a=b$.  Let $h$ denote an element in $\O_{d+1+2r}$ 
that on the first factor of $(\R^{a}\times \R^b\times \R^{2r+1})$ acts as an arbitrarily chosen orientation reversing element of $\O_a\times \O_b$ and maps 
\[
(t_1, \dots, t_{2r}, x) \mapsto (-t_1, t_2, \dots, (-1)^{i}t_i, \dots, t_{2r}, -x).
\]

\begin{lemma}\label{l:8.3} The relative symmetry group $G$ of a minimal Morin map germ $f$ of type $\A_{2r+1}$ is generated by $[\O_a\times \O_b]^+$ and the element $h$.  The action of $\mathcal{R}G$ is orientation preserving if and only if $r$ is even.
\end{lemma}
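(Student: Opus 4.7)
The plan is to follow the template of Lemmas~\ref{l:8.1} and \ref{l:8.2}. I would first invoke Lemma~\ref{l:3} to obtain the exact sequence
\[
0 \longrightarrow \hat G \longrightarrow G \longrightarrow \Z_2,
\]
where $\hat G$ is the kernel of the left representation, and then linearise $\hat G$ by Bochner's theorem. Reproducing the Boardman-bundle analysis of Lemma~\ref{l:8.2}, any $g \in \hat G$ acts trivially on the cokernel $C_1$, so the canonical isomorphism $K_2 \cong \Hom(\circ_{2r+1}K_2,C_1)$ forces the eigenvalue $\lambda$ of $\mathcal{R}_g$ on $K_2$ to satisfy $\lambda^{2r+2}=1$, whence $\lambda = \pm 1$. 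The deviation from the even case is exactly this sign ambiguity. I would exclude $\lambda = -1$ by descending to the explicit form of $f$: such a $g$ must preserve every coordinate $t_i$ (because $\mathcal{L}_g = \id$ and $y_i = t_i$), and reading off the coefficient of $t_1 x$ in the identity $f_{2r+1}\circ \mathcal{R}_g = f_{2r+1}$ yields $-1 = 1$, a contradiction. With $\lambda = 1$ in hand, the induced action of $\mathcal{R}\hat G$ on $K_1/K_2$ preserves the bilinear form $Q(a,b)$ with trivial scaling and lies therefore in $[\O_a\times\O_b]^+$, while the action on the coimage of $df$ is trivial exactly as in Lemma~\ref{l:8.2}. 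Combined with the obvious inclusion $[\O_a\times\O_b]^+\subseteq \hat G$, this pins down $\hat G = [\O_a\times\O_b]^+$.

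Next I would check directly that $h$ is a symmetry of $f$. The orthogonal action on $\R^a\times\R^b$ preserves $Q(a,b)$, and on the remaining coordinates the assignment $x\mapsto -x$, $t_i\mapsto (-1)^i t_i$ satisfies
\[
t_i x^i \mapsto (-1)^i t_i \cdot (-x)^i = t_i x^i,
\]
while $\pm x^{2r+2}$ is fixed because $2r+2$ is even. Hence $y_{2r+1}$ is invariant, whereas $y_i = t_i \mapsto (-1)^i y_i$ shows that $\mathcal{L}_h$ is non-trivial, so $h$ projects onto the generator of the $\Z_2$ quotient and $G = \langle [\O_a\times\O_b]^+, h\rangle$. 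In the case $a=b$ I would separately verify that no extra independent generator is hidden: a candidate symmetry swapping the two $\R^a$ factors would send $Q\mapsto -Q$ and hence require $y_{2r+1}\mapsto -y_{2r+1}$, and matching this on the source would force $x^{2r+2}\mapsto -x^{2r+2}$, which is impossible since $2r+2$ is even.

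For the orientation assertion, I would compute $\det\mathcal{R}_h$. The first factor contributes $-1$ by construction, and the diagonal action with entries $(-1)^1,(-1)^2,\ldots,(-1)^{2r},-1$ on $\R^{2r+1}$ contains $r+1$ entries equal to $-1$, giving determinant $(-1)^{r+1}$; the product is $(-1)^r$. Since $[\O_a\times\O_b]^+\subseteq \SO_{a+b}$ acts trivially on the $\R^{2r+1}$ factor, the whole group $\mathcal{R}G$ is orientation preserving precisely when $h$ is, that is, when $r$ is even.

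The main obstacle is the bundle-theoretic step in the first paragraph: the odd exponent in $K_2 \cong \Hom(\circ_{2r+1}K_2,C_1)$ produces a sign ambiguity $\lambda = \pm 1$ that is absent from Lemma~\ref{l:8.2}, and resolving it requires leaving the purely bundle-theoretic framework and inspecting the explicit polynomial form of $f$. Everything else is a routine transcription of the earlier arguments.
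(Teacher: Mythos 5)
Your proof is correct and follows the paper's overall skeleton (the exact sequence from Lemma~\ref{l:3}, Bochner linearization, reduction to $K_1/K_2$, the coimage argument, and the explicit element $h$), but it diverges from the paper at precisely the step the paper itself flags as the delicate one. The paper observes that the isomorphism $K_2\cong\Hom(\circ_{2r+1}K_2,C_1)$ only yields $\lambda^{2r+2}=1$, and it rules out $\lambda=-1$ by a coordinate-free geometric argument: $K_2$ is tangent at $0$ to the curve $\bar{\A}_{2r}(f)$, which away from $0$ is immersed by $f$ into the target, so a nontrivial action of $\mathcal{R}\hat G$ on $K_2$ would force $\mathcal{L}\hat G$ to move $f(\bar{\A}_{2r}(f))$ nontrivially, contradicting $\mathcal{L}\hat G=\{\id\}$. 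You instead rule out $\lambda=-1$ by matching the coefficient of $t_1x$ in $f_{2r+1}\circ\mathcal{R}_g=f_{2r+1}$; this is a valid computation (invariance of $K_1$ and $K_2$ forces the $\vec x$-component of $\mathcal{R}_g$ to be free of $x$, so the only source of the monomial $t_1x$ is $t_1\cdot(\lambda x+\cdots)$), and it is closer to the ``standard approach'' via normal forms that the paper explicitly says it avoids. What the paper's route buys is independence of coordinates: your argument tacitly assumes that the Bochner linearizing coordinates can be taken compatible with the polynomial normal form of $f$, which is true (and can be made intrinsic via Boardman's second intrinsic derivative, of which your $t_1x$-coefficient is the relevant component) but deserves a sentence of justification; what your route buys is a completely explicit, self-contained verification. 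The remaining parts --- the check that $h$ preserves $y_{2r+1}$ and acts nontrivially on the target, the determinant count $\det\mathcal{R}_h=(-1)\cdot(-1)^{r+1}=(-1)^r$, and the redundant but harmless check that no factor-swapping symmetry exists when $a=b$ (redundant because Lemma~\ref{l:3} already bounds $G/\hat G$ by $\Z_2$) --- agree with the paper.
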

\begin{proof} It is easily verified that $h$ belongs to the relative symmetry group $G$ and $\mathcal{L}_h$ is non-trivial. Hence to complete the proof of Lemma~\ref{l:8.3} it suffices to show that the kernel $\hat{G}$ of the left representation of $G$ is isomorphic to $[\O_a\times \O_b]^+$.

For $\A_{2r+1}$ the argument of Lemma~\ref{l:8.2} fails to show that the action of $\mathcal{R}\hat{G}$ is trivial on $K_2$. On the other hand, the vector space $K_2$ is tangent at $0$ to the curve $\bar{\A}_{2r}(f)$, which outside of $0$ immerses by $f$ into $\R^{2r+1}$. Consequently, if the action of $\mathcal{R}\hat{G}$ is non-trivial on $K_2$, then the action of $\mathcal{L}\hat{G}$ takes $\bar{\A}_{2r}(f)$ into itself non-trivially. This contradicts the definition of $\hat{G}$. Thus, the action of $\mathcal{R}\hat{G}$ is trivial on $K_2$ and therefore there is an action of $\mathcal{R}\hat{G}$ on $K_1/K_2$. The rest of the argument is similar to the argument in the proof of Lemma~\ref{l:8.2}.    
\end{proof}

Let $f$ be an arbitrary minimal Morin map germ, and $G$ its relative symmetry group. The kernel $K$ of $df$ at $0$ is invariant under the action of $\mathcal{R}G$. In particular the restriction of the action of $\mathcal{R}G$ to the kernel $K$ is  a new representation of $G$ which we call the \emph{kernel representation} of the relative symmetry group $G$. 

\begin{lemma}\label{l:8.4} For every minimal Morin map germ $f$, the kernel representation of $G$ is faithful.  
\end{lemma}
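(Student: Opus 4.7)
The plan is to reduce the statement to a case check using the explicit descriptions of the relative symmetry groups obtained in Lemmas~\ref{l:8.1}--\ref{l:8.3}, together with a general orthogonality argument for the subgroup $\hat{G}$ (the kernel of the left representation).

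First I would dispose of the low-corank cases. For $\A_0$ and $\A_1$, the differential $df|_0$ vanishes identically, so $K$ is the entire source vector space; the kernel representation then coincides with the right representation and is faithful by Lemma~\ref{l:7.4}. So one may assume the singularity type is $\A_r$ for some $r\ge 2$. Applying the Bochner linearization theorem~\ref{l:Boch}, I choose coordinates in which $\mathcal{R}G$ acts linearly; $K=\ker df|_0$ is then a linear $\mathcal{R}G$-invariant subspace. Reading off the canonical forms from Section~\ref{s:3}, a direct computation of $df|_0$ in the coordinates $(x_1,\dots,x_{a+b},x,t_1,\dots,t_{2r-\varepsilon})$ shows that $K$ is exactly the span of $(x_1,\dots,x_{a+b},x)$, while $K^{\perp}$ is the span of the parameter coordinates $t_i$.

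Next I would split into two subcases according to whether $g\in\hat{G}$ or $g$ lies in the non-trivial coset $G\setminus\hat{G}$ (recall that $G/\hat{G}$ is at most $\Z_2$, by Lemma~\ref{l:3}). For $g\in\hat{G}$ with $\mathcal{R}_g|_K=\id_K$: since $\mathcal{L}_g=\id$, the image of $df|_0$ is fixed pointwise, and under the natural identification of $V/K$ with $\mathrm{Im}(df|_0)$ this forces $\mathcal{R}_g$ to act trivially on $V/K$. But $\mathcal{R}_g\in \O_m$ is orthogonal and preserves $K$, hence also $K^{\perp}$; trivial on $K$ and on $V/K\cong K^{\perp}$ implies $\mathcal{R}_g=\id$, and then $g=\id$ by Lemma~\ref{l:7.4}.

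Finally, for the non-trivial coset the argument is a direct inspection of the explicit generators exhibited in Lemmas~\ref{l:8.1}--\ref{l:8.3}. In each case (the coset representative is $r_{2c}\circ h_{2c}$, $h_{2c+1}$, $h_{2c}\times\tau_{2r}$, $(r_{2c+1}\circ h_{2c+1})\times\tau_{2r}$, or $h$ depending on type) the representative already acts by $x\mapsto-x$ on the kernel direction $x\in K$, or by a visible reflection on $(x_1,\dots,x_{a+b})\subset K$; since any element of $\hat{G}$ acts on $x$ trivially and on $(x_1,\dots,x_{a+b})$ by an element of $[{\O}_a\times{\O}_b]^+$, composing with any $g_0\in\hat{G}$ cannot cancel these non-trivial actions. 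Hence no element of $G\setminus\hat{G}$ acts trivially on $K$, finishing the proof. The main obstacle is purely bookkeeping: correctly identifying $K$ in each canonical form and checking that the extra generators act non-trivially on it; there is no deep step, as the hard structural work was done in computing $G$ itself.
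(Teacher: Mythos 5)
Your argument is correct and is essentially the verification the paper intends when it states that the lemma ``immediately follows from the computation of relative symmetry groups and their right representations'': you check, case by case from Lemmas~\ref{l:8.1}--\ref{l:8.3}, that the right representation restricted to $K=\ker df|_0$ is already faithful, using that $K$ is the whole source for $\A_0,\A_1$ and the span of $(x_1,\dots,x_{a+b},x)$ for higher Morin types. The only mild difference is that for elements of $\hat G$ you run the coimage/orthogonality argument (the same one used in the proof of Lemma~\ref{l:8.2} to show $\tilde G$ is trivial) instead of quoting directly that $\hat G=[{\O}_a\times{\O}_b]^+$ acts faithfully on $K_1/K_2\subset K$; both routes are fine.
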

\begin{proof} The statement of Lemma~\ref{l:8.4} immediately follows from the computation of relative symmetry groups $G$ and their right representations.  
\end{proof}

\addcontentsline{toc}{part}{Kazarian spectral sequence}

\section{Classifying loop space of $\A_i$-maps}\label{s:8}

We recall that an \emph{$\A_i$-map} is a smooth map of manifolds such that each of its singular points is of type $\A_j$ for some $j\le i$. The classifying loop space of $\A_i$-maps is defined to be the infinite loop space of a spectrum $\mathbf{A_i}$ for $i=0,1,...,\infty$. The $t$-th term of $\mathbf{A_i}$ is constructed by means of the universal oriented vector bundle $\xi_t: E_t\to \BSO_t$ of dimension $t$. Let $S_t=S_t(i)$ denote the space of maps $f\colon \R^{t+d}\to E_t$ such that 
\begin{itemize}
\item the image of $f$ belongs to a single fiber $E_t|b$ of $\xi_t$ over some point $b\in \BSO_t$,
\item $f(0)$ is the zero in the vector space $E_t|b$, and
\item the map germ $f\colon (\R^{t+d}, 0)\to (E_t|b, 0)$ is an $\A_i$-map germ.  
\end{itemize}
The space $S_t$ is endowed with an obvious topology so that the map $\pi_t: S_t\to \BSO_t$ that takes $f$ to $b$ has a structure of a fiber bundle. By definition the $t$-th term $[\mathbf{A_i}]_t$  of the classifying loop space of $\A_i$-maps is the Thom space $\Th\pi_t^*\xi_t$ of the fiber bundle $\pi_t^*\xi_t$
over $S_t$. In other words, the spectrum in question is defined to be the 
Thom spectrum with the $t$-th term $\Th\pi_t^*\xi_t$. To simplify notation we will write $\mathbf{A}$ for $\mathbf{A}_{\infty}$. 

The spectrum $\mathbf{A_i}$ comes with a natural filtration; it is filtered by the subspectra $\mathbf{A_j}$ with $j\le i$. The spectral sequence yielded by the filtration is called the {\it Kazarian spectral sequence}. 

Next we give a description of the filtration; this description is essentially based on an observation of Kazarian (e.g., see \cite{Kaz3}). For each $i$ there are commutative diagrams
\[
   \begin{CD}
   S_t(i)@>j_t>> S_{t+1}(i)\\
   @V\pi_tVV @VV\pi_{t+1}V \\
   \BSO_t @>i_t>>\BSO_{t+1}.
   \end{CD}
\]
The map $i_t$ in the diagram is the canonical inclusion, while $j_t$ sends a map germ $f\colon \R^{t+d}\to E|b$ to the map germ of the composition
\[
   \R^{t+d+1}\equiv \R^{t+d}\times \R\stackrel{f\times \id_{\R}}\longrightarrow E_t|b\times \R\stackrel{\approx}\longrightarrow E_t|i_t(b),
\] 
in which we identify the fiber $E_t|b\times \R$ of the vector bundle $\xi_t\oplus\varepsilon$ over $b$ with the fiber $E_t|i_t(b)$ of the vector bundle $\xi_{t+1}|\BSO_t$ (for details see \cite{Sa}). Here and below $\varepsilon$ stands for the trivial line bundle over an appropriate space. 

Kazarian observed a beautiful fact that the direct limit $S(i)$ of spaces $S_t(i)$ with respect to inclusions $j_t$ is the disjoint union of spaces $S(\alpha)$, one for each $\A_i$-singularity type $\alpha$. In its turn, for each singularity type $\alpha$, the corresponding space $S(\alpha)$ is the classifying space of the relative symmetry group of the minimal map germ of type $\alpha$ (see Theorem~\ref{th:11.2}). This allows us to give a nice description of the spectrum $\mathbf{A_i}$ in terms of stable vector bundles. 

\begin{definition}\label{d:v}
A \emph{stable vector bundle} of dimension $n$ over a CW complex $X$ is an equivalence class $\xi\ominus \eta$ of pairs $(\xi, \eta)$ of vector bundles over $X$ with $n=\dim \xi-\dim \eta$. The equivalence relation is generated by the equivalences of the form $(\xi,\eta)\sim (\xi',\eta')$ for vector bundles with $\xi\oplus\eta'=\xi'\oplus \eta$. For any choice of $n$, stable vector bundles of dimension $n$ over $X$ are in bijective correspondence with homotopy classes $[X, \BO]$. Oriented stable vector bundles of dimension $n$ are defined similarly. Over $X$ these are in bijective correspondence with homotopy classes $[X, \BSO]$. 

Thus we may identify a stable vector bundle of dimension $-d$ with a (homotopy class of a) map $f\colon B\to \BSO$. Such a map determines a cohomology theory $h^*$; the $(n+d)$-th term of the spectrum of $h^*$ is the Thom space of the vector bundle $(f|B_n)^*\xi_n$ where $B_n=f^{-1}(\BSO_n)$. 
\end{definition}

Thus, the spectrum $\mathbf{A_i}$ is equivalent to the spectrum corresponding to a stable vector bundle $f\colon \sqcup S(\alpha)\to \BSO$ where $\alpha$ ranges over $\A_i$-singularity types. Our next step is to determine the map $f$. In the next section we determine the restriction $f|S(\alpha)$ where $\alpha$ is one of the $\A_i$-singularity types.

\section{Kazarian Theorem}\label{s:kaz}

 It follows from an observation due to Kazarian that for a sufficiently big $t$, the factor space $[\mathbf{A_j}]_t/[\mathbf{A_{j-1}}]_t$ is an approximation of the Thom space of a vector bundle over the classifying space $BG$, where $G$ is the relative symmetry group of $\A_j$-map germ. The Kazarian observation greatly facilitates
the computation of cohomology groups of the classifying loop spaces of maps with prescribed singularities. 

Let $\alpha$ be an $\A_i$ singularity type. Its minimal map germ is represented by a map $\R^{n+d}\to \R^n$ for some integer $n$. Let $G$ denote the relative symmetry group of $\alpha$. Let $\xi=\xi_{\alpha}$ denote the vector bundle over $S(\alpha)$ of dimension $d+n$ associated with the right representation of the group $G$. Similarly, let $\eta$ denote the vector bundle over $S(\alpha)$ of dimension $n$ associated with the left representation of $G$. We also have counterparts of $\xi$ and $\eta$, denoted by the same symbols, over the space $BG_{\alpha}$. 

Given a vector bundle $\xi$ over a space $X$ and a map $e: Y\to X$, we will often write $\xi$ for the fiber bundle $e^*\xi$ in order to simplify the notation. We put
\begin{equation}\label{eq:11.1}
   {\BO}_{\infty+n+d}\times {\BO}_{\infty+n}\colon = \lim_{t\to\infty} {\BO}_{t+n+d}\times {\BO}_{t+n}.  
\end{equation}
There is a double cover 
\begin{equation}\label{eq:11.2}
   {\BO}_{\infty+n+d}\tilde{\times} {\BO}_{\infty+n}  \longrightarrow    {\BO}_{\infty+n+d}\times {\BO}_{\infty+n}  
\end{equation}
classified by the first Stiefel-Whitney class $w_1\times 1 +1\times w_1$, its total space is the classifying space $BH$ of the group 
\[
H=[{\O}_{\infty+n+d}\times {\O}_{\infty+n}]^+\colon= \lim_{t\to \infty} [{\O}_{t+n+d}\times {\O}_{t+n}]^+. 
\]
Let $L$ denote the space of all equivalence classes of map germs $(\R^{n+d+t},0)\to (\R^{n+t},0)$ of type $\alpha$ under the equivalence $f\sim f\times \id_{\R}$. Thus an element in $L$ is a map germ 
\[
(\R^{\infty+n+d},0)\to (\R^{\infty+d})
\] 
represented by a product $f\times \id$ of a map germ $f$ on a finite dimensional vector space and the identity map germ $\id$ on an infinite vector space. The group $H$ acts on $L$ by right-left coordinate changes, and the Borel construction yields a fiber bundle 
\begin{equation}\label{eq:11.3}
   EH\times_{H} L \longrightarrow {\BO}_{\infty+n+d}\tilde{\times} {\BO}_{\infty+n},
\end{equation}  
where $EH$ is the total space of the universal principle $H$-bundle. 

The next Kazarian theorem is well-known; we recall its short proof for completeness.  

\begin{theorem}[Kazarian]\label{th:Kazarian} The space $EH\times_H L$ is homotopy equivalent to the classifying space $BG_{\alpha}\times \BO$. 
\end{theorem}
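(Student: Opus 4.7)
The plan is to realize $L$ as a homogeneous $H$-space $H/S$, identify the homotopy type of the isotropy group $S$ with $G_{\alpha}\times \O_{\infty}$, and then pass to classifying spaces.

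First I would verify that $H$ acts transitively on $L$. By construction any two points of $L$ are stabilizations $f_{1}\times \id$ and $f_{2}\times \id$ of minimal germs of type $\alpha$; by definition of singularity type these are $\mathcal{RL}$-equivalent via some pair of diffeomorphism germs $(\phi,\psi)$. The only obstruction to $(\phi,\psi)$ lying in $H$ is the positivity of $\det(d\phi)\cdot \det(d\psi)$, and since $L$ is defined in the direct limit $t\to \infty$ we may always compose $\phi$ and $\psi$ with the reflection $-\id_{\R}$ in one and the same stabilization factor; this preserves $f\times \id$ and flips the sign of the Jacobian product. Hence $L=H/S$, where $S$ is the isotropy group of a fixed basepoint $f_{0}\times \id_{\R^{\infty}}$.

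Second, I would pin down the homotopy type of $S$. By the J\"{a}nich--Wall theorem cited above, any compact subgroup of the diffeomorphism stabilizer is contained in a maximal compact one, and a standard Palais-type deformation retraction shows that $S$ is homotopy equivalent to such a maximal compact subgroup $K\subset S$. The subgroup $K$ contains the relative symmetry group $G_{\alpha}$ acting on the $f_{0}$-factor together with the diagonal copy of $\O_{\infty}$ acting by simultaneous orthogonal rotations on the stabilization $\R^{\infty}$ in source and target (which preserves $\id_{\R^{\infty}}$); these two subgroups commute and intersect trivially. For the opposite inclusion, Bochner's theorem linearizes the $K$-action, so an element $(\phi,\psi)\in K$ is a pair of orthogonal transformations: $\phi$ preserves the kernel of the linear part of $d(f_{0}\times \id)$ while $\psi$ preserves its image, and the compatibility $\psi\circ(f_{0}\times \id)=(f_{0}\times \id)\circ \phi$ combined with the explicit Taylor structure of $f_{0}$ (cf.\ Section~\ref{s:rsg}) and the faithfulness of the kernel representation of $G_{\alpha}$ (Lemma~\ref{l:8.4}) forces $\phi$ and $\psi$ to split as $\phi_{1}\oplus \phi_{2}$ and $\psi_{1}\oplus \psi_{2}$ along the essential/stabilization decomposition. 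Substituting these splittings back yields $\psi_{1}\circ f_{0}=f_{0}\circ \phi_{1}$ and $\psi_{2}=\phi_{2}$, so $K=G_{\alpha}\times \O_{\infty}$.

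Granting the two steps above, the conclusion is formal:
\[
EH\times_{H}L \;=\; EH\times_{H}(H/S) \;=\; EH/S \;\simeq\; EH/K \;=\; BK \;\simeq\; BG_{\alpha}\times \BO.
\]

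The main obstacle is the splitting step in the second paragraph: showing that no element of a maximal compact subgroup genuinely mixes the essential coordinates of $f_{0}$ with the stabilization factor. This should follow the pattern of Lemmas~\ref{l:8.1}--\ref{l:8.3}, using Bochner linearization together with the higher-order Taylor terms of $f_{0}$ (which couple the essential source coordinates to the non-stabilization target direction in a way that a nontrivial mixing cannot be realized by a linear orthogonal map on the target) to rule out such mixing.
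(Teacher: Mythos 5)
Your first step contains a genuine error. The group $H=[\O_{\infty+n+d}\times \O_{\infty+n}]^+$ consists of pairs of \emph{linear orthogonal} coordinate changes, whereas two germs of type $\alpha$ in $L$ are a priori related only by a pair of arbitrary diffeomorphism germs $(\phi,\psi)$. Arranging $\det(d\phi)\cdot\det(d\psi)>0$ by inserting a reflection in a stabilization factor does not put $(\phi,\psi)$ into $H$; the real obstruction is nonlinearity. In fact $H$ does not act transitively on $L$: the $H$-orbit $L'=Hf_0$ of the standard minimal germ is in general a proper subset of $L$ (for instance, $f_0$ precomposed with a nonlinear diffeomorphism germ still has type $\alpha$, hence lies in $L$, but need not lie in $Hf_0$). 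Consequently the identification $L=H/S$ is false as stated, and the formal chain $EH\times_H L=EH/S$ collapses.

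The missing idea --- which is the actual content of the paper's proof --- is a deformation-retract argument in place of transitivity: the \emph{full} group of relative-orientation-preserving right-left coordinate changes does act transitively on $L$, and $H$ is a topological deformation retract of that group; hence the orbit $L'=Hf_0$ is a deformation retract of $L$, and $EH\times_H L\simeq EH\times_H L'=EH/\mathrm{Stab}_H(f_0)$. Once this is in place, the rest of your argument matches the paper's: the stabilizer of $f_0\times\id$ in $H$ is $G_\alpha\times\O$, acting respectively on the essential coordinates and diagonally on the stabilization factor, and $EH/(G_\alpha\times\O)\simeq BG_\alpha\times\BO$. Your second paragraph, ruling out elements of the stabilizer that mix the essential coordinates with the stabilization factor, is a reasonable elaboration of the stabilizer computation (which the paper merely asserts), but it does not compensate for the incorrect transitivity claim.
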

\begin{proof} The space $L$ is the orbit of the standard minimal map germ $f$ of type $\alpha$ under the action of the group of all right-left coordinate changes $(\alpha, \beta)$ that preserve the relative orientation, i.e.,  
\[
\mathop{\mathrm{det}}d\alpha\cdot \mathop{\mathrm{det}}d\beta>0.
\] 
Since $H$ is a topological deformation retract of this group, the orbit $L'=Hf$ of the map germ $f$ is a deformation retract of $L$. In particular, there is a homotopy equivalence 
\[
EH\times_{H}L\simeq EH\times_H L'.
\] 
The stabilizer of $f$ under the action of $H$ is the group $G_{\alpha}\times \O$. Consequently, 
\[
EH\times_{H} L\simeq EH\times_{H} L'= EH/(G_{\alpha}\times {\O})\simeq BG_{\alpha}\times \BO,
\] 
since $L\simeq L'=H/(G_{\alpha}\times {\O})$. 
\end{proof}

We note that in view of the argument in the proof of the Kazarian Theorem, the map (\ref{eq:11.3}) factors by
\[
     BG_{\alpha}\times \BO \xrightarrow{(\xi, \eta) \times \id_{\BO}} ({\BO}_{n+d}\times {\BO}_n)\times \BO\xrightarrow{(\xi_{n+d}\oplus \gamma)\tilde{\times}(\xi_n\oplus\gamma)}  {\BO}_{\infty+n+d}\tilde{\times} {\BO}_{\infty+n},
\]
where $\gamma$ denotes the universal vector bundle over the space $\BO$. 
Let us consider a commutative diagram
\[
\begin{CD}
BG_{\alpha}\times \BO @>p >> S(\alpha) \\
@VVV @V\lim \pi_tVV \\
{\BO}_{\infty+n+d}\tilde{\times} {\BO}_{\infty+n} @>\eta\ominus\xi>> \BSO,
\end{CD}
\]
where the right vertical map $\lim \pi_t$ is the direct limit of projections $\pi_t$. The bottom horizontal map is the one classifying the stable vector bundle $\eta\ominus \xi$ of dimension $-d$. Finally, the top horizontal map $p$ takes a map germ
\[
     f\times \id_{\gamma}\colon (\xi\oplus \gamma)_x\longrightarrow (\eta\oplus \gamma)_x
\]
over $x\in BH$ to the map germ 
\[
    f\times \id_{\xi^{\perp}}\colon \R^{\infty}=\xi_x\oplus \xi_x^{\perp} \longrightarrow \eta_x\oplus \xi_x^{\perp},
\]
where $\xi^{\perp}$ is the negative of $\xi$; over a (finite dimensional) plane $l$ in $\R^{\infty+n+d}$ representing a point in $\BO_{\infty+n+d}$ the fiber of $\xi$ consists of vectors in $l$, while the fiber of $\xi^{\perp}$ consists of vectors in $\R^{\infty+n+d}$ orthogonal to $l$. The projection $p$ is a trivial fiber bundle with fiber $\BO$. Consequently, by taking the restriction of $p$ to a slice $BG_{\alpha}\times\{*\}$, we deduce the following form of the Kazarian Theorem.

\begin{theorem}\label{th:11.2}
The spaces $BG_{\alpha}$ and $S(\alpha)$ are homotopy equivalent.   
\end{theorem}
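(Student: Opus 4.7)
The plan is to derive the theorem directly from Theorem~\ref{th:Kazarian} together with the analysis of the commutative square preceding the statement. Theorem~\ref{th:Kazarian} identifies the homotopy type of $EH \times_H L$ with $BG_{\alpha}\times \BO$, and the top map $p\colon BG_{\alpha}\times \BO \to S(\alpha)$ of that diagram exhibits this space as a fibration over $S(\alpha)$. I would argue that $p$ is a trivial fibration with fiber $\BO$, and then extract the desired homotopy equivalence by restricting $p$ to the slice $BG_{\alpha}\times\{*\}$.

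First I would analyse the fibers of $p$. Unpacking the definition, over a point $[g]\in S(\alpha)$ the preimage $p^{-1}([g])$ consists of pairs $(x,[V])$ such that the stabilised germ $f_x\times \id_{\xi_x^{\perp}}$ represents $[g]$. Any such stabilisation is pinned down by a choice of orthogonal complement of $\xi$ in $\R^{\infty}$, and up to homotopy these complements are parametrised by $\BO$, so each fiber is homotopy equivalent to $\BO$. Local trivialisations of $p$ come from local sections of the bundle $\xi$ over $S(\alpha)$, and these patch into a global trivialisation precisely because the $\BO$-factor of the domain and the \emph{complementary} $\BO$-factor in the fiber of $p$ both classify the same stable oriented bundle $\eta\ominus\xi$ appearing in the bottom row of the diagram.

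Second, with $p$ realised as a trivial fibration with fiber $\BO$, I would pick the canonical basepoint $*\in\BO$ and consider the slice $BG_{\alpha}\times\{*\}$. A compatible trivialisation $\Phi\colon BG_{\alpha}\times \BO \xrightarrow{\cong} S(\alpha)\times \BO$ over $S(\alpha)$ sends this slice onto the graph of some map $S(\alpha)\to \BO$; since $\BO$ has a non-degenerate basepoint, this graph is homotopic inside $S(\alpha)\times \BO$ to $S(\alpha)\times\{*\}$, on which the first-factor projection is a homeomorphism. It then follows that the composition $BG_{\alpha}\cong BG_{\alpha}\times\{*\}\hookrightarrow BG_{\alpha}\times \BO \xrightarrow{p} S(\alpha)$ is a homotopy equivalence, giving the conclusion.

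The main obstacle is the triviality step for $p$. Two trivial fibrations with fiber $\BO$ on a common total space do not in general have homotopy equivalent bases, so the argument cannot be purely formal; it must exploit the concrete geometry of the construction. The point is that the $\BO$-factor in the domain $BG_{\alpha}\times \BO$ and the $\BO$-worth of orthogonal complements parametrising the fiber of $p$ are not independent parameters but both record the same stable bundle $\eta\ominus\xi$ classified by the bottom row of the diagram. It is this coincidence — enforced by the commutativity of the square and by the Kazarian identification of Theorem~\ref{th:Kazarian} — that produces a trivialisation of $p$ compatible with the product structure on the domain, and without which the slice argument would not yield an honest homotopy equivalence.
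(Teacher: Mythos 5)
Your proposal follows the paper's own proof of Theorem~\ref{th:11.2} essentially step for step: the paper likewise deduces the statement from Theorem~\ref{th:Kazarian} by asserting that $p\colon BG_{\alpha}\times\BO\to S(\alpha)$ is a trivial fiber bundle with fiber $\BO$ and then restricting $p$ to a slice $BG_{\alpha}\times\{*\}$. The one point where you elaborate beyond the paper --- the ``graph'' justification of the slice step --- is slightly circular as phrased (the image of the slice is a graph over $S(\alpha)$ only if $p$ restricted to the slice is already bijective), but the paper itself asserts the compatibility of the trivialization with the product structure without further argument, so your proposal matches the published proof in both approach and level of detail.
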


\begin{remark} The homotopy equivalence that we constructed should be compared with the homotopy equivalence in the paper \cite{Sz}. The homotopy equivalence in \cite{Sz} is not suitable for our purpose; it is not compatible with canonical vector bundles over $BG_{\alpha}$ and $S(\alpha)$, and in particular, the triangular diagram below in the proof of Lemma~\ref{l:11.3} is not homotopy commutative if taken with the slanted map from the paper \cite{Sz}. 
\end{remark}

\begin{lemma}\label{l:11.3} The map $f|S(\alpha)\colon S(\alpha)\to \BSO$ is classifying the stable vector bundle $\eta\ominus \xi$ of dimension $-d$.  
\end{lemma}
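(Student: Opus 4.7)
The plan is to read the statement directly off the commutative square constructed immediately before Theorem~\ref{th:11.2}. Recall that the spectrum $\mathbf{A_i}$ is by definition the Thom spectrum of the direct limit $\lim \pi_t$, so the map $f|S(\alpha)\colon S(\alpha)\to \BSO$ appearing in the lemma is nothing but the stabilization $\lim\pi_t$ restricted to the path-component $S(\alpha)\subset \lim S_t(i)$. Thus the task reduces to showing that $\lim\pi_t$, viewed as a classifying map, picks up the virtual bundle $\eta\ominus\xi$.

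First I would pull back everything to $BG_\alpha$ via the homotopy equivalence of Theorem~\ref{th:11.2}. More precisely, restrict the square
\[
\begin{CD}
BG_{\alpha}\times \BO @>p >> S(\alpha) \\
@VVV @V\lim \pi_tVV \\
{\BO}_{\infty+n+d}\tilde{\times} {\BO}_{\infty+n} @>\eta\ominus\xi>> \BSO
\end{CD}
\]
to the slice $BG_\alpha\times\{*\}$. By the proof of Theorem~\ref{th:11.2}, $p|_{BG_\alpha\times\{*\}}\colon BG_\alpha\to S(\alpha)$ is a homotopy equivalence, and by the factorization recalled just after Theorem~\ref{th:Kazarian}, the left vertical arrow evaluated on this slice is the map $BG_\alpha\xrightarrow{(\xi,\eta)}\BO_{n+d}\times\BO_n\hookrightarrow \BO_{\infty+n+d}\tilde\times \BO_{\infty+n}$ composed with the classifier of $\eta\ominus\xi$; the total composition is therefore the classifying map of the stable bundle $\eta\ominus\xi$ of virtual dimension $-d$ over $BG_\alpha$.

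By commutativity of the square, the composition $(\lim\pi_t)\circ p|_{BG_\alpha\times\{*\}}$ is homotopic to the bottom-left composition, hence it classifies $\eta\ominus\xi$ as well. Since $p|_{BG_\alpha\times\{*\}}$ is a homotopy equivalence with inverse realizing the identification $S(\alpha)\simeq BG_\alpha$ of Theorem~\ref{th:11.2}, we conclude that $f|S(\alpha)=\lim\pi_t$ is, under this identification, the classifier of $\eta\ominus\xi$.

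I expect the only subtlety to be bookkeeping of orientations and of the trivial $\BO$-factor: one has to check that the $\BO$ slot of $p$ really does factor out trivially (so that restricting to the slice loses no information about the classifying data) and that the double cover appearing in $(\ref{eq:11.2})$ is compatible with the orientation of $\eta\ominus\xi$ guaranteed by the orientability of the relative symmetry action on $\xi\ominus\eta$. Both points are essentially built into the construction of the square recalled above, so the argument is a verification rather than a new computation.
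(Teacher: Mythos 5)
Your argument is correct and is essentially the paper's own proof: the commutative triangle the paper uses (with the slanted homotopy equivalence $BG_\alpha\to S(\alpha)$ of Theorem~\ref{th:11.2}, the horizontal classifier of $\eta\ominus\xi$, and the vertical map $f|S(\alpha)$) is exactly the restriction of the square to the slice $BG_\alpha\times\{*\}$ that you describe. The bookkeeping points you flag at the end are indeed built into the construction of that square, so nothing further is needed.
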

\begin{proof}
We have constructed a commutative diagram of maps 
\[
\xymatrix{
                                           &       &  S(\alpha)      \ar[d]^{f|S(\alpha)} & \\
                                   & BG_{\alpha}\ar[r]^{\eta\ominus \xi} \ar[ur]   &  \BSO,                    &
}
\]
where the horizontal map $\eta\ominus \xi$ is the map classifying the stable vector bundle $\eta\ominus \xi$ of dimension $-d$, while the slanted map takes a map germ $f: \xi_x\to \eta_x$ to the map germ 
\[
       \R^k=\varepsilon^k_x\stackrel{\equiv}\longrightarrow \xi_x\oplus \xi_x^{\perp} \xrightarrow{f_x\times \id} \eta_x\oplus \xi_x^{\perp} 
\]
where $\id$ is the identity map of the vector space $\xi_x^{\perp}$. Since the slanted map is a homotopy equivalence constructed (see Theorem~\ref{th:11.2}), this concludes the proof of Lemma~\ref{l:11.3}.
\end{proof}

\begin{theorem}\label{th:16.5} The normal bundle of $S(\alpha)$ in $S(i)$ is isomorphic to $\xi$. 
\end{theorem}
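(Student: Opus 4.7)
The plan is to identify the normal bundle $\nu$ of $S(\alpha)$ in $S(i)$ by constructing an explicit $G_\alpha$-equivariant transversal slice at a distinguished point and then applying the Borel construction. By Theorem~\ref{th:11.2}, $S(\alpha)$ is homotopy equivalent to $BG_\alpha$ via the orbit of the standard minimal germ $[f]\in S(\alpha)$ of type $\alpha$, whose isotropy under the right-left action of $H$ is precisely $G_\alpha$. Hence $\nu$ is the vector bundle over $S(\alpha)\simeq BG_\alpha$ associated with the $G_\alpha$-representation on a transversal slice to $S(\alpha)$ in $S(i)$ at $[f]$; the goal is to show this representation is the right representation of $G_\alpha$ on $\R^{n+d}$, yielding $\nu\cong\xi$ by the very definition of $\xi$.

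For each $v\in\R^{n+d}$ set
\[
f_v\colon(\R^{n+d},0)\to(\R^n,0),\qquad f_v(x):=f(x+v)-f(v).
\]
Since the $\A_r$-singular locus of the standard germ $f$ consists only of the origin, the germ $f_v$ at $0$ coincides (up to a target translation) with the germ of $f$ at $v$ and is of singularity type strictly lower than $\alpha$ whenever $v\ne 0$ is small. Hence $\phi\colon\R^{n+d}\to S(i)$, $\phi(v):=[f_v]$, satisfies $\phi(0)=[f]$ and $\phi(v)\notin S(\alpha)$ for all sufficiently small $v\ne 0$.

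The $G_\alpha$-equivariance of $\phi$ is immediate. Writing $g\in G_\alpha$ as a pair $(\alpha_g,\beta_g)$ of Bochner-linearized coordinate changes with $\beta_g\circ f=f\circ\alpha_g$, one has
\[
g\cdot f_v(y)=\beta_g\!\bigl(f(\alpha_g^{-1}y+v)-f(v)\bigr)=f(y+\alpha_g v)-f(\alpha_g v)=f_{\mathcal{R}_g(v)}(y),
\]
so $\phi$ intertwines the right representation of $G_\alpha$ on $\R^{n+d}$ with the action of $G_\alpha\subset H$ on $S(i)$ fixing $[f]$. The codimension of $S(\A_r)$ in $S(i)$ equals $n+d$ by the standard Boardman jet-space count~\cite{Bo}, matching $\dim\R^{n+d}$. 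Provided $\phi$ is transverse at the origin to $S(\alpha)$, its image locally parametrizes a $G_\alpha$-equivariant transversal slice at $[f]$, and extending over $S(\alpha)$ via the Borel construction identifies $\nu$ with the vector bundle associated with the right representation, i.e.\ with $\xi$.

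The main technical point is verifying that the first-order deformation $h_w(x):=(df(x)-df(0))\cdot w$ associated with $\phi$ is not tangent to the orbit $H\cdot[f]$ for any $w\ne 0$. A general orbit-tangent deformation has the form $-df\cdot X+Y\circ f$ with germ-level vector fields $X,Y$ vanishing at the origin, whose leading linear-in-$x$ coefficient is $-df(0)\,X'(0)+Y'(0)\,df(0)$; projecting to the cokernel of $df(0)$ kills both contributions. On the other hand, the leading term of $h_w$ is the bilinear expression $d^2f(0)(\,\cdot\,,w)$, whose projection to the cokernel restricted to $\ker df(0)$ is precisely the non-degenerate Morin quadratic form $Q(a,b)$ paired with $w$ (see equation (\ref{e:3.2})), and hence is nonzero for any $w\ne 0$ with nontrivial kernel component; a parallel argument using the higher-order normal form of $\A_r$ treats the complementary component of $w$. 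Thus $d\phi(0)$ has trivial kernel modulo $T_{[f]}S(\alpha)$, and combined with the dimension match this yields the desired transversality and completes the proof.
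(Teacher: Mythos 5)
Your proof takes essentially the same route as the paper: the paper also realizes the normal slice to $S(\alpha)$ at a germ of type $\alpha$ by varying the point of the source at which the germ is taken (it does this globally, choosing a continuous family $u=\{u_x\}\colon E_{\xi}\to E_{\eta}$ of representatives over $BG_{\alpha}$ and mapping $E_\xi$ into $S(i)$, where you work at the standard minimal germ and globalize by $G_\alpha$-equivariance and the Borel construction), and in both arguments the identification $\nu\cong\xi$ comes from the fact that this slice is the source $\R^{n+d}$ carrying the right representation. One caution about the extra detail you supply: the paper simply asserts transversality from the genericity (Mather stability) of the minimal germ, whereas your first-order verification, as stated, does not cover all directions. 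For $\A_r$ with $r\ge 2$ the direction $K_2=\langle\partial/\partial x\rangle$ lies \emph{inside} $\ker df(0)$, yet the pairing $d^2f(0)(\cdot,w)$ projected to the cokernel vanishes identically on the kernel for $w\in K_2$ (the Morin quadratic form $Q(a,b)$ does not involve $x$), so the case split ``nontrivial kernel component versus complementary component'' is drawn in the wrong place; detecting $w\in K_2$ requires comparing jets of order up to $r$ against the orbit tangent space, not a ``parallel'' second-order argument. This does not affect the correctness of the overall strategy --- the needed transversality is exactly the stability of the minimal Morin normal form, which is what the paper invokes --- but as written that step of your argument is incomplete.
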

\begin{proof}
Let $E_{\xi}$ and $E_{\eta}$ denote the total spaces of the fiber bundles $\xi$ and $\eta$ over $BG_{\alpha}$ respectively.  Each point $x$ of $BG_{\alpha}$ comes with a minimal map germ $E_{\xi}|x\to E_{\eta}|x$ of type $\alpha$. We may choose a map $u_x$ representing this map germ for each point $x$ in $BG_{\alpha}$ so that we get a continuous family $u=\{u_x\}$ of maps,
\[
    u\colon E_{\xi}\longrightarrow E_{\eta},
\]  
whose restriction to the fiber of $\xi$ over each point $x$ of the base is a smooth map into the fiber of $\eta$ over $x$. By the construction in the proof of Lemma~\ref{l:11.3} we obtain a commutative diagram
\[
\xymatrix{
                                           &       &  S(i)      \ar[d]^{f|S(i)} & \\
                                   & E_{\xi}\ar[r]^{\eta\ominus \xi} \ar[ur]^{s}   &  \BSO.                    &
}
\]
The slanted map $s$ takes each fiber $E_{\xi}|x$ of $\xi$ to a fiber of $f|S(i)$. Furthermore, since $u_x$ is an $\A(i)$-map, the restriction of $s$ to $E_{\xi}|x$ is transversal to the stratum $S(\alpha)$ in $S(i)$. Consequently, the map $s$ pulls the normal vector bundle of $S(\alpha)$ in $S(i)$ to $\xi$.

\end{proof}

\section{The first term of the Kazarian spectral sequence}

In what follows all cohomology groups are with coefficients in a field $\k$, where $\k$ is either $\Q$ or $\Z/p\Z$ for some prime $p\ne 2$. In this section we compute the first term of the Kazarian spectral sequence of Morin maps associated to the filtration 
\[
    \mathbf{A_0} \subset \mathbf{A_1} \subset \mathbf{A_2} \subset \cdots
\]
where $\mathbf{A_r}$ corresponds to Morin singular points of types $\A_{i}(a,b)$ and $\A^{\pm}_i(a,b)$ with $i\le r$. 

Given an $\A_r$ singularity type $\alpha$, we recall that $\xi_{\alpha}$ stands for the vector bundle over $BG_{\alpha}$ associated with the right representation of the relative symmetry group $G_{\alpha}$. The Thom space of $\xi_{\alpha}$ is denoted by $\Th\xi_{\alpha}$.

\begin{lemma}\label{l:12.1}
The first term of the Kazarian spectral sequence is of the form 
\[
   E_1^{0,i} = H^i({\BSO}_q), 
\]
\[
    E_1^{r,i} = \mathop{\oplus}_{\alpha} \tilde{H}^{i+r}(\Th\xi_{\alpha})
\]
for $r>0$, where $\alpha$ ranges over $\A_r$ singularity types.     
\end{lemma}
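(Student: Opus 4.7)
The plan is to identify the $E_1$-page by computing the cohomology of each successive cofiber in the filtered spectrum. From the formalism of the spectral sequence of a filtered spectrum, $E_1^{0,i}=H^i(\mathbf{A_0})$ and $E_1^{r,i}=H^{r+i}(\mathbf{A_r}/\mathbf{A_{r-1}})$ for $r\ge 1$, and the task is to identify these groups on the right hand sides.

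For $r=0$, the only $\A_0$ singularity type is the regular germ, whose relative symmetry group is $\SO_d$ (\S\ref{s:rsg}). By Theorem~\ref{th:11.2} the space $S(\A_0)$ is homotopy equivalent to $\BSO_d$, and by Lemma~\ref{l:11.3} the classifying map of the spectrum $\mathbf{A_0}$ classifies the stable vector bundle $-\xi_{\A_0}$, where $\xi_{\A_0}$ is the tautological $d$-plane bundle. The Thom isomorphism applied to this oriented stable bundle gives $H^i(\mathbf{A_0})=H^i(\BSO_q)$ with $q=d$.

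For $r>0$, I would work at finite level $t$ and then pass to the stable limit. The open stratum $S_t(r)\setminus S_t(r-1)$ decomposes as the disjoint union $\sqcup_{\alpha}S_t(\alpha)$ over $\A_r$-singularity types $\alpha$, and by the finite-$t$ counterpart of Theorem~\ref{th:16.5} each $S_t(\alpha)$ has normal bundle $\xi_\alpha$ in $S_t(r)$. Combining the Thom--Pontryagin collapse applied to the tubular neighborhoods with the fact that an ambient vector bundle restricted to a tubular neighborhood pulls back from its restriction to the core, I obtain
\[
[\mathbf{A_r}]_t/[\mathbf{A_{r-1}}]_t \;\simeq\; \bigvee_{\alpha}\Th\bigl(\xi_\alpha\oplus \pi_t^*\xi_t|_{S_t(\alpha)}\bigr).
\]
The Thom isomorphism for the oriented $t$-plane bundle $\pi_t^*\xi_t$ then absorbs the $t$-shift and yields
\[
\tilde{H}^{r+i+t}\bigl([\mathbf{A_r}]_t/[\mathbf{A_{r-1}}]_t\bigr) \;\cong\; \bigoplus_{\alpha}\tilde{H}^{r+i}(\Th\xi_\alpha),
\]
and passing to the limit $t\to\infty$ identifies $E_1^{r,i}$ with the claimed expression.

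The main technical point requiring care will be the justification of the Thom--Pontryagin collapse at finite $t$: namely, the existence of tubular neighborhoods of the strata $S_t(\alpha)$ in $S_t(r)$ that are compatible with the bundle $\pi_t^*\xi_t$ and behave coherently with respect to the inclusions $j_t\colon S_t\to S_{t+1}$. This amounts to a finite-level elaboration of the classifying-space arguments of \S\ref{s:kaz} and the proof of Theorem~\ref{th:16.5}; once this is carried out, the remaining steps are routine applications of excision and the Thom isomorphism.
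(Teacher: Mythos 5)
Your proposal is correct and follows essentially the same route as the paper: identify $E_1^{r,i}$ with $H^{i+r}(\mathbf{A_r},\mathbf{A_{r-1}})$, strip the $t$-shift with the Thom isomorphism for $\pi_t^*\xi_t$, and then use the stratification by singularity types together with Theorems~\ref{th:11.2} and \ref{th:16.5} to identify the quotient with $\bigvee_\alpha \Th\xi_\alpha$. The only (cosmetic) difference is order of operations: the paper applies the Thom isomorphism first and passes to the limit spaces $\lim S_{t,r}$ before invoking the tubular-neighborhood argument, which sidesteps the finite-$t$ compatibility issue you flag at the end.
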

\begin{proof} Let us, for example, compute $E_1^{r,i}$ for $r>0$ and some $i\ge 0$. To this end, let us recall that the $t$-th terms of spectra $\mathbf{A}_r$ and $\mathbf{A}_{r-1}$ are Thom spaces over spaces
$S_{t,r}$ and $S_{t,r-1}$ respectively. So, 
\[
   E_1^{r,i} = H^{i+r}(\mathbf{A_r},\mathbf{A_{r-1}})=\mathrm{lim}\, H^{i+r+t}([\mathbf{A_r}]_t, [\mathbf{A_{r-1}}]_t) 
   \]
   \[
  = \mathrm{lim}\, H^{i+r}(S_{t,r}, S_{t,r-1})= H^{i+r}(\mathrm{lim}\, S_{t,r}, \mathrm{lim}\, S_{t,r-1})= \oplus_{\alpha}\, \tilde{H}^{i+r}(\Th\xi_{\alpha}),
\]
where the last equality follows from the Theorems~\ref{th:11.2} and \ref{th:16.5}. 
\end{proof}


In the rest of the section we will compute the cohomology groups $\tilde{H}^{*}(\Th\xi_{\alpha})$. Two preliminary reminders are in order. 

\subsection{Smith exact sequence}

Let $S$ be a subgroup of a group $G$ of index $2$. Then the inclusion $S\to G$ gives rise to a double covering $p: BS\to BG$ of classifying spaces. We may replace $BG$  by a triangulated space weakly homotopy equivalent to $BG$, e.g., by the geometric realization of the singular simplicial set of $BG$; and assume that $BS$ is also triangulated so that $p$ takes simplices of $BS$ isomorphically to simplices of $BG$. Let $\xi$ be a vector bundle over $BG$. We may choose a CW structure on the Thom space of $\xi$ so that every cell in $\Th\xi$ is a cone over a simplex in $BG$. Similarly we may introduce a CW structure on the Thom space of $p^*\xi$. To simplify the notation we will denote the singular covering $\Th p^*\xi\to \Th \xi$ by the same symbol $p$. Then there is a commutative diagram 
\[
\xymatrix{
 &                      &  \tilde H^*(\Th p^*\xi) \ar[dr]^{\mathop{\mathrm{Tr}}}& \\
  &  \tilde H^*(\Th \xi) \ar[ur]^{p^*} \ar@{->}[rr]^{\times 2} &  &\tilde H^*(\Th \xi),   & 
}
\]
of cohomology groups with coefficients in a field of characteristic $\ne 2$. In the diagram $\mathop{\mathrm{Tr}}$ stands for the transfer homomorphism, which descends from a homomorphism of cochains; given a cochain $c$ on $\Th p^*\xi$, the value of the cochain $\mathop{\mathrm{Tr}}(c)$ on a cell $\Delta$ in $\Th \xi$ over a simplex in $BG$ is the sum of values of $c$ on the two cells $p^{-1}(\Delta)$ in $\Th p^*\xi$.   
In particular, the map $p^*$ is a monomorphism. 

On the other hand, let $\chi$ denote the self map of the Thom space of $p^*\xi$ that exchanges the sheets of the singular covering $p$. We note that $\chi$ maps cells into sells. Hence there is a short exact sequence 
\[
     0\longrightarrow C^*_{eq}(\Th p^*\xi)\longrightarrow C^*(\Th p^*\xi)\longrightarrow C^*_{skew}(\Th p^*\xi) \longrightarrow 0,
\]
where the second homomorphism is the inclusion of the subgroup of $\chi$-equivariant cochains and the third homomorphism is a projection that takes a cochain $x$ to $x-\chi$. The corresponding long exact sequence of cohomology groups splits since $C^*_{eq}(\Th p^*\xi)\approx C^*(\Th \xi)$ and the map $p^*$ is a monomorphism. Thus, there is a Smith exact sequence 
\[
   0\longrightarrow \tilde H^*(\Th \xi)\longrightarrow \tilde H^*(\Th p^*\xi)\longrightarrow \tilde H^*_{skew}(\Th p^*\xi) \longrightarrow 0.
\]  

In particular the following lemma takes place.

\begin{lemma}\label{l:4} The group $\tilde{H}^*(\Th \xi)$ can be identified with a subgroup of the group $\tilde{H}^*(\Th p^*\xi)$ so that an element $x$ of $\tilde{H}^*(\Th p^*\xi)$ is in $\tilde{H}^*(\Th \xi)$ if and only if $\chi^*x=x$. 
\end{lemma}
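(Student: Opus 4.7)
The plan is to combine the Smith exact sequence constructed in the preceding paragraph with the standard averaging trick, which is available since the coefficient field $\k$ has characteristic different from $2$. Indeed, the Smith sequence already yields that $p^{*}\colon \tilde{H}^{*}(\Th\xi)\to\tilde{H}^{*}(\Th p^{*}\xi)$ is a monomorphism, so it suffices to identify the image of $p^{*}$ with the $\chi^{*}$-invariant subspace.

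For the ``only if'' direction I would use the obvious identity $p\circ \chi = p$: any class of the form $p^{*}y$ satisfies
\[
\chi^{*}(p^{*}y) \;=\; (p\circ \chi)^{*}y \;=\; p^{*}y .
\]
For the ``if'' direction, let $x\in \tilde{H}^{*}(\Th p^{*}\xi)$ with $\chi^{*}x=x$. Choose a cocycle $c$ representing $x$; then $\chi^{*}c-c$ is a coboundary. Averaging at the cochain level yields
\[
c_{\mathrm{eq}} \;:=\; \tfrac{1}{2}\bigl(c+\chi^{*}c\bigr),
\]
which is well defined because $\mathrm{char}\,\k \ne 2$, is manifestly $\chi^{*}$-equivariant, and is cohomologous to $c$ (their difference equals $\tfrac{1}{2}(\chi^{*}c-c)$, a coboundary). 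Under the identification $C^{*}_{\mathrm{eq}}(\Th p^{*}\xi)\cong C^{*}(\Th\xi)$ used to set up the Smith sequence, $c_{\mathrm{eq}}$ corresponds to a cocycle $c'$ on $\Th\xi$, and its cohomology class $y\in \tilde{H}^{*}(\Th\xi)$ satisfies $p^{*}y=x$.

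The main (and in fact only) delicate point is ensuring that the averaging operation $\tfrac{1}{2}(c+\chi^{*}c)$ makes sense; this is precisely what forces the restriction $\mathrm{char}\,\k\ne 2$, which is the standing assumption throughout this section. Once this is granted, no further calculation is required, because injectivity of $p^{*}$ and the cochain-level identification of equivariant cochains with cochains on $\Th\xi$ were both established when the Smith exact sequence was derived.
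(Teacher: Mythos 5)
Your proof is correct and follows essentially the same route the paper intends: the lemma is extracted from the Smith exact sequence setup, with injectivity of $p^*$ coming from the transfer and the identification of the image of $p^*$ with the $\chi^*$-invariants coming from the cochain-level identification $C^*_{\mathrm{eq}}(\Th p^*\xi)\cong C^*(\Th\xi)$ together with division by $2$. Your explicit averaging $c\mapsto \tfrac12(c+\chi^{\#}c)$ is just a direct way of phrasing the splitting the paper uses, so nothing further is needed.
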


We will also need the information about rational cohomology groups of the classifying spaces $\BO_n$ and $\BSO_n$. 

\begin{lemma} The rational cohomology groups of $\BO_n$ are given by 
\[ 
H^*({\BO}_n; \Q)=\Q[p_1, ..., p_{\left\lfloor n/2\right\rfloor}].
\] 
The rational cohomology groups of $\BSO_n$ are given by 
\[
H^*({\BSO}_n; \Q)=\Q[p_1, ..., p_{\left\lfloor n/2\right\rfloor}]+ e_n\Q[p_1, ..., p_{\left\lfloor n/2\right\rfloor}],
\]
where $e_n$ is the rational Euler class of the universal vector bundle. The class $e_n$ is trivial if $n$ is odd, and $e_n^2=p_{n/2}$ if $n$ is even. 
\end{lemma}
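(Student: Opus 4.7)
The plan is to combine Borel's theorem, identifying $H^*(BG;\Q)$ for a compact connected Lie group $G$ with the Weyl-group invariants of $H^*(BT;\Q)$, with the Smith exact sequence recalled earlier in this section (Lemma~\ref{l:4}) applied to the orientation double cover $\pi\colon \BSO_n\to \BO_n$. This splits the problem naturally into first computing $H^*(\BSO_n;\Q)$ and then descending to $H^*(\BO_n;\Q)$ by taking $\pi^*$-invariants.

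First I would treat $\BSO_n$. A maximal torus of $\SO_n$ is $T=(S^1)^{\lfloor n/2\rfloor}$, so Borel's theorem gives an isomorphism
\[
H^*(\BSO_n;\Q)\ \cong\ H^*(BT;\Q)^W\ =\ \Q[x_1,\ldots,x_{\lfloor n/2\rfloor}]^W
\]
with $|x_i|=2$, the class $x_i$ being the first Chern class of the $i$-th circle factor. The Weyl group $W$ acts by signed permutations of the $x_i$: all signed permutations when $n=2m+1$, and only those with an even number of sign changes when $n=2m$. The invariant theory of these classical reflection groups is standard. For $n=2m+1$ one obtains
\[
\Q[x_1,\ldots,x_m]^W=\Q\bigl[e_1(x_1^2,\ldots,x_m^2),\ldots,e_m(x_1^2,\ldots,x_m^2)\bigr],
\]
and the splitting principle identifies $e_i(x_1^2,\ldots,x_m^2)$ with the rational Pontrjagin class $p_i$; no Euler class appears, which matches the fact that the rational Euler class of an odd-rank oriented real bundle vanishes. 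For $n=2m$ the same elementary symmetric polynomials occur, and there is one further primitive invariant $e_{2m}:=x_1\cdots x_m$, which is $W$-invariant precisely because $W$ is restricted to even sign changes; the splitting principle identifies $e_{2m}$ with the Euler class of the universal oriented bundle, and directly $e_{2m}^{\,2}=x_1^2\cdots x_m^2=p_m$.

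To pass from $\BSO_n$ to $\BO_n$, I would apply Lemma~\ref{l:4} to the orientation double cover $\pi\colon\BSO_n\to\BO_n$, with the bundle $\xi$ taken to be trivial. This identifies $H^*(\BO_n;\Q)$ with the subring of $H^*(\BSO_n;\Q)$ fixed by the nontrivial deck involution $\chi$. Lifted to the maximal torus, $\chi$ may be taken to reverse the sign of a single coordinate, say $x_m\mapsto -x_m$; this fixes every squared coordinate $x_i^2$, and therefore every Pontrjagin class, while it negates $e_{2m}=x_1\cdots x_m$. Hence the fixed ring is exactly $\Q[p_1,\ldots,p_{\lfloor n/2\rfloor}]$, proving the first assertion.

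The main technical input is the invariant-theoretic computation in the even-dimensional case: one must verify that $\Q[x_1,\ldots,x_m]^W$ is generated by the $e_i(x_1^2,\ldots,x_m^2)$ together with $x_1\cdots x_m$, with the single relation $(x_1\cdots x_m)^2=e_m(x_1^2,\ldots,x_m^2)$. This is the classical invariant theory of the Weyl group of type $D_m$ and is the only nonformal step; everything else is a direct combination of Borel's theorem, the splitting principle, and the Smith sequence.
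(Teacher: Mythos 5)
Your argument is correct. Note, however, that the paper does not actually prove this lemma: it is stated without proof as a standard recollection (it is the classical computation of $H^*(\BO_n;\Q)$ and $H^*(\BSO_n;\Q)$, found e.g.\ in Milnor--Stasheff or deduced from Borel's theorem), so there is no proof in the text to compare against. Your route --- Borel's isomorphism $H^*(BG;\Q)\cong H^*(BT;\Q)^W$ for the connected group $\SO_n$, the invariant theory of the Weyl groups of types $B_m$ and $D_m$ to produce the Pontrjagin classes and, in the even case, the extra generator $x_1\cdots x_m=e_n$ with $e_n^2=p_{n/2}$, followed by descent along the orientation double cover $\BSO_n\to\BO_n$ via the Smith-sequence Lemma~\ref{l:4} --- is the standard proof and all steps are sound. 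One small point worth tightening: for $n$ odd the deck involution is induced by the central element $-I\in\O_n$, so it acts trivially on $H^*(\BSO_n;\Q)$ and the passage to invariants is vacuous there; your description of $\chi$ as a single sign change is the correct picture only in the even case, but since no Euler class exists for $n$ odd the conclusion is unaffected.
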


\subsection{Fold germ $\A_1(a,b)$, with $a+b=d+1$.} 
Let 
\[
\P(a,b)=\k[p_1, \dots , p_{\left\lfloor a/2\right\rfloor}, p'_1, \dots, p'_{\left\lfloor b/2\right\rfloor}]
\] 
denote the polynomial ring in terms of classes $p_i$ and $p'_i$ of degree $4i$; and let $\SP(a,b)$ and $\AP(a,b)$ be the submodules of $\P(a,b)$ that consist of polynomials respectively invariant and skew invariant under the substitution exchanging $p_i$ with $p'_i$. We will also denote the polynomial ring $\k[p_1, \dots , p_{\left\lfloor a/2\right\rfloor}]$ by $P(a)$. 

Let $G$ denote the relative symmetry group of a minimal $\A_1(a,b)$-germ.  Let $\xi_{a,b}$ denote the universal vector $G$-bundle over the classifying space $BG$. 
The vector bundle $\xi_{0, d+1}$ is isomorphic to the universal vector bundle of dimension $d+1$, and therefore  
\[
    \tilde{H}^*(\Th\xi_{0,d+1})=U_{0,d+1,1}\cup (\mathcal{P}(d+1)\oplus e_{0,d+1}\mathcal{P}(d+1)), 
\]
where $U_{0,d+1,1}$ and $e_{0,d+1}$ stand for the Thom and Euler classes of the universal vector bundle respectively. 

\begin{remark} At the moment we assume that all orientable vector bundles under consideration are oriented; we do not specify orientations though. However, in later sections, we will choose canonical orientations, and thus explicitly specify our choices of the Thom and Euler classes for oriented bundles under consideration. 
\end{remark}

In what follows we will assume that $a\ne 0$ and describe the cohomology group of $\Th \xi_{a,b}$ in terms of an essentially simpler cohomology group of the Thom space $\Th \mathfrak{o}$ of the universal vector $\SO_a\times \SO_b$-bundle $\mathfrak{o}=\xi^{so}_{a,b}$. 
Let 
\[
U_{a,b,1}\in H^{d+1}(\Th \mathfrak{o}) \qquad \mathrm{and} \qquad e_{a,b}\in H^{d+1}(B\mathfrak{o})
\]
denote the Thom class and the Euler class of the oriented vector bundle $\mathfrak{o}$ respectively. We recall that $e_{a,b}$ is trivial in the case where either $a$ or $b$ is odd. By the relative K\"unneth formula for the Thom space $\Th \mathfrak{o}$ we have
\[
   \tilde{H}^*(\Th \mathfrak{o})=\tilde{H}^*(\Th \xi_{a}\wedge \Th \xi_{b})=\tilde{H}^*(\Th \xi_{a})\otimes \tilde{H}^*(\Th \xi_{b}),
\]
where $\xi_a$ and $\xi_b$ denote the universal vector $\SO$-bundles over the classifying spaces $\BSO_a$ and $\BSO_b$ respectively. 

\begin{lemma}\label{l:10} Under the above notation, 
\begin{itemize} 
\item if $a\ne b$, then  $\tilde H^*(\Th \xi_{a,b})= U_{a,b,1}\s (\P(a,b)\oplus e_{a,b}\P(a,b))$,
\item if $a=b$, then $\tilde H^{i+1}(\Th \xi_{a,b})=U_{a,b,1}\s (\AP(a,b)\oplus e_{a,b}\SP(a,b))$, 
\end{itemize}
where $e_{a,b}$ is trivial whenever either $a$ or $b$ is odd. 
\end{lemma}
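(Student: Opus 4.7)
The plan is to apply the Smith exact sequence of Lemma~12.4 to reduce the computation of $\tilde H^*(\Th\xi_{a,b})$ to that of $\tilde H^*(\Th\mathfrak{o})$ on the oriented cover $\BSO_a\times\BSO_b$, then identify the image as the invariant subspace under the relevant deck transformation. First I would handle the case $a\ne b$: here $G=[\O_a\times\O_b]^+$ contains $\SO_a\times\SO_b$ as an index-$2$ subgroup, giving a double cover $\BSO_a\times\BSO_b\to BG$ to which Smith applies. By Thom isomorphism and K\"unneth, $\tilde H^*(\Th\mathfrak{o})=U_{a,b,1}\smallsmile H^*(\BSO_a\times\BSO_b)$, and using the formula $H^*(\BSO_n;\k)=\P(n)\oplus e_n\P(n)$ (with $e_n=0$ for $n$ odd) this splits into four summands according to presence or absence of $e_a$ and $e_b$. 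A coset representative of the form $(r_a,r_b)$ with each $r_i$ an orthogonal reflection acts trivially on Pontrjagin classes (these pull back from $\BO$), by $-1$ on each Euler class, and fixes the Thom class because $\det(r_a\oplus r_b)=+1$. The invariant part is therefore $\P(a,b)\oplus e_{a,b}\P(a,b)$, as asserted.

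For $a=b$ I would iterate Smith once more, using the double cover $B[\O_a\times\O_a]^+\to BG$ provided by Lemma~8.1. The first step already gives $\tilde H^*(\Th\mathfrak{o}')=U_{a,a,1}\smallsmile(\P(a,a)\oplus e_{a,a}\P(a,a))$ upstairs. The deck transformation $\sigma$ is the coset representative of $G/[\O_a\times\O_a]^+$, namely $r_a\circ h_a$ for $a$ even and $h_a$ for $a$ odd; in both cases the induced matrix on $\R^a\oplus\R^a$ has determinant $-1$, so $\sigma^*U_{a,a,1}=-U_{a,a,1}$. On Pontrjagin classes $\sigma$ swaps $p_i\leftrightarrow p'_i$; a direct computation using $r^*e_a=-e_a$ gives $\sigma^*e_{a,a}=-e_{a,a}$ when $a$ is even, while $e_{a,a}=0$ when $a$ is odd. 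Consequently $\sigma^*(U_{a,a,1}\smallsmile p)=-U_{a,a,1}\smallsmile\sigma^*p$, so on the $\P(a,a)$-summand invariance demands $\sigma^*p=-p$, selecting $\AP(a,a)$; on the $e_{a,a}\P(a,a)$-summand the additional sign on $e_{a,a}$ flips the condition, selecting $\SP(a,a)$. Combining these gives $U_{a,a,1}\smallsmile(\AP(a,a)\oplus e_{a,a}\SP(a,a))$.

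The main obstacle is the sign bookkeeping in the case $a=b$. The signs of $\sigma^*$ on the Thom class, on each of $e_a$ and $e'_a$ individually, and on their product $e_{a,a}$ depend subtly on the parity of $a$ and on whether the chosen coset representative is $h_a$ or $r_a\circ h_a$. Precisely these signs determine whether $\SP$ or $\AP$ appears in each summand, so a sign error at any point would interchange the two in the final answer.
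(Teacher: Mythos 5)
Your proposal is correct and follows essentially the same route as the paper: identify $\tilde H^*(\Th\xi_{a,b})$ with the invariants of $\tilde H^*(\Th\mathfrak{o})$ via the Smith sequence (applied twice when $a=b$), and compute the deck transformations' signs on $U_{a,b,1}$, $e_{a,b}$ and the Pontrjagin classes exactly as in Table~\ref{t:ab1}. Your sign bookkeeping in the $a=b$ case ($\sigma^*U_{a,a,1}=-U_{a,a,1}$ from the determinant, $\sigma^*e_{a,a}=-e_{a,a}$ for $a$ even, $p_i\leftrightarrow p_i'$) agrees with the paper's computation and yields the stated $\AP\oplus e_{a,b}\SP$.
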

\begin{proof}
Let $U_a$ and $U_b$ denote the classes of the cohomology group $H^*(\Th \xi_a)\otimes H^*(\Th \xi_b)$ given by the Thom classes of the oriented bundles $\xi_a$ and $\xi_b$ respectively. Here and below we identify $U_a$ with $U_a\otimes{1}$, $U_b$ with ${1}\times U_b$ and the vector bundles $\xi_a$ and $\xi_b$ with their obvious lifts. 
By the Thom isomorphism, the group 
$H^*(\Th \xi_a)\otimes H^*(\Th \xi_b)$ in positive degrees is isomorphic to the group $H^*(\BSO_a)\otimes H^*(\BSO_b)$ generated by the Euler classes $e_a$ and $e_b$ and the Pontrjagin classes $p_i$ and $p_i'$ of $\xi_a$ and $\xi_b$ respectively.

Let us consider the case where $a\ne b$, in which case the group $G$ is isomorphic to the group $[\O_a\times \O_b]^+$, and therefore $\tilde H^*(\Th \xi_{a,b})$ is isomorphic to the subgroup of 
\[
\tilde H^*(\Th \mathfrak{o})=[U_a\s (P(a)\oplus e_aP(a))]\otimes [U_b\s (P(b)\oplus e_bP(b))]
\] 
that consists of elements invariant under the homomorphisms induced by the map $\beta$ on $\Th \xi_a\wedge \Th \xi_b$ given by the involutions on the two factors at the same time. We have
\[
   \beta^*U_{a}=-U_{a}, \quad \beta^*U_b=-U_b, \quad \beta^*e_a=-e_a, \quad \beta^*e_b=-e_b, \quad  \beta^*p_i=p_i, \quad \beta^*p_i'=p_i'.
\]
We observe that the class $U_{a,b,1}$ can be identified with the class $U_a\otimes U_b$ in the group 
$H^*(\Th \xi_a)\otimes H^*(\Th \xi_b)$. 
Similarly, the class $e_{a,b}$ can be identified with the class $e_a\otimes e_b$ in 
$H^*(\BSO_a)\otimes H^*(\BSO_b)$.
Hence 
\[
\beta^*(U_{a,b,1})=\beta^*(U_a\otimes U_b)=-U_a\otimes -U_b=U_a\otimes U_b=U_{a,b,1},
\]
\[
\beta^*(e_{a,b})=\beta^*(e_a\otimes e_b)=-e_a\otimes -e_b=e_a\otimes e_b=e_{a,b}.
\]
Now the elements in $\tilde H^*(\Th \xi_{a,b})$ invariant with respect to the action of $\beta^*$ are easily determined; these are listed in the statement of Lemma~\ref{l:10}.   

Let us prove the claim in the case where $a=b$ and $a$ is even. In this case the symmetry group $G$ is generated by the group $[\O_a\times \O_b]^+$ and the element that acts on $\R^a\times \R^b$ by exchanging the factors and reflecting the first factor along a hyperplane. By applying Lemma~\ref{l:4} twice, we obtain that $\tilde H^*(\Th \xi_{a,b})$ is isomorphic to the subgroup of 
\[
\tilde H^*(\Th \mathfrak{o})=[U_a\s (P(a)\oplus e_aP(a))]\otimes [U_b\s (P(b)\oplus e_bP(b))]
\] 
that consists of elements invariant under the homomorphisms induced by the maps $\alpha, \beta: \Th \mathfrak{o}\to \Th \mathfrak{o}$, where $\alpha$ is the map induced by the composition 
\[
   {\BSO}_a\times {\BSO}_b\longrightarrow {\BSO}_b\times {\BSO}_a \longrightarrow {\BSO}_b\times {\BSO}_a,
\]
where the first map exchanges the factors, and the second map is given by the involution on the first factor; the map $\beta$ is given by the involutions on the two factors at the same time. The actions of $\alpha^*$ and $\beta^*$ are summarized in the first three columns of Table~\ref{t:ab1}, where, for example, 
\[
\alpha^*(U_{a,b,1})=\alpha^*(U_a\otimes U_b)=-U_b\otimes U_a=-U_a\otimes U_b=-U_{a,b,1},
\]
\[
\alpha^*(e_{a,b})=\alpha^*(e_a\otimes e_b)=-e_b\otimes e_a=-e_a\otimes e_b=-e_a\otimes e_b=-e_{a,b}.
\]
Again, an easy verification shows that the elements in $\Th \mathfrak{o}$ invariant with respect to the action of $\alpha$ and $\beta$ are those listed in the statement of Lemma~\ref{l:10}.

Finally, let us prove the claim in the case where $a=b$ and $a$ is odd. In this case the symmetry group $G$ is generated by the group $[\O_a\times \O_b]^+$ and the element that acts on $\R^a\times \R^b$ by exchanging the factors. We deduce that $\tilde H^*(\Th \xi_{a,b})$ is isomorphic to the subgroup of 
\[
\tilde H^*(\Th \mathfrak{o})=[U_a\s P(a)]\otimes [U_b\s P(b)]
\] 
that consists of elements invariant under the homomorphisms induced by the maps $\alpha, \beta: \Th \mathfrak{o}\to \Th \mathfrak{o}$, where $\alpha$ is the map on $\Th \xi_a\wedge \Th \xi_b$ exchanging the two factors, while the map $\beta$ is given by the involutions on the two factors at the same time. The action of $\alpha^*$ and $\beta^*$ on cohomology classes of $\Th \mathfrak{o}$ is described in the forth and fifth columns of Table~\ref{t:ab1}, 
which allows us to verify the statement of Lemma~\ref{l:10} in the case where $a=b$ and $a$ is odd.

\end{proof}

\begin{table}
\caption{}\label{t:ab1}
\begin{tabular}{|c||c|c||c|c||c|c|}
\hline 
    & \multicolumn{2}{|c||}{$\A_1, a=b=2s$ }     & \multicolumn{2}{|c||}{$\A_1, a=b=2s+1$ } & $\A_1, a\ne b$   \\
\hline
       &  $\alpha$ & $\beta$ & $\alpha$ & $\beta$ & $\beta$  \\
\hline
\hline
$U_a$       & $-U_b$       & $-U_a$      & $U_b$        & $-U_a$      &  $-U_a$  \\
 \hline
$U_b$       & $U_a$        & $-U_b$      & $U_a$        & $-U_b$      &  $-U_b$  \\
 \hline
$e_a$       & $-e_b$       & $-e_a$      &       0     &  0      &  $-e_a$  \\
 \hline
$e_b$       & $e_a$        & $-e_b$      &      0      &   0    & $-e_b$ \\
 \hline
$U_{a,b,1}$ & $-U_{a,b,1}$ & $U_{a,b,1}$ & $-U_{a,b,1}$ & $U_{a,b,1}$ &  $U_{a,b,1}$  \\
 \hline
$e_{a,b}$   & $-e_{a,b}$   & $e_{a,b}$   &        0      & 0    & $e_{a,b}$ \\
 \hline
$p_i$       & $p_i'$       & $p_i$       & $p_i'$       & $p_i$       & $p_i$  \\
 \hline
$p_i'$      & $p_i$        & $p_i'$      & $p_i$        & $p_i'$      &  $p_i'$   \\
 \hline
 \end{tabular}
 \end{table}

\subsection{Morin map germ $\A_{2r}(a,b)$, with $r>0$, $a+b=d$.}\label{s:16.3}

Let $G$ denote the maximal compact subgroup of a minimal $\A_{2r}(a,b)$-germ.  Let $\xi_{a,b}$ denote the universal vector $G$-bundle over the classifying space $BG$. Again, we will describe the cohomology group of $\Th \xi_{a,b}$ in terms of an essentially simpler cohomology group of $\Th \mathfrak{o}$, where $\mathfrak{o}$ stands for the Whitney sum of the universal vector $\SO_a\times \SO_b$-bundle and the trivial $2r$-bundle over $B[\SO_a\times \SO_b]$. Let 
\[
U_{a,b,2r}\in H^{a+b+2r}(\Th \mathfrak{o}) \quad \mathrm{and}\quad e_{a,b}\in H^{a+b}(B\mathfrak{o})
\]
denote the Thom class of $\mathfrak{o}$ and the Euler classes of the universal vector bundle over $B[\SO_a\times \SO_b]$ respectively.   

\begin{lemma}\label{l:12.5} Under the above notation, for $a\ne b$,
\begin{itemize} 
\item $\tilde{H}^{*}(\Th \xi_{a,b})= U_{a,b,2r}\s (\P(a,b)\oplus e_{a,b}\P(a,b))$;
\end{itemize}
while for $a=b$, 
\begin{itemize}
\item if $r$ is even, then $\tilde{H}^{*}(\Th \xi_{a,b})=U_{a,b,2r}\s(\SP(a,b)\oplus e_{a,b}\SP(a,b))$,
\item if $r$ is odd, then $\tilde{H}^*(\Th \xi_{a,b})=U_{a,b,2r}\s (\AP(a,b)\oplus e_{a,b}\AP(a,b))$,
\end{itemize}
where $e_{a,b}$ is trivial whenever either $a$ or $b$ is odd.
\end{lemma}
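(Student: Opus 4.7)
The plan is to mimic the proof of Lemma~\ref{l:10} essentially verbatim, the only new ingredient being the effect of the $\tau_{2r}$-component on the Thom class. Observe first that on the subgroup $\SO_a\times\SO_b\subset G$ the right representation of $G$ restricts to the standard $\SO_a\times\SO_b$-action on $\R^{a+b}$ together with the trivial action on $\R^{2r}$; consequently the pullback of $\xi_{a,b}$ along $B(\SO_a\times\SO_b)\to BG$ is exactly $\mathfrak{o}=\xi_a^{so}\oplus\xi_b^{so}\oplus\varepsilon^{2r}$. Iterated application of Lemma~\ref{l:4}, once for each $\Z_2$-factor in $G/(\SO_a\times\SO_b)$, identifies $\tilde H^*(\Th\xi_{a,b})$ with the joint fixed subgroup of $\tilde H^*(\Th\mathfrak{o})=U_{a,b,2r}\s H^*(\BSO_a\times\BSO_b)$ under the involutions induced by generators of $G/(\SO_a\times\SO_b)$.

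For $a\ne b$ the only such generator is an element $\beta$ of $[\O_a\times\O_b]^+/(\SO_a\times\SO_b)$: it acts trivially on the $\R^{2r}$-factor and reverses orientations on both $\R^a$ and $\R^b$ simultaneously, so the induced $\beta^*$ agrees with the one in Lemma~\ref{l:10}. The calculation of $\beta$-invariants is therefore literally the one carried out there and yields the first claim.

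For $a=b$, Table~\ref{t:2} furnishes a second generator $\gamma$, namely $h_{2c}\times\tau_{2r}$ if $a=b=2c$, or $(r_{2c+1}\circ h_{2c+1})\times\tau_{2r}$ if $a=b=2c+1$. The decisive sign computation is $\det\tau_{2r}=(-1)^r$: inspecting the formula $(t_1,\ldots,t_{2r-1},x)\mapsto(t_1,-t_2,\ldots,t_{2r-1},-x)$, exactly the $r$ coordinates $t_2,t_4,\ldots,t_{2r-2},x$ are flipped, while the $\R^{a+b}$-part of $\gamma$ has determinant $+1$ ($h_{2c}$ is an even-dimensional swap, and $r_{2c+1}\circ h_{2c+1}$ is a product of two elements of determinant $-1$). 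Since the Thom class of a trivial oriented bundle transforms by the determinant of any fiberwise orthogonal action, one finds
\[
\gamma^* U_{a,b,2r}=(-1)^r U_{a,b,2r},\qquad \gamma^* e_{a,b}=e_{a,b},\qquad \gamma^* Q(p_i,p_i')=Q(p_i',p_i).
\]
A $\beta$-invariant class written $U_{a,b,2r}\s(Q_1+e_{a,b}Q_2)$ with $Q_i\in\P(a,b)$ is then $\gamma$-invariant if and only if $(-1)^r\sigma(Q_i)=Q_i$ for the swap $\sigma\colon p_i\leftrightarrow p_i'$, which forces $Q_i\in\SP(a,b)$ when $r$ is even and $Q_i\in\AP(a,b)$ when $r$ is odd. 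The vanishing of $e_{a,b}$ whenever $a$ or $b$ is odd is the standard vanishing of the rational Euler class of $\BSO_n$ for $n$ odd.

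The principal obstacle is the sign bookkeeping on $U_{a,b,2r}$: one must invoke that the Thom class of a trivial bundle pulls back by the determinant of the fiberwise orthogonal action and then count the coordinate flips in $\tau_{2r}$ with correct parity, as any parity error would interchange the two halves of the lemma.
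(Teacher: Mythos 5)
Your proposal is correct and follows essentially the same route as the paper: both reduce to the case $a\ne b$ of Lemma~\ref{l:10}, and for $a=b$ apply Lemma~\ref{l:4} twice to identify $\tilde H^*(\Th\xi_{a,b})$ with the $\alpha$- and $\beta$-invariants in $\tilde H^*(\Th\mathfrak{o})$, the decisive signs being exactly those you derive ($\gamma^*U_{a,b,2r}=(-1)^rU_{a,b,2r}$, $\gamma^*e_{a,b}=e_{a,b}$, $p_i\leftrightarrow p_i'$), which the paper records in Table~\ref{t:ab2}. Your explicit determinant count for $\tau_{2r}$ is a correct and slightly more self-contained justification of the sign the paper illustrates only by the sample computation of $\alpha^*U_{a,b,2r}$ for $r$ odd.
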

\begin{proof} In the case $a\ne b$ the computations are precisely the same as in the previous lemma. Suppose that $a=b$ is even. Then the group $G$ is generated by a subgroup $[\O_a\times \O_b]^+$ and an element $h\times \tau$. Consequently, the cohomology group of $\Th \xi_{a,b}$ is isomorphic to the subgroup of the cohomology group of $\Th \mathfrak{o}=\Th \xi_a\wedge \Th \xi_b\wedge \Th \varepsilon^{2r}$ of elements that are invariant under the involution $\beta$ on the first two factors at the same time and the action $\alpha$ induced by $h\times \tau$. The Thom classes of the three factors are denoted by $U_a, U_b, U_{2r}$, while the Euler classes of the first two factors are denoted by $e_a, e_b$ respectively. The actions of $\alpha$ and $\beta$ depend on the parity of $r$ (see table Table~\ref{t:ab2}). 

If $a=b$ is odd, then the action $\alpha$ is the composition of $h\times \tau$ and the involution on the first factor of $\Th \mathfrak{o}$. Again, the actions of $\alpha$ and $\beta$ depend on the parity of $r$ (see Table~\ref{t:ab2}), and, for example, for $r$ odd we have 
\[
  \alpha^*U_{a,b,2r}=\alpha^*(U_a\otimes U_b\otimes U_{2r})= -U_b\otimes U_a\otimes -U_{2r}=-U_a\otimes U_b\otimes U_{2r}=-U_{a,b,2r}.
\]
The statement of Lemma~\ref{l:12.5} follows immediately from the above computation.
\end{proof}

\begin{table}
\caption{}\label{t:ab2}
\begin{tabular}{|c||c|c||c|c||c|c||c|c|}
\hline 
& \multicolumn{2}{|c||}{$\A_{2r}$}      & \multicolumn{2}{|c||}{$\A_{2r}$ } & \multicolumn{2}{|c||}{$\A_{2r}$ }  & \multicolumn{2}{|c|}{$\A_{2r}$ } \\
    & \multicolumn{2}{|c||}{$a,r$ even }     & \multicolumn{2}{|c||}{$a$ even, $r$ odd} & \multicolumn{2}{|c||}{$a$ odd, $r$ even} & \multicolumn{2}{|c|}{$a$, $r$ odd}      \\
    
\hline
       &  $\alpha$ & $\beta$ & $\alpha$ & $\beta$ & $\alpha$ & $\beta$ & $\alpha$ & $\beta$  \\
\hline
\hline
$U_a$       & $U_b$       & $-U_a$      & $U_b$        & $-U_a$              & $U_b$      &  $-U_a$          & $U_b$      &  $-U_a$ \\
 \hline 
$U_b$       & $U_a$        & $-U_b$      & $U_a$        & $-U_b$             & $-U_a$     &  $-U_b$           & $-U_a$     &  $-U_b$\\
 \hline
$e_a$       & $e_b$       & $-e_a$      &  $e_b$       & $-e_a$              & 0     & 0            & 0     &  0 \\
 \hline
$e_b$       & $e_a$        & $-e_b$      & $e_a$        & $-e_b$             & 0    & 0            & 0    & 0 \\
 \hline
$U_{a,b,2r}$ & $U_{a,b,2r}$ & $U_{a,b,2r}$ & $-U_{a,b,2r}$ & $U_{a,b,2r}$   & $U_{a,b,2r}$ & $U_{a,b,2r}$  & $-U_{a,b,2r}$ & $U_{a,b,2r}$ \\
 \hline
$e_{a,b}$   & $e_{a,b}$   & $e_{a,b}$   & $e_{a,b}$   & $e_{a,b}$            & 0 & 0  & 0 & 0 \\
 \hline
$p_i$       & $p_i'$       & $p_i$       & $p_i'$       & $p_i$              & $p_i'$       & $p_i$        & $p_i'$       & $p_i$  \\
 \hline
$p_i'$      & $p_i$        & $p_i'$      & $p_i$        & $p_i'$             & $p_i$   &  $p_i'$        & $p_i$   &  $p_i'$   \\
 \hline
 \end{tabular}
 \end{table}

\subsection{Morin map germ $\A^{\pm}_{2r+1}(a,b)$, $r>0$, $a+b=d$.}\label{s:16.4}

Let $G$ denote the relative symmetry group of a minimal $\A_{2r+1}^{\pm}(a,b)$-germ.  Let $\xi^{\pm}_{a,b}$ denote the universal vector $G$-bundle over the classifying space $BG$. Once again, the cohomology group of $\Th \xi^{\pm}_{a,b}$ can be identified with a subgroup of the cohomology group of $\Th \mathfrak{o}$, where $\mathfrak{o}$ stands for the Whitney sum of the universal vector $\SO_a\times \SO_b$-bundle and the trivial $(2r+1)$-bundle over $B[\SO_a\times \SO_b]$. Let 
\[
U^{\pm}_{a,b,2r+1}\in H^{a+b+2r+1}(\Th \mathfrak{o})\quad \mathrm{and} \quad e_{a,b}\in H^{a+b}(B\mathfrak{o})
\]
denote the Thom class of $\mathfrak{o}$ and the Euler class of the universal vector bundle over $B[\SO_a\times \SO_b]$ respectively.
 
\begin{lemma}\label{l:12.6} Under the notation as above,
\begin{itemize} 
\item if $r$ is odd, then $\tilde{H}^{*}(\Th \xi^{\pm}_{a,b})= U^{\pm}_{a,b,2r+1}\s e_{a,b}\P(a,b)$,
\item if $r$ is even, then $\tilde{H}^{*}(\Th \xi^{\pm}_{a,b})= U^{\pm}_{a,b,2r+1}\s \P(a,b)$,
\end{itemize}
where $e_{a,b}$ is trivial whenever either $a$ or $b$ is odd.
\end{lemma}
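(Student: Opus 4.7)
The plan is to mimic the strategy used for Lemmas~\ref{l:10} and~\ref{l:12.5}: identify $\tilde H^*(\Th\xi^\pm_{a,b})$ as a subgroup of $\tilde H^*(\Th\mathfrak{o})$ cut out by invariance under an elementary abelian $2$-group of covering involutions, then use the K\"unneth and Thom isomorphisms to write out a module basis of $\tilde H^*(\Th\mathfrak{o})$ and record the sign with which each generator transforms. By Lemma~\ref{l:8.3}, $G=G_{\A^{\pm}_{2r+1}(a,b)}$ is generated by $H_1=[\O_a\times\O_b]^+$ and the element $h$, while $H_0=\SO_a\times\SO_b\subset H_1$ also has index $2$. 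Applying Lemma~\ref{l:4} twice to the chain $H_0\subset H_1\subset G$ realizes $\tilde H^*(\Th\xi^\pm_{a,b})$ as the subspace of $\tilde H^*(\Th\mathfrak{o})$ fixed by the involutions $\beta$ (induced by a coset representative of $H_1/H_0$) and $\alpha$ (induced by $h$).

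Next I would write $\tilde H^*(\Th\mathfrak{o})$ explicitly. Since $\mathfrak{o}\simeq\xi_a\oplus\xi_b\oplus\varepsilon^{2r+1}$, by K\"unneth
\[
\tilde H^*(\Th\mathfrak{o})\ =\ U^{\pm}_{a,b,2r+1}\s \bigl(\P(a,b)\oplus e_a\P(a,b)\oplus e_b\P(a,b)\oplus e_{a,b}\P(a,b)\bigr),
\]
where $U^{\pm}_{a,b,2r+1}=U_a\otimes U_b\otimes U_{2r+1}$ and the $e_a,e_b$ terms are dropped when $a,b$ is odd. A coset representative of $H_1/H_0$ may be chosen to reverse orientation on $\R^a$ and on $\R^b$ simultaneously, acting trivially on $\R^{2r+1}$; this yields $\beta^*U_a=-U_a$, $\beta^*U_b=-U_b$, $\beta^*U_{2r+1}=U_{2r+1}$, $\beta^*e_a=-e_a$, $\beta^*e_b=-e_b$ and $\beta^*p_i=p_i$, $\beta^*p'_j=p'_j$. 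Thus $\beta$-invariance already cuts the four-term sum down to $U^{\pm}_{a,b,2r+1}\s(\P(a,b)\oplus e_{a,b}\P(a,b))$.

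The remaining step is to impose $\alpha$-invariance. Taking $h$ to reverse orientation on $\R^a$ only (any other admissible choice differs by an element of $H_1$ and hence gives the same coset action), $\alpha$ contributes a factor of $-1$ on $U_a\otimes U_b$ and a factor of $(-1)^{r+1}$ on $U_{2r+1}$; here the exponent $r+1$ is the count of negative signs in the map $(t_1,\dots,t_{2r},x)\mapsto(-t_1,t_2,\dots,(-1)^it_i,\dots,t_{2r},-x)$, namely the $r$ odd-indexed $t_i$'s plus the $-x$. Combining, $\alpha^*U^{\pm}_{a,b,2r+1}=(-1)^rU^{\pm}_{a,b,2r+1}$ and $\alpha^*e_{a,b}=-e_{a,b}$, while Pontrjagin classes and $\beta$ itself are $\alpha$-invariant. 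Consequently the surviving summand is $U^{\pm}_{a,b,2r+1}\s\P(a,b)$ when $r$ is even and $U^{\pm}_{a,b,2r+1}\s e_{a,b}\P(a,b)$ when $r$ is odd, which is exactly the statement.

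The mildly subtle point is the case analysis for $h$: unlike the $\A_{2r}$ computation in Section~\ref{s:16.3}, where the role of $r$'s parity is entangled with the symmetry between $\R^a$ and $\R^b$ (and the two cases $a=b$ versus $a\ne b$ must be separated), here $h$ always sits outside $[\O_a\times\O_b]^+$ in the same way regardless of whether $a=b$, so the single parity dichotomy in $r$ governs both $U^{\pm}$ and $e_{a,b}$. The main obstacle is therefore purely bookkeeping: confirming that the choice of $h$ does not matter modulo $H_1$ and correctly computing the determinant $(-1)^{r+1}$ on the jet variables. A separate (easy) verification that $\hat G=[\O_a\times\O_b]^+$---carried out in Lemma~\ref{l:8.3}---is what legitimizes using $\beta$ as the $H_1/H_0$ generator, so there is no further work beyond the sign calculation above.
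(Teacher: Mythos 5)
Your proposal is correct and follows essentially the same route as the paper: both realize $\tilde H^*(\Th\xi^\pm_{a,b})$ as the subgroup of $\tilde H^*(\Th\mathfrak{o})$ invariant under the involutions $\alpha$ (induced by $h$) and $\beta$, and both reduce the statement to the sign computation $\alpha^*U^\pm_{a,b,2r+1}=(-1)^rU^\pm_{a,b,2r+1}$, $\alpha^*e_{a,b}=-e_{a,b}$, with the $(-1)^{r+1}$ on the jet factor obtained exactly as you describe. Your remark on why the choice of $h$ is immaterial modulo $[\O_a\times\O_b]^+$ is a point the paper leaves implicit, but the argument is otherwise the same.
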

\begin{proof} The group $G$ is generated by $[\O_a\times \O_b]^+$ and an element that is an orientation reversing involution on the first and the third factors of $\Th \xi_a\wedge \Th \xi_b\wedge \Th \varepsilon^{2r+1}$ if $r+1$ is odd and an orientation reversing involution on the first factor only if $r+1$ is even. Let $U_r$ denote the Thom class of the third factor. Then for transformations $\alpha$ and $\beta$ defined as before, we have
\[
   \alpha^*U_a=-U_a, \quad \alpha^*U_b=U_b, \quad \alpha^*e_a=-e_a, \quad \alpha^*e_b=e_b, 
\quad   \alpha^*p_i=p_i, \quad \alpha^*p_i'=p_i',
\]
\[
   \beta^*U_{a}=-U_{a}, \quad \beta^*U_b=-U_b, \quad \beta^*e_a=-e_a, \quad \beta^*e_b=-e_b,
\quad   \beta^*p_i=p_i, \quad \beta^*p_i'=p_i',
\]  
\[
   \alpha^*U_r=(-1)^{r+1}U_r, \quad \beta^*(U_r)=U_r,
\]
\[
\alpha^*(U_{a,b,r})=(-1)^{r}U_{a,b,r},
\quad \beta^*(U_{a,b,r})=U_{a,b,r},
\quad \alpha^*(e_{a,b})=-e_{a,b},
\quad \beta^*(e_{a,b})=e_{a,b}.
\]
The statement of Lemma~\ref{l:12.6} is easily verified now. 
\end{proof}

Non-zero classes in the first term of the Kazarian spectral sequence are listed in Tables~\ref{t:11}--\ref{t:14}. Here we use the fact that, for example, for $r\ge 0$ the singularity types $\A^+_{4r+3}(2s,2s)$ and $\A^-_{4r+3}(2s,2s)$ coincide.

\begin{table}
\caption{The case $d=4s$.}\label{t:11}
\begin{tabular}{|c|c|c|c|c|c|c|}
\hline 
       &   $A_1$& $A_2$   & $A_3$ & $A_4$ & $A_5$   &$A_6$  \\
$a+b=$ &  $4s+1$& $4s$ & $4s$ & $4s$ & $4s$ & $4s$  \\
\hline
\hline
$a=2s$ & $U\s \P$ & $U\s \AP$ & $U^+\s e\P$ & $U\s S\P$ & $U^+\s \P$  & $U\s \AP$  \\
      &            & $U\s e\AP$ &               & $U\s e\SP$ &           & $U\s e\AP$   \\
 \hline
$a=2s-1$ & $U\s \P$ & $U\s \P$ & 0 & $U\s \P$ & $U^+\s \P$ & $U\s \P$  \\
      &            &            &  &             & $U^-\s \P$ &    \\
 \hline
$a=2s-2$ & $U\s \P$ & $U\s \P$ & $U^+\s e\P$ & $U\s \P$ & $U^+\s \P$  & $U\s \P$  \\
      &            & $U\s e\P$ & $U^-\s e\P$ & $U\s e\P$ & $U^-\s \P$ & $U\s e\P$   \\
 \hline
$a=2s-3$ & $U\s \P$ & $U\s \P$ & 0 & $U\s \P$ & $U^+\s \P$ & $U\s \P$  \\
      &            &            &  &             & $U^-\s \P$ &    \\
 \hline
\dots &\dots &\dots &\dots &\dots &\dots &\dots  \\
\hline
$a=3$ & $U\s \P$ & $U\s \P$ & 0 & $U\s \P$ & $U^+\s \P$ & $U\s \P$  \\
      &            &            &  &             & $U^-\s \P$ &    \\
 \hline
$a=2$ & $U\s \P$ & $U\s \P$ & $U^+\s e\P$ & $U\s \P$ & $U^+\s \P$  & $U\s \P$  \\
      &            & $U\s e\P$ & $U^-\s e\P$ & $U\s e\P$ & $U^-\s \P$ & $U\s e\P$   \\
 \hline
$a=1$ & $U\s \P$ & $U\s \P$ & 0 & $U\s \P$ & $U^+\s \P$ & $U\s \P$  \\
      &            &            &  &             & $U^-\s \P$ &    \\
 \hline
$a=0$ & $U\s \P$ & $U\s \P$ & $U^+\s e\P$ & $U\s \P$ & $U^+\s \P$  & $U\s \P$  \\
      &            & $U\s e\P$ & $U^-\s e\P$ & $U\s e\P$ & $U^-\s \P$ & $U\s e\P$   \\
 \hline
 \end{tabular}
 \end{table}

\begin{table}
\caption{The case $d=4s+1$.}
\begin{tabular}{|c|c|c|c|c|c|c|}
\hline 
       &   $A_1$& $A_2$   & $A_3$ & $A_4$ & $A_5$   &$A_6$    \\
$a+b=$ &  $4s+2$& $4s+1$ & $4s+1$ & $4s+1$ & $4s+1$ & $4s+1$  \\
\hline
\hline
$a=2s+1$ & $U\s \AP$ &     &  &   &   &    \\
         &       &  &  &   &  &  \\
\hline
$a=2s$ & $U\s \P$ & $U\s \P$ & 0 & $U\s \P$ & $U^+\s \P$  & $U\s \P$ \\
& $U\s e\P$ & & & & $U^-\s \P$ &  \\
 \hline
$a=2s-1$ & $U\s \P$ & $U\s \P$ & 0 & $U\s \P$ & $U^+\s \P$  & $U\s \P$  \\
 & & & & & $U^-\s \P$ &  \\
 \hline
$a=2s-2$ & $U\s \P$ & $U\s \P$ & 0 & $U\s \P$ & $U^+\s \P$  & $U\s \P$  \\
& $U\s e\P$ & & & & $U^-\s \P$ &  \\
 \hline
\dots &\dots &\dots &\dots &\dots &\dots &\dots  \\
\hline
$a=3$ & $U\s \P$ & $U\s \P$ & 0 & $U\s \P$ & $U^+\s \P$  & $U\s \P$ \\
 & & & & & $U^-\s \P$ &  \\
 \hline
$a=2$ & $U\s \P$ & $U\s \P$ & 0 & $U\s \P$ & $U^+\s \P$  & $U\s \P$  \\
& $U\s e\P$ & & & & $U^-\s \P$ &  \\
 \hline
$a=1$ & $U\s \P$ & $U\s \P$ & 0 & $U\s \P$ & $U^+\s \P$  & $U\s \P$  \\
 & & & & & $U^-\s \P$ &  \\
 \hline
$a=0$ & $U\s \P$ & $U\s \P$ & 0 & $U\s \P$ & $U^+\s \P$  & $U\s \P$  \\
 & $U\s e\P$  & & & & $U^-\s \P$ &  \\
 \hline
 \end{tabular}
 \end{table}

\begin{table}
\caption{The case $d=4s+2$.}
\begin{tabular}{|c|c|c|c|c|c|c|}
\hline 
       &   $A_1$& $A_2$   & $A_3$ & $A_4$ & $A_5$   &$A_6$  \\
$a+b=$ &  $4s+3$& $4s+2$ & $4s+2$ & $4s+2$ & $4s+2$ & $4s+2$  \\
\hline
\hline
$a=2s+1$ & $U\s \P$ & $U\s \AP$ & 0 & $U\s \SP$ & $U^+\s \P$ & $U\s \AP$  \\
         &          &           &   &           &             &            \\  
 \hline
$a=2s$ & $U\s \P$ & $U\s \P$  & $U^+\s e\P$ & $U\s \P$  & $U^+\s \P$  & $U\s \P$  \\
      &           & $U\s e\P$ & $U^-\s e\P$ & $U\s e\P$ & $U^-\s \P$ & $U\s e\P$   \\
 \hline
$a=2s-1$ & $U\s \P$ & $U\s \P$ & 0 & $U\s \P$ & $U^+\s \P$ & $U\s \P$  \\
      &            &               &  &               & $U^-\s \P$ &    \\  
 \hline
$a=2s-2$ & $U\s \P$ & $U\s \P$ & $U^+\s e\P$ & $U\s \P$ & $U^+\s \P$  & $U\s \P$  \\
      &            & $U\s e\P$ & $U^-\s e\P$ & $U\s e\P$ & $U^-\s \P$ & $U\s e\P$   \\
\hline
\dots &\dots &\dots &\dots &\dots &\dots &\dots  \\
\hline
$a=3$ & $U\s \P$ & $U\s \P$ & 0 & $U\s \P$ & $U^+\s \P$ & $U\s \P$  \\
      &            &               &  &               & $U^-\s \P$ &    \\  
 \hline
$a=2$ & $U\s \P$ & $U\s \P$ & $U^+\s e\P$ & $U\s \P$ & $U^+\s \P$  & $U\s \P$  \\
      &            & $U\s e\P$ & $U^-\s e\P$ & $U\s e\P$ & $U^-\s \P$ & $U\s e\P$   \\
 \hline
$a=1$ & $U\s \P$ & $U\s \P$ & 0 & $U\s \P$ & $U^+\s \P$ & $U\s \P$  \\
      &            &               &  &               & $U^-\s \P$ &    \\  
 \hline
$a=0$ & $U\s \P$ & $U\s \P$ & $U^+\s e\P$ & $U\s \P$ & $U^+\s \P$  & $U\s \P$  \\
      &            & $U\s e\P$ & $U^-\s e\P$ & $U\s e\P$ & $U^-\s \P$ & $U\s e\P$   \\
 \hline
 \end{tabular}
 \end{table}

\begin{table}
\caption{The case $d=4s+3$.}\label{t:14}
\begin{tabular}{|c|c|c|c|c|c|c|}
\hline 
       &   $A_1$& $A_2$   & $A_3$ & $A_4$ & $A_5$   &$A_6$    \\
$a+b=$ &  $4s+4$& $4s+3$ & $4s+3$ & $4s+3$ & $4s+3$ & $4s+3$  \\
\hline
\hline
$a=2s+2$ & $U\s \AP$ &     &  &  &   &  \\
         & $U\s e\SP$           &               &               & &    \\  
 \hline
$a=2s+1$ & $U\s \P$ & $U\s \P$ & 0 & $U\s \P$ & $U^+\s \P$  & $U\s \P$ \\
 & & & & & $U^-\s \P$ &  \\
 \hline
$a=2s$ & $U\s \P$ & $U\s \P$ & 0 & $U\s \P$ & $U^+\s \P$  & $U\s \P$ \\
& $U\s e\P$ & & & & $U^-\s \P$ &  \\
 \hline
$a=2s-1$ & $U\s \P$ & $U\s \P$ & 0 & $U\s \P$ & $U^+\s \P$  & $U\s \P$  \\
 & & & & & $U^-\s \P$ &  \\
 \hline
\dots &\dots &\dots &\dots &\dots &\dots & \dots \\
\hline
$a=3$ & $U\s \P$ & $U\s \P$ & 0 & $U\s \P$ & $U^+\s \P$  & $U\s \P$  \\
 & & & & & $U^-\s \P$ &  \\
 \hline
$a=2$ & $U\s \P$ & $U\s \P$ & 0 & $U\s \P$ & $U^+\s \P$  & $U\s \P$  \\
& $U\s e\P$ & & & & $U^-\s \P$ &  \\
 \hline
$a=1$ & $U\s \P$ & $U\s \P$ & 0 & $U\s \P$ & $U^+\s \P$  & $U\s \P$  \\
 & & & & & $U^-\s \P$ &  \\
 \hline
$a=0$ & $U\s \P$ & $U\s \P$ & 0 & $U\s \P$ & $U^+\s \P$  & $U\s \P$  \\
 & $U\s e\P$  & & & & $U^-\s \P$ &  \\
 \hline 
 \end{tabular}
 \end{table}

\addcontentsline{toc}{part}{The differentials in the Kazarian spectral sequence}

\section{The differential $d^{0,*}_{*}$}\label{s:diff}

Our next step is to compute the differentials in the spectral sequence. 


To compute the differential $d_1^{*,*}$ we need to know the geometry of adjacencies of singular strata. In those cases where the normal bundles of singular strata possess a natural orientation, the computation of the coboundary homomorphism associated with $d_1^{*,*}$ is not hard. However, for  example, in the case of $\A_2(2s,2s)$ and $\A_3(2s,2s)$ singularities, the corresponding normal bundles are non-orientable. From the geometry of the adjacency of the set of $\A_2(2s,2s)$-singular points to the set of $\A_3(2s,2s)$-singular points, it immediately follows that the corresponding coboundary homomorphism is either trivial or given by a ``multiplication by $\pm 2$"; but to determine weather the coboundary homomorphism is trivial or not, one needs to use a more careful argument. In fact, in those cases where the coboudary homomorphism is non-trivial, we will need to determine the sign of the ``multiplication by $\pm 2$".  An obvious approach to computing the coboundary homomorphism in these cases is to introduce and work with cohomology groups with coefficients in certain sheafs. We follow a similar approach by introducing different types of coverings of singular strata.

Let $\P(d)$ denote the polynomial ring $\Q[p_1, \dots , p_{\left\lfloor d/2\right\rfloor}]$. For $d$ even, let $\mathcal{T}(a,b)$ denote the image of the algebra homomorphism
\[
    \P(d)\longrightarrow \P(a, b),
\] 
\[  
      p_i \mapsto \sum_{j=0}^{i} p_jp'_{i-j}, 
\]
where $d=a+b$. Here we adopt the convention that in $\P(a,b)$ there are relations $p_{j}=0$ for $j>a$ and $p'_{i-j}=0$ for $i-j>b$. For example, $\mathcal{T}(4,8)$ is spanned by 
\[
    \{\ p_1+p_1', p_2+p_1p_1'+p_2', p_2p_1'+p_1p_2'+p_3', p_2p_2'+p_1p_3'+p_4', p_1p_4', p_2p_4' \ \}.
\]
The algebra $\mathcal{T}(a,b)$ is a homomorphic image of $\SP(d/2, d/2)$.

The first column in the Kazarian spectral sequence corresponds to 
\[
   H^*(\mathbf{A_0})\approx H^*(A_0)\approx H^*({\BSO}_d),
\]
where $A_0=S(0)$ (see Lemma~\ref{l:12.1}).

\begin{theorem}\label{th:9.7} For $d$ odd, the differential $d^{0,*}_*$ is trivial and 
\[
    E^{0,*}_{\infty}=\cdots =E_1^{0,*}=H^*({\BSO}_d)=\Q[p_1, ..., p_{\frac{d-1}{2}}].
\]
For $d$ even,
\[
    E^{0,*}_{\infty}=\cdots =E_2^{0,*}=\mathrm{ker}\ d^{0,*}_{1}=\Q[p_1, ..., p_{d/2}]\subset H^*({\BSO}_d),
\]
\[
   \mathrm{im}\, d^{0,*}_1=\{\, \sum_{a,b} (-1)^a U_{a,b,1}\s Q  \, \},
\]
where $Q\in \SP(d/2,d/2)$, and $a$ ranges over $0,..., d/2$ and $b=d+1-a$. 
\end{theorem}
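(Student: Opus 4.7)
The plan is to identify $d_1^{0,*}$ geometrically and compute it via the Gauss map of each fold stratum. By Lemma~\ref{l:12.1} the target $E_1^{1,*}$ decomposes as $\bigoplus_{(a,b)}\tilde H^{*+1}(\Th\xi_{a,b})$, with $(a,b)$ ranging over $a\le b$, $a+b=d+1$, so $d_1^{0,*}=\sum\Phi_{a,b}$. Invoking the Kazarian identifications of Theorems~\ref{th:11.2} and~\ref{th:16.5}, the local structure near $S(\A_1(a,b))\simeq BG_{a,b}$ realizes $S\xi_{a,b}$ as the link of the fold stratum inside $\mathbf{A_0}$; the kernel bundle $K_0$ over $S\xi_{a,b}$ defines a Gauss map $\phi_{a,b}\colon S\xi_{a,b}\to \BSO_d$, and each $\Phi_{a,b}$ factors as $\delta\circ\phi_{a,b}^*$, where $\delta\colon H^*(S\xi_{a,b})\to\tilde H^{*+1}(\Th\xi_{a,b})$ is the Gysin connecting homomorphism.

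The two local computations are as follows. From the normal form~(\ref{e:3.2}), at a nonzero point $x\in\R^{d+1}$ the kernel of $dQ$ is the orthogonal complement of $Ix$, where $I=\mathrm{diag}(-1_a,+1_b)$; this identifies $\phi_{a,b}^*K_0=\pi^*\xi_{a,b}\ominus L$ with $L$ a real line bundle. Because real line bundles have trivial rational Pontryagin classes, $\phi_{a,b}^*p_i=\pi^*p_i(\xi_{a,b})$, hence $\delta\phi_{a,b}^*p_i=0$ by exactness of the Gysin sequence. The Euler class $e_d$ (which is rationally nonzero only when $d$ is even) is handled fiberwise: over $S^d\subset\xi_{a,b}|_{pt}$ one verifies $K_0|_{S^d}\cong I^*(TS^d)$, so
\[
   \delta(\phi_{a,b}^*e_d)=(\deg I)\cdot\chi(S^d)\cdot U_{a,b,1}=2(-1)^a\, U_{a,b,1}.
\]
When $d$ is odd, $H^*(\BSO_d)=\Q[p_1,\dots,p_{(d-1)/2}]$ is polynomial in Pontryagin classes alone, so $d_1^{0,*}=0$; since these classes come from the globally-defined stable bundle $TM\ominus f^*TN$, all higher differentials out of the zeroth column also vanish, giving $E^{0,*}_\infty=H^*(\BSO_d)$.

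For $d$ even, write $H^*(\BSO_d)=\Q[p_1,\dots,p_{d/2}]\oplus e_d\cdot\Q[p_1,\dots,p_{d/2}]$, using $e_d^2=p_{d/2}$. The Pontryagin subalgebra lies in the kernel by the preceding paragraph and consists of permanent cycles for the same global reason, so it survives to $E^{0,*}_\infty$ unchanged. Using the module structure of $d_1^{0,*}$ over the image of $H^*(\mathbf{A_1})\to H^*(\mathbf{A_0})$ and noting that the pullback of $p_i$ to $\tilde H^*(\Th\xi_{a,b})$ acts on $U_{a,b,1}$ by $p_i(\xi_{a,b})=\sum_{j+k=i}p_jp'_k\in\mathcal{T}(a,b)$, one obtains
\[
   d_1^{0,*}(e_d\cdot P)=\sum_{(a,b)}2(-1)^a\, U_{a,b,1}\s P(p(\xi_{a,b})).
\]
Letting $P$ range over $\Q[p_1,\dots,p_{d/2}]$ and invoking the surjection $\SP(d/2,d/2)\twoheadrightarrow\mathcal{T}(a,b)$ recorded immediately before the theorem identifies the image with $\{\sum_{(a,b)}(-1)^a U_{a,b,1}\s Q:Q\in\SP(d/2,d/2)\}$, while linear independence of the distinct $U_{a,b,1}$ summands forces the kernel to be precisely $\Q[p_1,\dots,p_{d/2}]$.

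The main obstacle will be the rigorous identification $\Phi_{a,b}=\delta\circ\phi_{a,b}^*$, especially verifying that the Thom class normalization is compatible with the orientation conventions of Remark~\ref{r:orientation} and extracting the precise sign $(-1)^a$ from the degree of the involution $I$ on $S^d$. A secondary point is to check that the set of classes computed as $d_1^{0,*}$-images agrees with the claimed $\SP(d/2,d/2)$-parameterized set, using the preparatory surjection onto $\mathcal{T}(a,b)$.
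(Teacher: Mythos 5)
Your proposal is correct and follows essentially the same route as the paper: the paper likewise localizes $d_1^{0,*}$ at each fold stratum via Thom isomorphisms, kills the Pontryagin classes because they extend over $\mathbf{A_1}$ (equivalently, pull back from the base of the link), and extracts the sign $(-1)^a$ from the fiberwise involution $I=\mathrm{diag}(-1_a,+1_b)$ identifying the kernel bundle with a twist of the tangent bundle along the fibers of the sphere bundle of $\xi_{a,b}$. The only cosmetic difference is your factor $\chi(S^d)=2$ in $\delta(\phi_{a,b}^*e_d)$, which is immaterial over $\Q$ and does not change the image as a subspace.
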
 

\begin{remark} The signs in the formula for the image of $d^{0,*}_1$ in the case where $d$ is even (and hence $a<b$) depends on our choice of the Thom classes $U_{a,b,1}$, or equivalently, in the choice of coorientation of the normal bundle $\nu_1$ of the stratum of fold map germs of index $a$ in the set $A_1$ for each $a$. The normal bundle $\nu_1$ over fold points of index $a$ is isomorphic to the canonical kernel bundle, which in its turn has a canonical orientation (see Remark~\ref{r:orientation}). We choose $U_{a,b,1}$ so that it is compatible with the canonical orientation of the kernel bundle.  
\end{remark}

\begin{remark}\label{r:change} Consider an oriented Morin map $f\colon M\to N$ of even dimension of closed manifolds with only fold and cusp singularities. Let $S$ be a component of singular points of $f$ that contains fold points of different indices $a_1$ and $a_2=a_1+1$. Then $S$ is a closed smooth submanifold in $M$ and its normal bundle in $N$ is orientable. We observe that if a coorientation of $S$ is compatible with the canonical orientation of the kernel bundle of $f$ over the set of fold points of index $a_1$, then it is not compatible with the the canonical orientation of the kernel bundle of $f$ over the set of fold points of index $a_2$.    
\end{remark}

\begin{proof} The differential $d^{0,*}_1$ coincides with the upper coboundary homomorphism in the commutative diagram of exact sequences
\[
\begin{CD}  
H^*(\mathbf{A_1})@>>> H^*(\mathbf{A_0}) @>d^{0,*}_1 >> H^{*+1}(\mathbf{A_1}, \mathbf{A_0}) \\
@AAA @AAA @AAA\\
H^{*}(A_1)@>>> H^*(A_0) @>\delta >> H^{*+1}(A_1, A_0), \\
\end{CD}   
\]
where $A_r$ stands for the space $S(r)$ with $r\ge 0$ and the vertical maps are Thom isomorphisms (see the proof of Lemma~\ref{l:12.1}). 

Suppose that the dimension $d$ is odd. Then every cohomology class of $A_0$ extends to a cohomology class of $A_1$. Consequently, the differential $d^{0,*}_1$ is trivial. On the other hand, if $d^{0,*}_{i}$ is trivial for all $i<r$, then the homomorphism $d^{0,*}_r$ coincides with the upper horizontal homomorphism in the commutative diagram 
\[
\begin{CD}  
H^*(\mathbf{A_0}) @>d^{0,*}_r >> H^{*+1}(\mathbf{A_r}, \mathbf{A_{r-1}}) \\
@AAA @AAA \\
H^*(A_0) @>\delta >> H^{*+1}(A_{r}, A_{r-1}), \\
\end{CD}   
\]
of Thom isomoprhisms, where the lower horizontal homomorphism takes a cohomology class $x$ to the coboundary class $\delta(y)$ in $H^{*+1}(A_r, A_{r-1})$, where $y$ is an extension of $x$ over $A_{r-1}$. Arguing as above, we conclude that $\delta$ is trivial. Thus, by induction, all homomorphisms $d^{0,*}_*$ are trivial. 

Suppose now that $d$ is even. To begin with we observe that the spectrum $\mathbf{A_1/A_{0}}$ is the wedge union of $1+d/2$ spectra
\[
\mathbf{A_1/A_0} =\bigvee \mathbf{A_1(a,b)/A_0}
\]
where $\mathbf{A_1}(a,b)$ is the spectrum for $\A_1(a,b)$ and $\A_0$ map germs, and therefore the differential $d^{0,*}_1$ has $1+d/2$ components, one component for each index $a=0,...,d/2$. The $a$-th component of $d^{0,*}_1$ is the upper horizontal homomorphism in the commutative diagram 
\[
\begin{CD}  
H^*(\mathbf{A_0}) @>>> H^{*+1}(\mathbf{A_1(a,b)}, \mathbf{A_0}) \\
@AAA @AAA \\
H^*(A_0) @>\delta >> H^{*+1}(A_{1}(a,b), A_0) \\
\end{CD}   
\]
of Thom isomorphisms. In the case $a=0$, the lower horizontal homomorphism $\delta$ coincides with the coboundary homomorphism (which we also denote by $\delta$) in the exact sequence of the pair $(\BSO_{d+1}, \BSO_d)\colon$
\[
     0\longrightarrow H^*({\BSO}_{d+1})\longrightarrow H^*({\BSO}_{d}) \longrightarrow H^*({\BSO}_{d+1}, {\BSO}_d)\longrightarrow 0,
\]
\begin{equation}\label{eq:star}
    \delta(p_i)=0, \qquad \delta(e\s p_i) = \mathop{\mathrm{Th}}(p_i).
\end{equation}
where $\mathop{\mathrm{Th}}$ is the Thom isomorphism.  This implies the statement of Theorem~\ref{th:9.7} for the $a$-th component of $d^{0,*}_1$ for $a=0$. 

To compute the $a$-th component of $d^{0,*}_1$ for $a\ne 0$ we need an auxiliary construction. Let $f\colon \R^{a+b}\to \R$ denote the standard Morse function. There is a standard action of the group $G=\SO_a\times\SO_b$ on the source space of $f$. If we let $G$ act trivially on the target space of $f$, then $f$ is $G$-equivariant. Let
\[
    \gamma\colon E_\gamma=EG\times_{G} \R^{a+b}\longrightarrow {\BSO}_a\times {\BSO}_b= B_\gamma
\]
be the universal vector $G$-bundle.  Consider the commutative diagram of vector bundles
\[
    \begin{CD}
    E_{\gamma} @>I>> {\mathop{\mathrm{ESO}}}_{d+1} \\
     @V\gamma VV @V\xi_{d+1} VV \\
    \BSO_a\times \BSO_b @>>> \BSO_{d+1},  
    \end{CD}
\]
where $\xi_{d+1}$ is the universal oriented vector bundle of dimension $d+1$, the bottom horizontal map is induced by the injective group homomorphism  of $\SO_a\times \SO_b$ into $\SO_{a+b}$, and the map $I$ is the fiberwise involution
\begin{equation}\label{in}
     EG\times_G\R^{a+b}=EG\times_G[\R^a\times \R^b] \xrightarrow{\id\times_G[(-1)\times 1]} EG\times_G [\R^a\times \R^b]  
\end{equation}
composed with the pullback map. We recall that the total space of the spherical bundle of $\xi_{d+1}$ is homotopy equivalent to $\BSO_d$, and the restriction of $\xi_{d+1}^*{\mathop\mathrm{ESO}}_{d+1}$ to $\BSO_d$ canonically splits into the Whitney sum of the trivial line bundle over $\BSO_d$ and $\xi_d$. Let $S_{\gamma}$ denote the total space of the spherical bundle associated with $\gamma$. Let $K$ denote the kernel of $df$. Then there is a canonical orientation preserving isomorphism 
\begin{equation}\label{eq:split}
 \gamma^*E_\gamma=   \gamma^*[EG\times_G \mathop\mathrm{grad}f]\oplus [EG\times_G K]\approx I^*\varepsilon\oplus I^*\xi_d=I^*(\xi_{d+1}^*{\mathop\mathrm{ESO}}_{d+1})
\end{equation}
of oriented vector bundles over $S_{\gamma}$, where $\varepsilon$ is the trivial vector bundle over ${\mathop\mathrm{ESO}}_d$.

\begin{figure}[ht]
	\centering
			\includegraphics[draft=false, width=60mm]{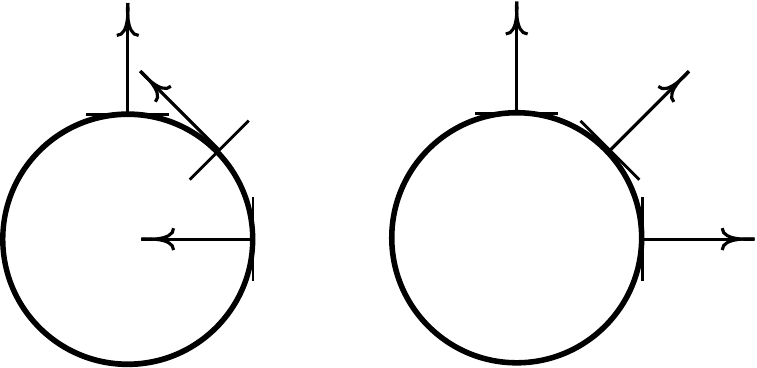}
\caption{The gradient and the kernel bundle for $f=-x_a^2+x_b^2$ (on the right hand side) and for $f=x_a^2+x_b^2$ (on the left hand side).}
\label{fig:5}
\end{figure}

\begin{lemma} The canonical isomorphism (\ref{eq:split}) splits into the direct sum of two orientation preserving isomorphisms of vector bundles over $S_\gamma$
\[
   \gamma^*[EG\times_G \mathop\mathrm{grad}f]\approx I^*\varepsilon \qquad\mathrm{and}\qquad \gamma^*[EG\times_G K]\approx I^*\xi_{d}.
\] 
\end{lemma}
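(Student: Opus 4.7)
The plan is to realize the isomorphism (\ref{eq:split}) pointwise as the tautological identification $[e,v]\mapsto [e,v]$ of fibers of $\gamma^*E_\gamma$ with fibers of $I^*(\xi_{d+1}^*\mathop\mathrm{ESO}_{d+1})$, and to verify that this identification carries the two summands on the left onto the two summands on the right. The key elementary observation is that the gradient of $f=Q(a,b)$ at a point $x\in\R^{a+b}$ equals $2\iota(x)$, where $\iota\colon \R^a\times\R^b\to\R^a\times\R^b$ is the reflection $(-1)\times 1$ appearing in (\ref{in}). In particular $I$ sends $(e,x)\in S_\gamma$ to $[e,\iota(x)]$; and since $|\iota(x)|=|x|=1$ this lands in the spherical bundle of $\xi_{d+1}$, which is identified with $\BSO_d$.

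Next, at a point $(e,x)\in S_\gamma$ I would unpack the four fibers and match them up. The fiber of $\gamma^*[EG\times_G \mathop\mathrm{grad}f]$ is the line $[e,\R\iota(x)]$, while the fiber of $I^*\varepsilon$ is the tautological line at $I(e,x)=[e,\iota(x)]$, which is again $[e,\R\iota(x)]$. Similarly, the fiber of $\gamma^*[EG\times_G K]$ is the hyperplane $[e,\iota(x)^\perp]$, since by definition $K_x=\mathop\mathrm{grad}f(x)^\perp$, and the fiber of $I^*\xi_d$ is the orthogonal complement of the tautological line inside the fiber of $\xi_{d+1}$, which is also $[e,\iota(x)^\perp]$. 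Hence the tautological identification realizing (\ref{eq:split}) carries the gradient line subbundle isomorphically onto $I^*\varepsilon$ and the kernel subbundle isomorphically onto $I^*\xi_d$.

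The remaining point, and the only non-formal content, is to check that both summand isomorphisms are orientation preserving. For the line factor this is immediate: the distinguished positive vector of the gradient bundle is $2\iota(x)$, which points the same way as the distinguished positive vector $\iota(x)$ of $\varepsilon$. For the hyperplane factor, $K_x$ is oriented so that $\mathop\mathrm{grad}f(x)\oplus K_x$ induces the standard orientation on $\R^{a+b}$, whereas $\xi_d$ at $[e,\iota(x)]$ is oriented so that $\varepsilon\oplus\xi_d$ induces the orientation on the fiber of $\xi_{d+1}$. Because the inclusion $\SO_a\times\SO_b\hookrightarrow\SO_{a+b}=\SO_{d+1}$ is orientation preserving, the two ambient orientations on $[e,\R^{a+b}]=[e,\R^{d+1}]$ coincide, and combined with the already-verified orientation matching on the line factor this forces orientation agreement on the hyperplane factor. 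The delicate step is precisely this orientation bookkeeping; in particular the sign in $\mathop\mathrm{grad}f(x)=+2\iota(x)$ (and not $-2\iota(x)$) is exactly what makes the line isomorphism orientation preserving, so any sign discrepancy in the definition of $\iota$ would break the argument.
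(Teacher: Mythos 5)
Your proof is correct and follows essentially the same route as the paper's: the whole content is the computation $\mathop{\mathrm{grad}}f(x)=(-2x_1,\dots,-2x_a,2x_{a+1},\dots,2x_{a+b})=2\iota(x)$, which identifies the gradient line at $x$ with the outward radial line at $I(x)=\iota(x)$ (hence with $I^*\varepsilon$, orientation preserved), after which the kernel, being the oriented orthogonal complement of the gradient, is carried to $I^*\xi_d$. You merely make explicit the orientation bookkeeping on the complementary summand that the paper leaves implicit.
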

\begin{proof} In the standard coordinates, the gradient of the standard Morse function $f$ is given by 
\[
  \mathop{\mathrm{grad}}f=(-2x_1, \dots, -2x_a, 2x_{a+1}, \dots, 2x_{a+b}).
\]
So the vector field given by the involution (\ref{in}) of $\mathop{\mathrm{grad}}f$ at a point $\vec{x}\in \R^d$ is given by $2\vec{x}$. It remains to observe that the kernel bundle of $f$ over non critical points is given by the oriented orthogonal complement of the span of $\mathop{\mathrm{grad}}f$. 
\end{proof}

In what follows we will identify the base space of a vector bundle with the zero section of that bundle. 

\begin{corollary} The coboundary homomorphism $\delta\colon H^*(E_{\gamma}\setminus B_{\gamma})\to H^*(E_\gamma, E_{\gamma}\setminus B_{\gamma})$ takes the Euler class of the vector bundle $K$ to $(-1)^aU$, where $U$ is the Thom class of the vector bundle $\gamma\colon E_{\gamma}\to B_{\gamma}$.  
\end{corollary}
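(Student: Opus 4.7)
The plan is to deduce the corollary from the preceding lemma together with naturality of the coboundary homomorphism under the map of pairs induced by $I$. More precisely, the lemma identifies the bundle $\gamma^*[EG\times_G K]$ over $S_\gamma$ with $I^*\xi_d$, so
\[
   e(K) \;=\; e(I^*\xi_d) \;=\; I^*e_d,
\]
where $e_d\in H^d(\BSO_d)$ is the Euler class of the universal oriented $d$-plane bundle. This reduces the computation of $\delta(e(K))$ to that of $I^*\delta(e_d)$.

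Next I would invoke naturality. The fiberwise involution $I$ is a linear isomorphism on each fiber of $\gamma$, so it carries the zero section $B_\gamma$ into the zero section of ${\mathop\mathrm{ESO}}_{d+1}$ and sends the complement $E_\gamma\setminus B_\gamma$ into ${\mathop\mathrm{ESO}}_{d+1}\setminus 0$. Thus $I$ is a map of pairs
\[
   I\colon (E_\gamma,\,E_\gamma\setminus B_\gamma)\;\longrightarrow\; ({\mathop\mathrm{ESO}}_{d+1},\,{\mathop\mathrm{ESO}}_{d+1}\setminus 0),
\]
and naturality of the long exact sequence yields $\delta(I^*e_d)=I^*\delta(e_d)$.

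The coboundary $\delta(e_d)$ on the target side is exactly the $a=0$ computation already carried out at the start of the proof of Theorem~\ref{th:9.7}: it equals the Thom class $U_{d+1}$ of $\xi_{d+1}$ (the relation $\delta(e\s p_i)=\Th(p_i)$ specialized at $p_i=1$). It therefore suffices to compute $I^*U_{d+1}$. On each fiber of $\gamma$ the map $I$ is the orthogonal involution $(-1)\oplus 1$ on $\R^a\oplus \R^b=\R^{d+1}$, whose determinant equals $(-1)^a$. Because the Thom class is characterized by its restriction to every fiber being the generator of the local cohomology compatible with the chosen orientation, and because pulling back such a generator under a linear isomorphism of fibers multiplies it by the sign of the determinant, we conclude
\[
   I^*U_{d+1}\;=\;(-1)^aU,
\]
where $U$ is the Thom class of $\gamma$ with its canonical orientation. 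Chaining the three identifications gives $\delta(e(K))=I^*\delta(e_d)=I^*U_{d+1}=(-1)^aU$.

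The subtle point to monitor is the orientation bookkeeping in the last display: one must verify that the canonical orientation used to define $U$ (the one from the kernel-bundle convention of Remark~\ref{r:orientation}) and the standard orientation used to define $U_{d+1}$ are compatible via $I$ so that the sign contribution is exactly $\mathrm{sign}(\det I)=(-1)^a$ with no extra sign. This is straightforward from the fiberwise characterization of the Thom class but is the step where a sign error could easily creep in.
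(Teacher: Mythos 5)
Your argument is correct and follows essentially the same route as the paper: both rest on the preceding lemma's identification $\gamma^*[EG\times_G K]\approx I^*\xi_d$, the Gysin-sequence fact that $\delta$ sends the Euler class to the Thom class, and the observation that the fiberwise involution has determinant $(-1)^a$. The only (equivalent) difference in bookkeeping is that the paper extracts the sign before applying $\delta$, by noting that the involution is covered by an automorphism of the vertical tangent bundle $\tilde\xi_d$ of $S_\gamma\to B_\gamma$ reversing orientation exactly when $a$ is odd, whereas you extract it afterwards as $I^*U_{d+1}=(-1)^aU$ via naturality of the coboundary.
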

\begin{proof} Let $\tilde{\xi}_d$ be the fiber bundle over $S_{\gamma}$ that consists of oriented planes tangent to the fibers of the projection $S_{\gamma}\to B_\gamma$. Then the coboundary homomorphism takes the Euler class of $\tilde\xi_d$ to the Thom class of $\gamma$. On the other hand the involution (\ref{in}) is covered by an automorphism of $\tilde\xi_d$ that changes the orientation if and only if $(-1)^a$ is negative. Thus for vector bundles over $S_\gamma$ we have 
\[
      e(\gamma^*(EG\otimes_G K))=I^*e(\tilde\xi_d)=(-1)^a(\tilde\xi_d). 
\]
This implies the statement of the corollary. 
\end{proof}

Let $E_{\xi_{a,b}}$ and $B_{\xi_{a,b}}$ respectively denote the total space and the base space of the universal vector bundle $\xi_{a,b}$ over the classifying space of the relative symmetry group of $\A_1(a,b)$. Then there is a commutative diagram 
\[
\begin{CD}
     H^*({\mathop\mathrm{ESO}}_{d+1}\setminus {\BSO}_{d+1}) @>\xi_{d+1}>> H^*({\mathop\mathrm{ESO}}_{d+1}, {\mathop\mathrm{ESO}}_{d+1}\setminus {\BSO}_{d+1}) \\
     @VVV @VVV \\
     H^*(E_{\xi_{a,b}}\setminus B_{\xi_{a,b}}) @>>> H^*(A_1(a,b), A_0) \\
     @VVV @Vk VV \\
     H^*(E_{\gamma}\setminus B_\gamma) @>>> H^*(E_\gamma, E_\gamma\setminus B_\gamma),
\end{CD}
\] 
where the bottom square is induced by a pullback square. Since the homomorphism $k$ in the above diagram is injective, chasing this diagram we compute the differential $d^{0,*}_1$. Since every cohomology class in $\ker d^{0,*}_1$ extends to a cohomology class in $\mathbf{A_r}$ for $r>1$ we conclude that all differentials $d^{0,*}_r$ are trivial for $r>1$. 
\end{proof}

\begin{remark}\label{r:17.3} In \cite{Sa} we have shown that the spectrum $\mathbf{A_1}$ splits as
\[
     \mathbf{A_1}=[\mathbf{A_1(0,d+1)}]\vee [\mathbf{A_1(1,d)/A_0}] \vee \cdots \vee [\mathbf{A_1(d/2,d/2+1)/A_0}]
\] 
for $d$ even, and 
\[
     \mathbf{A_1}=\mathbf{Y}\vee [\mathbf{A_1(1,d)/A_0}] \vee \cdots \vee [\mathbf{A_1(d-1/2,d-1/2)/A_0}]
\] 
for $d$ odd, where $\mathbf{Y}$ is the spectrum for $\A_0$, $\A_1(0,d+1)$ and $\A_1(\frac{d+1}{2}, \frac{d+1}{2})$ singularity types. This suggests a different choice of convenient generators for the cohomology groups of $\mathbf{A_1}$. We will see, however, that for our purposes the choices made in the current paper are more appropriate. 
\end{remark}

In the next section we will discuss in detail computations of the differential $d_1^{1,*}$ and the component of the differential $d_1^{2,*}$ in the case $d=4s$. Computations of the other differentials are similar and mentioned in remarks.

\section{The differential $d_1^{1,*}$}

In what follows we will often use the property that for a topological space $X$ the inclusion $\{1\}\times X\subset [0,1]\times X$ gives rise to the coboundary isomorphism 
\[
    H^*(\{1\}\times X)\stackrel{\delta}\longrightarrow \tilde{H}^{*+1}([0,1]\times X, (\{0\}\sqcup \{1\})\times X).
\]
Furthermore, the coboundary homomorphism takes each class $x$ to $\Th x$, where $\Th$ is the Thom isomorphism. Similarly, the coboundary homomorphism 
\[
    H^*(\{0\}\times X)\stackrel{\delta}\longrightarrow \tilde{H}^{*+1}([0,1]\times X, (\{0\}\sqcup \{1\})\times X),
\]
takes each class $x$ to $-\Th x$. These properties follow from the exact sequence of the pair $([0,1]\times X, (\{0\}\sqcup \{1\})\times X)$. 

\subsection{The case $a=0$.}
The stratum of $\A_1(0,d+1)$ points in $A_{\infty}$ is adjacent only to the stratum of $\A_2(0, d+1)$ points. Consequently in this case the differential $d_1^{1,*}$ has only one component. Let $A_1'$ denote the subset in $A_{\infty}=S(\infty)$ of $\A_1(0,d+1)$ and $\A_0$ points. Let $A_2'\subset A_{\infty}$ be the subset that contains $A_1$ and the set of $\A_2(0,d)$ points. Then there are homotopy equivalences
\[
    A_1'\setminus A_0 \simeq {\BSO}_{d+1}  \qquad \mathrm{and} \qquad  A_2'\setminus A_1\simeq {\BSO}_d,
\]
and a commutative diagram of Thom isomorphisms
\[
\begin{CD}
   \tilde{H}^{*+d+1}(A_1'/A_0) @>d^{1,*}_1>> \tilde{H}^{*+d+2}(A_2'/A_1) \\
   @AAA @AAA \\
   H^*(A_1'\setminus A_0) @>>> H^{*+1}(A_2'\setminus A_0, A_1\setminus A_0),
\end{CD}
\]
where the bottom horizontal homomorphism $\delta$ takes a cohomology class $x$ to the cohomology class $\delta(y)$ for a trivial extension of $x$ over $A_1\setminus A_0$. The vertical homomorphisms in the diagram depend on the choices of Thom classes, which in their turn depend on our choice of orientation of the normal bundle of $A_2'\setminus A_0$ in $A_2'$. In fact, by excision we only need to specify an orientation of the normal bundle in the complement to the set of $\A_{1}(i,d+1-i)$ points for $i>0$; in the complement the normal bundle is isomorphic to the canonical kernel bundle which carries a canonical orientation since the canonical cokernel bundle is oriented. 

\begin{figure}[ht]
	\centering
			\includegraphics[draft=false, width=60mm]{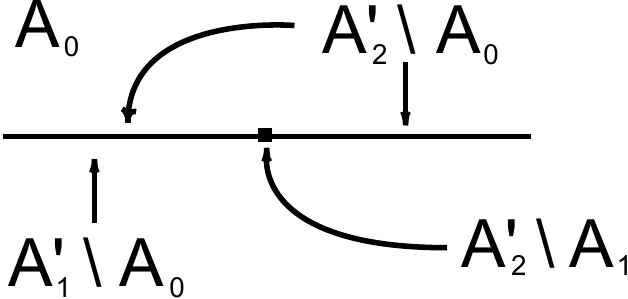}
\label{fig:6}
\end{figure}

Next we choose a generator in the group $H^1(A_2'\setminus A_0, A_1\setminus A_0)$, i.e., the Thom class, so that it is compatible with the coorientation of the $A_2'\setminus A_1$ stratum in the space $A_2'\setminus A_0$ in the direction of $A_1'$. Finally, we choose the Thom class $U_{0,d,2}$, or, equivalently, the coorientation of $A_2'\setminus A_1$ in $A'_2$ to be the coorientation of $A_2'\setminus A_1$ in $A_2'\setminus A_0$ followed by the coorientation of $A_2'\setminus A_0$ in $A_2'$. Then 
\begin{equation}\label{equ:14.1}
d^{1,*}_1(U_{0,d+1,1}\s p)= U_{0,d,2}\s p 
\end{equation}
for each class $p\in \mathcal{P}(0,d+1)$. 

\begin{remark} By a similar argument, we deduce that for $a=0$, 
\[
d^{1,*}_1(U_{0,d+1,1}\s p + U_{0,d+1,1}\s eq)= U_{0,d,2}\s p 
\]
for each integer $d>0$ (not necessarily of the form $d=4s$) and every pair of classes $p, q\in \mathcal{P}(0,d+1)$; the Euler class $e$ is trivial if $d+1$ is odd. In particular, if $d+1$ is even, then the class $U_{0,d+1,1}\s p$ maps to the trivial class for each $p\in p'_{\frac{d+1}{2}}\P(0,d+1)$. 
\end{remark}

\subsection{The case $a=1,..., 2s-1$.}\label{ss:18.2}
We will construct a double covering over the space $A_{\infty}\setminus A_0$ by means of a cohomology class $w$; by definition, this double covering will be the spherical bundle associated with the unique line bundle whose first Stiefel-Whitney class is $w$. Each component of the space $A_1\setminus A_0$ is homotopy equivalent to the classifying space $BG_{\alpha}$ of a relative symmetry group $G_{\alpha}$ of a singularity type $\alpha$. The cohomology class $w$ is first defined on the components in $A_1\setminus A_0$; it is a class $w'$ in $H^1(BG_{\alpha}; \Z_2)$ that assumes value $1$ only on those pointed loops that represent classes in $\pi_1(BG_{\alpha})\approx \pi_0(G_{\alpha})$ that correspond to path components of $G_{\alpha}$ that act on $\R^a\times \R^b$ by involution on each of the two factors. There is a similarly defined class $w''$ on the set $A_{\infty}\setminus A_0$. We claim that there is a unique cohomology class $w$ over $A_{\infty}\setminus A_0$ that restricts to $w'$ and $w''$. Indeed, consider a part of the Mayer-Vietoris exact sequence 
\[
    H^0(U)\oplus H^0(V)\longrightarrow H^0(U\cap V)\longrightarrow H^1(U\cup V) \longrightarrow H^1(U)\oplus H^1(V)\longrightarrow H^1(U\cap V)
\] 
for a space $V=A_{1}\setminus A_0$ and an open neighborhood $U$ of the space $A_{\infty}\setminus A_1$ in the space $A_{\infty}\setminus A_0$. In what follows we will identify the cohomology groups of the spaces $U$ and $A_{\infty}\setminus A_1$. The element $w'\oplus w''$ belongs to the kernel of the forth homomorphism and therefore lifts to an element $w$ in the cohomology group of $U\cup V$. This lift is unique up to the image of the second homomorphism, which is trivial since the second homomorphism in the exact sequence 
\[
    H^0(U\cup V) \longrightarrow H^0(U)\oplus H^0(V) \longrightarrow H^0(U\cap V),
\]
\[
    \Z \longrightarrow (2s+1)\Z\oplus (2s+1)\Z\longrightarrow (4s+1)\Z,
\]
is surjective. Thus the element $w$ that restricts to $w'$ and $w''$ is indeed unique.  

Let $R_{\infty}$ denote the total space of the double covering $\pi_R$ associated with $w$ over the space $A_{\infty}\setminus A_0$. The space $R_{\infty}$ inherits a filtration by subspaces
\[
   R_r=\pi_R^{-1}(A_r\setminus A_0) \qquad \mathrm{and} \qquad R^{\pm}_r(a,b)=\pi_R^{-1}(A^{\pm}_r(a,b)\setminus A_0),
\] 
where $r=1,2,...$, and the set $A^{\pm}_r(a,b)$ stands for the union of $A_{r-1}$ and the set of $\A^{\pm}_r(a,b)$ points in $A_{\infty}$. Our computation of the differential relies on the commutative diagram
\[
\begin{CD}
    H^*(\Th\pi_R^*\nu_1) @>d^{1,*}_1>> H^*(\Th\pi_R^*\nu_2) \\
    @AAA @AAA \\
    H^*(\Th\nu_1) @>d^{1,*}_1>> H^*(\Th\nu_2),     
\end{CD}
\]
where $\nu_k$ stands for the normal bundle of $A_k\setminus A_{k-1}$ in $A_k$. We observe that the vector bundle $\pi_R^*\nu_1$ consists of components $\mathfrak{o}$ that are described in the proof of Lemma~\ref{l:10}, while the Thom space of $\pi_R^*\nu_2$ consists of the Thom spaces that appear in the computation of the cohomology group of $\Th\nu_2|A_2$. Under the assumption that $a\ne b$, we have computed that the $a$-th components of the two groups at the bottom of the diagram are subgroups of the $a$-th components of the two groups at the top of the diagram:  
\[
     \tilde{H}^*(\Th\nu_1) = U_{a,b,1}\s \mathcal{P}(a,b),
\]
\[
    \tilde{H}^*(\Th\nu_2)= U_{a,b,2}\s (\mathcal{P}(a,b)+e_{a,b}\mathcal{P}(a,b)),
\]
and $e_{a,b}=0$ for $a$ odd.

Since the $A_1'=A_1(a, b)$ is bounded by the sets  
\[
A_2'=A_2(a, b-1)\qquad \mathrm{and} \qquad A_2''=A_2(a-1, b),
\]
the differential $d^{1,*}_1$ has two components. The first component will be computed by means of the commutative diagram of Thom isomorphisms
\[
\begin{CD}
    \tilde{H}^{4s+1+*}(\Th\pi_R^*\nu_1) @>>> \tilde{H}^{4s+2+*}(\Th\pi_R^*\nu'_2) \\
    @AAA @AAA \\
    H^*(R_1') @>>> H^{*+1}(R'_2, R_1), 
\end{CD}
\]
where $\nu'_2$ is the restriction of $\nu_2$ to $A_2'$, $R'_2=\pi_R^{-1}(A_2')$ and $R'_1=\pi_R^{-1}(A_1')$. The vector bundle $\nu_1$ is canonically oriented since it is isomorphic to the canonical kernel bundle. Consequently the vertical homomorphism in the above diagram on the left hand side is canonical. We choose a canonical Thom class $U'(a,b-1)$ for the vertical homomorphism on the right hand side so that it is compatible with the Thom class of $\pi_R^*\nu_1$. Next we choose a generator in the group $H^1(R'_2, R_1)$, which is a Thom class of a line bundle, so that it is compatible with the coorientation of $R'_2\setminus R_1$ in the space $R'_2$ in the direction of $R_1'$ (of fold map germs of the smaller index $a$). Finally we choose the class $U_{a, b-1,2}$ so that it corresponds to the coorientation of $R_2'\setminus R_1$ in $\Th \pi^*_R\nu_2'$ given by the coorientation of $R_2'\setminus R_1$ in $R_2'$ followed by the coorientation of $R_2'$ in $\Th \pi^*_R\nu_2'$. 

Similarly we have a commutative diagram for the second component of $d^{1,*}_1$ 
\[
\begin{CD}
    \tilde{H}^{4s+1+*}(\Th\pi_R^*\nu_1) @>>>\tilde{H}^{4s+2+*}(\Th\pi_R^*\nu''_2) \\
    @AAA @AAA \\
    H^*(R_1') @>>> H^{*+1}(R''_2, R_1), 
\end{CD}
\]
where $\nu''_2$ is the restriction of $\nu_2$ to $A_2''$, $R''_2=\pi_R^{-1}(A_2')$ and $R'_1=\pi_R^{-1}(A_1')$. Again, the Thom homomorphism on the left hand side is chosen to be the orientation class for the canonical kernel bundle, while the Thom class $U''(a-1,b)$ for the vertical homomorphism on the right hand side is chosen so that the diagram is commutative. Then, by Remark~\ref{r:change}, 
\[
U'(a,b)=-U''(a,b) \qquad \mathrm{for} \qquad a=1,..., 2s-1, \quad b=4s+1-a.
\] 
We choose a generator in the group $H^1(R''_2, R_1)$ so that it is compatible with the coorientation in the direction opposite to $R_1'$ (i.e., in the direction of map germs of smaller index $a$). We note that the chosen coorientation class $U_{a-1,b,2}$ coincides with negative the coorientation of $R_2''\setminus R_1$ in $R_2''$ followed by the coorientation of $U''(a-1,b)$ Chasing the two above diagrams we conclude that 
\begin{equation}\label{eq:18.18}
     d^{1,*}_1(U_{a,b,1}\cup p) = U_{a-1,b,2}\cup p + U_{a, b-1, 2}\cup p
\end{equation}
for every cohomology class $p\in \mathcal{P}(a,b)$, where all Thom classes $U_{a,b,1}$ are chosen to be compatible with $U'(a,b)$. Note that the classes $U_{a-1,b,2}\s p$ are trivial if $a$ is even and $p\in p_{\frac{a}{2}}\mathcal{P}(a,b)$; while the classes $U_{a,b-1,2}\s p$ are trivial if $b$ is even and $p\in p'_{\frac{b}{2}}\mathcal{P}(a,b)$.

\begin{remark} By a similar argument, we deduce that for $a=1,...,\left\lfloor \frac{d-1}{2}\right\rfloor$, 
\[
d^{1,*}_1(U_{a,b,1}\s p + U_{a,b,1}\s eq)= U_{a-1,b,2}\cup p + U_{a, b-1, 2}\cup p,
\]
where $b=d+1-a$, for each integer $d>0$ and every pair of classes $p, q\in \mathcal{P}(a,b)$; the Euler class $e$ is trivial if $d+1$ is odd; the classes $U_{a-1,b,2}\s p$ are trivial if $a$ is even and $p\in p_{\frac{a}{2}}\mathcal{P}(a,b)$; while the classes $U_{a,b-1,2}\s p$ are trivial if $b$ is even and $p\in p'_{\frac{b}{2}}\mathcal{P}(a,b)$. 
\end{remark}

\subsection{The case $a=2s$.}

Let $T_{\infty}$ denote the total space of the double covering $\pi_T$ associated with the cokernel bundle $C_1$ over the space $R_{\infty}$. Every filtration that we have over the space $R_{\infty}$ induces a filtration of the space $T_{\infty}$. We will write $T_r$ for $\pi_T^{-1}(R_r)$ and so on. 
The set $A_1'=A_{1}(2s, 2s+1)$ is bounded by the sets 
\begin{equation}\label{equ:14}
          A_2'=A_2(2s, 2s) \qquad \mathrm{and} \qquad A_2''=A_2(2s-1, 2s+1),
\end{equation}
and therefore the coboundary homomorphism may have two components. The restrictions of $\nu_2$ to the
 two spaces in (\ref{equ:14}) will be denoted by $\nu_2'$ and $\nu_2''$ respectively. Our computation of the $A_2'$ component of the differential relies on the commutative diagram
\[
\begin{CD}
    H^*(\Th\pi^*\nu_1) @>\delta>> H^*(\Th\pi^*\nu_2') \\
    @AAA @AAA \\
    H^*(\Th\nu_1) @>\delta>> H^*(\Th\nu_2'),     
\end{CD}
\]
where $\pi$ stands for the composition $\pi_R\circ \pi_T$. We observe that the Thom space $\Th\pi^*\nu_2'$ coincides with the Thom space $\Th\mathfrak{o}$ that we used in order to compute the cohomology group of the Thom space of $\nu_2'$. On the other hand, the space $\Th\pi^*\nu_1$ is a singular double cover of the corresponding space $\Th\mathfrak{o}$.

\begin{figure}[ht]
	\centering
			\includegraphics[draft=false, width=40mm]{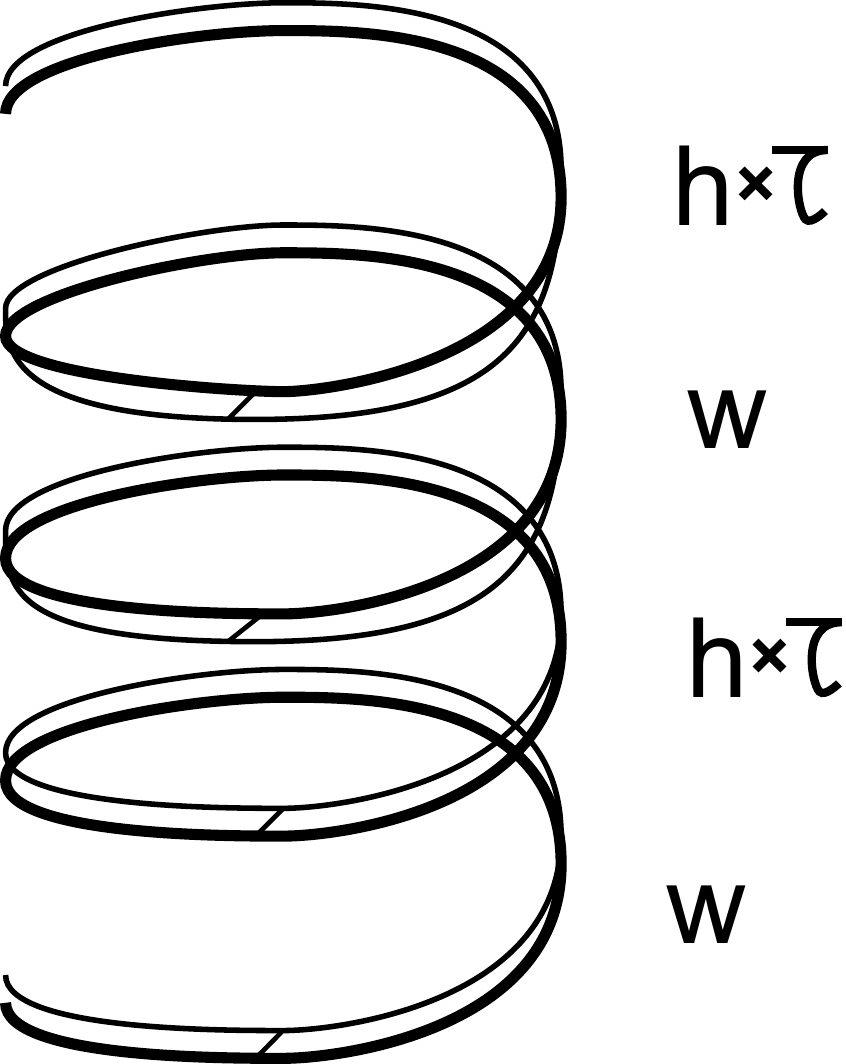}
\caption{The four fold covering $T_1'\to A_1\setminus A_0$. One of the two components of $T_1'$ adjacent to $T_2'\setminus T_1$ (bold).}
\label{fig:3}
\end{figure}

The deck transformation corresponding to the class $w$ preservers the two components of $\Th\pi^*\nu_1$, while the deck transformation corresponding to the class $w_1(C_1)$ exchanges the two components.  Next we choose Thom classes  $U'_{2s,2s+1,1}$ and $U''_{2s,2s+1,1}$ in the two path components of $\Th\pi^*\nu_1$. We choose the class $U'_{2s,2s+1,1}$ so that it is compatible with the canonical orientation of the kernel bundle and $U''_{2s,2s+1,1}$ to be the one that is not compatible with the canonical orientation of the kernel bundle, so that the classes $U'_{2s,2s+1,1}$ 
and $U''_{2s,2s+1,1}$ correspond to the same local orientation on the normal bundle of $A_{2s,2s,2}\setminus A_0$ in $A_{2s,2s,2}$. This guarantees that we may choose a Thom class for the vertical homomorphism on the right hand side of the diagram so that the diagram of Thom isomorphisms is commutative. 

\begin{lemma} The deck transformation of $w$ maps
\[
     U'_{2s, 2s+1, 1}\mapsto U'_{2s,2s+1, 1}, \qquad U''_{2s,2s+1, 1}\mapsto U''_{2s,2s+1,1}, 
\]
 while the deck transformation of $w_1(C_1)$ maps 
\[
     U'_{2s, 2s+1,1}\mapsto U''_{2s,2s+1,1}, \qquad U''_{2s,2s+1,1}\mapsto U'_{2s,2s+1,1}, 
\]
\[
     p_i\mapsto p_i' \qquad \mathrm{and} \qquad p_i'\mapsto p_i
\]
\end{lemma}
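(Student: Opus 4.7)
My plan is to argue geometrically by identifying each deck transformation with an explicit action on the characteristic bundles over the fold stratum and then tracking orientations.

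First I would analyze the covering $\pi^{-1}(\A_1(2s,2s+1))$. Since the relative symmetry group $G=[\O_{2s}\times\O_{2s+1}]^+$ has trivial left representation by Lemma~\ref{l:8.1}, the cokernel bundle $C_1$ is trivial on this stratum; hence $w_1(C_1)$ vanishes there and $\pi_T$ restricts to a trivial double cover of $\pi_R^{-1}(\A_1(2s,2s+1))$. The class $w$, on the other hand, detects the non-trivial component group $\pi_0(G)=\Z_2$, so $\pi_R$ is the connected double cover on this stratum. Consequently $\pi^{-1}(\A_1(2s,2s+1))$ has precisely two path components, which account for the two Thom classes $U'_{2s,2s+1,1}$ and $U''_{2s,2s+1,1}$; these two components are permuted by $\sigma_{w_1(C_1)}$ while being preserved by $\sigma_w$, simply because $\sigma_w$ acts as the deck transformation of $\pi_R$ and lifts along each trivial sheet of $\pi_T$.

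Next, to compute $\sigma_w$ on Thom classes, I would represent $\sigma_w$ by a path in $G$ from the identity to an element whose two factors are both orientation-reversing (the non-identity component of $[\O_{2s}\times\O_{2s+1}]^+$). On the canonical kernel bundle $K_1\cong\R^{2s}\oplus\R^{2s+1}\cong\nu_1$ this acts with determinant $(-1)(-1)=+1$, so the canonical orientation of $K_1$ (see Remark~\ref{r:orientation}) is preserved. Combined with the component-preservation observed above, this forces both $U'_{2s,2s+1,1}$ and $U''_{2s,2s+1,1}$ to be fixed.

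For $\sigma_{w_1(C_1)}$, the stable isomorphism $K_1\cong(TM\ominus f^*TN)\oplus C_1$ from Remark~\ref{r:orientation} shows that reversing the orientation of $C_1$ reverses the orientation of $K_1$, hence of $\nu_1$; since $U'$ and $U''$ were set up to correspond to opposite orientations of $\nu_1$, the swap $U'\leftrightarrow U''$ follows. Moreover, flipping the cokernel orientation negates the canonical quadratic form $K_1\otimes K_1\to C_1$ from (\ref{ee:3.4}), exchanging its maximal positive- and negative-definite subbundles $\nu^+_a\leftrightarrow\nu^-_a$. Since the classes $p_i$ and $p_i'$ are by definition the Pontrjagin classes of $\nu^+_a$ and $\nu^-_a$, this yields $p_i\leftrightarrow p_i'$.

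The principal obstacle will be making the identifications of $\sigma_w$ and $\sigma_{w_1(C_1)}$ with concrete bundle automorphisms fully rigorous. In particular, one must verify that the Borel-theoretic construction of $R_\infty$ and $T_\infty$ described in Section~\ref{s:8} is compatible with the splitting $\nu_1=\nu^+_a\oplus\nu^-_a$ and with our chosen normalizations of $U'$ and $U''$, so that the action of each deck transformation on Pontrjagin classes of the summands is indeed the geometric exchange above rather than a signed variant.
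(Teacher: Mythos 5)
Your argument is correct, and for the $w_1(C_1)$ part it takes a genuinely different route from the paper. For the deck transformation of $w$ you and the paper do essentially the same computation: the paper reads off $\beta^*U_{a,b,1}=U_{a,b,1}$ from Table~\ref{t:ab1}, and that entry is obtained exactly from your observation that the generator of the non-identity component of $[\O_{2s}\times\O_{2s+1}]^+$ acts on the fibre of $\nu_1\cong K_1$ with determinant $(-1)(-1)=+1$. For the deck transformation of $w_1(C_1)$, however, the paper does not argue on the fold stratum at all: it identifies this deck transformation with the element $h\times\tau$ of the relative symmetry group of the adjacent cusp stratum $\A_2(2s,2s)$, reads off $(h\times\tau)^*U_{2s,2s,2}=-U_{2s,2s,2}$ from Table~\ref{t:ab2}, notes that $h\times\tau$ also reverses the coorientation of $T_2'\setminus T_1$ in $T_2'$, and lets the two sign reversals cancel to obtain $U'\mapsto U''$; the action on $p_i,p_i'$ comes from the same table. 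You instead work intrinsically on the fold stratum, using that the two sheets of $\pi_T$ are the two orientations of $C_1$, that flipping this orientation flips the canonical orientation of $K_1\cong\nu_1$ (whence the unsigned swap of Thom classes) and negates the intrinsic quadratic form (whence the exchange of the definite subbundles and hence of $p_i$ with $p_i'$). Your route is more self-contained and geometric; the paper's route buys the simultaneous verification of the clause that $U'$ and $U''$ induce the same local orientation of the normal bundle of the cusp stratum, which is what is actually used in the ensuing computation of $d_1^{1,*}$. One sentence to tighten: ``$U'$ and $U''$ correspond to opposite orientations of $\nu_1$'' must be read relative to the sheet-dependent canonical orientation --- as absolute orientations of $\pi^*\nu_1$ they agree, and that is precisely why the swap comes without a sign; stated as absolute orientations your sentence would yield $U'\mapsto -U''$.
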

\begin{remark} Let us consider the deck transformation of $w_1(C_1)$. Consider the subgroup  $H\approx \Z_2\times \Z_2$ of the symmetry group of the minimal $\A_2(2s,2s)$ map germ generated by the two elements $\{w, h\times \tau\}$, where $w$ is an element of order $2$ in $[\O_a\times \O_a]^+$ that acts on $\R^{a}\times \R^{a}$ by reflecting the two copies of $\R^{a}$ along hyperplanes. The group $H$ acts on $T_2'\setminus T_1$ by deck transformations and on $A_2'\setminus A_1$ as a subgroup of the symmetry group of the minimal $\A_2(2s,2s)$ map germ; and the covering 
\[
  T_2'\setminus T_1\longrightarrow A_2'\setminus A_1
\] 
is $H$-equivariant. Furthermore, the action of $H$ extends over neighborhoods of $T_2'\setminus T_1$ in $T_2'$ and $A_2'\setminus A_1$ in $A_2'$ so that the covering of neighborhoods is still $H$-equivariant. We warn the reader, however, that the action of $H$ over the neighborhood of $T_2'\setminus T_1$ in $T_2'$ is different from the deck transformations (see Figure~\ref{fig:1}). For example, the transformation $H$ of the neighborhood of $T_2'\setminus T_1$ preserves its coorientation in $T_2'$, while the deck transformation of $h\times \tau$ reverses this coorientation.
\end{remark}

\begin{proof} The action of the deck transformation of $w$ can be deduced from the action of the  corresponding transformation $\beta$ in Table~\ref{t:ab1}. The deck transformation of $h\times \tau$ corresponds to the deck transformation of $\alpha$ in section~\ref{s:16.3}. Consequently,  
\[
   (h\times \tau)^*U_{2s,2s,2}=-U_{2s,2s,2}
\]
(see the action of $\alpha$ in Table~\ref{t:ab2}). The deck transformation of $h\times \tau$ reverses the coorientation of $T_2'\setminus T_1$ in $T_2'$. Hence 
\[
   (h\times \tau)^*U'_{2s,2s,1}=U''_{2s,2s,1}. 
\]
The action of the deck transformation of $w_1(C_1)$ on $U''_{2s,2s+1,1}$, $p_i$ and $p_i'$ is computed similarly.

\end{proof}

Thus, the vertical homomorphism on the left hand side embeds the group $H^*(\Th\nu_1)$ to a subgroup of
\[
    \left\langle \ U'_{2s,2s+1,1}\s p_i + U''_{2s,2s+1,1}\s p'_i    \right\rangle,
\]
 while the vertical homomorphism on the right hand side embeds the group $H^*(\Th\nu_2')$ into the subgroup 
\[
  H^*(A'_2, A_1)=U'_{2s, 2s,2}\s (\mathcal{AP}+e\mathcal{AP}).
\]
Since the vector bundle $\pi^*\nu_1$ is orientable, there is a commutative diagram of Thom isomorphisms
\[
\begin{CD}
    \tilde{H}^{4s+1+*}(\Th\pi^*\nu_1) @>>> \tilde{H}^{4s+2+*}(\Th\pi^*\nu'_2) \\
    @AAA @AAA \\
    H^*(T_1') @>\delta>> H^{*+1}(T'_2, T_1). 
\end{CD}
\]
We note that the normal bundle of $T_2'\setminus T_1$ is orientable, but there is no canonical orientation. We fix one of the two possible orientations. We also note that the set $T_2'\setminus T_1$ is connected, while $T_1$ has two components. Without loss of generality we may assume that the coorientation of $T_2'\setminus T_1$ is directed toward the first component of $T_1$. 

Let $f_1$ be the zero cochain in $T_1$ that assumes values $1$ and $0$ on the first and second components of $T_1$ respectively. Similarly, let $f_2$ be the zero cochain in $T_1$ that assumes $1$ and $0$ on the second and first components respectively. Then $\delta(f_1)$ and $\delta(f_2)$ equal the generator and negative generator $\iota$ of $H^1(T'_2, T_1)$ respectively. Furthermore, if $p$ is a cohomology class of $T_1$ that on both components of $T_1$ is the same polynomial of the form $\mathcal{SP}(2s,2s)$, then 
\[
\delta(f_1p)=\iota\cup p \qquad \mathrm{and} \qquad \delta(f_2p)=-\iota\cup p.
\]
On the other hand the Thom isomorphism on the left hand side of the above diagram takes the class $(f_1+f_2)p=p$ to the class
\[
    U'_{2s,2s+1,1}\cup p + U''_{2s,2s+1,1}\cup p.
\]
Since $\delta[(f_1+f_2)p]=0$, we conclude that the $A_2'$ component of $d^{1,*}_1$ is trivial on all classes of the form $U_{2s,2s+1,1}\cup p$ for $p$ in $\mathcal{SP}(2s,2s)$. 

Let now $p$ be a cohomology class on $T_1$ that on both components of $T_1$ is the same polynomial of the form $\mathcal{AP}(2s, 2s)$. Then 
\[
   \delta[(f_1-f_2)p]=2\iota\cup p,
\]
while the Thom isomorphism on the left hand side of the above diagram takes $\delta[(f_1-f_2)p]$ to
\[
    U'_{2s,2s+1,1}\cup p - U''_{2s,2s+1,1}\cup p.
\]
Consequently, the $A_2'$ component of $d^{1,*}_1(U_{2s,2s+1,1}\s p)$ equals $2U_{2s, 2s, 2}\s p$ for $p$ of the form $\mathcal{AP}(2s, 2s)$.

Next we consider the $A_2''$ component of the differential $d^{1,*}_1$. In this case we use the argument of the computation of the $A''_2$ component of the differential $d^{1,*}_1$ for $a=1,..., 2s-1$; in particular we utilize the double covering $\pi_R$ instead of $\pi$. We deduce from the equation (\ref{eq:18.18}) that the $A''_2$ component of $d^{1,*}_1$ maps
\[
     U_{2s, 2s+1, 1}\cup p \mapsto U_{2s-1, 2s+1, 2}\cup p
\]
for every class $p\in \mathcal{P}(2s, 2s+1)$. To summarize, we have computed that   
\[    
    d^{1,*}_1(U_{2s,2s+1, 1}\s p)=U_{2s-1, 2s+1,2}\s p  
\]
for $p$ in $\mathcal{SP}(2s, 2s)$ and 
\[    
    d^{1,*}_1(U_{2s,2s+1, 1}\s p)=U_{2s-1, 2s+1,2}\s p + 2U_{2s,2s,2}\s p 
\]
for $p$ in $\mathcal{AP}(2s, 2s)$.

\begin{remark} In the general case of a map of dimension $d>0$ the computations are slightly different. The deck transformation of $w_1(C_1)$ maps
\[
    U'\mapsto U'',\qquad U''\mapsto U'
\] 
for $d=4s$ and $d=4s+3$, and 
\[
    U'\mapsto -U'',\qquad U''\mapsto -U'
\] 
for $d=4s+1$ and $d=4s+2$. We deduce that
\[
   d^{1,*}_1(U_{a,b,1}\s p)= U_{a-1,b,2}\s p, \qquad \mathrm{for} \ p\in \mathcal{SP}(a,b),
\]
\[
   d^{1,*}_1(U_{a,b,1}\s p)= U_{a-1,b,2}\s p+2U_{a,b-1,2}\s p, \qquad \mathrm{for} \ p\in \mathcal{AP}(a,b)
\]
 for $d$ even and $a=d/2$, $b=d+1-a$; and
\[
   d^{1,*}_1(U_{a,b,1}\s ep)= 0, \qquad \mathrm{for} \ p\in \mathcal{SP}(a,b),
\]
\[
   d^{1,*}_1(U_{a,b,1}\s p)= U_{a-1,b,2}\s p, \qquad \mathrm{for} \ p\in \mathcal{AP}(a,b)
\]
for $d$ odd and $a=\frac{d+1}{2}$, $b=d+1-a$. In the case $a=b=2s$, the choice of a auxiliary covering is different is different; on the other hand, in this case one may avoid computations by chasing Table~\ref{t:13} and using the property that $d^{1,*}_1\circ d^{0, *}_1=0$.
\end{remark}

\begin{theorem}
In the case where the dimension is of the form $d=4s$, the kernel of $d^{1,*}_1$ is generated by the classes 
\[
      \tau_{2s,p}= U_{0, 4s+1, 1}\s p - U_{1, 4s, 1}\s p,
\]
for $p\in p'_{2s}\P(0, 4s)$, 
\[
      \tau_{2s-1,p}= U_{0, 4s+1, 1}\s p - U_{1, 4s, 1}\s p + U_{2, 4s-1, 1}\s p - U_{3, 4s-2, 1}\s p,
\]
for $p\in p'_{2s-1}\P(3, 4s-2)$, 
\[
 \cdots
\]
\[
      \tau_{s+1,p}= U_{0, 4s+1, 1}\s p +\cdots + (-1)^a U_{a, b, 1}\s p +\cdots + \cdots + (-1)U_{2s-1, 2s+1, 1}\s p,
\]
for $p\in p'_{s+1}\P(2s-1, 2s+1)$, and
\[
   \sigma_Q = U_{0, 4s+1, 1}\s Q - U_{1, 4s,1}\s Q +\cdots + U_{2s, 2s+1,1}\s Q 
\]
for each $Q\in \SP(2s, 2s)$. 
\end{theorem}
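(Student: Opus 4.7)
The plan is to treat the computation as a linear algebra problem over $\Q$. Write a general cocycle as $\omega = \sum_{a=0}^{2s} U_{a, 4s+1-a, 1} \s p_a$ with $p_a \in \P(a, 4s+1-a)$. By the formulas derived in the preceding paragraphs, $d^{1,*}_1(\omega)$ decomposes into contributions in $E_1^{2,*}$ indexed by the strata $\A_2(a', 4s-a')$: for $0 \le a' < 2s$ the coefficient of $U_{a', 4s-a', 2}$ is the pair-sum $p_{a'} + p_{a'+1}$ read modulo the vanishing rules ($U_{a, b-1, 2} \s p$ vanishes when $b$ is even and $p \in p'_{b/2}\,\P$; $U_{a-1, b, 2} \s p$ vanishes when $a$ is even and $p \in p_{a/2}\,\P$), while at $a' = 2s$ the coefficient is $2\,p_{2s}^{\AP}$, the $\AP(2s,2s)$-component of $p_{2s}$.

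First I would verify directly that each $\sigma_Q$ and $\tau_{j,p}$ is a cocycle. For $\sigma_Q$ with $Q \in \SP(2s,2s)$, the alternating sign pattern $(-1)^a$ forces $p_{a'} + p_{a'+1} = 0$ at every interior stratum, and $p_{2s}^{\AP} = 0$ because $Q$ is symmetric. For $\tau_{j,p}$ the same sign pattern gives interior cancellation; at the boundary stratum $a' = 4s-2j+1$ the surviving term falls into the zero locus of $d^{1,*}_1$ because $p \in p'_j\,\P(4s-2j+1, 2j)$ is precisely the condition that $U_{a, b-1, 2} \s p$ vanishes for $a = 4s-2j+1$, $b = 2j$.

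Second, and this is where the main work lies, I would show that the listed classes span the kernel by reverse induction on the largest index $a_{\max}$ supporting $\omega$. The constraint at $\A_2(2s,2s)$ forces $p_{2s} \in \SP(2s,2s)$, and subtracting $\sigma_{p_{2s}}$ kills this top component. With $p_{2s}=0$, the $\A_2(2s-1, 2s+1)$ constraint reduces to $p_{2s-1} \in p'_{s+1}\,\P$, so an appropriate multiple of $\tau_{s+1, p_{2s-1}}$ eliminates $p_{2s-1}$. Continuing downward, at odd indices $a_{\max} = 2s-1, 2s-3, \dots, 1$ the constraint at $\A_2(a_{\max}, 4s-a_{\max})$ (with all higher $p$'s already zero) forces $p_{a_{\max}} \in p'_{(4s+1-a_{\max})/2}\,\P$, which is exactly the divisibility condition making $\tau_{(4s+1-a_{\max})/2,\, p_{a_{\max}}}$ a valid generator; subtracting it reduces the support. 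The even-indexed coefficients $p_{2s-2}, p_{2s-4}, \dots$ vanish automatically: once higher coefficients have been killed, the constraint at the neighboring $\A_2(a_{\max}-1, 4s-a_{\max}+1)$ with even $a'$ imposes no divisibility and directly forces them to zero.

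The main obstacle is the bookkeeping: the polynomial rings $\P(a, 4s+1-a) = \Q[p_1,\dots,p_{\lfloor a/2\rfloor}, p'_1,\dots,p'_{\lfloor (4s+1-a)/2\rfloor}]$ change with $a$, and the vanishing of the individual summands of $d^{1,*}_1$ depends delicately on parities of $a$ and $b$. To make the induction rigorous I would filter both $E_1^{1,*}$ and $E_1^{2,*}$ by the monomial bi-weight in $(p_{\lfloor a/2\rfloor}, p'_{\lfloor (4s+1-a)/2\rfloor})$ and treat each filtered slice separately; within each slice the kernel condition reduces to a short linear system whose solutions are exactly the listed $\tau$'s and $\sigma$'s.
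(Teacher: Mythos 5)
Your proposal is correct and follows essentially the same route as the paper: the paper also decomposes $d^{1,*}_1$ into components $d_{k,l}$ (nontrivial only for $l=k$ or $l=k-1$), checks the alternating-sign cancellation to see that $\sigma_Q$ and the $\tau_{j,p}$ are cocycles, and then determines the kernel by the same descending induction starting from $d_{2s,2s}$ (whose kernel is $\SP(2s,2s)$, giving $\sigma_Q$), passing to $d_{2s-1,2s-1}$ (forcing the divisibility $p\in p'_{s+1}\P$, giving $\tau_{s+1,p}$), and so on in $2s$ steps, with the even-index coefficients forced to vanish exactly as you describe. Your write-up just makes explicit the "chasing the table from top to bottom" that the paper leaves implicit.
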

\begin{proof} Let $d_{k,l}$ denote the component of the differential $d^{1,*}_1$ that is a homomorphism from the subgroup $\left\langle U_{k, d+1-k,1}\s p\right\rangle$ to the subgroup $\left\langle U_{l, d-l,2}\s p\right\rangle$. In particular $d_{k,l}$ is non-trivial only if $l=k$ or $l=k-1$. Consider $d_{2s,2s}$. Its kernel is $U\s \SP(2s,2s)$. Chasing the Table~\ref{table:10} from the top to the bottom, we deduce that the classes $\sigma_{Q}$ are in the kernel of $d^{1,*}_1$. Next we consider $d_{2s-1, 2s-1}$. Suppose $x$ is a polynomial in $\Q[p_1,..., p_{s-1}, p_1', ..., p'_{s+1}]$, $x$ is not the homomorphic image of a class in $\SP(2s,2s)$ and $U_{2s-1, 2s+2,1}\s x$ is a component of a class in the kernel of $d^{1,*}_1$. Then $d_{2s-1, 2s-1}(x)=0$. Consequently $x$ is in the form $p'_{s+1}\mathcal{P}$, which gives rise to elements $\tau_{s+1,p}$ in the kernel of $d^{1,*}_1$. Next we consider the homomorphism $d_{2s-2, 2s-2}$ and repeat the argument. In $2s$ steps we determine all elements in the kernel $d^{1,*}_1$. 
\end{proof}

\begin{lemma}\label{l:123} The sequence of linear homomorphisms
\[
 0\longrightarrow \P(4s) \stackrel{\mathfrak{s}}\longrightarrow \P(2s,2s)\longrightarrow \AP(2s,2s)\longrightarrow 0
\]
is exact, where the homomorphism $\mathfrak{s}$ is an algebra homomorphism that takes $p_i$ to
\[
   p_i+p_{i-1}p_1'+\cdots p_1p_{i-1}'+p_i,
\]
while the next homomorphism is a projection along $\SP(2s,2s)$. 
\end{lemma}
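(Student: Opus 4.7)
The plan is to verify the three claims implicit in the short exact sequence: injectivity of $\mathfrak{s}$, the containment $\im\mathfrak{s}\subseteq\SP(2s,2s)$ (so that the composition with the projection to $\AP(2s,2s)$ vanishes), and the reverse containment $\SP(2s,2s)\subseteq\im\mathfrak{s}$. Exactness at $\AP(2s,2s)$ is then automatic from the $\Q$-vector-space splitting $\P(2s,2s)=\SP(2s,2s)\oplus\AP(2s,2s)$ built into the definitions of $\SP$ and $\AP$: the projection along $\SP$ is surjective onto $\AP$ with kernel exactly $\SP$.

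For the first two steps I would identify $\mathfrak{s}$ with the Whitney-sum map on rational Pontryagin classes $H^{*}(\BSO_{4s};\Q)\to H^{*}(\BSO_{2s}\times\BSO_{2s};\Q)$ induced by the block-diagonal inclusion $\SO_{2s}\times\SO_{2s}\hookrightarrow\SO_{4s}$, so that the formula $\mathfrak{s}(p_{i})=\sum_{j+k=i}p_{j}p'_{k}$ is exactly the Whitney sum formula for Pontryagin classes. Injectivity then amounts to the algebraic independence of these images, which can also be checked directly by a leading-term argument in a lexicographic monomial order on $\P(2s,2s)$. The containment $\im\mathfrak{s}\subseteq\SP(2s,2s)$ is immediate, since each generator $\mathfrak{s}(p_{i})$ is manifestly invariant under the swap $p_{j}\leftrightarrow p'_{j}$ and $\SP(2s,2s)$ is a $\Q$-subalgebra.

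The essential content is the reverse inclusion $\SP(2s,2s)\subseteq\im\mathfrak{s}$. My approach is to pass to a root model: write $p_{i}=e_{i}(u_{1},\dots,u_{s})$ and $p'_{i}=e_{i}(v_{1},\dots,v_{s})$, so that $\P(2s,2s)$ is identified with the $(S_{s}\times S_{s})$-invariants in $\Q[u_{1},\dots,u_{s},v_{1},\dots,v_{s}]$, the subalgebra $\SP(2s,2s)$ is its block-swap extension, and $\mathfrak{s}(p_{i})=e_{i}(u_{1},\dots,u_{s},v_{1},\dots,v_{s})$ is the $i$-th elementary symmetric polynomial in the full set of $2s$ roots. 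The desired inclusion can then be approached either by an inductive construction, exhibiting each swap-invariant polynomial as a polynomial in the combined elementary symmetric classes $\mathfrak{s}(p_{i})$, or by a graded Poincar\'e series comparison showing that $\P(4s)$ and $\SP(2s,2s)$ have equal dimensions in every degree, which combined with injectivity forces equality of subspaces.

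This last identification $\im\mathfrak{s}=\SP(2s,2s)$ is the main obstacle: the injectivity and the containment $\im\mathfrak{s}\subseteq\SP(2s,2s)$ are straightforward, but showing that every swap-invariant polynomial in $p_{i},p'_{i}$ is already a polynomial in the Whitney-sum classes $\mathfrak{s}(p_{i})$ requires careful combinatorial bookkeeping, either through explicit generating-function identities or through an inductive construction of a spanning set for $\SP(2s,2s)$ in terms of the $\mathfrak{s}(p_{i})$.
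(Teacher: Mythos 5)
Your reduction of the lemma to three claims is the right framing, and on the first two your route coincides with the paper's: the paper also passes to the root model, noting that the composite $\P(4s)\stackrel{\mathfrak{s}}\longrightarrow\P(2s,2s)\longrightarrow\Q[t_1,\dots,t_{2s}]$ sends $p_i$ to the $i$-th elementary symmetric polynomial in all $2s$ roots and is an isomorphism onto the $S_{2s}$-symmetric polynomials, whence $\mathfrak{s}$ is injective; the containment $\im\mathfrak{s}\subseteq\SP(2s,2s)$ is immediate, as you say. The difficulty is the third claim, $\SP(2s,2s)\subseteq\im\mathfrak{s}$, which you correctly identify as the crux but only sketch. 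Neither of the two routes you propose can be carried out, because that inclusion fails for $s\ge 2$. In the root model $\im\mathfrak{s}$ is the ring of $S_{2s}$-invariants of $\Q[t_1,\dots,t_{2s}]$, while $\SP(2s,2s)$ is the ring of invariants of the proper subgroup $(S_s\times S_s)\rtimes\Z/2\Z$ generated by the two blocks and the block swap, and for $s\ge 2$ the latter ring is strictly larger. Concretely, for $s=2$ the relevant graded piece of $\SP(2s,2s)$ is spanned by $p_2+p_2'$, $p_1p_1'$ and $p_1^2+p_1'^2$ and is three-dimensional, whereas the corresponding piece of $\P(8)$ is spanned by $p_2$ and $p_1^2$ only; injectivity of $\mathfrak{s}$ then already rules out surjectivity onto $\SP(2s,2s)$, and indeed $p_1p_1'$ is not a polynomial in the classes $\mathfrak{s}(p_i)$. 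Your Poincar\'e series comparison would therefore detect a strict inequality of dimensions rather than the equality you need, and no inductive construction can close that gap.

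You should also be aware that the paper's own proof stops exactly where yours does: after the factorization through $\Q[t_1,\dots,t_{2s}]$ it concludes that the image of $\mathfrak{s}$ ``consists of symmetric polynomials $\SP(2s,2s)$,'' which justifies only the containment $\im\mathfrak{s}\subseteq\SP(2s,2s)$ and not the reverse inclusion required for exactness at the middle term. So the missing step in your write-up is real, but it reflects a defect in the statement being proved rather than a trick you failed to find: as written, exactness at $\P(2s,2s)$ holds only for $s\le 1$, and any correct argument would have to replace $\SP(2s,2s)$ by the image of the Whitney-sum homomorphism (the subalgebra the paper elsewhere denotes $\mathcal{T}(2s,2s)$) or otherwise modify the claim.
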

\begin{proof} Let $\P(4s)\to \Q[t_1,...,t_{2s}]$ be an algebra homomorphism that takes $p_i$ to the $i-th$ symmetric polynomial, i.e., the symmetric polynomial that contains $t_1t_2\cdots t_i$. It is known that this homomorphism is an isomorphism onto the subalgebra of symmetric polynomials. This map factors as a composition of $\mathfrak{s}$ and the homomorphism $\P(2s,2s)\to \Q[t_1,...,t_{2s}]$ that takes $p_i$ and $p_i'$ to the $i$-th symmetric polynomials in $t_1,..., t_s$ and $t_{s+1}, ..., t_{2s}$ respectively. Consequently $\mathfrak{s}$ is injective an its image consists of symmetric polynomials $\SP(2s,2s)$. 
\end{proof}

\begin{lemma} The classes $\tau_{j,p}$ form a linear basis of $\ker d^{1,*}_1/ \im d^{0,1}_1$.
\end{lemma}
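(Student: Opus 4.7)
The plan is to combine the description of $\ker d^{1,*}_1$ from the previous theorem with the description of $\im d^{0,*}_1$ from Theorem~\ref{th:9.7}, and then to establish linear independence by a descending induction on the fold index $a$, reading off a distinguished ``leading term'' from each $\tau_{j,p}$.

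For spanning, the previous theorem gives us that $\ker d^{1,*}_1$ is generated as a vector space by the classes $\tau_{j,p}$ (for $j=s+1,\dots,2s$ and $p$ ranging over the specified bases of $p'_j\P(\cdot)$) together with the classes $\sigma_Q$ for $Q\in\SP(2s,2s)$. Theorem~\ref{th:9.7} identifies $\im d^{0,*}_1$ with the subspace spanned by the $\sigma_Q$'s. So in the quotient $\ker d^{1,*}_1/\im d^{0,*}_1$ the $\sigma_Q$-generators become zero and the $\tau_{j,p}$-generators span.

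For linear independence, suppose a relation $\sum_{j,p} c_{j,p}\,\tau_{j,p}=\sum_Q \lambda_Q\,\sigma_Q$ holds in $\ker d^{1,*}_1$. I would first isolate the summand at position $a=2s$, i.e.\ the coefficient of $U_{2s,2s+1,1}\s(-)$: each $\tau_{j,p}$ with $j\ge s+1$ has components only for $a=0,\dots,4s-2j+1\le 2s-1$, so the left side has no $a=2s$ component, whereas the right side contributes $\sum_Q\lambda_Q\,Q$ at $a=2s$. Hence $\sum_Q\lambda_Q\,Q=0$ in $\SP(2s,2s)$, so $\sum_Q\lambda_Q\,\sigma_Q=0$ and the relation reduces to $\sum_{j,p} c_{j,p}\,\tau_{j,p}=0$. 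Now I peel off from the top: at position $a=2s-1$, only $\tau_{s+1,p'}$ has a non-vanishing component (since $\tau_{j,p}$ with $j>s+1$ stops at $a\le 2s-3$), and that component is $-U_{2s-1,2s+2,1}\s p'$. Because the polynomials $p'$ chosen to index $\tau_{s+1,p'}$ form a basis of $p'_{s+1}\P(\cdot)$, all $c_{s+1,p'}=0$. Then the position $a=2s-3$ sees only the $\tau_{s+2,p}$-terms, forcing $c_{s+2,p}=0$, and so on down to $j=2s$.

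The main obstacle is bookkeeping: one must verify that at each descent step the ``leading position'' of $\tau_{j,p}$ really receives no interference from any $\tau_{j',p''}$ with $j'>j$ that has already been set to zero (automatic from the range $a\le 4s-2j'+1$), and that the polynomials chosen to parametrize $\tau_{j,\cdot}$ are genuinely linearly independent in $p'_j\P(\cdot)$ rather than merely a spanning set, so the vanishing of the $p'$-coefficient actually forces each $c_{j,p'}=0$. The descending $a$-filtration makes this clean, and no additional input from $d^{0,*}_r$ with $r\ge 2$ is needed since $E^{1,*}_2=\ker d^{1,*}_1/\im d^{0,*}_1$ depends only on $d^{0,*}_1$.
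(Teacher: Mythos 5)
Your proposal is correct, and its first half coincides with the paper's key observation: the $a=2s$ component separates the span of the $\tau_{j,p}$ (which vanishes there) from $\im d^{0,*}_1$ (whose nonzero elements have nonzero $a=2s$ component $U_{2s,2s+1,1}\s Q$). Where you diverge is in how linear independence is finished off. The paper does not run your descending-index elimination; instead it invokes Lemma~\ref{l:123} (the exact sequence $0\to\P(4s)\to\P(2s,2s)\to\AP(2s,2s)\to 0$) to match the dimension of the space spanned by the $\tau_{j,p}$ against the dimension of $\ker d^{1,*}_1/\im d^{0,*}_1$, and concludes by a dimension count. Your route replaces that count with a triangularity argument: reading off, for each $j$, the top surviving position $a=4s+1-2j$ (which is odd, so the relevant group is $U_{a,b,1}\s\P(a,b)$ with no Euler-class summand and $p\mapsto U_{a,b,1}\s p$ injective), and peeling coefficients off from $j=s+1$ downward. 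This is more elementary and self-contained --- it needs no input from Lemma~\ref{l:123} and makes the independence of the $\tau_{j,p}$ among themselves explicit rather than implicit in a rank computation --- at the cost of the bookkeeping you acknowledge (checking that each leading position receives no interference from $\tau_{j',p''}$ with $j'>j$, which indeed holds since those stop at $a\le 4s-2j'+1<4s-2j+1$). The paper's count, by contrast, packages the combinatorics into one algebraic lemma that is reused elsewhere in the computation of the $E_2$-page.
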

\begin{proof} Every class in $\ker d^{1,*}_1$ has $2s+1$ components, one for each index $i=0,...,2s$. The $(2s)$-th component of each linear combination of the classes $\tau_{j,p}$ is trivial while the $(2s)$-th component of every element in $\im d^{0,*}_1$ is non-trivial. On the other hand, by Lemma~\ref{l:123}, the dimension of the vector space spanned by the classes $\tau_{j,p}$ coincides with the dimension of the vector space $\ker d^{1,*}_1/ \im d^{0,1}_1$. 
\end{proof}

\section{The differential $d^{2r,*}_{1}$}
\subsection{The case $a\ne b$.} Over the space $A_{\infty}\setminus A_1$ there is a well-defined vector space $K_1/K_2$ with first Stiefel-Whitney class $v=w_1(K_1/K_2)$. We consider a four fold cover $\pi_P$ of the space $A_{\infty}\setminus A_1$ by a space $P_{\infty}$ classified by the two cohomology classes $v$ and $w$. Again, each of the defined filtrations of $A_{\infty}\setminus A_1$ determines a filtration on the space $P_{\infty}$; we will write $P_k$ for $\pi_P^{-1}(A_k\setminus A_1)$ for each $k>1$. 

The set $A_{2r}'=A_{2r}(a, b)$ is bounded only by the set $A_{2r+1}'=A_{2r+1}(a, b)$ and therefore the coboundary homomorphism has only one component, which we compute by means of the commutative diagram  
\[
\begin{CD}
    H^*(\Th\pi_P^*\nu_{2r}') @>\delta>> H^*(\Th\pi_P^*\nu_{2r+1}') \\
    @AAA @AAA \\
    H^*(\Th\nu_{2r}') @>\delta>> H^*(\Th\nu_{2r+1}'),     
\end{CD}
\]
where $\nu_{2r}'$ and $\nu_{2r+1}'$ are the restrictions of $\nu_{2r}$ and $\nu_{2r+1}$ to the components $A_{2r}'$ and  $A_{2r+1}'$ respectively. The Thom space $\Th\pi^*\nu_{2r+1}'$ coincides with the space $\Th\mathfrak{o}$ used in the computation of $H^*(\Th\nu_{2r+1}')$. In fact we expressed the latter group as a subgroup 
\[
    \tilde{H}^*(\Th\nu_{2r+1}')\approx U^{\pm}_{a,b,2r+1}\s e_{a,b}\mathcal{P}(a,b)
\]
if $r$ is odd, and 
\[
    \tilde{H}^*(\Th\nu_{2r+1}')\approx U^{\pm}_{a,b,2r+1}\s \mathcal{P}(a,b)
\]
if $r$ is even.  On the other hand, the space $\Th\pi^*\nu_{2r}'$ is a singular double cover of the corresponding space $\Th\mathfrak{o}$. Let $U'$ and $U''$ be two Thom classes in the two path components of $\Th\pi^*\nu_{2r}'$ chosen so that $U'$ and $U''$ correspond to the same local orientation of the normal bundle of $A_{2r+1}(a,b)\setminus A_{2r-1}$ in $A_{2r+1}(a,b)$.  The deck transformation of $w$ preserves the two sheets in $\Th\pi^*\nu_{2r}$ and $w_*$ maps
\[
    U'\mapsto U', \qquad    U''\mapsto U'',
\] 
(see the corresponding transformation $\beta$ in Table~\ref{t:ab1}), while the deck transformation of $v$ exchanges the two sheets in $\Th\pi^*\nu_{2r}'$.

\begin{figure}[ht]
	\centering
			\includegraphics[draft=false, width=40mm]{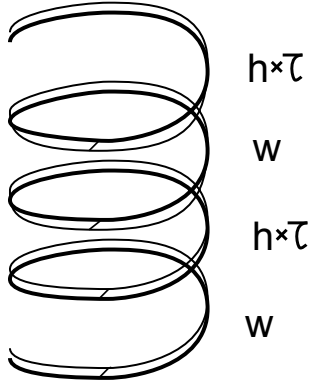}
\caption{The four fold covering $P_{2r}'\to A_{2r}\setminus A_1$. One of the two components of $P_{2r}'$ adjacent to $P_{2r+1}'\setminus P_{2r}$ (bold).}
\label{fig:1}
\end{figure}

\begin{lemma}~\label{l:19.1} The action of $v$ maps
\[
    U'\mapsto (-1)^{r+1}U'', \quad U''\mapsto (-1)^{r+1}U', \quad p_i\mapsto p_i, \quad p'_i\mapsto p'_i, \quad e\mapsto -e.
\]
\end{lemma}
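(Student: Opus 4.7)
The plan is to realize the deck transformation of $v$ as a concrete right-left coordinate change on the standard minimal $\A_{2r}(a,b)$ germ, and then compute its action on the Kazarian-style generators by the same sheet-tracking method used in the proofs of Lemmas~\ref{l:10}, \ref{l:12.5}, and \ref{l:12.6}. To set up the realization, I first use Bochner linearization (Theorem~\ref{l:Boch}) so that all relevant bundles become linear representations of their structure groups. Since $v=w_1(K_1/K_2)$ classifies the orientation double cover of $K_1/K_2$, the corresponding deck transformation may be represented by an explicit element $(\alpha,\beta)$ that reverses orientation along a single coordinate of the $\R^a$ factor of $K_1/K_2$ and acts as the identity on $\R^b$, on the parameters $t_i$, and on $x$. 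Because $Q(a,b)$ and the unfolding term $t_1x+\cdots+t_{2r-1}x^{2r-1}+x^{2r+1}$ are preserved under such a source reflection, no compensating target change is needed, so $\beta$ may be taken trivial.

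Given this explicit element, the action on Pontrjagin classes is immediate: rational Pontrjagin classes are insensitive to orientation reversals, hence $v^{*}p_i=p_i$ and $v^{*}p_i'=p_i'$. The Euler class $e_{a,b}$ is the Euler class of the oriented rank-$(a+b)$ bundle underlying $\R^a\oplus\R^b$, and reversing one coordinate on the $\R^a$ factor flips this orientation, giving $v^{*}e=-e$. These match the last three maps in the statement.

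The main obstacle, exactly as in Lemmas~\ref{l:10} and \ref{l:12.5}, is the sign $(-1)^{r+1}$ in the exchange $v^{*}U'=(-1)^{r+1}U''$. The Thom classes $U'$, $U''$ are normalized to correspond to the same local coorientation of the codimension-one stratum $A_{2r+1}(a,b)\setminus A_{2r-1}$ inside $A_{2r+1}(a,b)$. The deck transformation of $v$ swaps the two sheets and transports $U'$ to a class on the opposite sheet, which must then be compared with $U''$ under this fixed coorientation. Using the Kazarian decomposition $U_{a,b,2r}=U_a\otimes U_b\otimes U_{2r}$, the source reflection contributes a factor $-1$ by flipping $U_a$, and matching the pre-normalized coorientation of the adjacency requires re-orienting the $2r$-dimensional unfolding parameter factor by the involution $\tau_{2r}$ of section~\ref{s:rsg}. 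Since $\tau_{2r}$ negates the coordinates $t_2,t_4,\dots,t_{2r-2},x$, its determinant is $(-1)^{r}$, and the composite sign is $(-1)\cdot(-1)^{r}=(-1)^{r+1}$. The involutive nature of $v$ then forces $v^{*}U''=(-1)^{r+1}U'$ as well.

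The most delicate aspect is checking that the normalization of $U'$ and $U''$ absorbs all other sign ambiguities consistently, so that the only residual sign is the one computed above; this requires carefully matching the sheet-exchange bookkeeping with the geometric adjacency of $\A_{2r+1}(a,b)$ to $\A_{2r}(a,b)$, which is routine but is the technical heart of the lemma. Once this is done, all the claims of the lemma follow directly.
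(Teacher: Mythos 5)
The computations for $p_i$, $p_i'$ and $e$ are fine, but the key claim --- the sign $(-1)^{r+1}$ on the Thom classes --- is asserted rather than derived, and the mechanism you propose for it does not match the geometry. For $a\ne b$ the relative symmetry group of the minimal $\A_{2r}(a,b)$ germ is $[\O_a\times\O_b]^+$, which acts orientation-preservingly on $K_1/K_2$; hence $v=w_1(K_1/K_2)$ restricts to zero on this stratum and the $v$-cover over it is \emph{disconnected}. There is therefore no monodromy inside the $\A_{2r}(a,b)$ stratum to realize by a coordinate change of the $\A_{2r}$ germ: the deck transformation of $v$ is an abstract swap of the two components, and the sign relating $U'$ to $U''$ is entirely a consequence of the normalization of these classes, which refers to the adjacent stratum $A_{2r+1}(a,b)$ where the two sheets are glued. (Your proposed element, a single reflection on $\R^a$ with $\beta$ trivial, is moreover not in the relative symmetry group at all, since $\det(d\alpha)\cdot\det(d\beta)<0$.) Consequently the appearance of $\tau_{2r}$ in your sign count is unmotivated: in the paper $\tau_{2r}$ belongs to the symmetry group of $\A_{2r}(a,a)$ only in the case $a=b$, which is excluded here, and "matching the pre-normalized coorientation of the adjacency requires re-orienting the unfolding factor by $\tau_{2r}$" is exactly the statement you would need to prove --- you acknowledge this yourself when you call it the technical heart of the lemma.

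The derivation has to go through the deeper stratum. The monodromy connecting the two sheets of $P'_{2r}\setminus P_{2r-1}$ is the element $h$ of the relative symmetry group of the minimal $\A_{2r+1}(a,b)$ germ (Lemma~\ref{l:8.3}), which acts as an orientation-reversing element on $\R^a\times\R^b$ and by $(t_1,\dots,t_{2r},x)\mapsto(-t_1,t_2,\dots,(-1)^it_i,\dots,t_{2r},-x)$ on the unfolding parameters; one first checks (using that $h$ acts nontrivially on $K_2$, so it does not coincide with the deck transformation associated with the wall) that the deck transformation of $v$ indeed takes $U'$ to $\pm U''$, then uses the computation of Lemma~\ref{l:12.6}, namely $\alpha^*U_{a,b,2r+1}=(-1)^rU_{a,b,2r+1}$ and $\alpha^*e_{a,b}=-e_{a,b}$, together with the fact that $v$ reverses the coorientation of $P'_{2r+1}\setminus P_{2r}$ in $P'_{2r+1}\setminus P_{2r-1}$, to extract the sign $(-1)^{r}\cdot(-1)=(-1)^{r+1}$ on $U_{a,b,2r}$. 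Your answer agrees numerically, but without this passage through the $\A_{2r+1}$ germ the factor $(-1)^r$ has no justification, so the proof as written has a genuine gap at precisely the step that carries all the content.
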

\begin{proof} Let us assume that $r+1$ is odd, the argument in the case of $r$ even is similar and will be omitted. Then the relative symmetry group of the minimal $\A_{2r+1}(a,b)$ map germ contains a subgroup $H\approx \Z_2\times \Z_2$ generated by an element $h$ and by an element $w$ that acts on $\R^{a}\times \R^b\times \R^{2r+1}$ by orientation reversing involutions on the first and the third factors. The group $H$ acts both on the space $A_{2r+1}'\setminus A_{2r}$ and on $P_{2r+1}'\setminus P_{2r}$ so that the covering 
\[
    A_{2r+1}'\setminus A_{2r} \longrightarrow P_{2r+1}'\setminus P_{2r}
\]
is $H$-equivariant. This action extends to the action of $H$ on neighborhoods of the two spaces in $P_{2r+1}'$ and $A_{2r+1}'$ respectively, so that the double covering of neighborhoods is still $H$-equivariant. We observe that $h$ reverses the coorientation of the set $A_{2r+1}'\setminus A_{2r}$ in the set $A_{2r+1}'$ since it acts on the linear vector space $K_2$ non-trivially. Consequently, the action of $h$ does not coincide with the deck transformation of $v$. 
Thus the deck transformation of $v$ takes $U'$ to $\pm U''$ and $U''$ to $\pm U'$. 

The deck transformation of $v$ corresponds to the deck transformation of $\alpha$ in section~\ref{s:16.4}, which takes $U_{a,b,2r+1}$ to $U_{a,b,2r+1}$. Since $v$ reverses the coorientation of $P'_{2r+1}\setminus P_{2r}$ in $P'_{2r+1}\setminus P_{2r-1}$ we conclude that the deck transformation of $v$ takes 
\[
U'\mapsto - U'' \qquad \mathrm{and} \qquad 
U''\mapsto - U'. 
\]
Let us choose an Euler class $e$ over $P_{2r+1}'\setminus P_{2r}$, it is an Euler class of the vector bundle $K_1/K_2$. Then we may choose an Euler class over $P_{2r}'$ so that it is compatible with the Euler class $e$. Then by the computation in Lemma~\ref{l:12.6}, the deck transformation of $v$ takes $e$ to $-e$ (see the action of $\alpha$ on $e_{a,b}$).  Similarly we deduce that $h$ takes $p_i$ to $p_i$ and $p'_i$ to $p'_i$. 
\end{proof}

Consequently, the vertical homomorphism on the left hand side of the diagram embeds the cohomology group of the latter space onto the subgroup
\[
     \{\ U'\s p + (-1)^{r+1}U''\s p\ |\ p\in \mathcal{P}(a,b)    \ \}\oplus      \{\ U'\s ep + (-1)^{r}U''\s ep\ |\ p\in \mathcal{P}(a,b)    \ \}; 
\] 
the Euler class $e$ is trivial for $a$ odd. There is a commutative diagram of Thom isomorphisms
\[
\begin{CD}
    \tilde{H}^*(\Th\pi_P^*\nu_{2r}') @>>> \tilde{H}^*(\Th\pi_P^*\nu_{2r+1}') \\
   @AAA @AAA \\
   H^*(P_{2r}'\setminus P_{2r-1}) @>>> H^*(P_{2r+1}'\setminus P_{2r-1}, P_{2r}\setminus P_{2r-1}). 
\end{CD}
\]
The normal bundle of $P_{2r}'\setminus P_{2r-1}$ in $P_{2r}'$ is isomorphic to the bundle $(K_1/K_2)\oplus 2rK_2$, which is orientable but does not have a canonical orientation. Again we have that the two components of $P_{2r}'\setminus P_{2r-1}$ are adjacent to the one component of $P_{2r+1}'\setminus P_{2r}'$. We may assume that the generator $\iota$ in the group $H^1(P_{2r+1}'\setminus P_{2r-1}/P_{2r}'\setminus P_{2r-1})$ is chosen so that it is compatible with the coorientation in the direction of the first component of $P_{2r}'\setminus P_{2r-1}$. Let $f_1$ be the cochain of degree $0$ that assumes value $1$ on the first component of $P_{2r}'\setminus P_{2r-1}$ and the value $0$ on the other component. Let $f_2$ be the cochain $1-f_1$. For $r+1$ odd we have
\[
   \delta(f_1-f_2)p=\iota\s p+\iota\s p= 2\iota\s p, \qquad (f_1-f_2)p\xrightarrow{\Th} U'\s p-U''\s p,
\]
\[
   \delta(f_1+f_2)ep=\iota\s ep=0, \qquad (f_1+f_2)p\xrightarrow{\Th} U'\s ep + U''\s ep, 
\]
and therefore
\[
d^{2r,*}_{1}(U_{a,b,2r}\s p)=2U^+_{a,b,2r+1}\s p+2U^-_{a,b,2r+1}\s p  \qquad \mathrm{for} \ p\in \mathcal{P}(a,b),
\]
and
\[
d^{2r,*}_{1}(U_{a,b,2r}\s ep)= 0 \qquad \mathrm{for} \ p\in \mathcal{P}(a,b).
\]
For $r+1$ even we have
\[
   \delta(f_1+f_2)p=\iota\s p-\iota\s p=0, \qquad (f_1+f_2)p\xrightarrow{\Th} U'\s p+U''\s p,
\]
\[
   \delta(f_1-f_2)ep=\iota\s ep=2\iota\s ep, \qquad (f_1-f_2)p\xrightarrow{\Th} U'\s ep - U''\s ep, 
\]
and therefore
\[
d^{2r,*}_{1}(U_{a,b,2r}\s p)=0  \qquad \mathrm{for} \ p\in \mathcal{P}(a,b),
\]
and
\[
d^{2r,*}_{1}(U_{a,b,2r}\s ep)= 2U^+_{a,b,2r+1}\s ep+2U^-_{a,b,2r+1}\s ep \qquad \mathrm{for} \ p\in \mathcal{P}(a,b).
\]

\begin{remark} In the general case of maps of dimension $d>0$, computations are similar. We deduce that 
for $r+1$ odd 
\[
d^{2r,*}_{1}(U_{a,b,2r}\s p)=2U^+_{a,b,2r+1}\s p+2U^-_{a,b,2r+1}\s p  \qquad \mathrm{for} \ p\in \mathcal{P}(a,b),
\]
\[
d^{2r,*}_{1}(U_{a,b,2r}\s ep)= 0 \qquad \mathrm{for} \ p\in \mathcal{P}(a,b),
\]
where $e$ is trivial if $d$ is odd; and for $r+1$ even
\[
d^{2r,*}_{1}(U_{a,b,2r}\s p)=0  \qquad \mathrm{for} \ p\in \mathcal{P}(a,b),
\]
\[
d^{2r,*}_{1}(U_{a,b,2r}\s ep)= 2U^+_{a,b,2r+1}\s ep+2U^-_{a,b,2r+1}\s ep \qquad \mathrm{for} \ p\in \mathcal{P}(a,b),
\]
where again $e$ is trivial if $d$ is odd. 
\end{remark}

\subsection{The case $a=b$.}

The argument is similar to that in the case $a\ne b$. We conclude that 
\[
d^{2r,*}_{1}(U_{a,b,2r}\s p)=2U_{a,b,2r+1}\s p  \qquad \mathrm{for} \ p\in \mathcal{SP}(a,b),
\]
and
\[
d^{2r,*}_{1}(U_{a,b,2r}\s ep)= 0 \qquad \mathrm{for} \ p\in \mathcal{SP}(a,b)
\]
for $r+1$ odd, and 
\[
d^{2r,*}_{1}(U_{a,b,2r}\s p)=0  \qquad \mathrm{for} \ p\in \mathcal{AP}(a,b),
\]
and
\[
d^{2r,*}_{1}(U_{a,b,2r}\s ep)= 2U_{a,b,2r+1}\s ep  \qquad \mathrm{for} \ p\in \mathcal{AP}(a,b)
\]
for $r+1$ even.

\begin{remark} In the case where $d$ is odd, we always have $a<b$. In the case $d=4s+2$ the above formulas hold true with $e=0$. 
\end{remark}

\section{The differential $d_1^{2r+1,*}$}

\subsection{The case $a\ne b$.} 
For computation of the differential $d_1^{2r+1,*}$ we will again use the space $P_{\infty}$ together with its filtrations. The set $A_{2r+1}'=A_{2r+1}(a, b)$ is bounded only by the set $A_{2r+2}'=A_{2r+2}(a, b)$; we will compute $d^{2r+1,*}_1$ by means of the commutative diagram  
\[
\begin{CD}
    H^*(\Th\pi_P^*\nu_{2r+1}') @>>> H^*(\Th\pi_P^*\nu_{2r+2}') \\
    @AAA @AAA \\
    H^*(\Th\nu_{2r+1}') @>>> H^*(\Th\nu_{2r+2}'),     
\end{CD}
\]
where $\nu_{2r+1}'$ and $\nu_{2r+2}'$ are the restrictions of $\nu_{2r+1}$ and $\nu_{2r+2}$ to the components $A_{2r+1}'$ and  $A_{2r+2}'$ respectively. We note that $P'_{2r+1}=\pi^{-1}_P(A_{2r+1}')$ consists of a component of $\A^+_{2r+1}$ map germs and a component of $\A^-_{2r+1}$ map germs. The Thom space of $\pi_P^*\nu_{2r+1}'$ coincides with the Thom space $\Th\mathfrak{o}$ used in the computation of the cohomology group of  $A_{2r+1}'/A_{2r}$; we have computed that the Thom isomorphism on the left hand side of the diagram embeds the cohomology group of $\Th\nu_{2r+1}'$ onto the subgroup 
\[
   \tilde{H}^*(\Th\nu_{2r+1}')\approx  U^{\pm}_{a,b,2r+1}\s e_{a,b}\P(a,b)
\] 
if $r$ is odd, and    
\[   
 \tilde{H}^*(\Th\nu_{2r+1}')\approx U^{\pm}_{a,b,2r+1}\s \P(a,b)
\]
if $r$ is even. On the other hand, the Thom space of $\pi_P^*\nu_{2r+2}'$ is a double cover over the space $\Th\mathfrak{o}$ used in the computation of the cohomology group of $A_{2r}'/A_{2r-1}$. We choose the coorientation of $P'_{2r+2}\setminus P_{2r+1}$ in $P'_{2r+2}\setminus P'_{2r-2}$ in the direction of $\A^+_{2r+1}$ map germs. Let $U'$ and $U''$ be the two components of the Thom class of $\pi_P^*\nu_{2r+2}'$ that locally correspond to the same orientation of the normal bundle. 

\begin{lemma}
The deck transformation of $v$ maps 
\[
    U'\mapsto (-1)^{r}U'', \quad U''\mapsto (-1)^{r} U', \quad p\mapsto p, \quad e\mapsto -e. 
\]
\end{lemma}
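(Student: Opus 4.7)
The plan is to transplant the argument of Lemma~\ref{l:19.1} to the analogous setting of the minimal $\A_{2r+2}(a,b)$ germ, with the sign shift from $(-1)^{r+1}$ to $(-1)^r$ accounted for by the extra dimension of the target $\R^{2r+2}$ compared with $\R^{2r+1}$. To fix ideas I would start in the case $r$ even (the case $r$ odd being parallel) and identify a subgroup $H\cong\Z_2\times\Z_2$ inside an appropriate enlargement of the relative symmetry group acting on the Thom space $\Th\pi_P^*\nu'_{2r+2}$: one generator arising from an involution on the kernel line $K_2$ (spanned by $\partial/\partial x$), the other from an orientation-reversing element in $[\O_a\times\O_b]^+$ acting by independent involutions on the factors $\R^a$ and $\R^b$. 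As in Lemma~\ref{l:19.1}, this subgroup lifts to deck transformations of $\pi_P$ over a neighborhood of $P'_{2r+2}\setminus P_{2r+1}$ in $P'_{2r+2}$, and the covering remains $H$-equivariant.

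The next step is to single out which element of $H$ implements the deck transformation of $v$. The element acting non-trivially on $K_2$ necessarily reverses the coorientation of $P'_{2r+2}\setminus P_{2r+1}$ inside $P'_{2r+2}\setminus P'_{2r-1}$, whereas the $v$-deck transformation, by construction, preserves this coorientation while swapping the two sheets of $\pi_P^*\nu'_{2r+2}$. Hence the $v$-deck transformation is realized by the remaining non-trivial element of $H$, and its action on $U_{a,b,2(r+1)}$ can be read off from the computation of the $\alpha$-action in the proof of Lemma~\ref{l:12.5} (or equivalently from the reindexed entry of Table~\ref{t:ab2}). Combining the tabulated value of $\alpha^* U_{a,b,2(r+1)}$ with the coorientation behaviour of the element pins down the sign and yields the asserted $U'\mapsto (-1)^r U''$, $U''\mapsto (-1)^r U'$.

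The actions on $e$ and on the Pontrjagin classes $p_i, p_i'$ are then inherited from the same symmetry-group computation. Since $a\ne b$ the selected element does not swap the two factors $\R^a$ and $\R^b$, so each $p_i$ and each $p_i'$ is fixed; on the other hand $\alpha^* e_{a,b}=-e_{a,b}$, and a choice of the Euler class over $P'_{2r+2}$ compatible with its restriction to $P'_{2r+2}\setminus P_{2r+1}$ translates this into $v^* e = -e$, exactly as stated.

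I expect the main difficulty to lie in the sign bookkeeping: one must simultaneously track (i) the chosen coorientation of $P'_{2r+2}\setminus P_{2r+1}$ in $P'_{2r+2}$, (ii) the tabulated action on $U_{a,b,2(r+1)}$, and (iii) the Euler class convention across the stratum boundary, all while keeping track of the parities of $r$ and $a$. The safest way to do this is to cross-check each sign against the parallel step in the proof of Lemma~\ref{l:19.1}, where an identical scheme with $r$ replaced by $r+1$ produced the shifted exponent $(-1)^{r+1}$.
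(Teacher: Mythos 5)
Your overall strategy---realize the deck transformation of $v$ by a symmetry-group element and read the sign off the tables---is the right kind of argument, but you apply it to the wrong stratum, and for $a\ne b$ (the case of this subsection) the ingredients you invoke are not available there. By Lemma~\ref{l:8.2} (applied to $\A_{2(r+1)}(a,b)$) the relative symmetry group of the minimal $\A_{2r+2}(a,b)$ germ with $a\ne b$ is exactly $[\O_a\times\O_b]^+$: it acts trivially on $K_2$ and preserves the orientation of $K_1/K_2$, so $v=w_1(K_1/K_2)$ restricts to zero on the $\A_{2r+2}(a,b)$ stratum and the $v$-part of the cover over it is the trivial, disconnected double cover. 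Hence no element of the symmetry group realizes the $v$-deck transformation there, and passing to an ``enlargement'' does not help, because the $H$-equivariance of the covering that drives the argument of Lemma~\ref{l:19.1} comes precisely from $H$ being a subgroup of the symmetry group. Concretely, the ``$\alpha$-action'' of Lemma~\ref{l:12.5}/Table~\ref{t:ab2} from which you propose to read the sign exists only for $a=b$ (it exchanges the factors $\R^a$ and $\R^b$); for $a\ne b$ the table has only the $\beta$-column, which corresponds to $w$, not $v$, and gives $\beta^*U_{a,b,2r+2}=+U_{a,b,2r+2}$ with no sign. Your quoted identity $\alpha^*e_{a,b}=-e_{a,b}$ is the one from Lemma~\ref{l:12.6}, i.e.\ from the $\A_{2r+1}$ germ, not from Table~\ref{t:ab2}---a symptom of the two adjacent strata being conflated.

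The factor $(-1)^r$ has to be extracted from the adjacent $\A^{\pm}_{2r+1}(a,b)$ strata, which is what the paper's proof does. There the generator $h$ does satisfy $w_1(K_1/K_2)(h)=1$, so the $v$-deck transformation restricted to $P'_{2r+1}$ is the one induced by $h$, and the computation in \S\ref{s:16.4} gives $h^*U^{\pm}_{a,b,2r+1}=(-1)^rU^{\pm}_{a,b,2r+1}$, $h^*p_i=p_i$, $h^*p_i'=p_i'$, $h^*e_{a,b}=-e_{a,b}$. Since the $v$-deck transformation maps each of the two components of $P'_{2r+1}$ to itself (they lie over disjoint strata of the base), it preserves the chosen coorientation of $P'_{2r+2}\setminus P_{2r+1}$ in the direction of the $\A^+_{2r+1}$ germs---so your coorientation observation is correct, though it needs this justification rather than ``by construction''---and writing $U_{a,b,2r+2}$ near the boundary as $U_{a,b,2r+1}\s\iota$ then yields $U'\mapsto(-1)^rU''$, $U''\mapsto(-1)^rU'$, $p\mapsto p$, $e\mapsto -e$. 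As written, your proposal leaves the sign $(-1)^r$ unsupported.
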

\begin{proof} The transformation $h\times \tau$ takes $U^{\pm}_{a,b,2r+1}$ to $(-1)^{r}U_{a,b,2r+1}$, and therefore $v$ maps 
\[
   U'\mapsto (-1)^rU'', \qquad U''\mapsto (-1)^rU'.
\]
\end{proof}

Consequently, the vertical homomorphism on the left hand side of the diagram embeds the cohomology group of the latter space onto the subgroup
\[
     \{\ U'\s p + (-1)^rU''\s p\ |\ p\in \mathcal{P}(a,b)    \ \}\oplus      \{\ U'\s ep + (-1)^{r+1}U''\s ep\ |\ p\in \mathcal{P}(a,b)    \ \}; 
\] 
the Euler class $e$ is trivial for $a$ odd. There is a commutative diagram
\[
\begin{CD}
   \tilde{H}^{*+d+2r+1}(P_{2r+1}'/P_{2r}) @>d^{2r+1,*}_{1}>> \tilde{H}^{*+d+2r+2}(P_{2r+2}'/P_{2r+1}') \\
   @AAA @AAA \\
   H^*(P_{2r+1}'\setminus P_{2r}) @>\delta>> H^*(P_{2r+2}'\setminus P_{2r}, P_{2r+1}\setminus P_{2r}). 
\end{CD}
\]
Two components of $P'_{2r+1}$ are adjacent to the two components of $P'_{2r+2}$.

\begin{figure}[ht]
	\centering
			\includegraphics[draft=false, width=40mm]{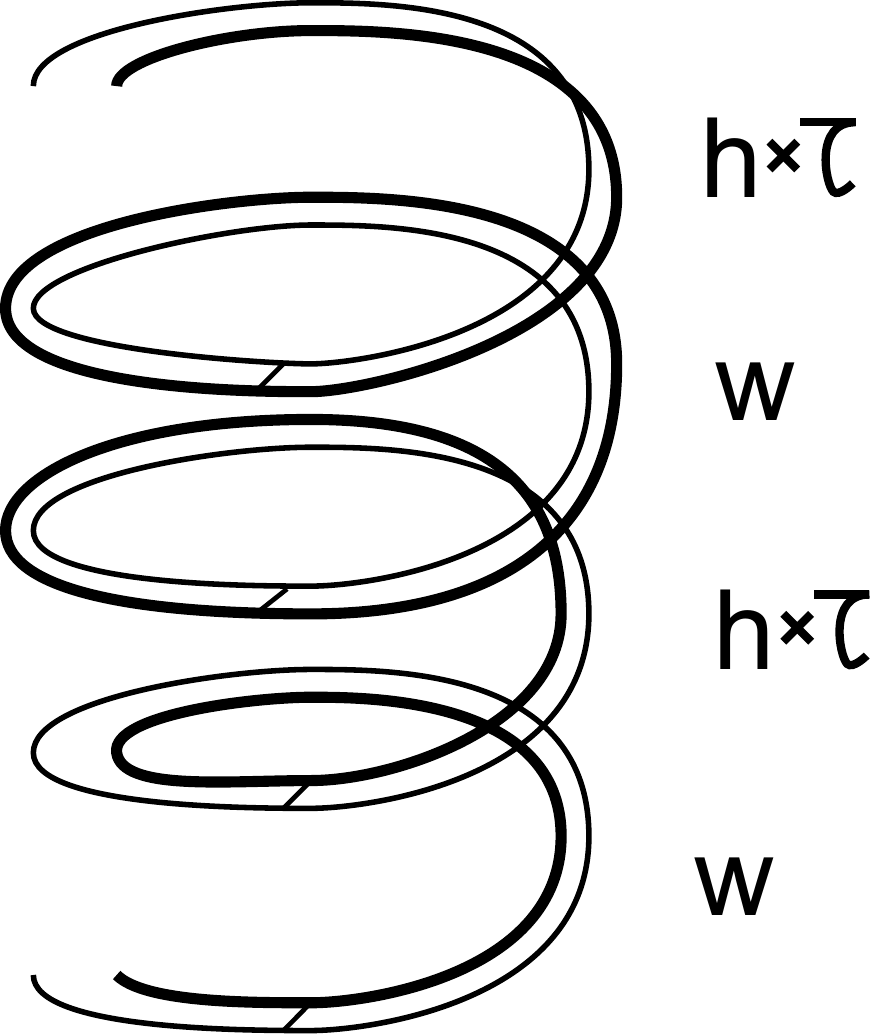}
\caption{The four fold covering $P_{2r+2}'\setminus P_{2r}\to A_{2r+2}'\setminus A_{2r}$. One of the two components of $P_{2r+1}'$ adjacent to $P_{2r+2}'$ (bold).}
\label{fig:2}
\end{figure}

Let $f_1$ be the cochain of degree $0$ that assumes value $1$ on the component $P^+_{2r+1}(a,b)\setminus P_{2r}$ and the value $0$ on the other component. Let $f_2$ be the cochain $1-f_1$. 

Suppose $r$ is odd. Then 
\[
   \delta(f_1ep)=\iota'\s ep+ \iota''\s ep,
\]
\[
   \delta(f_2ep)=-\iota'\s ep - \iota''\s ep,
\]
\[
   \Th(f_1ep)=U^+_{a,b,2r+1}\s ep, \qquad \Th(f_2ep)=U^-_{a,b,2r+1}\s ep,
\]
where $\iota'$ and $\iota''$ are the two Thom classes that correspond to the coorientation in the direction of $\A^+_{2r+1}$ map germs. Consequently, 
\[
  d^{2r+1,*}_1\colon U^+_{a,b,2r+1}\s ep + U^-_{a,b,2r+1}\s ep \mapsto 0, 
\]
\[
  d^{2r+1,*}_1\colon U^+_{a,b,2r+1}\s ep - U^-_{a,b,2r+1}\s ep \mapsto 2U_{a,b,2r+2} \s ep. 
\]
Suppose, now, that $r$ is even. Then 
\[
   \delta(f_1p)=\iota'\s p+ \iota''\s p, 
\]
\[
   \delta(f_2p)=-\iota'\s p- \iota''\s p,
\]
\[
   \Th(f_1p)=U^+_{a,b,2r+1}\s p, \qquad \Th(f_2p)=U^-_{a,b,2r+1}\s p,
\]
Consequently, 
\[
  d^{2r+1,*}_1\colon U^+_{a,b,2r+1}\s p + U^-_{a,b,2r+1}\s p \mapsto 0, 
\]
\[
  d^{2r+1,*}_1\colon U^+_{a,b,2r+1}\s p - U^-_{a,b,2r+1}\s p \mapsto 2U_{a,b,2r+2} \s p. 
\]

\begin{remark} The above formulas hold true for arbitrary $d>0$ with $e=0$ if $d$ is odd. 
\end{remark}

\subsection{The case $a=b$}  Let $\pi_T$ denote the double covering $T_{\infty}\to A_{\infty}\setminus A_1$ associated with $w$ and the class $w_1(C_1)$. To compute the coboundary homomorphism for the adjacency of the set $A_{2r+1}'=A_{2r+1}(a, b)$ to the set $A_{2r+2}'=A_{2r+2}(a, b)$, we consider the commutative diagram  
\[
\begin{CD}
    H^*(\Th\pi_P^*\nu_{2r+1}') @>>> H^*(\Th\pi_P^*\nu_{2r+2}') \\
    @AAA @AAA \\
    H^*(\Th\nu_{2r+1}') @>>> H^*(\Th\nu_{2r+2}'),     
\end{CD}
\]
where $\nu_{2r+1}'$ and $\nu_{2r+2}'$ are the restrictions of $\nu_{2r+1}$ and $\nu_{2r+2}$ to the components $A_{2r+1}'$ and  $A_{2r+2}'$ respectively. We note that the space $T'_{2r+2}=\pi^{-1}_T(A_{2r+2}')$ consists of a single component, and that the Thom space of $\pi_T^*\nu_{2r+2}'$ coincides with the Thom space $\Th\mathfrak{o}$ that we used to compute the cohomology group of  $A_{2r+2}'/A_{2r+1}$; in particular, we have 
\[
   \tilde{H}^*(\Th\nu_{2r+2}')\approx  U_{a,b,2r+2}\s (\mathcal{SP}\oplus  e\mathcal{SP}(a,b))
\] 
if $r$ is even, and    
\[   
 \tilde{H}^*(\Th\nu_{2r+2}')\approx U_{a,b,2r+2}\s (\mathcal{AP}(a,b)\oplus e\mathcal{AP}(a,b))
\]
if $r$ is even. On the other hand, the Thom space of $\pi_P^*\nu_{2r+1}'$ is a double cover over the space $\Th\mathfrak{o}$ used in the computation of the cohomology group of $A_{2r+1}'/A_{2r}$. It has too components and we choose the coorientation of $T'_{2r+2}\setminus T_{2r+1}$ in $T'_{2r+2}\setminus T'_{2r-2}$ in the direction of the first component. The Thom classes $U'$ and $U''$ for the two components of $\pi_T^*\nu_{2r+1}'$ are chosen to locally  correspond to the same orientation of the normal bundle.

\begin{lemma}
The deck transformation of $w_1(C)$ maps 
\[
    U'\mapsto (-1)^rU'', \quad U''\mapsto (-1)^rU', \quad p\mapsto p', \quad e\mapsto e. 
\]
\end{lemma}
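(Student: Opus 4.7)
The plan is to mimic the strategy used in Lemma~\ref{l:19.1}. First I would locate inside the relative symmetry group $G$ of the minimal $\A_{2r+1}(a,a)$ map germ a subgroup $H\cong \Z_2\oplus \Z_2$ built from two commuting involutions: the involution $w$ already introduced (an orientation reversing element in $[\O_a\times \O_a]^+$ acting by a hyperplane reflection on each of the two copies of $\R^a$), and an involution $g$ whose left representation acts on the cokernel line $C_1$ by multiplication by $-1$. A natural candidate for $g$ is the element of $G$ that exchanges the two factors of $\R^a\times \R^a$ composed (if necessary) with the map $(t_1,\dots,t_{2r},x)\mapsto ((-1)^{i+1}t_i,-x)$; its left representation must act non-trivially on $y_{2r+1}$ since the $Q(a,a)$-term is sent to $-Q(a,a)$ while the exchange of the two copies of $\R^a$ preserves the indexing, and this is exactly the content of the class $w_1(C_1)$ on the $\A_{2r+1}(a,a)$ stratum.

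Next I would observe, exactly as in Lemma~\ref{l:19.1}, that $H$ acts both on $A_{2r+2}'\setminus A_{2r+1}$ and on $T_{2r+2}'\setminus T_{2r+1}$ making the covering $H$-equivariant, and that this action extends to tubular neighborhoods preserving the $H$-equivariance. The deck transformation of $w_1(C_1)$ on a neighborhood of $T_{2r+2}'\setminus T_{2r+1}$ is then identified with the restriction of $g$, up to a possible action of the other generator $w$, which fixes each sheet and hence does not affect the question of exchanging $U'$ and $U''$. To determine the sign with which $g$ exchanges $U'$ and $U''$, I would decompose the action into two pieces: whether $g$ exchanges the two sheets of the covering, and whether it preserves or reverses the chosen coorientation of $T_{2r+2}'\setminus T_{2r+1}$ in $T_{2r+2}'$. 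The first piece is immediate from the construction of $\pi_T$, while the second is read off from the action of the corresponding generator in the tables of section \S\ref{s:16.3}: for $a=b$ the element exchanging the two $\R^a$-factors composed with the $\tau$-type map sends $U_{a,a,2r+2}$ to $(-1)^r U_{a,a,2r+2}$ (compare the columns for $\A_{2r}$, $a,r$ of the same and opposite parity in Table~\ref{t:ab2}, adapted to $\A_{2r+2}$). Combined, this yields $U'\mapsto (-1)^r U''$ and $U''\mapsto (-1)^r U'$.

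For the action on the Pontrjagin classes $p_i,p'_i$, the deck transformation exchanges the two $\BSO_a$-factors precisely because $g$ swaps the two copies of $\R^a$; this gives $p_i\mapsto p'_i$ and, by involutivity, $p'_i\mapsto p_i$. For the Euler class $e$ of $K_1/K_2$ over the $\A_{2r+2}(a,a)$ stratum, the class $e$ is identified with $e_a\cdot e_b=e_{a,a}$; since $g$ swaps $e_a$ with $e_b$ (both with the same sign in the $a=b$ case), we obtain $e\mapsto e$, which is also consistent with Table~\ref{t:ab2}.

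The main obstacle will be keeping the sign bookkeeping consistent: the formula for the action on $U',U''$ depends on two independent conventions (the choice of coorientation of $T_{2r+2}'\setminus T_{2r+1}$ and the choice which sheet of $T_\infty$ is labelled first), and one must check that the element $g$ we pick actually represents $w_1(C_1)$ rather than the sum $w+w_1(C_1)$ or $w$ alone. I would verify this last point by restricting to a loop in the base that is known to detect $w_1(C_1)$, for example a loop in $B[\O_a\times \O_a]^+$ along which an orientation of the summand $\nu_a^+$ is reversed, and confirming that the monodromy on the fiber of $C_1$ is nontrivial for $g$ but trivial for $w$.
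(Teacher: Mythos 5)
Your overall strategy is the same as the paper's: identify the deck transformation of $w_1(C_1)$ near the $\A_{2r+2}(a,a)$ stratum with the generator $h\times\tau$ of the relative symmetry group modulo $[\O_a\times\O_a]^+$, read off its action on Thom, Euler and Pontrjagin classes from Table~\ref{t:ab2}, and correct for the behaviour of the coorientation of $T'_{2r+2}\setminus T_{2r+1}$. The conclusions $p_i\mapsto p_i'$ and $e\mapsto e$ are argued correctly.

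The sign computation for $U'$ and $U''$, however, contains a genuine gap, and since that sign is the entire content of the lemma it matters. First, Table~\ref{t:ab2} applied to $\A_{2r'}$ with $2r'=2r+2$ gives $\alpha^*U_{a,a,2r'}=(-1)^{r'}U_{a,a,2r'}=(-1)^{r+1}U_{a,a,2r+2}$, not $(-1)^{r}U_{a,a,2r+2}$ as you state. Second, the action on $U_{a,a,2r+2}$ is not the same thing as the action on the coorientation of $T'_{2r+2}\setminus T_{2r+1}$ in $T'_{2r+2}$: along the deeper stratum the Thom class $U_{a,a,2r+2}$ factors as the product of that coorientation class $\iota$ with the Thom class of $\nu_{2r+1}$, so the sign $\epsilon$ in $U'\mapsto\epsilon U''$ satisfies $(-1)^{r+1}=\epsilon\cdot(\text{sign of the action on }\iota)$. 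One must therefore separately establish that the deck transformation reverses $\iota$ (it exchanges the two components of $T'_{2r+1}$ adjacent to the single component $T'_{2r+2}\setminus T_{2r+1}$, and $\iota$ was chosen to point toward the first of them), and only then divide out to obtain $\epsilon=(-1)^{r+1}\cdot(-1)=(-1)^{r}$. As written, your argument takes the (misquoted) sign on $U_{a,a,2r+2}$ directly as the answer; the two slips cancel, so you land on the correct formula, but the derivation does not establish it. A smaller point: the subgroup $H$ should be located in the relative symmetry group of the minimal $\A_{2r+2}(a,a)$ germ, whose classifying space controls the neighbourhood of the adjacency, rather than in that of the $\A_{2r+1}(a,a)$ germ, if you intend to quote Table~\ref{t:ab2} directly.
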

\begin{proof} The transformation $h\times \tau$ takes 
\[
U_{a,b,2r+2}=U_{a,b,2r'}\qquad \mathrm{to} \qquad (-1)^{r'}U_{a,b,r'}=(-1)^{r+1}U_{a,b,2r+2}
\] 
(see the transformation $\alpha$ in Table~\ref{t:ab2}), and therefore the deck transformation of $w_1(C)$ maps 
\[
   U'\mapsto (-1)^rU'', \qquad U''\mapsto (-1)^rU'.
\]
\end{proof}

Consequently, the cohomology group $H^*(\Th \nu'_{2r+1})$ is isomorphic to 
\[
     \{\ U'\s p_i + U''\s p_i'\ \} 
\] 
if $r$ is even and 
\[
      \{\ U'\s ep_i -U''\s ep'_i\ \}
\]
if $r$ us odd.

Next we turn to the commutative diagram
\[
\begin{CD}
   \tilde{H}^{*+d+2r+1}(\Th p^*_T\nu'_{2r+1}) @>>> \tilde{H}^{*+d+2r+2}(\Th p^*_T\nu'_{2r+2}) \\
   @AAA @AAA \\
   H^*(T_{2r+1}'\setminus T_{2r}) @>>> H^*(T_{2r+2}'\setminus T_{2r}, T_{2r+1}\setminus T_{2r}). 
\end{CD}
\]

Let $f_1$ be a function on $T_{2r+1}'\setminus T_{2r}$ that on the first component assumes value $1$ and on  the second component assumes value $0$. Let $f_2$ be the cochain $1-f_1$. Then for every $p\in \mathcal{P}\oplus e\mathcal{P}$,
\[
   \delta(f_1p)=\iota\s p, \qquad
   \delta(f_2p)=-\iota\s p,
\]
\[
   \Th(f_1p)=U'\s p, \qquad \Th(f_2p)=U''\s p.
\]
Consequently, for $r$ even and $p\in \mathcal{SP}$,  
\[
  d^{2r+1,*}_1\colon U_{a,b,2r+1}\s p \mapsto 0, 
\]
for $r$ even and $p\in \mathcal{AP}$,  
\[
  d^{2r+1,*}_1\colon U_{a,b,2r+1}\s p \mapsto 2U_{a,b,2r+2}\s p, 
\]
while for $r$ odd and $p\in \mathcal{SP}$, 
\[
  d^{2r+1,*}_1\colon U_{a,b,2r+1}\s ep \mapsto 2U_{a,b,2r+2}\s ep, 
\]
for $r$ oedd and $p\in \mathcal{AP}$,  
\[
  d^{2r+1,*}_1\colon U_{a,b,2r+1}\s ep \mapsto 0.
\]

\begin{remark} The above formulas hold true for maps of dimension $d=4s+2$ as well. In the case where $d$ is odd, we always have $a<b$. 
\end{remark}

\addcontentsline{toc}{part}{Cohomology groups of $\mathbf{A_r}$}
\section{The cohomology group of $\mathbf{A_r}$ for $d=4s$}

The differentials $d^{*,*}_t$ with $t>1$ are trivial since $E_2^{i, *}$ are trivial for $i>1$. Thus we are in position to list the cohomology group of each spectrum $\mathbf{A_r}$ for $d=4s$. For a graded algebra $A$ over a field $\k$, let 
\[
    \rk(G)=g_0+g_1t+g_2t^2+\cdots
\] 
denote the series where each $g_i$ stands for the dimension of the subspace of $A$ of vectors of grade $i$. We will be interested in series 
\[
    \mathfrak{p}_t(a,b)=\rk(\mathcal{P}(a,b)), \qquad    \mathfrak{s}_t(a,b)=\rk(\mathcal{SP}(a,b)),
    \]
    \[ 
    \mathfrak{a}_t(a,b)=\rk(\mathcal{AP}(a,b)), \qquad
   \mathfrak{b}_t(d)=\rk(\mathcal{P}(d)). 
\]

\begin{theorem}\label{th:4s} The rank series of the cohomology ring of $\mathbf{A_1}$ is
\[
   \mathfrak{p}_t(0, d+1) + t^{d+1}\sum_{a=1}^{d/2} \mathfrak{p}_t(a, d+1-a).
\]
\end{theorem}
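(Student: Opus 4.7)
The plan is to run the Kazarian spectral sequence attached to the two-step filtration $\mathbf{A_0}\subset\mathbf{A_1}$. With only two non-trivial columns, there is a single relevant differential $d_1^{0,*}\colon E_1^{0,*}\to E_1^{1,*}$, and the rank series of $H^*(\mathbf{A_1})$ is simply $\rk \ker d_1^{0,*}+\rk E_1^{1,*}-\rk\im d_1^{0,*}$; no extension problem intervenes at the level of Poincar\'e series.

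Reading the $E_1$-page from Lemma~\ref{l:12.1}: the zeroth column is $E_1^{0,*}=H^*(\BSO_{4s};\Q)$, whose rank series equals $(1+t^{4s})\mathfrak{b}_t(4s)$ via the vector-space decomposition $\Q[p_1,\dots,p_{2s}]\oplus e\cdot\Q[p_1,\dots,p_{2s}]$ subject to $e^2=p_{2s}$. For the first column I would invoke Lemma~\ref{l:10}: since $d+1=4s+1$ is odd, every fold type $\A_1(a,d+1-a)$ satisfies $a\ne b$, and at least one of $a,b$ is odd, so the rational Euler class $e_{a,b}$ vanishes. Hence
\[
E_1^{1,*}=\bigoplus_{a=0}^{2s}U_{a,d+1-a,1}\s\P(a,d+1-a),\qquad \rk E_1^{1,*}=t^{d+1}\sum_{a=0}^{d/2}\mathfrak{p}_t(a,d+1-a).
\]

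I would then feed in Theorem~\ref{th:9.7}: for $d=4s$ the kernel of $d_1^{0,*}$ is the Pontrjagin subalgebra $\Q[p_1,\dots,p_{2s}]$, of rank $\mathfrak{b}_t(4s)=\mathfrak{p}_t(0,d+1)$. Because $d_1^{0,*}$ raises total cohomological degree by one, a rank-nullity argument with degree shift yields
\[
\rk\im d_1^{0,*}=t\bigl(\rk E_1^{0,*}-\rk\ker d_1^{0,*}\bigr)=t\cdot t^{d}\mathfrak{b}_t(d)=t^{d+1}\mathfrak{p}_t(0,d+1).
\]
Subtracting this image rank from the first-column rank kills exactly the $a=0$ summand of the sum, leaving
\[
\rk H^*(\mathbf{A_1})=\mathfrak{p}_t(0,d+1)+t^{d+1}\sum_{a=1}^{d/2}\mathfrak{p}_t(a,d+1-a),
\]
as claimed.

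The main delicacy is the bookkeeping of the degree shift in $d_1^{0,*}$: by the diagram chase in Theorem~\ref{th:9.7}, the image is not actually supported in the $a=0$ summand but cuts across all $a$ as the diagonal span of classes of the form $\sum_a(-1)^a U_{a,d+1-a,1}\s\phi_a(P)$, obtained from Whitney-sum pullbacks $\phi_a\colon\P(d)\to\P(a,d+1-a)$ applied to $P\in e\cdot\Q[p_1,\dots,p_{2s}]$. Nevertheless its Poincar\'e series matches that of the $a=0$ summand, so the cancellation is clean once the degree-shifted rank-nullity is executed carefully; the remaining arithmetic is elementary manipulation of Pontrjagin-class Hilbert series.
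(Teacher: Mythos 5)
Your argument is correct, but it takes a genuinely different route from the paper's. The paper's proof is a one-line appeal to the splitting of Remark~\ref{r:17.3} (imported from an earlier paper of the author): for $d$ even, $\mathbf{A_1}$ decomposes as a wedge $[\mathbf{A_1(0,d+1)}]\vee[\mathbf{A_1(1,d)/A_0}]\vee\cdots\vee[\mathbf{A_1(d/2,d/2+1)/A_0}]$, and the rank series is then read off summand by summand, the first contributing $\mathfrak{p}_t(0,d+1)$ and each quotient summand contributing $t^{d+1}\mathfrak{p}_t(a,d+1-a)$ via Lemma~\ref{l:10}. You instead stay inside the two-column spectral sequence (equivalently, the long exact sequence of the pair $(\mathbf{A_1},\mathbf{A_0})$), take $\ker d_1^{0,*}=\Q[p_1,\dots,p_{2s}]$ from Theorem~\ref{th:9.7}, and recover the rank of the image by degree-shifted rank--nullity. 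Your bookkeeping checks out: since $d+1=4s+1$ is odd, every fold type has $a\ne b$ with one index odd, so $E_1^{1,*}=\bigoplus_a U_{a,b,1}\s\P(a,b)$ as in Table~\ref{t:11}; and your closing caveat --- that $\im d_1^{0,*}$ is the diagonal subspace spanned by $\sum_a(-1)^aU_{a,b,1}\s Q$ with $Q\in\SP(2s,2s)$ rather than the $a=0$ summand itself --- is exactly the point not to gloss over, with Lemma~\ref{l:123} supplying the identification $\mathfrak{s}_t(2s,2s)=\mathfrak{b}_t(4s)=\mathfrak{p}_t(0,d+1)$ that makes the cancellation valid at the level of Poincar\'e series. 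The paper's route buys brevity and an actual additive splitting of $H^*(\mathbf{A_1})$ (not merely of its Hilbert series); yours buys self-containedness, using only the $E_1$-page of Lemma~\ref{l:12.1} and the differential of Theorem~\ref{th:9.7}, i.e.\ the same machinery the paper deploys for the higher $\mathbf{A_r}$.
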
  
\begin{proof}  By Remark~\ref{r:17.3}, for $d$ even the spectrum $\mathbf{A_1}$ splits as
\[
     \mathbf{A_1}=[\mathbf{A_1(0,d+1)}]\vee [\mathbf{A_1(1,d)/A_0}] \vee \cdots \vee [\mathbf{A_1(d/2,d/2+1)/A_0}].
\] 

\end{proof}

\begin{theorem} The rank series of the cohomology ring of $\mathbf{A_{2r}}$ for $r$ odd is
\[
    \mathfrak{b}_t(d) + t^{d+1}\sum_{j=0}^{s-1}t^{4(2s-j)}\mathfrak{p}_t(2j+1, 4s-2j) + 
    \]
    \[ 
    + t^{8s+2r}\mathfrak{a}_t(2s,2s) + t^{8s+2r}\sum_{i=0}^{s-1} \mathfrak{p}_t(2i,4s-2i).
\]
\end{theorem}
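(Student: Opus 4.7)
The proof runs the Kazarian spectral sequence of the filtration $\mathbf{A_0}\subset \mathbf{A_1}\subset\cdots \subset\mathbf{A_{2r}}$, whose $E_1$-page is recorded in Lemma~\ref{l:12.1} and tabulated for $d=4s$ in Table~\ref{t:11}, and whose first-page differentials $d_1^{i,*}$ have been computed stratum-by-stratum in the preceding sections. The plan is to first verify that all higher differentials $d_t^{*,*}$ with $t\ge 2$ vanish (so that $E_\infty=E_2$) and then to read off the four summands of the stated rank series from the surviving pieces in columns $0$, $1$, and $2r$.

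The first two summands come from the low-degree columns. Theorem~\ref{th:9.7} supplies $E_\infty^{0,*}=\Q[p_1,\ldots,p_{2s}]$, accounting for $\mathfrak{b}_t(4s)$. For column $1$, the unlabeled lemma following Lemma~\ref{l:123} identifies a basis $\{\tau_{j,p}\}$ of $\ker d_1^{1,*}/\im d_1^{0,*}$ with $j\in\{s+1,\ldots,2s\}$ and $p\in p'_j\P(2(2s-j)+1,2j-1)$; reindexing $j=2s-k$ and factoring out the Thom-class shift $t^{4s+1}$ together with the degree $t^{4(2s-k)}$ of the distinguished factor $p'_{2s-k}$ yields exactly $t^{4s+1}\sum_{k=0}^{s-1}t^{4(2s-k)}\mathfrak{p}_t(2k+1,4s-2k)$, the $\sigma_Q$-classes in $\ker d_1^{1,*}$ being absorbed into $\im d_1^{0,*}$ by Theorem~\ref{th:9.7}.

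For the intermediate columns $2\le i\le 2r-1$ I would verify that $E_\infty^{i,*}=0$ by showing $\ker d_1^{i,*}=\im d_1^{i-1,*}$ stratum by stratum, using the formulas for $d_1^{2r',*}$ and $d_1^{2r'+1,*}$ from the previous sections with $r'$ ranging over $\{1,\ldots,r-1\}$; the factor of $2$ appearing throughout those formulas guarantees the required rational surjectivity. The last column $2r$ is special: since $d_1^{2r,*}$ has no target inside $\mathbf{A_{2r}}$, one has $E_\infty^{2r,*}=E_1^{2r,*}/\im d_1^{2r-1,*}$. By Lemma~\ref{l:12.5}, for $a=b=2s$ and $r$ odd, $E_1^{2r,*}$ at this stratum equals $U_{2s,2s,2r}\s(\AP(2s,2s)\oplus e_{2s,2s}\AP(2s,2s))$; the incoming differential from $\A_{2r-1}(2s,2s)$ (to which the $d_1^{2r'+1,*}$-formula with $r'=r-1$ even applies) surjects onto the $U\s\AP$ summand, leaving $U\s e\AP$ with Poincar\'e contribution $t^{8s+2r}\mathfrak{a}_t(2s,2s)$. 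For $a<b$ with $a$ even, the analogous computation, sourced from $\A^\pm_{2r-1}(a,b)$, leaves the Euler-multiplied part $U\s e_{a,b}\P(a,b)$ of rank series $t^{8s+2r}\mathfrak{p}_t(a,b)$; for $a$ odd the stratum is killed entirely since $e_{a,b}=0$. Summing over $a=2i$, $i=0,\ldots,s-1$, produces the final summand $t^{8s+2r}\sum_{i=0}^{s-1}\mathfrak{p}_t(2i,4s-2i)$.

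The main obstacle is the intermediate-column vanishing: each column contains many strata, each splitting into Euler and non-Euler parts whose behaviour under the $d_1$-differentials depends on the parity of the relevant $r'$ and on whether $a=b$. Tracking these cases carefully — in particular ensuring that every $\sigma_Q$-type class in an intermediate column that lies in $\ker d_1^{i,*}$ does get hit by $\im d_1^{i-1,*}$, so that no stray classes survive — is where the bulk of the work lies. A secondary obstacle is confirming that the higher differentials $d_t$ with $t\ge 2$ are all trivial; this should follow once one observes that the rational dimension count from the four surviving pieces agrees with the $E_2$-page dimensions, forcing every subsequent $d_t$ to vanish at the level of ranks.
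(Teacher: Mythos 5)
Your proposal is correct and takes essentially the same route as the paper: the paper's proof is precisely the list of surviving generators — $\ker d_1^{0,*}$ contributing $\mathfrak{b}_t(d)$, the classes $\tau_{2s-j,p}$ for $j=0,\dots,s-1$ contributing the second term, and the cokernel classes $U_{2i,4s-2i,2r}\s ep$ for $i=0,\dots,s-1$ together with $U_{2s,2s,2r}\s ep$ for $p\in\AP(2s,2s)$ contributing the last two terms — which is exactly the inventory you reconstruct from Theorem~\ref{th:9.7}, the basis lemma for $\ker d_1^{1,*}/\im d_1^{0,*}$, and the $d_1^{2r-1,*}$ formulas. The one soft spot, which the paper shares, is the collapse at $E_2$: a rank count cannot by itself force $d_t=0$ for $t\ge 2$, and the actual mechanism is the vanishing of the intermediate columns at $E_2$ combined with the extension argument in the proof of Theorem~\ref{th:9.7}.
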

\begin{proof} The cohomology algebra of $\mathbf{A_{2r}}$ is generated by $\ker d_1^{0,*}$, which contributes the polynomial $\mathfrak{b}_t(d)$; the classes $\tau_{2s-j,p}$ for $j=0, ...s-1$ (see Table~\ref{t:17}), which contribute the second term; the classes $U_{2i, 4s-2i, 2r}\s ep$ for each $i=0,..., s-1$ and $p\in \mathcal{P}(2i,4s-2i)$, and for $i=s$ and $p\in \mathcal{AP}(2s,2s)$. 
\end{proof}

\begin{theorem}  The rank series of the cohomology ring of $\mathbf{A_{2r+1}}$ for $r$ odd is
\[
    \mathfrak{b}_t(d) + t^{d+1}\sum_{j=0}^{s-1}t^{4(2s-j)}\mathfrak{p}_t(2j+1, 4s-2j) + 
    \]
    \[
    + t^{8s+2r+1}\mathfrak{s}_t(2s,2s) + t^{8s+2r+1}\sum_{i=0}^{s-1}\mathfrak{p}_t(2i,4s-2i).
\]
\end{theorem}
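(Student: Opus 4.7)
The plan is to compute $H^*(\mathbf{A_{2r+1}})$ with the Kazarian spectral sequence associated to the filtration $\mathbf{A_0}\subset\mathbf{A_1}\subset\cdots\subset\mathbf{A_{2r+1}}$, starting from the $E_1$ page identified in Lemma~\ref{l:12.1} and described termwise in Lemmas~\ref{l:10}, \ref{l:12.5}, and \ref{l:12.6}. As in the proof of Theorem~\ref{th:9.7}, every class in $\ker d_1^{k,*}$ lifts to the next filtration step, which forces $d_r=0$ for all $r\ge 2$, so $E_\infty=E_2$ and the rank series of $H^*(\mathbf{A_{2r+1}})$ equals $\sum_{k=0}^{2r+1}\rk E_2^{k,*}$. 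The four summands of the stated formula will correspond, in order, to contributions from column $0$, column $1$, and two pieces of column $2r+1$; the intermediate columns $2,\ldots,2r$ will all contribute zero.

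I will proceed column by column, reusing the bulk of the analysis already carried out for the preceding theorem on $\mathbf{A_{2r}}$. Column $0$ contributes $\ker d_1^{0,*}=\P(d)$ by Theorem~\ref{th:9.7}, giving $\mathfrak{b}_t(d)$. Column $1$ contributes the classes $\tau_{j,p}$ modulo $\im d_1^{0,*}$, exactly as in the $\mathbf{A_{2r}}$ argument, producing $t^{d+1}\sum_{j=0}^{s-1}t^{4(2s-j)}\mathfrak{p}_t(2j+1,4s-2j)$. Columns $2,\ldots,2r$ are all shown to be zero by matching the explicit formulas for successive $d_1^{k,*}$: for $r$ odd the relevant cases of the differentials (from Section~\ref{s:diff} and the two subsequent sections) fall in the $r+1$ even case for $d_1^{2r,*}$ and in the $R$ even case ($R=r-1$) for $d_1^{2r-1,*}$, and one checks that $\im d_1^{k-1,*}$ exactly fills $\ker d_1^{k,*}$ on every summand $\P$, $\SP$, $\AP$ and their $e$-multiples. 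Finally column $2r+1$ contributes $E_1^{2r+1,*}/\im d_1^{2r,*}$: Lemma~\ref{l:12.6} gives $E_1^{2r+1,*}=\bigoplus_{(a,b)} U^{\pm}_{a,b,2r+1}\s e_{a,b}\P(a,b)$ for $r$ odd, and the $r+1$ even formulas for $d_1^{2r,*}$ make $\im d_1^{2r,*}$ equal to the span of $U^+\s ep + U^-\s ep$ (when $a\ne b$) and of $2U\s ep$ with $p\in\AP$ (when $a=b=2s$). Taking cokernels yields $U^+\s e\P(a,b)$ for each even $a<2s$ (the odd-$a$ terms vanish because $e_{a,b}=0$), giving $t^{8s+2r+1}\sum_{i=0}^{s-1}\mathfrak{p}_t(2i,4s-2i)$, and $U\s e\SP(2s,2s)$ for $a=b=2s$, giving $t^{8s+2r+1}\mathfrak{s}_t(2s,2s)$.

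The main obstacle is the case analysis required to show that the intermediate columns $2,\ldots,2r$ contribute nothing: one needs a uniform matching of $\ker d_1^{k,*}$ with $\im d_1^{k-1,*}$ across all parities of $k$, all parities of $r$, and all possibilities for $(a,b)$ and for whether the normal bundle of the stratum is orientable, which in turn requires careful bookkeeping of signs and of the alternating $\pm$ Thom classes. A second delicate point is the appearance of $\mathfrak{s}_t(2s,2s)$ (rather than $\mathfrak{a}_t(2s,2s)$) in the $a=b=2s$ part of column $2r+1$: this reflects the equivalence $\A_{2r+1}^+(2s,2s)=\A_{2r+1}^-(2s,2s)$, so that $d_1^{2r,*}$ surjects onto the antisymmetric piece and leaves behind exactly the symmetric complement in the Pontrjagin algebra of the diagonal stratum.
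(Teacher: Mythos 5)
Your computation is correct and follows essentially the same route as the paper: the Kazarian spectral sequence degenerates at $E_2$, columns $0$ and $1$ contribute $\mathfrak{b}_t(d)$ and the $\tau_{2s-j,p}$ classes, the intermediate columns cancel, and column $2r+1$ contributes the classes $U^+_{2i,4s-2i,2r+1}\s ep - U^-_{2i,4s-2i,2r+1}\s ep$ for $i<s$ together with $U_{2s,2s,2r+1}\s eQ$ for $Q\in\SP(2s,2s)$, which is exactly the list of generators the paper records. Your identification of the $a=b=2s$ cokernel as $e\SP(2s,2s)$ (the image of $d_1^{2r,*}$ there being $2U\s e\AP(2s,2s)$) matches the paper's bookkeeping.
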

\begin{proof} The cohomology algebra of $\mathbf{A_{2r+1}}$ is generated by $\ker d_1^{0,*}$; the classes $\tau_{2s-j,p}$ for $j=0, ...s-1$; and the cohomology classes 
\[
U^+_{2i, 4s-2i, 2r+1}\s ep- U^{-}_{2i, 4s-2i, 2r+1}\s ep
\] 
for each non-negative integer $i<s$ and $p\in \mathcal{P}(2i, 4s-2i)$; and for $i=s$ and $p\in \mathcal{SP}(2s,2s)$. 
\end{proof}

\begin{theorem}  The rank series of the cohomology ring of $\mathbf{A_{2r}}$ for $r$ even is
\[
    \mathfrak{b}_t(d) + t^{d+1}\sum_{j=0}^{s-1}t^{4(2s-j)}\mathfrak{p}_t(2j+1, 4s-2j) +  
    \]
    \[ 
    +t^{4s+2r} \mathfrak{s}_t(2s,2s) + t^{4s+2r}\sum_{a=0}^{2s-1} \mathfrak{p}_t(a,4s-a).
\]
\end{theorem}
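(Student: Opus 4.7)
\medskip

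The plan is to apply the Kazarian spectral sequence for the filtration $\mathbf{A_0}\subset\mathbf{A_1}\subset\cdots\subset \mathbf{A_{2r}}$, with $E_1$-page described by Lemma~\ref{l:12.1} and the tables of \S16, and to sum the column-by-column contributions on $E_\infty$. Since the statement has four summands matching four columns of the spectral sequence, I would identify each contribution separately and check that all intermediate columns vanish.

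First I would read off the column 0 and column 1 contributions from the work already done. By Theorem~\ref{th:9.7} the column 0 survivors form $\mathbb{Q}[p_1,\dots,p_{2s}]$, giving $\mathfrak{b}_t(d)$. By the computation of $d^{1,*}_1$ in \S18 and Theorem~18.6 (the kernel of $d^{1,*}_1$ modulo the image of $d^{0,*}_1$), a linear basis of $E_\infty^{1,*}$ is given by the classes $\tau_{2s-j,p}$ for $j=0,\dots,s-1$ with $p\in p'_{2s-j}\P(2j+1,4s-2j)$, contributing exactly $t^{d+1}\sum_{j=0}^{s-1}t^{4(2s-j)}\mathfrak{p}_t(2j+1,4s-2j)$.

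Next I would compute the top column contribution. In the filtration defining $H^*(\mathbf{A_{2r}})$ there is no column $2r{+}1$, so $E_\infty^{2r,*}=E_1^{2r,*}/\mathrm{im}\,d_1^{2r-1,*}$. Since our main $r$ is even, the local parameter $r'=r-1$ of \S20 is odd, so the applicable formulas for $d_1^{2r-1,*}$ on $\A_{2r-1}$ classes read, for $a\neq b$,
\[
U^+_{a,b,2r-1}\s ep-U^-_{a,b,2r-1}\s ep\longmapsto 2U_{a,b,2r}\s ep,\qquad U^+_{a,b,2r-1}\s ep+U^-_{a,b,2r-1}\s ep\longmapsto 0,
\]
and for $a=b=2s$, $U_{2s,2s,2r-1}\s ep\mapsto 2U_{2s,2s,2r}\s ep$ for $p\in\SP$, while $U_{2s,2s,2r-1}\s ep\mapsto 0$ for $p\in\AP$. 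Over $\mathbb{Q}$ the factor $2$ is a unit, so the image in column $2r$ is precisely $U_{a,b,2r}\s e\P(a,b)$ for $a\neq b$ and $U_{2s,2s,2r}\s e\SP(2s,2s)$ for $a=b$. Lemma~\ref{l:12.5} (with $r$ even) identifies $E_1^{2r,*}$ over $\A_{2r}(2s,2s)$ as $U\s(\SP\oplus e\SP)$ and over $\A_{2r}(a,b)$, $a\neq b$, as $U\s(\P\oplus e\P)$; quotienting by the image yields cokernels $U_{2s,2s,2r}\s\SP(2s,2s)$ and $U_{a,b,2r}\s\P(a,4s-a)$ respectively. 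Since $U_{a,b,2r}$ has degree $a+b+2r=4s+2r$, this contributes exactly $t^{4s+2r}\mathfrak{s}_t(2s,2s)+t^{4s+2r}\sum_{a=0}^{2s-1}\mathfrak{p}_t(a,4s-a)$.

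Finally, the main obstacle is to show that all intermediate columns $2\le i\le 2r-1$ contribute $0$ to $E_\infty$, i.e.\ that the row becomes exact at each such position, and that no higher differential $d_t$ ($t\ge 2$) can reintroduce classes between columns $0,1$ and $2r$. For the row exactness I would process the adjacent pairs $(d_1^{2j,*},d_1^{2j+1,*})$ using the formulas of \S\S19--20, checking for each parity that the ``$+$'' and ``$-$'' combinations of $U^{\pm}$ (respectively the $\SP$/$\AP$ splittings for $a=b$) are matched bijectively---the differential coming in from column $2j-1$ surjects onto the $e$-part (resp.\ the $\AP$-part), and the differential going out of column $2j$ is injective on the remaining summand, with their images exhausting the targets (up to a unit factor $2$). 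Combined with the observation already used in the proof of Theorem~\ref{th:9.7} that any surviving element of the kernel of $d_1^{0,*}$ extends over all higher strata, this simultaneously kills the intermediate columns and prevents higher differentials from acting. Summing the four surviving contributions gives the formula in the statement.
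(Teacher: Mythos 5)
Your proposal is correct and follows essentially the same route as the paper: the paper's proof is just the list of $E_\infty$-generators (the kernel of $d_1^{0,*}$ in column $0$, the classes $\tau_{2s-j,p}$ in column $1$, and the classes $U_{a,4s-a,2r}\s p$ with $p\in\P(a,4s-a)$ for $a<2s$ and $p\in\SP(2s,2s)$ for $a=2s$ in column $2r$), which is precisely the column-by-column bookkeeping you carry out, with your cokernel computation $E_1^{2r,*}/\im d_1^{2r-1,*}$ matching the paper's differential formulas for the odd local parameter $r-1$. Your treatment of the intermediate columns and of the potential higher differentials is, if anything, more explicit than the paper's one-line justification.
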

\begin{proof} The cohomology algebra of $\mathbf{A_{2r+1}}$ is generated by $\ker d_1^{0,*}$; the classes $\tau_{2s-j,p}$ for $j=0, ...s-1$;  and the cohomology classes 
$U_{a,4s-a,2r}\s p$ for each $a=0,..., 2s-1$ and $p\in \mathcal{P}(a,4s-a)$; and for $a=2s$ and $p\in \mathcal{SP}(2s,2s)$.  

\end{proof}

\begin{theorem}  The rank series of the cohomology ring of $\mathbf{A_{2r+1}}$ for $r$ even is
\[
    \mathfrak{b}_t(d) + t^{d+1}\sum_{j=0}^{s-1}t^{4(2s-j)}\mathfrak{p}_t(2j+1, 4s-2j) + 
    \]
    \[
    + t^{4s+2r+1}\mathfrak{a}_r(2s,2s)+ t^{4s+2r+1}\sum_{a=0}^{2s-1} \mathfrak{p}_t(a,4s-a).
\]
\end{theorem}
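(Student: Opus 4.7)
The proof is parallel to those of the preceding three theorems in this section. The plan is to assemble the $E_{\infty}$ page of the Kazarian spectral sequence of Lemma~\ref{l:12.1}, using the differentials worked out in section~\ref{s:diff} and the two sections that follow. Since $\mathbf{A_{2r+1}}$ is the top term of the filtration $\mathbf{A_0}\subset \mathbf{A_1}\subset \cdots \subset \mathbf{A_{2r+1}}$, the differential $d_1^{2r+1,*}$ out of the top column vanishes, so $H^*(\mathbf{A_{2r+1}})$ is the direct sum of the columns $E_{\infty}^{k,*}$ for $0\le k\le 2r+1$ and its rank series is the sum of theirs.

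For the first column, Theorem~\ref{th:9.7} gives $E_{\infty}^{0,*}=\Q[p_1,\dots,p_{2s}]$, contributing $\mathfrak{b}_t(d)$. For the second column, the argument already given in the proof of Theorem~\ref{th:4s} identifies $\ker d_1^{1,*}/\im d_1^{0,*}$ with the span of the classes $\tau_{2s-j,p}$, contributing $t^{d+1}\sum_{j=0}^{s-1}t^{4(2s-j)}\mathfrak{p}_t(2j+1,4s-2j)$. The intermediate columns $E_{\infty}^{k,*}$ for $2\le k\le 2r$ vanish: the formulas for $d_1^{k-1,*}$ and $d_1^{k,*}$ from sections~19 and 20 show that over $\Q$ one of the two adjacent differentials pairs each remaining summand of $E_1^{k,*}$ with a summand of an adjacent column via a multiplication-by-$\pm 2$ isomorphism, exactly as in the three preceding theorems.

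For the top column $E_{\infty}^{2r+1,*}$, the key input is the formula for $d_1^{2r,*}$ in the subcase $r+1$ odd. When $a\ne b$, Lemma~\ref{l:12.6} identifies $E_1^{2r+1,*}$ over the stratum with $U^+_{a,b,2r+1}\s\P(a,b)\oplus U^-_{a,b,2r+1}\s\P(a,b)$, and the image of $d_1^{2r,*}$ is spanned by the symmetric combinations $U^+_{a,b,2r+1}\s p+U^-_{a,b,2r+1}\s p$ with $p\in\P(a,b)$; the quotient is then spanned by the differences $U^+_{a,b,2r+1}\s p-U^-_{a,b,2r+1}\s p$, and summing over $a=0,\dots,2s-1$ with $b=4s-a$ gives $t^{4s+2r+1}\sum_{a=0}^{2s-1}\mathfrak{p}_t(a,4s-a)$. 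When $a=b=2s$, the identification $\A^+_{2r+1}(2s,2s)=\A^-_{2r+1}(2s,2s)$ from subsection~5.4 collapses $E_1^{2r+1,*}$ over this stratum to $U_{2s,2s,2r+1}\s\P(2s,2s)$, and the $a=b$ formula for $d_1^{2r,*}$ with $r+1$ odd yields image $U_{2s,2s,2r+1}\s\SP(2s,2s)$, so the quotient $U_{2s,2s,2r+1}\s\AP(2s,2s)$ contributes $t^{4s+2r+1}\mathfrak{a}_t(2s,2s)$.

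The main technical step is the same as in the three preceding theorems: the vanishing of the intermediate columns depends on carefully pairing the appropriate $\SP$/$\AP$ halves of $E_1^{k,*}$ (together with their Euler-class twists when present) via the two adjacent differentials, and for $a=b=2s$ one has to combine Lemmas~\ref{l:8.3} and~\ref{l:4} with the sign computation of Lemma~\ref{l:19.1} to identify the image of $d_1^{2r,*}$ as the $\SP$-part. Granted this pairing and the computations above, the four surviving summands add to the stated rank series.
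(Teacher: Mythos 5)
Your proposal is correct and follows essentially the same route as the paper: the paper's proof simply lists the surviving generators ($\ker d_1^{0,*}$, the classes $\tau_{2s-j,p}$, and the differences $U^+_{a,4s-a,2r+1}\s p - U^-_{a,4s-a,2r+1}\s p$ for $a<2s$ together with $U_{2s,2s,2r+1}\s p$ for $p\in\AP(2s,2s)$), which is exactly the collapse of the Kazarian spectral sequence you describe. The only nitpick is attributional: the identification of $\ker d_1^{1,*}/\im d_1^{0,*}$ with the span of the $\tau_{2s-j,p}$ comes from the theorem and lemma at the end of the section on $d_1^{1,*}$ (via Lemma~\ref{l:123}), not from the proof of Theorem~\ref{th:4s}, which instead uses the splitting of $\mathbf{A_1}$ from Remark~\ref{r:17.3}.
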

\begin{proof} The cohomology algebra of $\mathbf{A_{2r+1}}$ is generated by $\ker d_1^{0,*}$; the classes $\tau_{2s-j,p}$ for $j=0, ...s-1$;  and the cohomology classes 
\[
U^+_{a, 4s-a, 4r+1}\s p- U^-_{a, 4s-a, 4r+1}\s p
\] 
for each $a=0,..., 2s-1$ and $p\in \mathcal{P}(a, 4s-a)$; and for $a=2s$ and each $p\in \mathcal{AP}(2s,2s)$. 

\end{proof}

\begin{theorem}  The rank series of the cohomology ring of $\mathbf{A_{\infty}}$ is 
\[
    \mathfrak{b}_t(d)+ t^{d+1}\sum_{j=0}^{s-1}t^{4(2s-j)}\mathfrak{p}_t(2j+1, 4s-2j).
\]
\end{theorem}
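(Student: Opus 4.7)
The plan is to obtain the rank series for $\mathbf{A}_\infty$ by passing to the limit in the four rank series computed in the preceding theorems for $\mathbf{A}_{2r}$ and $\mathbf{A}_{2r+1}$ (in both parities of $r$). Since $\mathbf{A}_\infty$ is the colimit of the filtration $\mathbf{A}_0\subset\mathbf{A}_1\subset\cdots$, in each fixed cohomological degree $n$ one has $H^n(\mathbf{A}_\infty)=H^n(\mathbf{A}_r)$ for all sufficiently large $r$; this is a consequence of the fact that the Kazarian spectral sequence for $\mathbf{A}_\infty$ restricts to the Kazarian spectral sequence for each $\mathbf{A}_r$ and the differentials $d_t^{*,*}$ vanish for $t>1$.

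Next, inspecting the four preceding theorems, each rank series for $\mathbf{A}_r$ decomposes as
\[
\rk H^*(\mathbf{A}_r) \ = \ \mathfrak{b}_t(d)+t^{d+1}\sum_{j=0}^{s-1}t^{4(2s-j)}\mathfrak{p}_t(2j+1,4s-2j)+\mathfrak{u}_r(t),
\]
where the first two summands are independent of $r$ (they account for $\ker d_1^{0,*}$ and the $\tau_{2s-j,p}$-classes, respectively) while the remainder $\mathfrak{u}_r(t)$ consists entirely of monomials in $t$ of degree $\ge 4s+2r$ (the Thom-class shift of the $r$-th filtration quotient). Consequently, for any fixed $n$, once $r>(n-4s)/2$ the contribution of $\mathfrak{u}_r(t)$ to degree $n$ is trivial, and the limit rank in degree $n$ equals the coefficient of $t^n$ in the stable part above.

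The final bookkeeping step is to confirm that the stable generators — the polynomial classes in $\mathcal{P}(d)$ from column $0$ and the $\tau$-classes from column $1$ of the Kazarian spectral sequence — are preserved by the inclusion $\mathbf{A}_r\hookrightarrow\mathbf{A}_{r+1}$. This is immediate from the construction in \S\ref{s:8}: the strata $A_0$ and $A_1$ sit inside every $S(i)$, so their contributions to $E_1^{0,*}$ and $E_1^{1,*}$ are intrinsic to the filtration and pull back to themselves. Collecting these observations gives the claimed rank series.

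The main obstacle is purely one of bookkeeping: one must check that no stable contribution arises from the columns $E_1^{r,*}$ with $r\ge 2$ in the limit. This reduces to the observation that every class in $E_1^{r,*}$ for $r\ge 2$ carries a Thom-class shift of at least $d+r$, so its total degree grows without bound as $r\to\infty$; hence no such class can contribute to a fixed finite degree in $H^*(\mathbf{A}_\infty)$.
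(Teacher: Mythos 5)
Your argument is correct and lands on the same answer, but it is packaged differently from the paper's. The paper's proof is a one-line reading of the $E_\infty=E_2$ page of the Kazarian spectral sequence applied directly to $\mathbf{A}_\infty$: column $0$ contributes $\ker d_1^{0,*}$ (rank series $\mathfrak{b}_t(d)$), column $1$ contributes $\ker d_1^{1,*}/\im d_1^{0,*}$, which is spanned by the classes $\tau_{2s-j,p}$, and every column $i\ge 2$ has $E_2^{i,*}=0$. You instead take a limit over the finite truncations: connectivity of the quotients $\mathbf{A}_{k+1}/\mathbf{A}_k$ (whose cohomology begins in degree $d+k+1$) identifies $H^n(\mathbf{A}_\infty)$ with $H^n(\mathbf{A}_r)$ for $r$ large, and the four rank series already computed share a common $r$-independent part, the remainder being concentrated in degrees $\ge 4s+2r$. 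This is legitimate and avoids re-examining the spectral sequence of $\mathbf{A}_\infty$ itself, but it leans entirely on the four preceding theorems, which already encode the vanishing of $E_2^{i,*}$ for $2\le i<r$; so the underlying computation is the same.

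One caveat: the justification in your closing paragraph is not right as stated. A class in $E_1^{i,*}$ for a \emph{fixed} $i\ge 2$ sits in a fixed finite degree (at least $d+i$), so ``its total degree grows without bound'' does not explain why it contributes nothing to $H^{d+i}(\mathbf{A}_\infty)$; the correct reason is that $E_2^{i,*}=0$ for $i\ge 2$, i.e., everything in $\ker d_1^{i,*}$ is hit by $d_1^{i-1,*}$. Fortunately that paragraph is redundant: the stabilization $H^n(\mathbf{A}_\infty)\cong H^n(\mathbf{A}_r)$ for large $r$, combined with the decomposition of the finite rank series, already yields the claim, so the proof stands with the closing paragraph deleted.
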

\begin{proof} 
The cohomology ring of $\mathbf{A_{\infty}}$ is generated by $\ker d_1^{0,*}$, and the classes $\tau_{2s-j,p}$ for $j=0,..., s-1$.
\end{proof}

\section{The cohomology group of $\mathbf{A_r}$ for $d=4s+1$ and $d=4s+3$}

Let $d'$ denote the number $\frac{d+1}{2}$.

\begin{theorem} For $d=4s+1$ the rank series of the cohomology ring of $\mathbf{A_1}$ is
\[
   \mathfrak{b}_t(d)+ t^{d+1}\mathfrak{a}_t(d',d')+
\]
\[
   + t^{d+1}\sum_{a=0}^{d'-1}\mathfrak{p}_t(a,d+1-a)+
   t^{2d+2}\sum_{a=0}^{d'-1/2}\mathfrak{p}_t(2a,d+1-2a).
\]
In the case $d=4s+3$ there is an additional term $t^{2d+2}\mathfrak{s}(d',d')$.
\end{theorem}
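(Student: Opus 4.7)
The plan is to apply the Kazarian spectral sequence for the two-step filtration $\mathbf{A_0}\subset\mathbf{A_1}$ and exploit the vanishing of $d^{0,*}_{*}$. By Theorem~\ref{th:9.7}, for every odd $d$ the differential $d^{0,*}_{*}$ is identically zero; since only two columns are nonzero, all higher differentials out of $E^{0,*}$ and into $E^{1,*}$ automatically vanish. Thus the spectral sequence collapses at $E_1=E_\infty$, and Lemma~\ref{l:12.1} yields
\[
   \rk H^*(\mathbf{A_1})\ =\ \mathfrak{b}_t(d)\ +\ \sum_{a=0}^{d'}\rk\tilde H^*(\Th\xi_{a,d+1-a}),
\]
where $d'=(d+1)/2$ and the sum runs over all $\A_1(a,d+1-a)$ singularity types.

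Next I would evaluate each Thom-space summand using Lemma~\ref{l:10}. For indices $0\le a<d'$ the pair satisfies $a\ne b$, so $\tilde H^*(\Th\xi_{a,d+1-a})=U_{a,d+1-a,1}\smallsmile(\P(a,d+1-a)\oplus e_{a,d+1-a}\P(a,d+1-a))$, with $U$ of degree $d+1$ and $e_{a,d+1-a}$ of degree $d+1$. Because $d+1$ is even the two indices have matching parity, so $e_{a,d+1-a}$ survives precisely when $a$ is even. Each such stratum therefore contributes $t^{d+1}\mathfrak{p}_t(a,d+1-a)$, plus an additional $t^{2d+2}\mathfrak{p}_t(a,d+1-a)$ whenever $a$ is even. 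For $a=d'$ the indices coincide, and Lemma~\ref{l:10} gives $\tilde H^*(\Th\xi_{d',d'})=U_{d',d',1}\smallsmile(\AP(d',d')\oplus e_{d',d'}\SP(d',d'))$, with $e_{d',d'}$ trivial exactly when $d'$ is odd.

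Assembling, the $t^{d+1}$-terms total $t^{d+1}\mathfrak{a}_t(d',d')+t^{d+1}\sum_{a=0}^{d'-1}\mathfrak{p}_t(a,d+1-a)$, while the $t^{2d+2}$-terms from even indices $0\le a<d'$, reindexed by $a=2a'$, give $t^{2d+2}\sum_{a'=0}^{\lfloor(d'-1)/2\rfloor}\mathfrak{p}_t(2a',d+1-2a')$. When $d=4s+1$ we have $d'=2s+1$ odd, so the $a=b=d'$ stratum contributes nothing in degree $2d+2$ and the stated formula drops out directly. When $d=4s+3$ we have $d'=2s+2$ even, so $e_{d',d'}$ survives and produces the additional term $t^{2d+2}\mathfrak{s}_t(d',d')$.

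The argument is essentially combinatorial bookkeeping and presents no real obstacle: the substantive input is the vanishing of the Kazarian differential in Theorem~\ref{th:9.7} together with the explicit Thom-space decompositions of Lemma~\ref{l:10}. The only delicate point is recognizing that the parity of $d'=(d+1)/2$ controls whether the diagonal stratum $a=b=d'$ contributes the extra $\mathfrak{s}_t(d',d')$-term, which is exactly the mechanism producing the discrepancy between the residues $d\equiv 1$ and $d\equiv 3\pmod 4$.
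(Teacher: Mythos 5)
Your proof is correct and follows essentially the same route as the paper's (which merely lists the generators): the two-column Kazarian spectral sequence collapses at $E_1$ because $d^{0,*}_1$ vanishes for odd $d$ by Theorem~\ref{th:9.7}, and the stratum-by-stratum count then comes from Lemma~\ref{l:12.1} together with Lemma~\ref{l:10}. Your parity bookkeeping for $e_{a,b}$ on the off-diagonal strata and for the diagonal stratum $a=b=d'$ (which is exactly what distinguishes $d\equiv 1$ from $d\equiv 3 \pmod 4$) matches the paper's enumeration of the classes $I_{Q,a}$ and $I_Q$.
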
  
\begin{proof} The cohomology algebra of $\mathbf{A_{1}}$ is generated by $\ker d_1^{0,*}$; and $U_{d', d',1}\s p$ for $p\in \mathcal{AP}(d', d')$; and $U_{a, d+1-a,1}\s p$ for each $a=0, ..., d'-1$ and $p\in \P(a, d+1-a)$; and the classes $I_{Q,a}$ for each even $a$. In the case $d=4s+3$ there are also classes $I_Q$. 
\end{proof}

\begin{theorem} The rank series of the cohomology ring of $\mathbf{A_{2r}}$ for $r$ odd is
\[
   \mathfrak{b}_t(d)+ t^{d+1}\mathfrak{a}_t(d',d')+
\]
\[
   + t^{2d+2}\sum_{a=0}^{d'-1/2}\mathfrak{p}_t(2a,d+1-2a)+t^{d+1}\sum_{j=0}^{s}t^{4(d'-j)}\mathfrak{p}(2j,d+1-j).
\]
In the case $d=4s+3$ there is an additional term $t^{2d+2}\mathfrak{s}(d',d')$.
\end{theorem}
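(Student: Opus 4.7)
The plan is to follow the pattern established in Section 22 for $d=4s$, computing kernels and images of the differentials in the Kazarian spectral sequence page by page and reading off the surviving generators. Since $E_2^{i,*}=0$ for $i>1$ (as in the $d=4s$ analysis), it suffices to identify which classes of the $E_1$-page tables (Tables~\ref{t:11} and \ref{t:14}) survive the differentials $d_1^{i,*}$ for $0\le i\le 2r-1$ already computed in Sections~\ref{s:diff}--20, using the remarks in those sections that the formulas carry over to arbitrary $d>0$.

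The first contribution $\mathfrak{b}_t(d)$ comes from $\ker d_1^{0,*}$: for $d$ odd, Theorem~\ref{th:9.7} yields $E_\infty^{0,*}=H^*(\BSO_d)=\Q[p_1,\dots,p_{(d-1)/2}]$, and these classes survive all further differentials. Next, I would compute $\ker d_1^{1,*}/\im d_1^{0,*}$ using the formulas of Section 18. For $d=4s+1$ (resp.\ $d=4s+3$), the strata $\A_1(a,b)$ with $a\ne b$ contribute $U\s \P(a,b)$ and $U_{0,d+1,1}\s e\P$ at index zero (with $e=e_{a,b}$ nontrivial precisely when both $a,b$ are even); the diagonal stratum $\A_1(d',d')$ contributes $U\s\AP(d',d')$ (resp.\ $U\s\AP\oplus U\s e\SP$). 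The coboundary $d_1^{1,*}$ then either kills the classes or combines them into surviving $\tau$-type linear combinations by the same alternating-sum mechanism that produced the classes $\tau_{j,p}$ in the $d=4s$ case; this accounts for the $t^{d+1}\mathfrak{a}_t(d',d')$ summand (the indecomposable piece from the diagonal), the $t^{d+1}\sum_{j=0}^{s}t^{4(d'-j)}\mathfrak{p}_t(2j,d+1-j)$ summand (the $\tau$-classes constructed from $p'_{d'-j}$-divisible elements across multiple indices), and, in the case $d=4s+3$, the extra $t^{2d+2}\mathfrak{s}(d',d')$ summand coming from the $e\SP$ part of $\A_1(d',d')$.

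Finally, I would track contributions from the higher strata up to $\A_{2r}$. The formulas in Sections 19 and 20 for $d_1^{2k,*}$ and $d_1^{2k+1,*}$ involve ``multiplication by $2$'' (hence injective on rational coefficients) on half of each $E_1$ group, so the alternating pattern of kernel/image cancellations from $k=1$ through $k=r-1$ collapses all intermediate strata into the first column, as happened in the $d=4s$ analysis. At the final stratum $\A_{2r}$ for $r$ odd, the remark following the $a\ne b$ computation in Section 19 shows that $d_1^{2r,*}$ kills the $U\s ep$ summand (so these classes survive) while mapping $U\s p$ injectively into the $\A_{2r+1}$-stratum. Collecting the surviving classes $U_{a,d-a,2r}\s ep$ for $a$ even in $\{0,2,\dots,d'-1\}$ (with the additional diagonal contribution $U_{d',d',2r}\s p$, $p\in\SP(d',d')$, in the case $d=4s+3$) accounts for the remaining summands.

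The main obstacle will be bookkeeping: one must verify carefully that no undesired classes from higher strata contribute to lower degrees, and that the ``multiplication by $2$'' identifications over $\Q$ genuinely eliminate the expected terms without accidental survivors. In particular the diagonal case $a=b=d'$ requires the auxiliary covering constructions of Sections 18--20 (rather than naive Thom isomorphisms) to correctly distinguish $\SP$-type from $\AP$-type contributions, and one must track the parity of $r$ throughout to select the right symmetric/skew-symmetric pieces. Once these verifications are in place, the formula for the rank series follows by simply summing the contributions listed above, exactly as in Theorems~\ref{th:4s} and following.
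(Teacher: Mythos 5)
Your overall strategy (read off $E_2=E_\infty$ column by column from the already-computed differentials, using that $E_2^{i,*}=0$ for $1<i<2r$) is the paper's, and your treatment of columns $0$ and $1$ is mostly right, but the last step misattributes one of the four summands and would produce a wrong answer. The term $t^{2d+2}\sum_{a}\mathfrak{p}_t(2a,d+1-2a)$ does not come from the top stratum $\A_{2r}$: it is carried by the classes $I_{Q,a}=U_{a,b,1}\s Q$ with $Q\in e_{a,b}\P(a,b)$, $a=0,2,\dots,2s$, $b=d+1-a$, sitting in the \emph{fold} column $E_1^{1,*}$ (there $a+b=d+1$ is even, so $e_{a,b}\ne 0$ exactly for $a$ even, and the degree is $(d+1)+(d+1)=2d+2$); these lie in $\ker d_1^{1,*}$ because the remarks following the computation of $d_1^{1,*}$ show that $d_1^{1,*}$ annihilates the $e$-divisible part. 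Your paragraph on $\A_1$ mentions these classes but never assigns them to a summand, deferring them instead to the top column.

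The top column in fact contributes nothing, which is precisely why the stated rank series coincides with that of $\mathbf{A_\infty}$ in Theorem~\ref{th:20.6}. Two things go wrong in your account of it. First, for $d$ odd every stratum $\A_{2r}(a,b)$ has $a+b=d$ odd, so one of $a,b$ is odd and $e_{a,b}=0$; the classes $U_{a,d-a,2r}\s ep$ you propose to collect are identically zero, they would in any case sit in degree $2d+2r$ rather than $2d+2$, and there is no diagonal stratum $U_{d',d',2r}$ since $2d'=d+1\ne d$. Second, survival in the last column of the truncated spectral sequence for $\mathbf{A_{2r}}$ is governed by the cokernel of the incoming differential $d_1^{2r-1,*}$, not by the kernel of $d_1^{2r,*}$ (which does not exist in this filtration); since $r-1$ is even, $d_1^{2r-1,*}$ sends $U^+\s p-U^-\s p$ to $2U_{a,b,2r}\s p$ and hence surjects rationally onto $E_1^{2r,*}=\oplus_a U_{a,d-a,2r}\s\P(a,d-a)$, so $E_2^{2r,*}=0$. (Even in the $d=4s$ model you are imitating, the parity is reversed: for $r$ odd, $d_1^{2r,*}$ kills $U\s p$ and multiplies $U\s ep$ by $2$, and the classes that survive in $\mathbf{A_{2r}}$ are those outside $\im d_1^{2r-1,*}$.) With the $I_{Q,a}$ correctly placed in column $1$ and the top column discarded, the count matches the theorem.
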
  
\begin{proof} The cohomology algebra of $\mathbf{A_{2r}}$ for $r$ odd is generated by $\ker d_1^{0,*}$; the cohomology classes $\sigma_Q$, $I_{Q,a}$,  and $\tau_{d'-s,p},...,\tau_{d',p}$. In the case $d=4s+3$ there are also classes $I_Q$.
\end{proof}

\begin{theorem} The rank series of the cohomology ring of $\mathbf{A_{2r+1}}$ for $r$ odd is
\[
   \mathfrak{b}_t(d)+ t^{d+1}\mathfrak{a}_t(d',d')+
\]
\[
   + t^{2d+2}\sum_{a=0}^{d'-1/2}\mathfrak{p}_t(2a,d+1-2a)+t^{d+1}\sum_{j=0}^{s}t^{4(d'-j)}\mathfrak{p}(2j,d+1-j).
\]
In the case $d=4s+3$ there is an additional term $t^{2d+2}\mathfrak{s}(d',d')$.
\end{theorem}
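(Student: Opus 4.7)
The plan is to reduce this theorem to its predecessor by showing that, in the Kazarian spectral sequence, the passage from $\mathbf{A_{2r}}$ to $\mathbf{A_{2r+1}}$ produces no change in rational cohomology when both $r$ and $d$ are odd. The generators of $H^*(\mathbf{A_{2r}})$ and the associated rank series for $r$ odd were exhibited in the preceding theorem (namely $\ker d^{0,*}_1$, the classes $\sigma_Q$ and $I_{Q,a}$, the classes $\tau_{d'-s,p},\dots,\tau_{d',p}$, and, for $d=4s+3$, the extra $I_Q$-classes). It therefore suffices to establish that the $\A_{2r+1}$-column of $E_1$ is trivial and that the differential $d^{2r,*}_1$ vanishes identically, so that the $E_\infty$-page is unchanged.

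First I would pin down $E_1^{2r+1,*}$. On the $\A_{2r+1}$-stratum one has $a+b=d$, and since $d$ is odd this forces $a\neq b$ and at least one of $a,b$ to be odd. By the cohomology convention of the introductory lemma, the rational Euler class $e_{a,b}\in H^{a+b}(B[\SO_a\times\SO_b];\Q)$ then vanishes. Invoking Lemma~\ref{l:12.6} for $r$ odd gives
\[
   \tilde H^*(\Th\xi^{\pm}_{a,b})\;=\;U^{\pm}_{a,b,2r+1}\s e_{a,b}\P(a,b)\;=\;0,
\]
whence $E_1^{2r+1,*}=0$.

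Second I would check the differential $d^{2r,*}_1$. By the computation in section~\ref{s:diff} specialized to $r$ odd, this map annihilates the summand $U_{a,b,2r}\s \P(a,b)$ and acts non-trivially only on the summand $U_{a,b,2r}\s e_{a,b}\P(a,b)$. But on the $\A_{2r}$-stratum one again has $a+b=d$ odd, so $e_{a,b}=0$ and that summand is itself zero. Therefore $d^{2r,*}_1$ is the zero map.

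Combining these two observations with the already-established triviality of all higher differentials $d^{i,*}_t$ for $t\ge 2$, the $E_\infty$-page of the Kazarian spectral sequence for $\mathbf{A_{2r+1}}$ coincides column-by-column with that for $\mathbf{A_{2r}}$ in columns $0,1,\dots,2r$, and the new column $2r+1$ is void. The rank series for $H^*(\mathbf{A_{2r+1}};\Q)$ must therefore equal the rank series for $H^*(\mathbf{A_{2r}};\Q)$ displayed in the preceding theorem, which is precisely the formula claimed, including the additional $t^{2d+2}\mathfrak{s}_t(d',d')$ summand when $d=4s+3$ (already present on the $\A_1$-column through the $I_Q$-classes). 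The only delicate input is the parity observation that the rational Euler class of an oriented bundle of odd total dimension vanishes; once this is granted, the rest is bookkeeping against the tabulated $E_1$-page of Tables~\ref{t:11}--\ref{t:14}.
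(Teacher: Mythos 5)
Your argument is correct and follows the paper's own route: the theorem is read off the Kazarian spectral sequence, and for $d$ odd the $\A_{2r+1}$-column of $E_1$ vanishes (one of $a,b$ is odd, so $e_{a,b}=0$ rationally and Lemma~\ref{l:12.6} gives $U^{\pm}_{a,b,2r+1}\s e_{a,b}\P(a,b)=0$), so the generators and hence the rank series coincide with those of $\mathbf{A_{2r}}$ from the preceding theorem. The paper's proof merely lists those generators; your explicit check that $E_1^{2r+1,*}=0$ (which already forces $d_1^{2r,*}=0$, making your second step redundant but harmless) supplies the justification the paper leaves implicit.
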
  
\begin{proof} The cohomology algebra of $\mathbf{A_{2r}}$ for $r$ odd is generated by $\ker d_1^{0,*}$; the cohomology classes $\sigma_Q$, $I_{Q,a}$,  and $\tau_{d'-s,p},...,\tau_{d',p}$. In the case $d=4s+3$ there are also classes $I_Q$.
\end{proof}

\begin{theorem} The rank series of the cohomology ring of $\mathbf{A_{2r}}$ for $r$ even is
\[
   \mathfrak{b}_t(d)+ t^{d+1}\mathfrak{a}_t(d',d')+    t^{2d+2}\sum_{a=0}^{d'-1/2}\mathfrak{p}_t(2a,d+1-2a) +
\]
\[
+t^{d+1}\sum_{j=0}^{s}t^{4(d'-j)}\mathfrak{p}(2j,d+1-j)
   +t^{d+2r}\sum_{a=0}^{d'-1}\mathfrak{p}_t(a, d-a).
\]
In the case $d=4s+3$ there is an additional term $t^{2d+2}\mathfrak{s}(d',d')$.
\end{theorem}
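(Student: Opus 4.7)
The plan is to analyze the Kazarian spectral sequence for $\mathbf{A_{2r}}$, which is the truncation at column $2r$ of the spectral sequence for $\mathbf{A_{\infty}}$, and to compare with the known cohomology of $\mathbf{A_{2r-1}}$ (obtained from the preceding theorem by replacing $r$ with $r-1$, which is odd). All contributions from columns $0,1,\ldots,2r-1$ are inherited from that computation: the polynomial subalgebra $\ker d^{0,*}_1 = \Q[p_1,\ldots,p_{\frac{d-1}{2}}]$ contributes $\mathfrak{b}_t(d)$; the classes $\sigma_Q$ contribute $t^{d+1}\mathfrak{a}_t(d',d')$; the classes $I_{Q,a}$ (and, in the case $d=4s+3$, the classes $I_Q$) contribute $t^{2d+2}\sum_{a=0}^{d'-1/2}\mathfrak{p}_t(2a,d+1-2a)$ (and $t^{2d+2}\mathfrak{s}_t(d',d')$); and the classes $\tau_{d'-s,p},\ldots,\tau_{d',p}$ contribute $t^{d+1}\sum_{j=0}^{s}t^{4(d'-j)}\mathfrak{p}_t(2j,d+1-j)$.

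Next I would verify that all intermediate columns $i$ with $2 \le i \le 2r-1$ contribute trivially to $E_\infty^{i,*}$. The crucial simplification is that for $d$ odd, every $\A_k$-stratum with $k \ge 2$ has $a+b = d$ odd, so by Lemmas~\ref{l:12.5}--\ref{l:12.6} the Euler classes $e_{a,b}$ vanish identically; in particular, the cohomology $U^\pm \s e_{a,b}\P(a,b)$ of every $\A_{2r'+1}$ stratum with $r'$ odd is zero, so those columns are empty. Using the explicit formulas computed in Section~\ref{s:diff} and the following sections, one checks column by column that the image of the incoming differential equals the kernel of the outgoing one: for $k$ even, $d^{2k,*}_1$ surjects onto the $U^++U^-$ subspace of column $2k+1$, which is exactly the kernel of $d^{2k+1,*}_1$; while for $k$ odd, the neighboring columns $\A_{2k-1}$ and $\A_{2k+1}$ are already trivial, killing column $\A_{2k}$ from both sides.

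Finally, the last column $2r$ furnishes the new contribution. Since $r$ is even, $r-1$ is odd, and by the vanishing just noted, column $2r-1$ is trivial; hence $\im d^{2r-1,*}_1 = 0$, and in the truncated spectral sequence there is no outgoing differential out of column $2r$. Therefore every class in $E_1^{2r,*} = \bigoplus_{a<b} U_{a,b,2r} \s \P(a,b)$ survives, contributing exactly $t^{d+2r}\sum_{a=0}^{d'-1} \mathfrak{p}_t(a,d-a)$, which is the new term in the statement. The higher differentials $d^{*,*}_t$ for $t \ge 2$ vanish as in the $d=4s$ case, since $E_2^{i,*}$ is now nonzero only for $i \in \{0,1,2r\}$, leaving no room for nontrivial higher maps. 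The main technical obstacle is the column-by-column bookkeeping in the intermediate columns: one has to carefully match signs and indices between the various differentials so that kernel and image agree exactly, rather than merely up to $2$-torsion, and in particular to track how the general formulas specialize in the regime where $d$ is odd and $e_{a,b}=0$ throughout the strata with $k \ge 2$.
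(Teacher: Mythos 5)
Your overall strategy coincides with the paper's: read off the surviving generators of the (truncated) Kazarian spectral sequence column by column, and the decisive new ingredient is handled correctly. Since $r-1$ is odd and $d$ is odd, every Euler class $e_{a,b}$ with $a+b=d$ vanishes, so $E_1^{2r-1,*}=0$ by Lemma~\ref{l:12.6}; in the spectral sequence truncated at column $2r$ there is then neither an incoming image nor an outgoing differential for column $2r$, so all of $E_1^{2r,*}=\bigoplus_{a=0}^{d'-1}U_{a,d-a,2r}\s\P(a,d-a)$ survives and produces the new term $t^{d+2r}\sum_{a=0}^{d'-1}\mathfrak{p}_t(a,d-a)$. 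The inherited terms from columns $0$ and $1$ also match the preceding theorem for $\mathbf{A_{2(r-1)+1}}$.

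There is, however, a genuine error in your treatment of the intermediate columns. For $k$ odd you claim that both neighbours $\A_{2k-1}$ and $\A_{2k+1}$ of the column $\A_{2k}$ are trivial and that this ``kills'' $\A_{2k}$ from both sides. First, $\A_{2k-1}=\A_{2(k-1)+1}$ has $r'=k-1$ \emph{even}, so by Lemma~\ref{l:12.6} its $E_1$-term is $U^{\pm}_{a,b,2k-1}\s\P(a,b)$, which is nonzero. Second, the logic is inverted: if both neighbouring columns really were trivial, column $\A_{2k}$ would have no incoming image and no outgoing differential and would therefore \emph{survive} to $E_\infty$ --- exactly the mechanism you correctly invoke for column $2r$ --- contributing spurious terms $t^{d+2k}\sum_a\mathfrak{p}_t(a,d-a)$ for every odd $k<r$. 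The correct reason $E_2^{2k,*}=0$ for $k$ odd is that the incoming differential from the nontrivial column $\A_{2k-1}$ is surjective: by the computation of $d_1^{2r'+1,*}$ with $r'=k-1$ even, it sends $U^{+}_{a,b,2k-1}\s p-U^{-}_{a,b,2k-1}\s p$ onto $2U_{a,b,2k}\s p$ and so covers all of $E_1^{2k,*}$ (for $k=1$ the surjectivity of $d_1^{1,*}$ onto the cusp column plays this role). Finally, note that your dismissal of higher differentials ``for lack of room'' is not quite accurate either: $E_2$ of the truncated sequence is concentrated in columns $0$, $1$ and $2r$, so a differential $d_{2r-1}^{1,*}$ into column $2r$ is a priori possible; it vanishes because the surviving classes in column $1$ are restrictions of permanent cycles of the untruncated spectral sequence, not for positional reasons.
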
  
\begin{proof} The cohomology algebra of $\mathbf{A_{2r+1}}$ for $r$ odd is generated by $\ker d_1^{0,*}$; the cohomology classes $\sigma_Q$, $I_{Q,a}$,  and $\tau_{d'-s,p},...,\tau_{d',p}$; and $U_{a, d-a,2r}\s p$ for each $a=0, ..., d'-1$ and $p\in \P(a, d-a)$. In the case $d=4s+3$ there are also classes $I_Q$. 
\end{proof}

\begin{theorem} The rank series of the cohomology ring of $\mathbf{A_{2r+1}}$ for $r$ even is
\[
   \mathfrak{b}_t(d)+ t^{d+1}\mathfrak{a}_t(d',d')+   t^{2d+2}\sum_{a=0}^{d'-1/2}\mathfrak{p}_t(2a,d+1-2a) +
\]
\[
+t^{d+1}\sum_{j=0}^{s}t^{4(d'-j)}\mathfrak{p}(2j,d+1-j)
   +t^{d+2r+1}\sum_{a=0}^{d'-1}\mathfrak{p}_t(a, d-a).
\]
In the case $d=4s+3$ there is an additional term $t^{2d+2}\mathfrak{s}(d',d')]$.
\end{theorem}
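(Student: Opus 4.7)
The plan is to read off the generators of $H^*(\mathbf{A_{2r+1}})$ from the $E_\infty$-page of the Kazarian spectral sequence, following exactly the pattern of the proofs of the preceding theorems in this section. The spectral sequence for $\mathbf{A_{2r+1}}$ is a truncation in columns $0,1,\dots,2r+1$ of the one converging to $H^*(\mathbf{A_\infty})$; since the $d_1$-differentials are intrinsic to stratum adjacencies and do not depend on the truncation, the $E_2$-terms at columns $i$ with $2\le i\le 2r$ coincide with those of the $\mathbf{A_\infty}$ sequence and therefore vanish. Thus only columns $0$, $1$, and $2r+1$ can contribute non-trivially.

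First I would identify the column-$0$ contribution as $\ker d_1^{0,*}=\Q[p_1,\dots,p_{\lfloor d/2\rfloor}]$ (Theorem~\ref{th:9.7}), giving the summand $\mathfrak{b}_t(d)$. The column-$1$ contribution coincides with the one already identified in the preceding theorems of this section, since $d_1^{0,*}$ and $d_1^{1,*}$ do not depend on $r$; chasing the tables of section~16 with the formulas for $d_1^{1,*}$ from section~18 produces the classes $\sigma_Q$ with $Q\in\AP(d',d')$, the Euler-class invariants $I_{Q,2a}$ at even-index fold strata, the chain classes $\tau_{j,p}$, and in the $d=4s+3$ case the additional $I_Q$ classes with $Q\in\SP(d',d')$. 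These account for the four middle summands of the stated rank series, including the extra $t^{2d+2}\mathfrak{s}(d',d')$ that appears only when $d=4s+3$.

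The key new step is the column-$(2r+1)$ contribution. Since $d$ is odd we always have $a<b$ on an $\A_{2r+1}$-stratum, so $a$ ranges over $0,1,\dots,d'-1$. Lemma~\ref{l:12.6}, applied with $r$ even, identifies $E_1^{2r+1,*}$ at the stratum $(a,b)$ with $U^+_{a,b,2r+1}\s\P(a,b)\oplus U^-_{a,b,2r+1}\s\P(a,b)$. The formula from section~19 (case $r+1$ odd, $d$ odd so $e=0$) shows that $d_1^{2r,*}$ sends $U_{a,b,2r}\s p\mapsto 2(U^+_{a,b,2r+1}+U^-_{a,b,2r+1})\s p$, so its image is the diagonal $(U^++U^-)\s\P(a,b)$ and a natural complement is spanned by the classes $U^+_{a,b,2r+1}\s p-U^-_{a,b,2r+1}\s p$ for $p\in\P(a,b)$. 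Since there is no $d_1^{2r+1,*}$ to apply in the truncated spectral sequence, these classes survive to $E_\infty$ and contribute the final summand $t^{d+2r+1}\sum_{a=0}^{d'-1}\mathfrak{p}_t(a,d-a)$.

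The hard part will be to rule out higher differentials $d_t$ with $t\ge 2$ landing in column $2r+1$. My plan is to handle this by comparison with the $\mathbf{A_{2r}}$ case of the immediately preceding theorem: the parallel complement at column $2r$ there already realizes the predicted rank, and compatibility with the inclusion of spectra $\mathbf{A_{2r}}\hookrightarrow\mathbf{A_{2r+1}}$ (which on $E_1$ is simply the inclusion in columns $\le 2r$) forces the analogous classes for $\mathbf{A_{2r+1}}$ to be permanent cycles, so no higher differential can hit them.
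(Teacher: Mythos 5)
Your identification of the generators is exactly the paper's proof: column $0$ gives $\ker d_1^{0,*}=\mathfrak{b}_t(d)$ by Theorem~\ref{th:9.7}; column $1$ gives the classes $\sigma_Q$, $I_{Q,a}$, $\tau_{j,p}$ (and $I_Q$ when $d=4s+3$); and in column $2r+1$, since $r$ is even Lemma~\ref{l:12.6} gives $E_1^{2r+1,*}=U^+_{a,b,2r+1}\s\P(a,b)\oplus U^-_{a,b,2r+1}\s\P(a,b)$ with $a=0,\dots,d'-1$, the image of $d_1^{2r,*}$ is the diagonal $2(U^++U^-)\s\P(a,b)$, and the differences $U^+_{a,b,2r+1}\s p-U^-_{a,b,2r+1}\s p$ represent $E_2^{2r+1,*}$, yielding the last summand. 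This is precisely how the paper reads off the answer.

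The one step that would fail as written is your proposed mechanism for killing the higher differentials $d_t$ with $t\ge 2$ landing in column $2r+1$. Comparison with $\mathbf{A_{2r}}$ gives no information here: the map of spectral sequences induced by $\mathbf{A_{2r}}\hookrightarrow\mathbf{A_{2r+1}}$ annihilates column $2r+1$ (that column is absent from the $\mathbf{A_{2r}}$ sequence), so the commutative square relating $d_{2r}^{1,*}$ for the two spectra reads $0=0$ and constrains nothing. Moreover, the issue is not whether the classes $U^+-U^-$ are permanent cycles (they sit in the last column, so trivially are), but whether they are \emph{boundaries} of higher differentials originating in columns $0$ and $1$. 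The correct comparison is in the other direction: restrict from $\mathbf{A_\infty}$ (or any $\mathbf{A_m}$ with $m\ge 2r+2$), where $E_2^{2r+1,*}=\ker d_1^{2r+1,*}/\im d_1^{2r,*}=0$, and use that the restriction map is an isomorphism on columns $0$ and $1$ at $E_2$; naturality then forces $d_{2r+1}^{0,*}$ and $d_{2r}^{1,*}$ to vanish in the truncated sequence. Equivalently, and this is how the paper disposes of the point, every class surviving to $E_2$ in columns $0$ and $1$ extends to an actual cohomology class of $\mathbf{A_m}$ for all $m$ (for column $0$ this is stated explicitly in the proof of Theorem~\ref{th:9.7}; for column $1$ it is the assertion that $\sigma_Q$, $I_{Q,a}$, $\tau_{j,p}$ are invariants of $\A_r$-maps for every $r$), so all differentials out of these columns vanish and nothing can hit column $2r+1$.
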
  
\begin{proof} The cohomology algebra of $\mathbf{A_{2r+1}}$ for $r$ odd is generated by $\ker d_1^{0,*}$; the cohomology classes $\sigma_Q$, $I_{Q,a}$,  and $\tau_{d'-s,p},...,\tau_{d',p}$; and $U^+_{a,d-a, 2r+1}\s p-U^-_{a,d-a,2r+1}\s p$ for each $a=0,..., d'-1$ and $p\in \P(a,d-a)$.  In the case $d=4s+3$ there are also classes $I_Q$.
\end{proof}  

\begin{theorem}\label{th:20.6} The rank series of the cohomology ring of $\mathbf{A_{\infty}}$ is
\[
   \mathfrak{b}_t(d)+ t^{d+1}\mathfrak{a}_t(d',d')+
\]
\[
   + t^{2d+2}\sum_{a=0}^{d'-1/2}\mathfrak{p}_t(2a,d+1-2a)+t^{d+1}\sum_{j=0}^{s}t^{4(d'-j)}\mathfrak{p}(2j,d+1-j)
\]
In the case $d=4s+3$ there is an additional term $t^{2d+2}\mathfrak{s}(d',d')$.
\end{theorem}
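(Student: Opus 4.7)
The plan is to read off $H^*(\mathbf{A}_{\infty})$ from the $E_\infty$-page of the Kazarian spectral sequence whose $E_1$-page was assembled in Lemma~\ref{l:12.1} and tabulated (for $d=4s+1$ and $d=4s+3$) in the tables of \S\ref{s:diff}, using the differentials computed in the subsequent sections. Since the higher differentials $d_k$ with $k\ge 2$ vanish (as noted at the start of \S\ref{s:diff}, after factoring out $d_1$ the page is concentrated in columns $r\le 1$), the task reduces to computing $\ker d_1/\im d_1$ column by column and matching each surviving family to one of the four summands of the asserted rank series.

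I would organize the surviving classes into four families. First, by Theorem~\ref{th:9.7} for $d$ odd the kernel of $d_1^{0,*}$ is the full polynomial algebra $\Q[p_1,\dots,p_{\lfloor d/2\rfloor}]$, contributing $\mathfrak{b}_t(d)$. Second, the top row of the $\A_1$-column contains $U\s\AP(d',d')$ (and, when $d=4s+3$, the additional piece $U\s e\,\SP(d',d')$); these classes live on the $\A_1(d',d')$ stratum, whose closure meets no $\A_2$-stratum (no $\A_2(a,b)$ satisfies $a+b=d+1$), so they survive automatically, contributing $t^{d+1}\mathfrak{a}_t(d',d')$ and, in the $d=4s+3$ case, $t^{2d+2}\mathfrak{s}_t(d',d')$. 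Third, the $\tau_{j,p}$-type classes $\sum_a(-1)^a U_{a,d+1-a,1}\s p$ with $p$ divisible by $p'_j$ lie in $\ker d_1^{1,*}/\im d_1^{0,*}$ by the same linear-algebra argument used in the $d=4s$ case, contributing $t^{d+1}\sum_{j=0}^{s}t^{4(d'-j)}\mathfrak{p}_t(2j,d+1-2j)$. Fourth, the Euler-twisted classes $U_{2a,d+1-2a,1}\s e\,Q$ lie in the kernel of $d_1^{1,*}$ (its $e$-part vanishes, cf.\ \S\ref{ss:18.2}); these correspond geometrically to the invariants $I'_{Q,2a}$ of \S\ref{s:interpretation} and contribute $t^{2d+2}\sum_{a}\mathfrak{p}_t(2a,d+1-2a)$.

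The main obstacle will be confirming that \emph{no} class from columns $E_1^{r,*}$ with $r\ge 2$ survives to $E_\infty$. The differentials $d_1^{2r,*}$ and $d_1^{2r+1,*}$ act, in the relevant summands, as multiplication by $2$ between adjacent $\A_{2r}$- and $\A_{2r+1}$-columns; with rational coefficients this is nonzero, so every class $U_{a,b,2r}\s p$ or $U^{\pm}_{a,b,2r+1}\s p$ is either hit by the previous differential or pushed forward nontrivially, \emph{provided} the next column exists. The key point is that in $\mathbf{A}_{\infty}$ every column has a successor, so all such classes pair up; this is exactly what fails in the finite truncations and explains the disappearance of the extra terms $t^{d+2r}\sum_a\mathfrak{p}_t(a,d-a)$ appearing in the preceding theorems when one passes to the colimit. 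The only exceptions are the symmetric/antisymmetric summands already accounted for in the second family.

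Summing the four families yields the announced rank series, with the additional $t^{2d+2}\mathfrak{s}_t(d',d')$ summand appearing precisely in the case $d=4s+3$ where the Euler class $e_{d',d'}$ is non-trivial on $B[\SO_{d'}\times \SO_{d'}]$. The verification is routine once the four families are identified and the pairing argument of the previous paragraph is in place.
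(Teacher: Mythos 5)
Your overall strategy is the same as the paper's: read off the surviving generators from the $E_2=E_\infty$ page of the Kazarian spectral sequence, match them to the four summands of the rank series, and argue that nothing from the columns $r\ge 2$ survives in the colimit because the multiplication-by-$2$ differentials pair adjacent columns off (this is indeed why the extra terms $t^{d+2r}\sum_a\mathfrak{p}_t(a,d-a)$ of the finite truncations disappear). Your first, third and fourth families agree with the paper's generators $\ker d_1^{0,*}$, $\tau_{j,p}$ and $I_{Q,a}$, with essentially the paper's justifications.

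There is, however, a genuine error in your second family. You claim that the classes $U_{d',d',1}\s Q$ with $Q\in \AP(d',d')$ survive ``automatically'' because the closure of the $\A_1(d',d')$ stratum meets no $\A_2$-stratum, on the grounds that no $\A_2(a,b)$ has $a+b=d+1$. But $\A_2$-strata satisfy $a+b=d$ by definition, and the adjacency lowers the total by one: $\A_1(d',d')$ is bounded by $\A_2(d'-1,d')$. The paper's computation of $d_1^{1,*}$ for $d$ odd and $a=d'$ gives $d_1^{1,*}(U_{d',d',1}\s p)=U_{d'-1,d',2}\s p$ for $p\in\AP(d',d')$, which is \emph{nonzero}; only the $e\,\SP$ part (present when $d=4s+3$, giving the classes $I_Q$) is annihilated. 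Consequently the classes contributing $t^{d+1}\mathfrak{a}_t(d',d')$ are not the single terms $U_{d',d',1}\s Q$ but the alternating sums $\sigma_Q=\sum_{a=0}^{d'}(-1)^aU_{a,d+1-a,1}\s Q$, whose components cancel pairwise in each $\A_2$-stratum; identifying them requires the same kernel computation you invoke for the $\tau_{j,p}$, not an adjacency-vanishing argument. The final rank count happens to be unaffected (the family still has rank series $\mathfrak{a}_t(d',d')$ in degree $d+1$), but as written this step rests on a false premise and would collapse if one actually evaluated the differential on your proposed generators.
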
  
\begin{proof} The cohomology algebra of $\mathbf{A_{2r}}$ for $r$ odd is generated by $\ker d_1^{0,*}$; the cohomology classes $\sigma_Q$, $I_{Q,a}$,  and $\tau_{d'-s,p},...,\tau_{d',p}$. In the case $d=4s+3$ there are also classes $I_Q$.
\end{proof}

\section{The cohomology group of $\mathbf{A_r}$ for $d=4s+2$}

\begin{theorem} The rank series of the cohomology ring of $\mathbf{A_1}$ is
\[
   t^{d+1}\mathfrak{p}_t(0, d+1)]+t^{d+1}\sum_{a=1}^{d/2}\mathfrak{p}_t(a, d+1-a).
\]
\end{theorem}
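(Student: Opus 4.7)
The plan is to apply the wedge decomposition of $\mathbf{A_1}$ from Remark~\ref{r:17.3}, valid for any even $d$ and hence for $d = 4s+2$:
\[
\mathbf{A_1} \;=\; [\mathbf{A_1}(0, d+1)] \;\vee\; [\mathbf{A_1}(1, d)/\mathbf{A_0}] \;\vee\; \cdots \;\vee\; [\mathbf{A_1}(d/2, d/2+1)/\mathbf{A_0}].
\]
Thus $H^*(\mathbf{A_1})$ is an additive direct sum of the cohomologies of the individual wedge summands, and its rank series is the sum of the corresponding rank series. This is exactly the strategy that established Theorem~\ref{th:4s}; only the parity bookkeeping in each summand changes.

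For each $a = 1, \dots, d/2$, the summand $\mathbf{A_1}(a, d+1-a)/\mathbf{A_0}$ is stably the Thom space $\Th\,\xi_{a, d+1-a}$. Since $a + (d+1-a) = d+1 = 4s+3$ is odd, one of $a$ or $b = d+1-a$ is odd, so by Lemma~\ref{l:10} the Euler class $e_{a,b}$ is trivial and $\tilde H^*(\Th\,\xi_{a, b}) = U_{a, b, 1}\s \P(a, b)$, with Thom class $U_{a, b, 1}$ of degree $d+1$. The contribution to the rank series is $t^{d+1}\mathfrak{p}_t(a, d+1-a)$, which sums to the second term in the formula.

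For the remaining summand $\mathbf{A_1}(0, d+1)$, I would exploit the cofibration sequence $\mathbf{A_0} \to \mathbf{A_1}(0, d+1) \to \Th\,\xi_{0, d+1}$. The connecting homomorphism is the $a = 0$ component of the differential $d_1^{0, *}$ computed in Theorem~\ref{th:9.7}, given explicitly by formula (\ref{eq:star}): $\delta(p_i) = 0$ and $\delta(e \s p_i) = \Th(p_i)$. Feeding this into the long exact sequence identifies $H^*(\mathbf{A_1}(0, d+1))$ in terms of $\ker\delta$ and $\mathrm{coker}\,\delta$, yielding the first piece of the claimed formula. The main technical point, exactly as in the $d = 4s$ case, is confirming that the image of $\delta$ exhausts $\tilde H^*(\Th\,\xi_{0, d+1})$ while the surviving classes in $H^*(\BSO_d)$ produce precisely the expected generators. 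Since $d+1$ is odd and (\ref{eq:star}) is insensitive to whether $d\equiv 0$ or $d\equiv 2\pmod 4$, the argument from Theorem~\ref{th:4s} transports verbatim, completing the computation.
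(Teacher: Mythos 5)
Your route is the same as the paper's: the printed proof consists precisely of invoking the wedge splitting of Remark~\ref{r:17.3}, and the details you supply are the right ones. In particular, for $a=1,\dots,d/2$ the identification of $\mathbf{A_1}(a,d+1-a)/\mathbf{A_0}$ with $\Th\xi_{a,d+1-a}$, together with the fact that $a+b=d+1$ is odd (so $a\ne b$ and $e_{a,b}=0$), gives $U_{a,b,1}\s\P(a,b)$ by Lemma~\ref{l:10} and hence the contribution $t^{d+1}\mathfrak{p}_t(a,d+1-a)$. That part is correct.

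One substantive point: your computation of the $\mathbf{A_1}(0,d+1)$ summand does not produce the first term as printed, and you should not claim that it does. By (\ref{eq:star}) the connecting map kills $\Q[p_1,\dots,p_{d/2}]\subset H^*(\BSO_d)$ and sends $e\s\Q[p_1,\dots,p_{d/2}]$ isomorphically onto $\tilde H^*(\Th\xi_{0,d+1})=U_{0,d+1,1}\s\Q[p'_1,\dots,p'_{d/2}]$ (there is no Euler class upstairs since $d+1$ is odd). Hence $\mathrm{coker}\,\delta=0$ and $H^*(\mathbf{A_1}(0,d+1))=\ker\delta=\Q[p_1,\dots,p_{d/2}]$, whose rank series is $\mathfrak{p}_t(0,d+1)=\mathfrak{b}_t(d)$ with no factor of $t^{d+1}$ --- exactly as in Theorem~\ref{th:4s}, consistent with your remark that the argument ``transports verbatim.'' The first term $t^{d+1}\mathfrak{p}_t(0,d+1)$ in the displayed statement (note also the stray bracket) is a misprint: it would force $H^0(\mathbf{A_1})=0$, whereas $H^0(\mathbf{A_1})\cong H^0(\mathbf{A_0})=H^0(\BSO_d)=\Q$ because $H^i(\mathbf{A_1},\mathbf{A_0})=0$ for $i\le d$. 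So your argument is sound but establishes the corrected series $\mathfrak{p}_t(0,d+1)+t^{d+1}\sum_{a=1}^{d/2}\mathfrak{p}_t(a,d+1-a)$; asserting that the $a=0$ piece ``yields the first piece of the claimed formula'' as literally printed is the one step that is wrong.
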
  
\begin{proof}  By Remark~\ref{r:17.3}, for $d$ even the spectrum $\mathbf{A_1}$ splits as
\[
     \mathbf{A_1}=[\mathbf{A_1(0,d+1)}]\vee [\mathbf{A_1(1,d)/A_0}] \vee \cdots \vee [\mathbf{A_1(d/2,d/2+1)/A_0}].
\] 
\end{proof}

\begin{theorem} The rank series of the cohomology ring of $\mathbf{A_{2r}}$ for $r$ odd is
\[
    \mathfrak{b}_t(d)+ t^{d+1}\sum_{j=0}^{s}t^{4(2s+1-j)}\mathfrak{p}(2j+1,d-2j) +
    t^{2d+2r}\sum_{i=0}^{s} \mathfrak{p}_t(2i,d-2i).
\]
\end{theorem}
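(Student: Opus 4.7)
The plan is to list the algebra generators of $H^*(\mathbf{A_{2r}})$ in the same style as the proof of the corresponding theorem for $d=4s$ with $r$ odd. Since $E_2^{i,*}=0$ for $i>1$ in the Kazarian spectral sequence, all higher differentials vanish and $E_2=E_\infty$, so it suffices to identify classes surviving in $\ker d_1^{i,*}/\im d_1^{i-1,*}$ for $i=0,1,2r$.

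I will identify three families of generators. The first is $\ker d_1^{0,*}=\Q[p_1,\dots,p_{2s+1}]$ by the even-dimensional case of Theorem~\ref{th:9.7}, contributing $\mathfrak{b}_t(d)$. The second is the family of $\tau$-classes surviving in $\ker d_1^{1,*}/\im d_1^{0,*}$: indexed by $j=s+1,\dots,2s+1$ with polynomial factor in $p'_j\mathcal{P}$ of the appropriate form, these yield the middle summand $t^{d+1}\sum_{k=0}^{s}t^{4(2s+1-k)}\mathfrak{p}_t(2k+1,d-2k)$ under the reindexing $k=2s+1-j$. Their construction follows the template of the $d=4s$ case, using the $d=4s+2$ versions of the differential formulas in the section on $d_1^{1,*}$ together with Table~\ref{t:13}. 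The third family consists of the classes $U_{2i,d-2i,2r}\s p$ with $i=0,\dots,s$ and $p\in\mathcal{P}(2i,d-2i)$, coming from the even-index $\A_{2r}(2i,d-2i)$ strata; these lie in $\ker d_1^{2r,*}$ by the $r$-odd formula of the section on $d_1^{2r,*}$ (case $a\neq b$), and contribute the final summand.

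The main subtlety will be ruling out extra generators from the $\A_{2r}(2s+1,2s+1)$ stratum, which is the $a=b$ case with common value odd. By Lemma~\ref{l:12.5} this stratum contributes $U\s\mathcal{AP}(2s+1,2s+1)$ with no $e$-multiplied piece, since the Euler class $e_{a,b}$ vanishes rationally when $a$ or $b$ is odd. By the relevant parity case of the differential formula in the section on $d_1^{2r-1,*}$, for our $r$ odd we obtain $d_1^{2r-1,*}(U_{2s+1,2s+1,2r-1}\s p)=2U_{2s+1,2s+1,2r}\s p$ for $p\in\mathcal{AP}$, which is nontrivial rationally and therefore hits the entire antisymmetric piece. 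This explains the absence of a separate $\mathfrak{a}_t(2s+1,2s+1)$ summand in the stated rank series, in contrast with the $d=4s$ case, and is why the sum over $i$ runs all the way to $i=s$ without a correction term.
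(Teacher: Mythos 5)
Your first two families are right and match the paper's proof: $\ker d_1^{0,*}$ contributes $\mathfrak{b}_t(d)$, and the classes $\tau_{2s+1-j,p}$, $j=0,\dots,s$, with $p\in p'_{2s+1-j}\P(2j+1,d-2j)$ contribute the middle summand. The third family, however, is wrong. The generators surviving in the top column are $U_{2i,d-2i,2r}\s e\,p$ (with the Euler class $e=e_{2i,d-2i}$ of the degree-$d$ kernel bundle), not $U_{2i,d-2i,2r}\s p$. A first sanity check is the degree: $U_{2i,d-2i,2r}\s p$ starts in degree $d+2r$, whereas the stated summand is $t^{2d+2r}\sum_i\mathfrak{p}_t(2i,d-2i)$; the missing $t^{d}$ is exactly the Euler class.

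The underlying conceptual error is that you are computing $\ker d_1^{2r,*}$, but for the truncated spectrum $\mathbf{A_{2r}}$ the column $E_1^{2r,*}$ is the top of the filtration and carries no outgoing differential; the surviving group is $E_2^{2r,*}=\mathrm{coker}\,d_1^{2r-1,*}$. Since $r-1$ is even, $E_1^{2r-1,*}$ for $a\ne b$ is $U^{+}_{a,b,2r-1}\s\P(a,b)\oplus U^{-}_{a,b,2r-1}\s\P(a,b)$ and $d_1^{2r-1,*}$ sends $U^{+}\s p-U^{-}\s p$ to $2U_{a,b,2r}\s p$; hence the classes $U_{a,b,2r}\s p$, $p\in\P(a,b)$ --- precisely the ones you keep --- all lie in the image and die in $E_2$. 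What is \emph{not} hit is the Euler-class part $U_{2i,d-2i,2r}\s e\,\P(2i,d-2i)$ for even indices $a=2i$ (it is absent from $E_1^{2r-1,*}$ when $r-1$ is even, by Lemma~\ref{l:12.6}), and this is the paper's third family. Your discussion of the $a=b=2s+1$ stratum is correct as far as it goes, but the same mechanism that kills its $\AP$ part also kills the non-Euler part of every $a\ne b$ stratum, which your argument overlooks.
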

\begin{proof} The cohomology algebra of $\mathbf{A_{2r}}$ is generated by $\ker d_1^{0,*}$, which contributes the polynomial $\mathfrak{b}_t(d)$; the classes $\tau_{2s-j+1,p}$ for $j=0, ...s$, which contributes the second term; the classes $U_{2i, d-2i, 2r}\s ep$ for each $i=0,..., s$ and $p\in \mathcal{P}(2i,d-2i)$. 
\end{proof}

\begin{theorem}  The rank series of the cohomology ring of $\mathbf{A_{2r+1}}$ for $r$ odd is
\[
    \mathfrak{b}_t(d)+ t^{d+1}\sum_{j=0}^{s}t^{4(2s+1-j)}\mathfrak{p}(2j+1,d-2j)+ t^{2d+2r+1}\sum_{i=0}^{s}\mathfrak{p}_t(2i,d-2i).
\]
\end{theorem}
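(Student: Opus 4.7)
The plan is to read off the rank series directly from the $E_2 = E_\infty$ page of the Kazarian spectral sequence, whose differentials $d^{k,*}_1$ have already been computed column by column in the preceding sections. Since the higher differentials $d^{*,*}_t$ for $t > 1$ vanish (as their targets lie in columns already exhausted by the $d_1$ analysis), it suffices to identify the surviving classes in each column of $E_2$ for $\mathbf{A_{2r+1}}$ with $r$ odd and $d = 4s+2$, and to sum their contributions to the rank series.

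First, the column $E_2^{0,*} = \ker d^{0,*}_1$ contributes $\mathfrak{b}_t(d)$ by Theorem~\ref{th:9.7}. Second, the column $E_2^{1,*} = \ker d^{1,*}_1 / \im d^{0,*}_1$ is generated by the classes $\tau_{2s-j+1,p}$ for $j = 0, \dots, s$, defined in direct analogy with the $d = 4s$ case, contributing the middle sum $t^{d+1}\sum_{j=0}^{s}t^{4(2s+1-j)}\mathfrak{p}_t(2j+1, d-2j)$. This step is identical to the corresponding step in the preceding theorem for $\mathbf{A_{2r}}$ with $r$ odd and $d = 4s+2$, since the differentials $d^{0,*}_1$ and $d^{1,*}_1$ depend only on the dimension $d$ and not on $r$.

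For the column $E_2^{2r+1,*} = \ker d^{2r+1,*}_1 / \im d^{2r,*}_1$, I would invoke the explicit formulas derived earlier for $r$ odd: the differential $d^{2r,*}_1$ maps $U_{a,b,2r}\s ep$ to $2(U^+_{a,b,2r+1} + U^-_{a,b,2r+1})\s ep$ and vanishes on $U_{a,b,2r}\s p$, while $d^{2r+1,*}_1$ vanishes on $U^+\s ep + U^-\s ep$ and sends $U^+\s ep - U^-\s ep$ to $2U_{a,b,2r+2}\s ep$. Taking the quotient shows that the surviving generators in this column are the classes $U^+_{2i,d-2i,2r+1}\s ep - U^-_{2i,d-2i,2r+1}\s ep$ for $i = 0, \dots, s$ and $p \in \mathcal{P}(2i, d-2i)$; these have total degree $(d+2r+1) + (d + \deg p) = 2d + 2r + 1 + \deg p$, yielding the third contribution $t^{2d+2r+1}\sum_{i=0}^{s}\mathfrak{p}_t(2i, d-2i)$.

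The main obstacle will be the bookkeeping to verify that the intermediate columns $E_2^{k,*}$ for $2 \le k \le 2r$ all vanish, as well as the handling of the middle stratum $a = b = 2s+1$, where $\A^+_{2r+1}(a,b) = \A^-_{2r+1}(a,b)$ and the cohomology of the Thom space is expressed through $\mathcal{AP}$ (or $\mathcal{SP}$) rather than the full $\mathcal{P}$. In each intermediate column, the formulas for $d^{2k,*}_1$ and $d^{2k-1,*}_1$ force the $U\s p$ summands to lie in the image from below while the $U\s ep$ summands lie outside the kernel of the next differential, and the middle-stratum cases were already treated systematically in the earlier differential computations; the same arguments carry over here to yield the claimed rank series.
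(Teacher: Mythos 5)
Your overall strategy is exactly the paper's: read the rank series off the $E_2=E_\infty$ page of the Kazarian spectral sequence for the truncated filtration $\mathbf{A_0}\subset\cdots\subset\mathbf{A_{2r+1}}$, with column $0$ giving $\mathfrak{b}_t(d)$, column $1$ giving the classes $\tau_{2s+1-j,p}$, the intermediate columns dying, and the top column giving the classes $U^+_{2i,d-2i,2r+1}\s ep-U^-_{2i,d-2i,2r+1}\s ep$. The paper's proof is nothing more than this list of generators, and your degree count $2d+2r+1+\deg p$ for the top-column classes is correct (the Euler class $e_{2i,d-2i}$ has degree $d$, and only even $a=2i$ contribute since $e_{a,b}=0$ for $a$ odd, which also disposes of the middle stratum $a=b=2s+1$).

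There is, however, one step that as written would fail. You describe the top column as $E_2^{2r+1,*}=\ker d^{2r+1,*}_1/\im d^{2r,*}_1$ and then invoke the computed formulas for $d^{2r+1,*}_1$ ($r$ odd): these say $U^+\s ep+U^-\s ep\mapsto 0$ and $U^+\s ep-U^-\s ep\mapsto 2U_{a,b,2r+2}\s ep$. With that differential, $\ker d^{2r+1,*}_1=\langle U^+\s ep+U^-\s ep\rangle$, which over $\Q$ coincides with $\im d^{2r,*}_1=\langle 2U^+\s ep+2U^-\s ep\rangle$, so the quotient you describe would be \emph{zero}, contradicting the generators you then write down. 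The point you are missing is that the formula for $d^{2r+1,*}_1$ computed earlier is the differential in the spectral sequence of the \emph{full} filtration (its target lives in $H^*(\mathbf{A_{2r+2}}/\mathbf{A_{2r+1}})$); in the spectral sequence of the truncated spectrum $\mathbf{A_{2r+1}}$ the column $2r+2$ is absent, so the outgoing differential from the top column is identically zero and $E_2^{2r+1,*}=E_1^{2r+1,*}/\im d^{2r,*}_1$. That quotient is indeed spanned by the classes $U^+_{2i,d-2i,2r+1}\s ep-U^-_{2i,d-2i,2r+1}\s ep$ (equivalently by $U^+\s ep$ alone), giving the term $t^{2d+2r+1}\sum_{i=0}^{s}\mathfrak{p}_t(2i,d-2i)$. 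This distinction is not pedantry: it is exactly why these invariants exist for $\A_{2r+1}$-maps but die once $\A_{2r+2}$-singularities are allowed, so your writeup should make it explicit rather than appeal to a kernel condition that would annihilate the column.
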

\begin{proof} The cohomology algebra of $\mathbf{A_{2r+1}}$ is generated by $\ker d_1^{0,*}$; the classes $\tau_{2s-j+1,p}$ for $j=0, ...s$; and the cohomology classes 
\[
U^+_{2i, d-2i, 2r+1}\s ep- U^{-}_{2i, d-2i, 2r+1}\s ep
\] 
for each non-negative integer $i\le s$ and $p\in \mathcal{P}(2i, d-2i)$. 
\end{proof}

\begin{theorem}  The rank series of the cohomology ring of $\mathbf{A_{2r}}$ for $r$ even is
\[
    \mathfrak{b}_t(d)+ t^{d+1}\sum_{j=0}^{s}t^{4(2s+1-j)}\mathfrak{p}(2j+1,d-2j)+
    \]
    \[
    +t^{d+2r}\mathfrak{s}_t(2s+1,2s+1)  +t^{d+2r}\sum_{a=0}^{2s} \mathfrak{p}_t(a,d-a).
\]
\end{theorem}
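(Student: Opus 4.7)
The plan is to compute the $E_\infty$-page of the Kazarian spectral sequence for the filtration $\mathbf{A_0}\subset \mathbf{A_1}\subset\cdots\subset \mathbf{A_{2r}}$. By the remark preceding Theorem~\ref{th:4s}, all differentials $d_t$ with $t\ge 2$ vanish, so it suffices to take $\ker/\im$ of $d_1$ column by column; because $r$ is even, any appeal to the formulas for $d^{2k,*}_1$ and $d^{2k+1,*}_1$ established earlier will be made with $r'=r-1$ (odd).

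First I would handle column $\A_0$: Theorem~\ref{th:9.7} immediately gives $E_\infty^{0,*}=\Q[p_1,\ldots,p_{d/2}]$, contributing $\mathfrak{b}_t(d)$. Next I would compute $\ker d^{1,*}_1/\im d^{0,*}_1$ using the formulas for $d^{1,*}_1$ from Section~\ref{s:diff} and the sections that follow (extended to general even $d>0$ as in the remarks there). The surviving generators are the classes $\tau_{2s+1-j,p}$ with $j=0,\ldots,s$ and $p\in p'_{2s+1-j}\P(2j+1,d-2j)$, contributing $t^{d+1}\sum_{j=0}^{s}t^{4(2s+1-j)}\mathfrak{p}_t(2j+1,d-2j)$. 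For the intermediate columns $\A_k$ with $2\le k\le 2r-1$, I would apply the formulas for $d^{k-1,*}_1$ and $d^{k,*}_1$ and verify that $\ker d^{k,*}_1=\im d^{k-1,*}_1$, so these columns contribute nothing to $E_\infty$.

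The decisive computation is column $\A_{2r}$. Here $d^{2r,*}_1$ has target outside the truncated spectrum and vanishes, while the incoming $d^{2r-1,*}_1$ is given by the $r'$-odd formulas with $r'=r-1$. For $a\ne b$ this sends $U^+_{a,b,2r-1}\s ep - U^-_{a,b,2r-1}\s ep \mapsto 2\,U_{a,b,2r}\s ep$, so every $U_{a,b,2r}\s ep$ dies in the quotient while each $U_{a,b,2r}\s p$ with $p\in \P(a,b)$ survives, yielding $t^{d+2r}\sum_{a=0}^{2s}\mathfrak{p}_t(a,d-a)$. For the diagonal case $a=b=2s+1$ the corresponding $\A_{2r-1}$ stratum contributes zero to $E_1$: by Lemma~\ref{l:12.6} with $r'$ odd the cohomology there is $U^{\pm}\s e\P$, which vanishes identically once $a$ is odd. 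Consequently nothing in $U_{2s+1,2s+1,2r}\s\SP(2s+1,2s+1)$ is hit, giving $t^{d+2r}\mathfrak{s}_t(2s+1,2s+1)$.

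The main obstacle I expect is the sign and parity bookkeeping on the diagonal $a=b=2s+1$: verifying, by tracing the deck-transformation computations through the auxiliary covers $\pi_R$, $\pi_P$, $\pi_T$, that the $\SP$-components truly avoid being hit by any chain of earlier differentials rather than surviving only modulo something I have overlooked. As the remark near the end of Section~\ref{s:diff} observes, this is most efficiently handled by chasing the $d=4s+2$ table top-to-bottom and repeatedly invoking $d_1\circ d_1=0$; once this bookkeeping is done, the four contributions combine to give the stated rank series.
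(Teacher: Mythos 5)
Your proposal is correct and follows the same route as the paper: the paper's (very terse) proof simply lists the surviving generators --- $\ker d_1^{0,*}$ contributing $\mathfrak{b}_t(d)$, the classes $\tau_{2s+1-j,p}$ contributing the second term, and the classes $U_{a,d-a,2r}\s p$ for $a\le 2s$, $p\in\P(a,d-a)$ together with $U_{2s+1,2s+1,2r}\s Q$ for $Q\in\SP(2s+1,2s+1)$ contributing the last two terms --- and your column-by-column analysis of $E_2=E_\infty$, including the observation that the incoming $d_1^{2r-1,*}$ (with $r-1$ odd) kills exactly the $e\P$ parts and misses the diagonal $\SP(2s+1,2s+1)$ because the $\A_{2r-1}(2s+1,2s+1)$ stratum contributes $U^\pm\s e\P=0$, is precisely the computation the paper is summarizing.
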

\begin{proof} The cohomology algebra of $\mathbf{A_{2r+1}}$ is generated by $\ker d_1^{0,*}$; the classes $\tau_{2s-j+1,p}$ for $j=0, ...s$;  and the cohomology classes 
$U_{a,d-a,2r}\s p$ for each $a=0,..., 2s$ and $p\in \mathcal{P}(a,d-a)$; and for $a=2s+1$ and $p\in \mathcal{SP}(2s+1,2s+1)$.  

\end{proof}

\begin{theorem}  The rank series of the cohomology ring of $\mathbf{A_{2r+1}}$ for $r$ even is
\[
    \mathfrak{b}_t(d)+ t^{d+1}\sum_{j=0}^{s}t^{4(2s+1-j)}\mathfrak{p}(2j+1,d-2j)+
    \]
    \[
    t^{d+2r+1}\mathfrak{a}_t(2s+1,2s+1)+ t^{d+2r+1}\sum_{a=0}^{2s} \mathfrak{p}_t(a,d-a).
\]
\end{theorem}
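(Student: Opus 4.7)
The plan is to mimic the structure of the preceding theorems in this section, computing $H^*(\mathbf{A_{2r+1}})$ by extracting generators from the Kazarian spectral sequence associated with the filtration $\mathbf{A_0}\subset\mathbf{A_1}\subset\cdots\subset\mathbf{A_{2r+1}}$. The $E_1$ page is described by Lemma~\ref{l:12.1}, with the column contributions for $d=4s+2$ given by Table~\ref{t:13}; by the same inductive argument as in Theorem~\ref{th:9.7} (restricted to the first $2r+2$ columns), only the differentials $d_1^{i,*}$ contribute, so it suffices to assemble the pieces $\ker d_1^{i,*}/\im d_1^{i-1,*}$ for $i=0,\ldots,2r+1$.

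First, I would treat the low columns. For $i=0$, Theorem~\ref{th:9.7} yields $E_\infty^{0,*}=\Q[p_1,\dots,p_{d/2}]$, contributing $\mathfrak{b}_t(d)$. For $i=1$, the same style of argument used in section~\ref{s:diff} (for $d=4s$) applied with the adjacencies and differentials recorded in section on $d^{1,*}_1$ gives the classes $\tau_{2s+1-j,p}$ with $j=0,\dots,s$, contributing $t^{d+1}\sum_{j=0}^{s}t^{4(2s+1-j)}\mathfrak{p}_t(2j+1,d-2j)$. For each middle column $i=2,\dots,2r$, the computations in the sections on $d_1^{2r,*}$ and $d_1^{2r+1,*}$ (applied with the appropriate parity of the subscript rather than of $r$) force $\ker d_1^{i,*}=\im d_1^{i-1,*}$ inside the relevant direct summands, so these columns make no contribution.

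Next, I would handle the top column $i=2r+1$, where no further differential acts. Here the relevant pieces of $E_1^{2r+1,*}$ are $U^{\pm}_{a,d-a,2r+1}\smallsmile\!\mathcal{P}(a,d-a)$ for $a=0,\dots,2s$ (by Lemma~\ref{l:12.6}, since $r$ is even the Euler factor is absent) together with $U_{2s+1,2s+1,2r+1}\smallsmile\!\mathcal{AP}(2s+1,2s+1)$ from the middle index case; the $\A_{2r+1}^-$ copies are identified with the $\A_{2r+1}^+$ copies only when $a=b=2s+1$. According to the computation of $d_1^{2r,*}$ for $r$ even, the image is exactly the subgroup spanned by the symmetric combinations $U^+\!\smallsmile\! p + U^-\!\smallsmile\! p$ (for $a\neq b$) and by $U\!\smallsmile\! p$ with $p\in\mathcal{SP}(2s+1,2s+1)$ (for $a=b=2s+1$), which accounts for the entire image. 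Consequently the quotient $E_\infty^{2r+1,*}$ is generated by the antisymmetric combinations $U^+_{a,d-a,2r+1}\!\smallsmile\! p-U^-_{a,d-a,2r+1}\!\smallsmile\! p$ with $p\in\mathcal{P}(a,d-a)$ for $a=0,\dots,2s$ and by $U_{2s+1,2s+1,2r+1}\!\smallsmile\! p$ with $p\in\mathcal{AP}(2s+1,2s+1)$, contributing exactly $t^{d+2r+1}\sum_{a=0}^{2s}\mathfrak{p}_t(a,d-a)+t^{d+2r+1}\mathfrak{a}_t(2s+1,2s+1)$.

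Finally, the four contributions assemble into the claimed rank series, and the proof is complete. The main technical obstacle I anticipate is keeping the sign and parity conventions consistent in the middle columns: one must verify column by column that the auxiliary covers $\pi_R,\pi_T,\pi_P$ introduced in section~\ref{s:diff} give compatible identifications of Thom classes so that the cancellations $\ker d_1^{i,*}=\im d_1^{i-1,*}$ for $2\le i\le 2r$ hold on the nose, including the diagonal strata where $a=b=2s+1$ and the double-covering arguments are subtler. Once this bookkeeping is in place, the computation is formal.
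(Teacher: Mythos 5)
Your proposal is correct and follows essentially the same route as the paper: the paper's proof simply lists the surviving generators ($\ker d_1^{0,*}$, the classes $\tau_{2s+1-j,p}$ for $j=0,\dots,s$, and the antisymmetric combinations $U^+_{a,d-a,2r+1}\smallsmile p-U^-_{a,d-a,2r+1}\smallsmile p$ together with $U_{2s+1,2s+1,2r+1}\smallsmile p$ for $p\in\AP(2s+1,2s+1)$), which is exactly what your column-by-column analysis of the Kazarian spectral sequence produces. The only quibble is a citation slip: the differential table for $d=4s+2$ is Table~\ref{t:12}, not Table~\ref{t:13} (which treats $d=4s+3$).
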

\begin{proof} The cohomology algebra of $\mathbf{A_{2r+1}}$ is generated by $\ker d_1^{0,*}$; the classes $\tau_{2s-j+1,p}$ for $j=0, ...s$;  and the cohomology classes 
\[
U^+_{a, d-a, 4r+1}\s p- U^-_{a, d-a, 4r+1}\s p
\] 
for each $a=0,..., 2s$ and $p\in \mathcal{P}(a, d-a)$; and for $a=2s+1$ and each $p\in \mathcal{AP}(2s+1,2s+1)$. 

\end{proof}

\begin{theorem}\label{th:4s+2}  The rank series of the cohomology ring of $\mathbf{A_{\infty}}$ is 
\[
    \mathfrak{b}_t(d)+ t^{d+1}\sum_{j=0}^{s}t^{4(2s+1-j)}\mathfrak{p}(2j+1,d-2j).
    \]
\end{theorem}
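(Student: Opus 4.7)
The plan is to run the same argument as for the preceding limit theorems (the case $d=4s$ and Theorem~\ref{th:20.6}), using the Kazarian spectral sequence together with the differential computations carried out in Sections~\ref{s:diff}--21. Since $E_2^{i,\ast}$ stabilizes to zero for $i\ge 2$ in the limit $r\to\infty$ for the same dimensional reasons as in the $d=4s$ case, the spectral sequence collapses at $E_2$, and it suffices to identify $E_\infty^{0,\ast}$ and $E_\infty^{1,\ast}$ together with showing the vanishing of $E_\infty^{r,\ast}$ for $r\ge 2$.

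First, I would handle the zeroth column. By Theorem~\ref{th:9.7} applied to the even dimension $d=4s+2$, one has $E_\infty^{0,\ast}=E_2^{0,\ast}=\ker d_1^{0,\ast}=\Q[p_1,\ldots,p_{2s+1}]$, which contributes the summand $\mathfrak{b}_t(d)$ to the rank series. Next, for the first column, one uses the explicit formula~(\ref{eq:18.18}) and its analogue for $a=2s+1$ (recorded in the remarks of Section~20) to see that, after quotienting by the image of $d_1^{0,\ast}$ computed in Theorem~\ref{th:9.7}, the kernel of $d_1^{1,\ast}$ is freely generated by alternating-sum classes
\[
\tau_{2s+1-j,p}\ =\ \sum_{a=0}^{2j+1}(-1)^{a}\,U_{a,d+1-a,1}\s p,\qquad p\in p'_{2s+1-j}\P(2j+1,d-2j),
\]
for $j=0,\ldots,s$. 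This follows by the same step-by-step analysis as in the $d=4s$ case: one inspects the components $d_{k,k}$ and $d_{k,k-1}$ of $d_1^{1,\ast}$ starting from $k=2s+1$ and descending, and applies Lemma~\ref{l:123} (which is dimension-blind in its proof) to match vector-space dimensions. The resulting contribution to the rank series is
\[
t^{d+1}\sum_{j=0}^{s} t^{4(2s+1-j)}\,\mathfrak{p}_t(2j+1,d-2j),
\]
as $|p'_{2s+1-j}|=4(2s+1-j)$ and the Thom class $U_{\ast,\ast,1}$ shifts by $d+1$.

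Finally, for columns $r\ge 2$ I would show that $E_\infty^{r,\ast}=0$. The entries of $E_1^{r,\ast}$ for $r\ge 2$ are listed in Table~13 (the $d=4s+2$ table). Using the explicit formulas for $d_1^{2r,\ast}$ and $d_1^{2r+1,\ast}$ proven in Sections~19 and~20 (each such differential is, on an appropriate summand, a rational isomorphism given by multiplication by $\pm 2$), every class in $E_1^{r,\ast}$ with $r\ge 2$ either lies in the image of $d_1^{r-1,\ast}$ coming from a stratum of lower codimension or is sent nontrivially by its outgoing $d_1^{r,\ast}$. Passing to the direct limit $r\to\infty$ defining $\mathbf{A_\infty}=\mathbf{A}$, all such classes cancel in pairs and nothing survives in columns $\ge 2$.

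The main obstacle will be the last step: one must carefully track the alternating signs in Lemmas~\ref{l:19.1} and the surrounding computations so that the cancellation across the entire filtration is exhaustive. This bookkeeping is essentially identical to the bookkeeping carried out for the cases $d=4s$ and $d=4s+3$ (cf.\ the proofs of Theorem~\ref{th:4s} and Theorem~\ref{th:20.6}), and the same argument applies verbatim in dimension $4s+2$ once one notes that, in contrast to the odd-dimensional cases, the Euler-class summands are trivial in every column of Table~13 that would otherwise produce $\sigma_Q$ or $I_Q$ type invariants; hence no additional generators appear beyond $\mathfrak{b}_t(d)$ and the $\tau_{j,p}$.
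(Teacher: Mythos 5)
Your proposal is correct and follows essentially the same route as the paper, whose proof is simply the one-line observation that $H^*(\mathbf{A_\infty})$ is generated by $\ker d_1^{0,*}$ (contributing $\mathfrak{b}_t(d)$) together with the classes $\tau_{2s+1-j,p}$, $j=0,\dots,s$; your identification of $E_\infty^{0,*}$ and $E_\infty^{1,*}$ and the vanishing of the higher columns is exactly the content behind that statement. One small correction: the reason the $\sigma_Q$ classes do not survive to $E_2^{1,*}$ for $d=4s+2$ is not the vanishing of Euler-class summands but that they coincide with $\im d_1^{0,*}$ as computed in Theorem~\ref{th:9.7} — which you in fact already use correctly when you quotient by that image in your second paragraph.
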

\begin{proof} The cohomology algebra of $\mathbf{A_{\infty}}$ is generated by $\ker d_1^{0,*}$, and the cohomology classes $\tau_{2s-j+1,p}$ for $j=0, ...s$.
\end{proof}

\begin{table} 
\caption{Linear generators of $\mathrm{ker}(d_1^{1,*})$.}\label{t:17}
\begin{tabular}{|c|c|}
\hline
 $d=4s$ &  $\tau_{2s-j,p}=\sum_{a,b}\ (-1)^a U_{a,b,1}\s p$\\
        &  where \\
        & $p\in p'_{2s-j} \P(2j+1,4s-2j)$, \\
        & $j=0,...,s-1; \ a=0,...,2j+1;\  b=4s+1-a$ \\ \cline{2-2}
        &  $\sigma_Q=\sum_{a,b}\ (-1)^a U_{a,b,1}\s Q$ \\ 
        &  where \\
        &  $Q\in \SP(2s,2s);\ a=0,...,2s; \ b=4s+1-a$ \\
\hline
$d=4s+1$&  $I_{Q,a}= U_{a,b,1}\s Q$ \\
        &  where \\
        &  $Q\in e\P(a,b), \ a=0, 2, 4, ...,2s,\  b=4s+2-a$ \\ \cline{2-2}
        &  $\tau_{2s+1-j,p}=\sum_{a,b}\ (-1)^a U_{a,b,1}\s p$\\
        &  where \\
        & $p\in p'_{2s+1-j} \P(2j, 4s+2-j)$, \\
        &  $j=0,...,s; \ a=0,...,2j;\  b=4s+2-a$ \\ \cline{2-2}
        &  $\sigma_Q=\sum_{a,b}\ (-1)^a U_{a,b,1}\s Q$ \\ 
        &  where \\
        &  $Q\in \AP(2s+1,2s+1),\ a=0,...,2s+1; \ b=4s+2-a$ \\
\hline
 $d=4s+2$ &  $\tau_{2s+1-j,p}=\sum_{a,b}\ (-1)^a U_{a,b,1}\s p$\\
        &  where \\
        & $p\in p'_{2s+1-j} \P(2j+1,4s+2-2j)$, \\
        & $j=0,...,s; \ a=0,...,2j+1;\  b=4s+3-a$ \\ \cline{2-2}
        &  $\sigma_Q=\sum_{a,b}\ (-1)^a U_{a,b,1}\s Q$ \\ 
        &  where \\
        &  $Q\in \SP(2s+1, 2s+1),\ a=0,...,2s+1; \ b=4s+3-a$ \\
\hline
 $d=4s+3$&  $I_{Q,a}= U_{a,b,1}\s Q$ \\
        &  where \\
        &  $Q\in e\P(a,b), \ a=0, 2, 4, ...,2s,\  b=4s+4-a$ \\ \cline{2-2}
        &  $I_{Q}= U_{2s+2,2s+2,1}\s Q$ \\
        &  where \\
        &  $Q\in e\SP(2s+2,2s+2)$ \\ \cline{2-2}
        &  $\tau_{2s+2-j,p}=\sum_{a,b}\ (-1)^a U_{a,b,1}\s p$\\
        &  where \\
        & $p\in p'_{2s-j+2} \P(2j,4s+4-2j)$, \\
        & $j=0,...,s; \ a=0,...,2j;\  b=4s+4-a$ \\ \cline{2-2}
        &  $\sigma_Q=\sum_{a,b}\ (-1)^a U_{a,b,1}\s Q$ \\ 
        &  where \\
        &  $Q\in \AP(2s+2,2s+2), \ a=0,...,2s+2; \ b=4s+4-a$ \\
\hline
\end{tabular}
\end{table}


\addcontentsline{toc}{part}{Cobordism groups of Morin maps}

\section{Rational homotopy theory}\label{s:rational}

Let $X$ be a simply connected space with $H_*(X; \Q)$ of finite type. Then, by the Hopf theorem, $H^*(\Omega X, \Q)$ is a free graded commutative algebra and, by the Cartan-Serre and Milnor-Moore theorems, the Hurewicz homomorphism 
\[
   \pi_*(\Omega X)\otimes \Q \longrightarrow P_*(\Omega X)
\]
is an isomorphism onto the primitive subspace for $H_*(\Omega X)$, e.g., see \cite[Theorem 10.7]{DNF} where the primitive elements are identified with those classes whose Kronecker pairing vanishes with any cohomology class $x$ that decomposes as a product $x=yz$ of cohomology classes in positive degrees.  Furthermore, the space $\Omega X$ has a rational homotopy type of a weak product of spaces $K(\Q, n)$. More precisely, there are rational homotopy equivalences
\[
     K(\Q, 2s+i)\simeq_{\Q} S^{2s+1} \qquad \mathrm{and} \qquad K(\Q, 2s)\simeq_{\Q} \Omega S^{2s+1}. 
\] 
Let $a_j\colon S^{2m_j+1}\to X$ and $b_i\colon S^{2n_i+2}\to X$ be maps representing the homotopy classes in a basis of $\pi_*(X)\otimes \Q$. Then the adjoints of the $b_i$'s and the loops of the $a_j$'s give rise to a continuous map 
\[
\prod K(\Q, 2n_i+1)\times \prod K(\Q, 2m_j) \simeq_\Q    \prod S^{2n_i+1} \times \prod \Omega S^{2m_j+1} \longrightarrow \Omega X, 
\]
from a weak product of infinitely many spaces. This map induces an isomorphism of rational homotopy groups. It also induces an isomorphism of rational homology groups (e.g., see the textbook \cite{FHT}).

\section{Definition of cobordism groups of Morin maps}\label{s:def}

For this section we fix a value for $r=0,1,...,\infty$.

A proper map $F: M\to N\times [0,1]$ is said to be a {\it cobordism} of two proper $\A_r$-maps $f_i: M_i\to N\times \{i\}$, with $i=0,1$, if $\partial M=M_0\sqcup M_1$, the map $F|M_i: M_i\to N\times\{i\}$ coincides with $f_i$ for $i=0,1$, and the restriction of $f$ to collar neighborhoods of $M_0$ and $M_1$ in $M$ can be identified with the disjoint union of suspensions 
\[
    f_0\times \id\colon M_0\times [0, \delta)\longrightarrow N\times [0, \delta) 
\]    
and 
\[
    f_1\times \id\colon M_1\times (1-\delta, 1]\longrightarrow N\times (1-\delta, 1], 
\]    
where $\id$ is the identity map of an appropriate space, and $\delta>0$ is a sufficiently small real number. The classes of cobordant (proper) $\A_r$-maps of dimension $d$ into a fixed manifold $N$ form a semigroup $H^{d}(N;\A_r)$ with respect to the operation which in terms of representatives is given by taking the disjoint union of maps. It turns out that the semigroup $H^{d}(N;\A_r)$ is a group. 

\begin{theorem}\label{th:9.0} For each $d\ge 0$ there exists an infinite loop space $\Omega^{\infty}\mathbf{A_r}=\Omega^{\infty}\mathbf{A_r}(d)$ such that for each manifold $N$, the cobordism group $H^{d}(N;\A_r)$ is isomorphic to the group $[N, \Omega^{\infty-d}\mathbf{A_r}]$ of homotopy classes of maps. 
\end{theorem}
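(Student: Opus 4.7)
\smallskip

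The plan is to establish the isomorphism by a Pontrjagin--Thom style construction, using the spectrum $\mathbf{A_r}$ already built in Section~\ref{s:8} together with an $h$-principle / transversality argument. First I would fix a proper $\A_r$-map $f\colon M\to N$ of dimension $d$ and choose an embedding of $M$ into $N\times\R^t$ for $t$ large so that $f$ is the composition with the projection to $N$; let $\nu$ denote the normal bundle of this embedding. Its dimension is $t-d$ and, because $f$ is an oriented $\A_r$-map, one has a canonical identification $\nu\oplus TM\cong f^*TN\oplus\varepsilon^t$, giving $\nu$ a canonical stable orientation. Collapsing the complement of a tubular neighborhood of $M$ in $N\times\R^t$ in the standard way yields the Pontrjagin--Thom map $c_f\colon S^t\wedge N_+\to\Th\nu$.

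The next step is to refine $c_f$ to land in the $t$-th space $[\mathbf{A_r}]_t=\Th\pi_t^*\xi_t$. At every point $x\in M$, the $\A_r$-map germ of $f$ at $x$, viewed in the normal fibre of $\nu$ at $x$, determines a point in the fibre $S_t|_{b}$ over the classifying point $b\in\BSO_t$ of the normal plane. This defines a continuous map $M\to S_t$ covering the classifying map of $\nu$, and hence a map of Thom spaces $\Th\nu\to\Th\pi_t^*\xi_t=[\mathbf{A_r}]_t$. Composing with $c_f$ and taking adjoints and direct limits produces a classifying map $\Phi(f)\colon N\to\Omega^{\infty-d}\mathbf{A_r}$. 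A cobordism $F\colon W\to N\times[0,1]$ of two $\A_r$-maps gives, by the same construction applied to $W$, a homotopy between $\Phi(f_0)$ and $\Phi(f_1)$, so $\Phi$ descends to a well-defined homomorphism of semigroups $H^{d}(N;\A_r)\to[N,\Omega^{\infty-d}\mathbf{A_r}]$.

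For the inverse, given a continuous map $g\colon N\to\Omega^{\infty-d}\mathbf{A_r}$ I would first replace it (by adjunction) with a map $S^t\wedge N_+\to [\mathbf{A_r}]_t$ for sufficiently large $t$, then homotope it off the basepoint so as to be transverse to the zero section of $\pi_t^*\xi_t$. Transversality produces a closed submanifold $M\subset N\times\R^t$ of codimension $t-d$ equipped with a map $M\to S_t$; tautologically (by the construction of $S_t$) this produces a proper $\A_r$-map $f\colon M\to N$ whose classifying map is homotopic to $g$. Two transverse representatives of homotopic maps differ by a transverse homotopy, which yields a cobordism of $\A_r$-maps; this defines the inverse assignment $[g]\mapsto[f]$ and simultaneously shows $H^{d}(N;\A_r)$ is a group (the inverse of $[f]$ being obtained from the reflection of its classifying map along $[0,1]$).

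The main obstacle will be the transversality / $h$-principle step: one must verify that transversality to the zero section inside the singular space $S_t$ of jets does produce a map with \emph{only} $\A_i$-singularities for $i\le r$, and that transverse homotopies can be realized by cobordisms in the class of $\A_r$-maps. This is exactly the content of the bordism version of Gromov's $h$-principle cited as \cite{Sa10}, which applies because the locus $S_t$ is stratified by relative symmetry orbits (Theorems~\ref{th:Kazarian}, \ref{th:11.2} and \ref{th:16.5}) in a way compatible with the filtration by singularity types; once this input is in hand, the standard Pontrjagin--Thom formalism upgrades $\Phi$ and its inverse to mutually inverse group isomorphisms, and the infinite loop space structure is automatic from the fact that $\mathbf{A_r}$ is a Thom spectrum.
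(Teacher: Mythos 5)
The paper does not prove this theorem: immediately after the statement it says that ``in the stated form Theorem~\ref{th:9.0} is proved by the author \cite{Sa10}'', with closed-target versions in \cite{Sa} and \cite{An4}. So deferring the essential step to \cite{Sa10} is consistent with what the paper does --- but be aware that \cite{Sa10} is not merely a transversality input to an otherwise routine Pontrjagin--Thom argument; it \emph{is} the proof of the theorem, and the explicit Pontrjagin--Thom scaffolding you wrap around it does not typecheck.

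Concretely, two steps fail. First, the normal bundle $\nu$ of $M\subset N\times\R^t$ has rank $t-d$ (since $\dim M=n+d$), while $\pi_t^*\xi_t$ has rank $t$; moreover a point of $S_t$ is a germ $(\R^{t+d},0)\to(E_t|b,0)$ of a map of \emph{dimension} $d$ into a $t$-dimensional fibre, which is not the germ of $f$ in a normal slice to $M$. Hence there is no bundle map $\nu\to\pi_t^*\xi_t$ covering a map $M\to S_t$, and no induced map of Thom spaces $\Th\nu\to[\mathbf{A_r}]_t$ of the kind you describe. Second, and symmetrically, making a map into $\Th\pi_t^*\xi_t$ transverse to the zero section produces a submanifold $V$ of codimension $t$ (not $t-d$), i.e.\ of dimension $n$; this $V$ is not the source of an $\A_r$-map but a parameter space carrying a formal family of $\A_r$-germs. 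Turning such a formal solution into an actual proper $\A_r$-map $M\to N$ (and doing so compatibly with homotopies, so as to obtain cobordisms) is precisely the integrability problem that the bordism version of the $h$-principle solves; it is not the issue of ``transversality producing only $\A_i$-singularities''. This is why classifying maps in this setting are constructed from the target side (see the Madsen--Weiss-style simplicial construction sketched in the Remark of \S\ref{s:inv}) rather than by collapsing onto the normal bundle of the source. As written, your proposal establishes the theorem only by quoting the reference that is its proof; the explicit constructions in both directions would need to be replaced.
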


In the stated form Theorem~\ref{th:9.0} is proved by the author \cite{Sa10}. With additional assumptions that $N$ is closed and $\dim N\ge 2$, it is proved in the paper \cite{Sa} and a closely related theorem is proved by Ando in \cite{An4}. In the case $d<0$, the theorem for cobordism groups of $\A_r$-maps is also true; its versions are due to Ando~\cite{An4}, Szucs~\cite{Sz} and the author~\cite{Sa}, \cite{Sa10}. The spectrum $\mathbf{A_r}$ is a counterpart of the spectra constructed by Eliashberg, who was motivated by Lagrangian and Legendrian immersions~\cite{El} (note, however, that in contrast to equivariant spectra in \cite{El} and \cite{An4}, the spectrum $\Omega^{\infty}\mathbf{A_r}$ defines a cobordism theory on the category of topological spaces).

\begin{remark} Cobordism groups of $\A_r$-maps should not be confused with bordism groups of $\A_r$-maps. These two groups are \underline{different}. The definition of bordism groups is obtained from the definition of cobordism groups by replacing ``proper $\A_r$-maps" by ``$\A_r$-maps of closed manifolds" (for details see the paper \cite{Sa}). We do not claim that the bordism group of $\mathcal R$-maps into a manifold $N$ is isomorphic to $[N, \Omega^{\infty-d}\mathbf{A_r}]$. 
\end{remark}

An $\A_r$-map $f\colon M\to N$ is said to be \emph{orientable} if the stable vector bundle $f^*TN\ominus TM$ over $M$ is orientable. A choice of orientation on $f^*TN\ominus TM$ determines an \emph{orientation} on the $\A_r$-map $f$. The \emph{oriented cobordism group of $\A_r$-maps} is defined similarly to the cobordism group of $\A_r$-maps by replacing ``$\A_r$-maps" by ``oriented $\A_r$-maps".

\section{Computation of rational cobordism groups of $\A_r$-maps} \label{s:final}

From evaluation of the homology groups of $E_1^{*,*}$, it follows that the groups $E_2^{i,*}$, with $i>1$, are trivial. Hence the Kazarian spectral sequence for $\A_r$ singularities collapses at the second term.  

\begin{corollary}\label{c:f.1}
For each dimension $n$ the vector space $H_n(\mathbf{A_r})$ is finite dimensional. 
\end{corollary}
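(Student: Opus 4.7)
The plan is to read finite dimensionality directly off the Kazarian spectral sequence, whose collapse at $E_2$ is asserted immediately above the corollary. First I would invoke Lemma~\ref{l:12.1}, which identifies
\[
   E_1^{0,i}=H^i(\BSO_q), \qquad E_1^{p,i}=\bigoplus_{\alpha}\tilde{H}^{i+p}(\Th\xi_{\alpha})\ \text{for}\ p>0,
\]
where the sum ranges over the $\A_p$-singularity types $\alpha$ of maps of dimension $d$. The classification in \S\ref{s:3} exhibits only finitely many such types for each fixed $p$ (they are parameterized by pairs $(a,b)$ with $a+b$ equal to a constant depending on $d$ and $p$, together with a $\pm$-choice for odd $p$). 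The explicit computations in Lemmas~\ref{l:10}, \ref{l:12.5}, and \ref{l:12.6} express $\tilde{H}^*(\Th\xi_{\alpha})$ as a Thom class (whose degree is at least $d+p$) cupped with a submodule of a finitely generated polynomial algebra in Pontrjagin and Euler classes; each graded piece is therefore a finite dimensional $\k$-vector space.

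Next I would observe that $H^*(\BSO_q)$ is itself finite dimensional in each degree, being a polynomial algebra on classes of strictly positive degree. Consequently on every antidiagonal $p+i=n$ only finitely many columns contribute nontrivially (the Thom class constraint forces $i\ge d$, hence $p\le n-d$), and each contributing summand is finite dimensional. By the discussion of differentials preceding the corollary, $E_2^{p,*}$ vanishes for $p>1$ and all higher differentials are trivial, so the spectral sequence collapses at $E_2$. Hence $H^n(\mathbf{A_r})$ admits a two-step filtration whose associated graded, $E_\infty^{0,n}\oplus E_\infty^{1,n-1}$, is a subquotient of a finite dimensional space, and is therefore finite dimensional.

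Finally, since all cohomology is computed with coefficients in the field $\k$, universal coefficients identify $H_n(\mathbf{A_r})$ with $\Hom_{\k}(H^n(\mathbf{A_r}),\k)$, which is finite dimensional as well. No substantial obstacle is foreseen: the only bookkeeping needed is to confirm that each antidiagonal of $E_1$ is a finite sum of finite dimensional groups, and this is immediate from the singularity classification together with the explicit Thom space cohomology computations already performed in the paper.
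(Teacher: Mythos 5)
Your proposal is correct and takes essentially the same route as the paper: the paper's proof simply notes that the preceding spectral-sequence computations already show $H^n(\mathbf{A_r})$ has finite rank and then applies the Universal Coefficient Theorem, exactly as you do. The only difference is that you re-derive the finite rank of $H^n(\mathbf{A_r})$ directly from the $E_1$-page of the Kazarian spectral sequence (finitely many singularity types per column, degreewise finite summands, and the Thom-class degree shift bounding the contributing columns), whereas the paper cites its explicit rank-series theorems; this is a matter of self-containedness, not of method.
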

\begin{proof} We have seen that $H^n(\mathbf{A_r})$ is of finite rank. By the Universal Coefficient Theorem, the groups $H_n(\mathbf{A_r})$ are also of finite ranks.  
\end{proof}

Let $\Omega^{\infty}\mathbf{A_r}$ denote the classifying loop space of $\A_r$-maps. Then, by considering spaces $\Omega(\Omega^{t-1}[\mathbf{A_r}]_t)$, we conclude that $H^*(\Omega^{\infty}\mathbf{A_r})$ is a free graded commutative algebra, and its generators are in bijective correspondence with classes in 
\[
    \pi_*(\Omega^{\infty}\mathbf{A_r})\otimes \Q\approx \pi_*(\mathbf{A_r})\otimes \Q.
\]
On the other hand,  the Hurewicz homomorphism 
\[
    \pi_n(\mathbf{A_r})\otimes \Q\longrightarrow H_n(\mathbf{A_r}) 
\]
is an isomorphism (e.g., see \cite[Corollary 10.11]{DNF}). Thus the map $\prod K(\Q, n_i)\to \Omega^{\infty}\mathbf{A_r}$ induces an isomorphism of rational homology groups, where the product is indexed by a set of generators of the vector space $\pi_*(\Omega^{\infty}\mathbf{A})\otimes \Q$. Thus we deduce the following theorem.



\begin{theorem}\label{th:char} For each $r\ge 1$, the rational oriented cobordism class of an $\A_r$-map is completely determined by the characteristic classes described in Theorem~\ref{th:main}. 
\end{theorem}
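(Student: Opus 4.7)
The plan is to reduce the statement to a standard fact in rational homotopy theory, using the classifying space description of cobordism together with the finite-dimensionality of $H^*(\mathbf{A_r})$ established via the Kazarian spectral sequence.

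First, by Theorem~\ref{th:9.0}, a rational oriented cobordism class of an $\A_r$-map $f\colon M\to N$ of dimension $d$ corresponds to a rational homotopy class of a classifying map $u\colon S^d\wedge N_+\to \Omega^{\infty}\mathbf{A_r}$; equivalently, to an element of $[N,\Omega^{\infty-d}\mathbf{A_r}]\otimes \Q$. By Corollary~\ref{c:f.1} (which is the combined output of the spectral sequence computation) the groups $H_n(\mathbf{A_r};\Q)$ are finite-dimensional in every degree, so $\mathbf{A_r}$ is a rational spectrum of finite type. The rational Hurewicz theorem for spectra then gives an isomorphism $\pi_n(\mathbf{A_r})\otimes\Q\stackrel{\approx}\longrightarrow H_n(\mathbf{A_r};\Q)$.

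Next I would invoke the rational splitting of the infinite loop space $\Omega^{\infty}\mathbf{A_r}$ reviewed in \S\ref{s:rational}. Since $\Omega^{\infty}\mathbf{A_r}$ is an infinite loop space with homotopy groups of finite rank, choosing a basis of $\pi_*(\mathbf{A_r})\otimes\Q$ yields a rational equivalence
\[
   \Omega^{\infty}\mathbf{A_r}\ \simeq_{\Q}\ \prod_{i} K(\Q,n_i),
\]
one factor for each basis element. Consequently $H^*(\Omega^{\infty}\mathbf{A_r};\Q)$ is a free graded-commutative algebra on generators dual to this basis, and the generators are exactly the characteristic classes enumerated in Theorem~\ref{th:main}.

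For any CW complex $X$ and any product of rational Eilenberg-MacLane spaces, homotopy classes of maps $X\to \prod K(\Q,n_i)$ are in bijective correspondence with the tuples of cohomology classes $\prod H^{n_i}(X;\Q)$ obtained by pulling back the fundamental classes. Applying this to $X=S^d\wedge N_+$ and composing with the Thom isomorphism $j\colon \tilde H^*(S^d\wedge N_+;\Q)\approx H^{*-d}(N;\Q)$, we conclude that the rational homotopy class of $u$, and therefore the rational cobordism class of $f$, is determined by the collection $\{\chi(f):=j[u^*(\chi)]\}$ as $\chi$ ranges over $H^*(\Omega^{\infty}\mathbf{A_r};\Q)$. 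Since this algebra is generated by the characteristic classes listed in Theorem~\ref{th:main}, those classes already determine the cobordism class.

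The main obstacle is bookkeeping rather than any new geometric input: one must verify that the spectral sequence computation indeed yields a degreewise finite-dimensional $H^*(\mathbf{A_r};\Q)$ (so that the Cartan--Serre/Milnor--Moore machinery of \S\ref{s:rational} applies without difficulty) and that the generators identified in Theorem~\ref{th:main} genuinely span the indecomposables of $H^*(\Omega^{\infty}\mathbf{A_r};\Q)$. Both facts have already been established earlier in the paper, so the remaining work is essentially a direct translation between the spectrum-level computation and the loop-space statement.
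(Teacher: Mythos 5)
Your proposal is correct and follows essentially the same route as the paper: Theorem~\ref{th:9.0} to pass to homotopy classes of maps into $\Omega^{\infty}\mathbf{A_r}$, Corollary~\ref{c:f.1} plus the rational Hurewicz isomorphism to split $\Omega^{\infty}\mathbf{A_r}$ rationally as a product of Eilenberg--MacLane spaces, and then the standard identification of maps into such a product with tuples of cohomology classes. The paper's argument is exactly this discussion preceding the theorem statement, so there is nothing to add.
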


\begin{theorem} For each $r\ge 1$, the rational oriented cobordism class of an $\A_r$-map into a Euclidean space is completely 
determined by its characteristic numbers. 
\end{theorem}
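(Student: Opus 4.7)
The plan is to deduce the result directly from Theorem~\ref{th:char} by specializing to $N=\R^n$. First I would recall that, via Theorem~\ref{th:9.0}, a proper oriented $\A_r$-map $f\colon M\to\R^n$ of dimension $d$ is classified up to cobordism by a homotopy class $[u]\in[S^d\wedge(\R^n)_+,\Omega^{\infty}\mathbf{A_r}]$. For Euclidean $N=\R^n$ the space $S^d\wedge(\R^n)_+$ has the homotopy type of the sphere $S^{n+d}$, so the rational cobordism group of such $f$ is identified with $\pi_{n+d}(\Omega^{\infty}\mathbf{A_r})\otimes\Q=\pi_{n+d}(\mathbf{A_r})\otimes\Q$.

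Next I would invoke the Hurewicz isomorphism $\pi_{n+d}(\mathbf{A_r})\otimes\Q\xrightarrow{\cong}H_{n+d}(\mathbf{A_r};\Q)$ already used in the proof of Theorem~\ref{th:char}. Since the groups $H_*(\mathbf{A_r};\Q)$ are finite-dimensional in each degree by Corollary~\ref{c:f.1}, the Kronecker pairing $H_{n+d}(\mathbf{A_r};\Q)\times H^{n+d}(\mathbf{A_r};\Q)\to\Q$ is perfect. Hence the Hurewicz image $h([u])$, and therefore the rational cobordism class of $f$, is completely determined by the collection of numbers $\langle\chi,h([u])\rangle$ as $\chi$ ranges over $H^{n+d}(\Omega^{\infty}\mathbf{A_r};\Q)$.

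Finally I would identify each such number as a characteristic number of $f$: by naturality of the Hurewicz homomorphism, $\langle\chi,h([u])\rangle=\langle u^*(\chi),[S^{n+d}]\rangle$, which by definition is the evaluation of the pulled-back characteristic class $u^*(\chi)\in\tilde H^{n+d}(S^d\wedge(\R^n)_+;\Q)$ on the top fundamental class of the compactified target. I do not expect a substantial obstacle here, since the statement is essentially a formal consequence of Theorem~\ref{th:char} combined with finite-dimensionality of the rational homology of $\mathbf{A_r}$. The only subtlety worth flagging is interpretive: for a Euclidean target the push-down $\chi(f)\in H^{*-d}(\R^n;\Q)$ vanishes in positive degrees, so the cobordism information is carried not by $\chi(f)$ itself but by its representative $u^*(\chi)$ on $S^{n+d}$, whose value on the top class is the characteristic number.
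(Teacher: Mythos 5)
Your proposal is correct and follows essentially the same route as the paper: identify the rational cobordism class with an element of $\pi_*(\mathbf{A_r})\otimes\Q$, pass through the rational Hurewicz isomorphism, and recover the class from Kronecker pairings with cohomology classes, which are the characteristic numbers. You merely make explicit two points the paper leaves implicit (the identification $S^d\wedge(\R^n)_+\simeq S^{n+d}$ and the role of Corollary~\ref{c:f.1} in making the pairing perfect), which is fine.
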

\begin{proof} On one hand side, the rational oriented cobordism class $x$ of an $\A_r$-map into a Euclidean space is determined by the corresponding homotopy class $x'$ in $\pi_*(\Omega^{\infty}\mathbf{A_r})\otimes \Q$. On the other hand, the image $x$ of $x'$ under the Hurewecz homomorphism is determined by the values $\left\langle p,x'\right\rangle$ of cohomology classes $p\in H^*(\Omega^{\infty}\mathbf{A_r})$. Finally, we observe that these values coincide with characteristic numbers of $\A_r$-maps into a Euclidean space. 
\end{proof}

It also immediately follows that for rational oriented cobordism groups ``the higher singularities are not necessary''. 


\begin{corollary}\label{c:main} Every Morin map is rationally cobordant through Morin maps to a fold map. 
\end{corollary}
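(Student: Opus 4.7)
The plan is to show that the spectrum inclusion $\mathbf{A_1}\hookrightarrow\mathbf{A_\infty}$ induces a surjection on rational stable homotopy groups; Theorem~\ref{th:9.0} will then translate this into the desired geometric statement that every rational cobordism class of Morin maps is represented by a fold map. First I would use the rational Hurewicz isomorphism for connective spectra---equivalently, the rational splitting $\Omega^\infty\mathbf{A_r}\simeq_{\Q}\prod K(\Q,n_i)$ recorded in \S\ref{s:rational} and already exploited in Theorem~\ref{th:char}---together with the finiteness of Corollary~\ref{c:f.1} to reformulate the target surjectivity as the injectivity of the restriction homomorphism
\[
  H^*(\mathbf{A_\infty};\Q)\longrightarrow H^*(\mathbf{A_1};\Q).
\]

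The heart of the argument is the observation, recorded at the opening of \S\ref{s:final}, that the Kazarian spectral sequence of $\mathbf{A_\infty}$ collapses at $E_2$ and, crucially, that $E_2^{i,\ast}(\mathbf{A_\infty})=0$ for every $i\geq 2$. Hence $H^*(\mathbf{A_\infty};\Q)$ is concentrated in the columns $i=0$ and $i=1$ of the spectral sequence, with associated graded pieces $\ker d_1^{0,\ast}$ and $\ker d_1^{1,\ast}/\im d_1^{0,\ast}$ respectively. The Kazarian spectral sequence of $\mathbf{A_1}$ shares the same $E_1$-page in columns $0$ and $1$ and the same differential $d_1^{0,\ast}$; having no higher columns it lacks the differential $d_1^{1,\ast}$, and its associated graded pieces are therefore $\ker d_1^{0,\ast}$ and $E_1^{1,\ast}/\im d_1^{0,\ast}$. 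Read on the associated graded, the restriction map is the identity on $i=0$ and the tautological inclusion $\ker d_1^{1,\ast}/\im d_1^{0,\ast}\hookrightarrow E_1^{1,\ast}/\im d_1^{0,\ast}$ on $i=1$; both are injective, whence the restriction map itself is injective.

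The main potential obstacle, and the one point deserving verification, is the vanishing $E_2^{i,\ast}(\mathbf{A_\infty})=0$ for $i\geq 2$. Once this input is in hand the argument reduces to a short diagram chase in the spectral sequence combined with the formal principle that rational cobordism is detected by characteristic numbers. This vanishing is however a direct consequence of the explicit computations of $\ker d_1^{r,\ast}$ and $\im d_1^{r-1,\ast}$ performed in the preceding sections on the differentials of the Kazarian spectral sequence, so no new technical ingredient is required and the corollary drops out as an immediate structural consequence of the cohomology computation.
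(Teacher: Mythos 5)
Your proposal is correct and follows essentially the route the paper intends: the computations of the Kazarian spectral sequence show $E_2^{i,*}(\mathbf{A_\infty})=0$ for $i\ge 2$, so $H^*(\mathbf{A_\infty};\Q)$ is built from $\ker d_1^{0,*}$ and $\ker d_1^{1,*}/\im d_1^{0,*}$, both of which inject into the corresponding pieces of $H^*(\mathbf{A_1};\Q)$; combined with the rational Hurewicz/Eilenberg--MacLane splitting of \S\ref{s:rational}--\S\ref{s:final} and Theorem~\ref{th:9.0}, this yields surjectivity of the fold cobordism group onto the Morin cobordism group rationally. The paper offers no separate argument beyond "it immediately follows," and your write-up is a faithful, slightly more explicit version of exactly that deduction.
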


\begin{landscape}
\begin{table}
\caption{Differential $d^{1,*}_1$ in the case $d=4s$.}
\[
\xymatrix{
  &  U_{2s, 2s+1, 1}\s \Q[p_1,\dots, p_s, p_1', ..., p'_{s}] \ar[dr]  \ar[r]  &(eU_{2s, 2s, 2}\oplus U_{2s, 2s, 2})\s \mathcal{AP}[p_1,\dots, p_s, p_1', ..., p'_{s}]            & \\     
  &  U_{2s-1, 2s+2, 1}\s \Q[p_1,\dots, p_{s-1}, p_1', ..., p'_{s+1}] \ar[dr]  \ar[r]  & U_{2s-1, 2s+1, 2}\s \Q[p_1,\dots, p_{s-1}, p_1', ..., p'_{s}]             & \\     
  &  U_{2s-2, 2s+3, 1}\s \Q[p_1, \dots, p_{s-1}, p_1', ..., p'_{s+1}] \ar[dr]  \ar[r]  &(eU_{2s-2, 2s+2, 2}\oplus U_{2s-2, 2s+2, 2})\s \Q[p_1, \dots, p_{s-1}, p_1', ..., p'_{s+1}]        & \\     
  &  U_{2s-3, 2s+4, 1}\s \Q[p_1, \dots, p_{s-2}, p_1', ..., p'_{s+2}] \ar[dr]  \ar[r] & U_{2s-3, 2s+3, 2}\s \Q[p_1, \dots, p_{s-2}, p_1', ..., p'_{s+1}]                     & \\
  &  \cdots \ar[dr]  \ar[r]  &\cdots            & \\     
  &  U_{3, 4s-2, 1}\s \Q[p_1, p_1', ..., p'_{2s-1}] \ar[dr]  \ar[r]  &U_{3, 4s-3, 2}\s \Q[p_1, p_1', ..., p'_{2s-2}]             & \\     
  &  U_{2, 4s-1, 1}\s \Q[p_1, p_1', ..., p'_{2s-1}] \ar[dr]  \ar[r]  &(eU_{2, 4s-2, 2}\oplus U_{2, 4s-2, 2})\s \Q[p_1, p_1', ..., p'_{2s-1}]             & \\     
  &  U_{1, 4s, 1}\s \Q[p_1', ..., p'_{2s}] \ar[dr]  \ar[r]  & U_{1, 4s-1, 2}\s \Q[p_1', ..., p'_{2s-1}]             & \\     
  &  U_{0, 4s+1, 1}\s \Q[p_1', ..., p'_{2s}]   \ar[r] & (eU_{0, 4s, 2} \oplus U_{0, 4s, 2})\s \Q[p_1', ..., p'_{2s}]                     & 
  }
\]
\label{table:10}
\end{table}
\end{landscape}

\begin{landscape}
\begin{table}
\caption{Differential $d^{1,*}_1$ in the case $d=4s+1$.}
\[
\xymatrix{
  &  U_{2s+1, 2s+1, 1}\s \AP[p_1,\dots, p_s, p_1', ..., p'_{s}] \ar[dr]  \ar[r]  &  0           & \\     
  &  (eU_{2s, 2s+2, 1}\oplus U_{2s, 2s+2, 1})\s \Q[p_1,\dots, p_s, p_1', ..., p'_{s+1}] \ar[dr]  \ar[r]  
        &U_{2s, 2s+1, 2}\s \Q[p_1,\dots, p_s, p_1', ..., p'_{s}]            & \\     
  &  U_{2s-1, 2s+3, 1}\s \Q[p_1,\dots, p_{s-1}, p_1', ..., p'_{s+1}] \ar[dr]  \ar[r]  
        &U_{2s-1, 2s+2, 2}\s \Q[p_1,\dots, p_{s-1}, p_1', ..., p'_{s+1}]             & \\     
  &  (eU_{2s-2, 2s+4, 1}\oplus U_{2s-2, 2s+4, 1})\s \Q[p_1, \dots, p_{s-1}, p_1', ..., p'_{s+2}] \ar[dr]  \ar[r]                 &U_{2s-2, 2s+3, 2}\s \Q[p_1, \dots, p_{s-1}, p_1', ..., p'_{s+1}]        & \\     
  &  U_{2s-3, 2s+5, 1}\s \Q[p_1, \dots, p_{s-2}, p_1', ..., p'_{s+2}] \ar[dr]  \ar[r] 
        &U_{2s-3, 2s+4, 2}\s \Q[p_1, \dots, p_{s-2}, p_1', ..., p'_{s+2}]                     & \\
  &  \cdots \ar[dr]  \ar[r]  &\cdots            & \\     
  &  U_{3, 4s-1, 1}\s \Q[p_1, p_1', ..., p'_{2s-1}] \ar[dr]  \ar[r]  
        &U_{3, 4s-2, 2}\s \Q[p_1, p_1', ..., p'_{2s-1}]             & \\     
  &  (eU_{2, 4s, 1}\oplus U_{2, 4s, 1})\s \Q[p_1, p_1', ..., p'_{2s}] \ar[dr]  \ar[r]  
        &U_{2, 4s-1, 2}\s \Q[p_1, p_1', ..., p'_{2s-1}]             & \\     
  &  U_{1, 4s+1, 1}\s \Q[p_1', ..., p'_{2s}] \ar[dr]  \ar[r]  
        &U_{1, 4s, 2}\s \Q[p_1', ..., p'_{2s}]             & \\     
  &  (eU_{0, 4s+2, 1}\oplus U_{0, 4s+2, 1})\s \Q[p_1', ..., p'_{2s+1}]   \ar[r] 
        &U_{0, 4s+1, 2}\s \Q[p_1', ..., p'_{2s}]                     & 
  }
\]
\end{table}
\end{landscape}

\begin{landscape}
\begin{table}\label{t:12}
\caption{Differential $d^{1,*}_1$ in the case $d=4s+2$.}
\[
\xymatrix{
  &  U_{2s+1, 2s+2, 1}\s \Q[p_1,\dots, p_s, p_1', ..., p'_{s+1}] \ar[dr]  \ar[r]  
        &U_{2s+1, 2s+1, 2}\s \mathcal{AP}[p_1,\dots, p_s, p_1', ..., p'_{s}]            & \\     
  &  U_{2s, 2s+3, 1}\s \Q[p_1,\dots, p_{s}, p_1', ..., p'_{s+1}] \ar[dr]  \ar[r]  
        &(eU_{2s, 2s+2, 2}\oplus U_{2s, 2s+2, 2})\s \Q[p_1,\dots, p_{s}, p_1', ..., p'_{s+1}]             & \\     
  &  U_{2s-1, 2s+4, 1}\s \Q[p_1, \dots, p_{s-1}, p_1', ..., p'_{s+2}] \ar[dr]  \ar[r]  
        &U_{2s-1, 2s+3, 2}\s \Q[p_1, \dots, p_{s-1}, p_1', ..., p'_{s+1}]        & \\     
  &  U_{2s-2, 2s+5, 1}\s \Q[p_1, \dots, p_{s-1}, p_1', ..., p'_{s+2}] \ar[dr]  \ar[r] 
        &(eU_{2s-2, 2s+4, 2}\oplus U_{2s-2, 2s+4, 2})\s \Q[p_1, \dots, p_{s-1}, p_1', ..., p'_{s+2}]     & \\
  &  \cdots \ar[dr]  \ar[r]  &\cdots            & \\     
  &  U_{3, 4s, 1}\s \Q[p_1, p_1', ..., p'_{2s}] \ar[dr]  \ar[r]  
        &U_{3, 4s-1, 2}\s \Q[p_1, p_1', ..., p'_{2s-1}]             & \\     
  &  U_{2, 4s+1, 1}\s \Q[p_1, p_1', ..., p'_{2s}] \ar[dr]  \ar[r]  
        &(eU_{2, 4s, 2}\oplus U_{2, 4s, 2})\s \Q[p_1, p_1', ..., p'_{2s}]             & \\     
  &  U_{1, 4s+2, 1}\s \Q[p_1', ..., p'_{2s+1}] \ar[dr]  \ar[r]  
        & U_{1, 4s+1, 2}\s \Q[p_1', ..., p'_{2s}]             & \\     
  &  U_{0, 4s+3, 1}\s \Q[p_1', ..., p'_{2s+1}]   \ar[r] 
        & (eU_{0, 4s+2, 2} \oplus U_{0, 4s+2, 2})\s \Q[p_1', ..., p'_{2s+1}]                     & 
  }
\]
\end{table}
\end{landscape}

\begin{landscape}
\begin{table}
\caption{Differential $d^{1,*}_1$ in the case $d=4s+3$.}
\[
\xymatrix{
  &  U_{2s+2, 2s+2, 1}\s (\AP\oplus e\SP)[p_1,\dots, p_{s+1}, p_1', ..., p'_{s+1}] \ar[dr]  \ar[r]  &  0       & \\     
  &  U_{2s+1, 2s+3, 1}\s \Q[p_1,\dots, p_s, p_1', ..., p'_{s+1}] \ar[dr]  \ar[r]  
        &U_{2s+1, 2s+2, 2}\s \Q[p_1,\dots, p_s, p_1', ..., p'_{s+1}]            & \\     
  &  (eU_{2s, 2s+4, 1}\oplus U_{2s, 2s+4, 1})\s \Q[p_1,\dots, p_{s}, p_1', ..., p'_{s+2}] \ar[dr]  \ar[r]  
        &U_{2s, 2s+3, 2}\s \Q[p_1,\dots, p_{s}, p_1', ..., p'_{s+1}]             & \\     
  &  U_{2s-1, 2s+5, 1}\s \Q[p_1, \dots, p_{s-1}, p_1', ..., p'_{s+2}] \ar[dr]  \ar[r]                 
        &U_{2s-1, 2s+4, 2}\s \Q[p_1, \dots, p_{s-1}, p_1', ..., p'_{s+2}]        & \\     
  &  (eU_{2s-2, 2s+6, 1}\oplus U_{2s-2, 2s+6, 1})\s \Q[p_1, \dots, p_{s-1}, p_1', ..., p'_{s+3}] \ar[dr]  \ar[r] 
        &U_{2s-2, 2s+5, 2}\s \Q[p_1, \dots, p_{s-1}, p_1', ..., p'_{s+2}]                     & \\
  &  \cdots \ar[dr]  \ar[r]  &\cdots            & \\     
  &  U_{3, 4s+1, 1}\s \Q[p_1, p_1', ..., p'_{2s}] \ar[dr]  \ar[r]  
        &U_{3, 4s, 2}\s \Q[p_1, p_1', ..., p'_{2s}]             & \\     
  &  (eU_{2, 4s+2, 1}\oplus U_{2, 4s+2, 1})\s \Q[p_1, p_1', ..., p'_{2s+1}] \ar[dr]  \ar[r]  
        &U_{2, 4s+1, 2}\s \Q[p_1, p_1', ..., p'_{2s}]             & \\     
  &  U_{1, 4s+3, 1}\s \Q[p_1', ..., p'_{2s+1}] \ar[dr]  \ar[r]  
        &U_{1, 4s+2, 2}\s \Q[p_1', ..., p'_{2s+1}]             & \\     
  &  (eU_{0, 4s+4, 1}\oplus U_{0, 4s+4, 1}) \s \Q[p_1', ..., p'_{2s+2}]   \ar[r] 
        &U_{0, 4s+3, 2}\s \Q[p_1', ..., p'_{2s+1}]                     & 
  }
\]
\label{t:13}
\end{table}
\end{landscape}

\end{document}